\documentclass[twoside,10pt]{article}
\usepackage{geometry}
\usepackage{graphicx}
\usepackage{indentfirst}                                
\usepackage[colorlinks]{hyperref}
\hypersetup{linkcolor=blue,filecolor=black,urlcolor=blue, citecolor=black}   
\usepackage[numbers,sort&compress]{natbib}         
\usepackage[perpage,symbol*,marginal]{footmisc}   
\usepackage{amsmath}                                
\usepackage{amsfonts}
\usepackage{amssymb}                                
\usepackage{bm}                                            
\usepackage{mathrsfs}                                 
\usepackage{amsthm}                                   
\usepackage{amsxtra}
\usepackage[numbers]{natbib}
\ifxetex
\usepackage{letltxmacro}
\LetLtxMacro\SavedIncludeGraphics\includegraphics
\def\includegraphics#1#{
	\IncludeGraphicsAux{#1}%
}%
\newcommand*{\IncludeGraphicsAux}[2]{%
	\XeTeXLinkBox{%
		\SavedIncludeGraphics#1{#2}%
}}
\fi
\newcommand\orcidicon[1]{\href{https://orcid.org/#1}{\includegraphics[scale=0.02]{orcid.pdf}}}
\usepackage[titletoc,title]{appendix}
\usepackage{titlesec}
\usepackage{authblk}
\titleformat{\section}{\large\bfseries\centering}{\thesection}{1em}{}
\titleformat{\subsection}{\centering\bfseries}{\thesubsection}{0.5em}{}
\titleformat{\subsubsection}[runin]{\bfseries}{\thesubsubsection.}{0.4em}{}[.]
\numberwithin{equation}{section}
\newtheorem{lemma}{Lemma}[section]
\newtheorem{proposition}[lemma]{Proposition}
\newtheorem{theorem}{Theorem}[section]

\newtheorem{remark}{Remark}[section]
\usepackage{color}
\definecolor{DarkRed}{RGB}{139,0,0}
\definecolor{Purple}{RGB}{128,0,128}
\usepackage{ulem}
	
	\usepackage{accents}
	\makeatletter
	\def\wideubar{\underaccent{{\cc@style\underline{\mskip10mu}}}}
	\def\Wideubar{\underaccent{{\cc@style\underline{\mskip8mu}}}}
	\makeatother
	\makeatletter
	\def\widebar{\accentset{{\cc@style\underline{\mskip10mu}}}}
	\def\Widebar{\accentset{{\cc@style\underline{\mskip8mu}}}}
	\makeatother
	\newcommand{\VERTiii}[1]{{\left\vert\kern-0.3ex\left\vert\kern-0.3ex\left\vert #1
			\right\vert\kern-0.3ex\right\vert\kern-0.3ex\right\vert}}
	\newcommand{\VERT}{\vert\kern-0.3ex\vert\kern-0.3ex\vert}
	\newcommand{\VERTl}{\left\vert\kern-0.3ex\left\vert\kern-0.3ex\left\vert}
	\newcommand{\VERTr}{\right\vert\kern-0.3ex\right\vert\kern-0.3ex\right\vert}
	\newcommand{\VERTbig}{\big\vert\kern-0.3ex\big\vert\kern-0.3ex\big\vert}
	\newcommand{\VERTBig}{\Big\vert\kern-0.3ex\Big\vert\kern-0.3ex\Big\vert}
\begin{document}
		\title{\bf A Rigorous Derivation of the Vlasov-Navier-Stokes Model from
			Multicomponent Boltzmann Equations
		}
		
		\author[a]{Shuping Li\thanks{ Email: lisp@hnu.edu.cn}}
		\author[a,b]{Linjie Xiong\thanks{Corresponding author. Email: xlj@hnu.edu.cn}}
		\affil[a]{\small School of Mathematics, Hunan University, Changsha 410082, China}
		\affil[b]{\small Hunan Provincial Key Laboratory of Intelligent Information Processing and Applied Mathematics, Hunan University, Changsha 410082, China}
		
		\date{\empty}
		\maketitle
		
		\begin{abstract}
			The rigorous justification of the Vlasov–Navier–Stokes system remains an outstanding open problem, both as a mean-field limit of an ‭$N$‬-particle fluid-interacting system and as a hydrodynamic limit derived from multiphase Boltzmann equations. Inspired by the work of Bernard, Desvillettes, Golse, and Ricci [{\it Commun. Math. Sci.}, {\bf 15}(6), 1703–1741, 2017], we present a rigorous derivation of the incompressible Vlasov–Navier–Stokes system from the two-component Boltzmann system for elastic hard-sphere collisions. To this end, we establish the well-posedness of the rescaled multicomponent Boltzmann system for small initial data. Specifically, we obtain estimates for the solution that hold uniformly with respect to both the thermal speed ratio ‭$\varepsilon$‬ and the mass ratio ‭$\eta$‬. Furthermore, under the Vlasov–Navier–Stokes scaling assumption ‭$O(1)\varepsilon^3 \leqslant \eta \leqslant o(1)\varepsilon^2$‬‭‬‭‬‭‬‭‬‭‬‭‬‭‬‭‬‭‬‭‬ as ‭$\varepsilon \rightarrow 0$‬‭‬, we establish the weak convergence of the solution to the fluid–kinetic coupled limit system.
		\end{abstract}
		
		\vspace*{2mm}
		{\small
			\noindent{\bf Keywords}:
			Vlasov-Navier-Stokes system,
			multiphase Boltzmann system,
			hydrodynamic limits

			\vspace*{2mm}
			
			\noindent{\bf Mathematics Subject Classification (2010)}:\quad
			76W05,
			35L65,
			35R35,
			76E17
			

			\tableofcontents
		}
		
		\bigskip
		
		\section{Introduction}
		\subsection{IVNS System and Multicomponent Boltzmann Equations}
		When describing the motion of a dispersed phase of small particles flowing in a surrounding incompressible fluid—such as solid particles or liquid droplets immersed in a gas—physicists have introduced fluid-particle models, most notably the incompressible Vlasov–Navier–Stokes (IVNS) system:
		\begin{equation}\label{V-N-S*}
			\left\{\begin{aligned}
				&\partial_t F+v\cdot \nabla_x F=\kappa {\text{div}}_v((v-u)F),\\
				&	\partial_t u+u\cdot \nabla_x u+\nabla_x p=\widetilde{\nu} \Delta_x u+\kappa\int_{\mathbb{R}^3}(v-u)F\mathrm{d}v,\\
				&	\partial_t \theta+u \cdot \nabla_x\theta=\widetilde{\kappa}\Delta_x\theta,\\
				&	{\text{div}}_x u=0,\quad	\rho+\theta=0,
			\end{aligned} \right.
		\end{equation}
		where ‭$F(t,x,v)\geqslant 0$‬‭‬‭‬‭‬‭‬‭‬‭‬‭‬‭‬ is the number density of particles or droplets with velocity ‭$v$‬ at position ‭$x$‬ and time ‭$t$‬, ‭$\rho(t,x)$‬‭‬‭‬‭‬‭‬‭‬ is the fluid mass density, ‭$u(t,x)$‬‭‬‭‬‭‬‭‬‭‬ is the fluid velocity, ‭$\theta(t,x)$‬‭‬‭‬‭‬‭‬‭‬ is the fluid temperature, and ‭$p(t,x)$‬‭‬‭‬‭‬‭‬‭‬ is the fluid pressure. The physical parameters ‭$\kappa$‬, ‭$\widetilde{\nu}$‬, and ‭$\widetilde{\kappa}$‬ represent the gas friction coefficient on the dispersed phase, the kinematic viscosity, and the thermal diffusivity of the gas, respectively (see \cite{MR3668954} for details). For further discussion on its physical relevance, we refer the reader to Caflisch–Papanicolaou \cite{MR0709743} and Williams \cite{Williams-85} (see also Goudon–Jabin–Vasseur \cite{MR2106333}).
		
		Driven by both physical importance and mathematical complexity, the global existence of solutions to this kinetic system coupled with incompressible fluid dynamics has been extensively studied. Anoshchenko, and Boutet de Monvel-Berthier \cite{Ano-Bou} as well as Hamdache \cite{MR1610309} pioneered the investigation of the Cauchy problem for Vlasov–Navier–Stokes models. Boudin, Desvillettes, Grandmont, and Moussa \cite{MR2555647} established the global existence of weak solutions on the torus ‭$\mathbb{T}^3$‬. This result was subsequently extended to general bounded domains in ‭$\mathbb{R}^3$‬ by Wang–Yu \cite{MR3369269} and to time-dependent bounded domains by Boudin–Grandmont–Lorz–Moussa \cite{MR3582196}. Yu \cite{MR3073216} proved the global existence of weak solutions in two and three spatial dimensions, alongside uniqueness in 2D. Choi and Kwon studied the global solvability and large-time behavior for inhomogeneous Vlasov–Navier–Stokes equations under small, regular initial data \cite{MR3403400}, while also exploring finite-time blow-up scenarios \cite{MR3723164}. Han-Kwan and collaborators investigated asymptotic stability \cite{MR3842054} and uniqueness \cite{MR4061981} in a 2D pipe with a non-trivial kinetic phase, as well as the large-time behavior of Fujita–Kato type solutions on the 3D torus \cite{MR4076066} and the whole space ‭$\mathbb{R}^3$‬ \cite{MR4420295}. Very recently, Danchin \cite{MR5041103} established the unique existence and optimal decay rates of global solutions initialized in a velocity space of critical regularity under a Fujita–Kato type framework. A vast literature exists on strong and weak solutions for both homogeneous and inhomogeneous fluid regimes; see, e.g., \cite{MR2904273,MR3465376,MR2825335,Springerlink-ews}.
		
		Regarding the hydrodynamic limit of the Vlasov–Navier–Stokes system itself, Goudon, Jabin, and Vasseur first explored various asymptotic regimes using a relative entropy method for light particles \cite{MR2106333} and fine particles \cite{MR2106334}. El Ghani and Mejri \cite{MR4591940} established the hydrodynamic limit for the inhomogeneous Vlasov–Navier–Stokes system in a bounded 2D domain subject to homogeneous Dirichlet boundary conditions for the fluid and Maxwell boundary conditions for the kinetic distribution. Su, Wu, Yao, and Zhang \cite{MR4493149} analyzed this limit on the 3D torus. More recently, Han-Kwan and Michel \cite{Han-Kwan-Michel-23} developed a unified framework to justify hydrodynamic limits under high-friction regimes. For other asymptotic properties, see \cite{MR1869641,MR4344262,MR2415460}.
		
		Rigorously deriving the Vlasov–Navier–Stokes system as a mean-field model—where the empirical phase-space measure of solid particles (or droplets) converges as the particle number approaches infinity and the particle radius vanishes under a suitable scaling—is notoriously challenging and remains a major open problem (see \cite{MR1079189,MR2398959,MR4172441,MR2094523,MR0565234}).
		Nevertheless, in the spirit of the Bardos–Golse–Levermore program \cite{MR1115587,MR1213991} and the full justification of Boltzmann hydrodynamic limits within DiPerna–Lions' renormalized solution framework \cite{MR2025302,MR2517786,MR1014927}, Bernard, Desvillettes, Golse, and Ricci \cite{MR3668954} proposed a theoretical program to formally derive the IVNS system from multiphase Boltzmann equations. Furthermore, based on Hilbert or Chapman–Enskog expansions near equilibrium with uniform Knudsen-number estimates, foundational works such as \cite{MR1115292,MR0503305,MR0553964,MR1029125,MR3875244,MR3894734} obtained the Euler or Navier–Stokes limits of sufficiently smooth solutions to scaled Boltzmann equations.
		
		As noted by Han-Kwan and Michel \cite{Han-Kwan-Michel-23}, a complete rigorous justification of this IVNS limit remains open. In this paper, we provide a rigorous proof deriving the IVNS model \eqref{V-N-S*} from a microscopic aerosol flow description. Specifically, we investigate the hydrodynamic limit of the following Boltzmann system arising in \cite{MR3668954,MR4172441}, which models a binary mixture of gas molecules and substantially larger dust particles or liquid droplets:
		\begin{equation}\label{bi-Boltz}
			\left\{ \begin{aligned}
				& \partial_t F+v \cdot \nabla_x F=\mathcal{D}(F, f)+\mathcal{B}(F,F), \\
				& \partial_tf+w \cdot \nabla_x f=\mathcal{R}(f, F)+\mathcal Q (f, f),
			\end{aligned}  \right.
		\end{equation}
		where $F(t, x, v) \geqslant 0$ is the distribution function of dust particles or droplets with velocity $v\in \mathbb R^3$ located at position $x\in \mathbb R^3$ at time $t>0$, and $ f(t, x, w) \geqslant 0$ is the distribution function of gas molecules with velocity $w\in \mathbb R^3$ located at the position $x\in \mathbb R^3$ at time $t>0$.
		The terms $\mathcal{B}(F,F)$ and $\mathcal Q ( f, f)$ are the classical Boltzmann collision integrals for pairs of dust particles or liquid droplets and for pairs of gas molecules respectively.
		The terms $\mathcal{D}(F, f)$ and $\mathcal{R}(f, F)$ are Boltzmann type collision integrals describing the deflection of dust particles or liquid droplets due to the impingement of gas molecules, and the slowing down of gas molecules caused by collisions with dust particles or liquid droplets respectively.
		The exact collision operators $\mathcal Q, \mathcal B, \mathcal D$ and $\mathcal R$ are tabulated below:
		\begin{align*}
			\mathcal Q(f,g)(w)&= \int_{\mathbb{R}^3\times\mathbb{S}^2} \mathfrak{c}(w-w_*,\omega) \{f(w')g(w_*')-f(w)g(w_*)\}\mathrm{d}\omega\mathrm{d}w_*,\\
			\mathcal B(F,G)(v)&= \int_{\mathbb{R}^3\times\mathbb{S}^2} \mathfrak{d}(v-v_*,\omega) \{F(v')G(v_*')-F(v)G(v_*)\}\mathrm{d}\omega\mathrm{d}v_*,\\
			\mathcal D(F,f)(v) & =\int_{\mathbb{R}^3\times\mathbb{S}^2}\mathfrak{b}(v-w,\omega)\{F(v^{\prime\prime})f(w^{\prime\prime})-F(v)f(w)\}\mathrm{d}\omega\mathrm{d}w,\\
			\mathcal R(f,F)(w) & =\int_{\mathbb{R}^3\times\mathbb{S}^2}\mathfrak{b}(v-w,\omega) \{F(v^{\prime\prime})f(w^{\prime\prime})-F(v)f(w)\}\mathrm{d}\omega\mathrm{d}v,
		\end{align*}
		where $ \mathbb{S}^2=\{\omega\in \mathbb{R}^3 \mid |\omega|=1\}$.
		Here, $w'$ and $w_*'$ are the velocities of two gas molecules after an elastic collision, which are related to the pre-collision velocities $w$ and $w_*$ by
		\[
		\begin{aligned}
			w'=w+\omega(\omega\cdot (w_*-w)),\quad
			w_*'=w_*-\omega(\omega\cdot (w_*-w)).
		\end{aligned}
		\]
		Similarly, $v'$ and $v_*'$ are the velocities of two particles after an elastic collision, which are related to the pre-collision velocities $v$ and $v_*$ by
		\[
		\begin{aligned}
			v'=v+\omega(\omega\cdot (v_*-v)),\quad
			v_*'=v_*-\omega(\omega\cdot (v_*-v)).
		\end{aligned}
		\]
		Finally, ‭$v^{\prime\prime}$‬ and ‭$w^{\prime\prime}$‬ represent post-collision velocities resulting from an elastic collision between a particle and a gas molecule with initial velocities ‭$v$‬ and ‭$w$‬:
		\begin{equation*}
			v^{\prime\prime}=v-\frac{2m_g}{m_g+m_p}\omega(\omega\cdot (v-w)),\quad
			w^{\prime\prime}=w+\frac{2m_p}{m_g+m_p}\omega(\omega\cdot (v-w)),
		\end{equation*}
		where ‭$m_g$‬ and ‭$m_p$‬ denote the masses of a gas molecule and a particle/droplet, respectively. The collision kernels ‭$\mathfrak{c}$‬, ‭$\mathfrak{d}$‬, and ‭$\mathfrak{b}$‬ are defined by
	\begin{align*}
		\mathfrak{c}(w-w_*,\omega)&=|w-w_*|\sigma_{gg}(|w-w_*|,|\cos\widehat{(w-w_*,\omega)}|),\\
		\mathfrak{d}(v-v_*,\omega)&=|v-v_*|\sigma_{pp}(|v-v_*|,|\cos\widehat{(v-v_*,\omega)}|),\\
		\mathfrak{b}(v-w,\omega)&=|v-w|\sigma_{pg}(|v-w|,|\cos\widehat{(v-w,\omega)}|),
	\end{align*}
		where ‭$\sigma_{gg}$‬, ‭$\sigma_{pp}$‬, and ‭$\sigma_{pg}$‬ denote the differential cross-sections for gas–gas, particle–particle, and particle–gas interactions, respectively.
		
		\subsection{Dimensionless Analysis}
		Our aim in the present work is to capture the asymptotic dynamical behavior of aerosol flows starting from the multicomponent Boltzmann system \eqref{bi-Boltz}. To this end, we must first formulate the dimensionless version of the system. This dimensional analysis follows the framework established in \cite{MR3668954}, which we summarize below for completeness.
		
		We begin by introducing a characteristic length unit, $L$, and defining the dimensionless spatial variable:
		\[
		\widehat{x}=\frac{x}{L}.
		\]
		Let $V_p$ and $V_g$ denote the thermal speeds of the particles and gas molecules, respectively. Measuring the velocity of each species against its respective thermal speed, we define the dimensionless velocity variables:
		\[
		\widehat{v}=\frac{v}{V_p},\quad \widehat{w}=\frac{w}{V_g}.
		\]
		Based on these choices, the dimensionless post-collision velocities $(\widehat{w'}, \widehat{w_*'}, \widehat{v'}, \widehat{v_*'}, \widehat{v^{\prime\prime}}, \widehat{w^{\prime\prime}})$ take the form:
		\[
		\begin{aligned}
			\widehat{w'}&=\widehat{w}+\omega(\omega\cdot (\widehat{w_*}-\widehat{w})),&
			\widehat{w_*'}&=\widehat{w_*}-\omega(\omega\cdot (\widehat{w_*}-\widehat{w})),\\
			\widehat{v'}&=\widehat{v}+\omega(\omega\cdot (\widehat{v_*}-\widehat{v})),&
			\widehat{v_*'}&=\widehat{v_*}-\omega(\omega\cdot (\widehat{v_*}-\widehat{v})),\\
			\widehat{v^{\prime\prime}}&=\widehat{v}-\frac{2m_g}{m_g+m_p}\omega\left(\omega\cdot \left(\widehat{v}-\frac{V_g}{V_p}\widehat{w}\right)\right), &\widehat{w^{\prime\prime}}&=\widehat{w}+\frac{2m_p}{m_g+m_p}\omega\left(\omega\cdot \left(\frac{V_p}{V_g}\widehat{v}-\widehat{w}\right)\right).
		\end{aligned}
		\]
		
		Next, we define a time variable adapted to the motion of the typical particle in the slower species (the dust particles or droplets):
		\[
		\widehat{t}=\frac{tV_p}{L}.
		\]
		Let $N_p$ and $N_g$ denote the number densities (particles per unit volume) of the dust particles and gas molecules, respectively. We introduce the dimensionless distribution functions for each species:
		\[
		\widehat{F}(\widehat{t},\widehat{x},\widehat{v})=\frac{V_p^3}{N_p}F(t,x,v),\quad \widehat{f}(\widehat{t},\widehat{x},\widehat{w})=\frac{V_g^3}{N_g}f(t,x,w).
		\]
		
		To non-dimensionalize the collision integrals, we denote by $S_{gg}$, $S_{pp}$, and $S_{pg}$ the average molecular, particle--particle, and particle--gas cross-sections, respectively. The dimensionless differential cross-sections are given by:
		\[
		\begin{aligned}
			\widehat{\sigma_{gg}}(|\widehat{w}|,|\cos\widehat{(\widehat{w},\omega)}|)&=\frac{1}{S_{gg}}\sigma_{gg}(V_g|\widehat{w}|,|\cos\widehat{(\widehat{w},\omega)}|),\\
			\widehat{\sigma_{pp}}(|\widehat{v}|,|\cos\widehat{(\widehat{v},\omega)}|)&=\frac{1}{S_{pp}}\sigma_{pp}(V_p|\widehat{v}|,|\cos\widehat{(\widehat{v},\omega)}|),\\
			\widehat{\sigma_{pg}}(|\widehat{z}|,|\cos\widehat{(\widehat{z},\omega)}|)&=\frac{1}{S_{pg}}\sigma_{pg}(V_g|\widehat{z}|,|\cos\widehat{(\widehat{z},\omega)}|).
		\end{aligned}
		\]
		Accordingly, the dimensionless collision kernels are defined as:
		\begin{align*}
			\widehat{\mathfrak{c}}(\widehat{w}-\widehat{w_*},\omega)&=|\widehat{w}-\widehat{w_*}|\widehat{\sigma_{gg}}(|\widehat{w}-\widehat{w_*}|,|\cos\widehat{(\widehat{w}-\widehat{w_*},\omega)}|),\\
			\widehat{\mathfrak{d}}(\widehat{v}-\widehat{v_*},\omega)&=|\widehat{v}-\widehat{v_*}| \widehat{\sigma_{pp}}(|\widehat{v}-\widehat{v_*}|,|\cos\widehat{(\widehat{v}-\widehat{v_*},\omega)}|),\\
			\widehat{\mathfrak{b}}\left(\frac{V_p}{V_g}\widehat{v}-\widehat{w},\omega\right)&=\left|\frac{V_p}{V_g}\widehat{v}-\widehat{w}\right|\widehat{\sigma_{pg}}\left(\left|\frac{V_p}{V_g}\widehat{v}-\widehat{w}\right|,\left|\cos\widehat{\left(\frac{V_p}{V_g}\widehat{v}-\widehat{w},\omega\right)}\right|\right),
		\end{align*}
		which lead to the dimensionless collision integrals:
		\begin{align*}
			\widehat{\mathcal Q}(\widehat{f},\widehat{g})(\widehat{w})&= \int_{\mathbb{R}^3\times\mathbb{S}^2} 	\widehat{\mathfrak{c}}(\widehat{w}-\widehat{w_*},\omega) \{\widehat{f}(\widehat{w'})\widehat{g}(\widehat{w_*'})-\widehat{f}(\widehat{w})\widehat{g}(\widehat{w_*})\}\mathrm{d}\omega\mathrm{d}\widehat{w_*},\\
			\widehat{\mathcal B}(\widehat{F},\widehat{G})(\widehat{v})&= \int_{\mathbb{R}^3\times\mathbb{S}^2}\widehat{\mathfrak{d}}(\widehat{v}-\widehat{v_*},\omega) \{\widehat{F}(\widehat{v'})\widehat{G}(\widehat{v_*'})-\widehat{F}(\widehat{v})\widehat{G}(\widehat{v_*})\}\mathrm{d}\omega\mathrm{d}\widehat{v_*},\\
			\widehat{\mathcal D}(\widehat{F},\widehat{f})(\widehat{v}) & =\int_{\mathbb{R}^3\times\mathbb{S}^2}\widehat{\mathfrak{b}}\left(\frac{V_p}{V_g}\widehat{v}-\widehat{w},\omega\right)
			\{\widehat{F}(\widehat{v^{\prime\prime}})\widehat{f}(\widehat{w^{\prime\prime}})-\widehat{F}(\widehat{v})\widehat{f}(\widehat{w})\}\mathrm{d}\omega\mathrm{d}\widehat{w},\\
			\widehat{\mathcal R}(\widehat{f},\widehat{F})(\widehat{w}) & =\int_{\mathbb{R}^3\times\mathbb{S}^2}\widehat{\mathfrak{b}}\left(\frac{V_p}{V_g}\widehat{v}-\widehat{w},\omega\right)
			\{\widehat{F}(\widehat{v^{\prime\prime}})\widehat{f}(\widehat{w^{\prime\prime}})-\widehat{F}(\widehat{v})\widehat{f}(\widehat{w})\}\mathrm{d}\omega\mathrm{d}\widehat{v}.
		\end{align*}
		
		Collecting the above definitions, we obtain the dimensionless multicomponent Boltzmann system:
		\begin{equation*}
			\left\{\begin{aligned}
				& \partial_{\widehat{t}} \widehat{F}+\widehat{v} \cdot \nabla_{\widehat{x}} \widehat{F}=N_gS_{pg}L\frac{V_g}{V_p} \widehat{\mathcal D}(\widehat{F},\widehat{f})+N_pS_{pp}L\widehat{\mathcal B}(\widehat{F},\widehat{F}), \\
				& \partial_{\widehat{t}}\widehat{f}+\frac{V_g}{V_p}\widehat{w} \cdot \nabla_{\widehat{x}}\widehat{f}=N_pS_{pg}L\frac{V_g}{V_p}\widehat{\mathcal R}(\widehat{f},\widehat{F})+N_gS_{gg}L\frac{V_g}{V_p}\widehat{\mathcal Q}(\widehat{f},\widehat{f}).
			\end{aligned} \right.
		\end{equation*}
		
		Following \cite{MR3668954}, we assume throughout that
		\[
		N_pS_{pp}L\ll 1,
		\]
		meaning that self-collisions among dust particles/droplets, $N_pS_{pp}L\widehat{\mathcal B}(\widehat{F},\widehat{F})$, are formally negligible. Moreover, because dust particles are typically much larger than gas molecules and have a lower thermal speed, we introduce two key dimensionless scaling parameters: the mass ratio
		\begin{equation}\label{DefParm1}
			\eta = \frac{m_g}{m_p}\in [0,1],
		\end{equation}
		which quantifies the disparity between the gas molecular mass $m_g$ and the particle mass $m_p$, and the thermal speed ratio
		\begin{equation}\label{DefParm2}
			\varepsilon = \frac{V_p}{V_g}\in [0,1].
		\end{equation}
		In aerosol dynamics, the number density of dispersed particles is far lower than that of gas molecules. We thus assume the number density ratio satisfies
		$\frac{N_p}{N_g} = \eta \in [0,1]$. This scaling ensures that the mass densities of the gas phase and the particle phase remain of the same order of magnitude. Furthermore, we adopt the scaling relations
		\begin{equation}\label{DefParm4}
			N_p S_{pg}L=\varepsilon,\quad N_gS_{gg}L=\frac{1}{\varepsilon}.
		\end{equation}
		Combining relations \eqref{DefParm1}, \eqref{DefParm2}, and \eqref{DefParm4} yields the prefactors:
		\[
		\begin{aligned}
			N_gS_{pg}L\frac{V_g}{V_p}&=\frac{N_g}{N_p}(N_pS_{pg}L)\frac{V_g}{V_p}=\frac{1}{\eta},\\
			N_pS_{pg}L\frac{V_g}{V_p}&=1,\\
			N_gS_{gg}L\frac{V_g}{V_p}&=\frac{1}{\varepsilon^2}.
		\end{aligned}
		\]
		
		Omitting the caret notation $(\,\widehat{\cdot}\,)$ for brevity, the rescaled distribution functions $F_{\varepsilon,\eta}(t,x,v)$ and $f_{\varepsilon,\eta}(t,x,w)$ satisfy the nondimensionalized system:
		\begin{equation}\label{Boltz.sys.}
			\left\{\begin{aligned}
				\partial_t F_{\varepsilon,\eta} + v\cdot \nabla_x F_{\varepsilon,\eta}&=\frac{1}{\eta} \mathcal D(F_{\varepsilon,\eta},f_{\varepsilon,\eta}), \\
				\partial_t f_{\varepsilon,\eta} +\frac{1}{\varepsilon}w\cdot \nabla_x f_{\varepsilon,\eta}&=\mathcal R(f_{\varepsilon,\eta},F_{\varepsilon,\eta})+\frac{1}{\varepsilon^2}\mathcal Q(f_{\varepsilon,\eta},f_{\varepsilon,\eta}),
			\end{aligned} \right.
		\end{equation}
		subject to initial data
		\begin{equation*}
			F_{\varepsilon,\eta}\vert_{t=0}=F_{\varepsilon,\eta,0}(x,v),\quad f_{\varepsilon,\eta}\vert_{t=0}=f_{\varepsilon,\eta,0}(x,w).
		\end{equation*}
		
		For simplicity, we restrict our attention to elastic hard-sphere interactions among gas molecules as well as between molecules and particles. The collision kernels $\mathfrak{c}$, $\mathfrak{d}$, and $\mathfrak{b}$ thus reduce to:
		\begin{align*}
			\mathfrak{c}(w-w_*,\omega)&=|(w-w_*)\cdot\omega|,\\
			\mathfrak{d}(v-v_*,\omega)&=|(v-v_*)\cdot\omega|,\\
			\mathfrak{b}\left(\varepsilon v-w,\omega\right)&=|(\varepsilon v-w)\cdot\omega|.
		\end{align*}
		Under this hard-sphere assumption, the collision operators in \eqref{Boltz.sys.} take the explicit form:
		\begin{subequations}
			\begin{align}
				\mathcal Q(f,g)(w)&= \int_{\mathbb{R}^3\times\mathbb{S}^2} |(w-w_*)\cdot \omega| \{f(w')g(w_*')-f(w)g(w_*)\}\mathrm{d}\omega\mathrm{d}w_*,\label{Q}\\
				\mathcal D(F,f)(v) & =\int_{\mathbb{R}^3\times\mathbb{S}^2}|(\varepsilon v-w)\cdot{\omega}|\{F(v^{\prime\prime})f(w^{\prime\prime})-F(v)f(w)\}\mathrm{d}\omega\mathrm{d}w, \label{D,R,defa}\\
				\mathcal R(f,F)(w) & =\int_{\mathbb{R}^3\times\mathbb{S}^2}|(\varepsilon v-w)\cdot{\omega}| \{F(v^{\prime\prime})f(w^{\prime\prime})-F(v)f(w)\}\mathrm{d}\omega\mathrm{d}v,\label{D,R,defb}
			\end{align}
		\end{subequations}
		where $\mathbb{S}^2=\{\omega\in \mathbb{R}^3 \mid |\omega|=1\}$.
		The post-collision velocities $w'$ and $w_*'$ for inter-molecular collisions are given by
		\[
		\begin{aligned}
			w'=w+\omega(\omega\cdot (w_*-w)),\quad
			w_*'=w_*-\omega(\omega\cdot (w_*-w)),
		\end{aligned}
		\]
		and the post-collision velocities $v^{\prime\prime}$ and $w^{\prime\prime}$ for molecule--particle collisions satisfy:
		\begin{subequations}
			\begin{align}
				v^{\prime\prime}&=v-\frac{2\eta}{1+\eta}\omega\left(\omega\cdot \left(v-\frac{1}{\varepsilon}w\right)\right),\label{velocitya}\\
				w^{\prime\prime}&=w+\frac{2}{1+\eta}\omega(\omega\cdot (\varepsilon v-w)).\label{velocityb}
			\end{align}
		\end{subequations}
		
		Inter-molecular collisions are elastic and thus conserve mass, momentum, and energy in the standard sense (see \cite{MR1379589}). Collisions between gas molecules and particles are likewise elastic, preserving species identities. Consequently, the collision integrals $\mathcal D(F,f)$ and $\mathcal R(f,F)$ independently satisfy mass conservation:
		\begin{subequations}
			\begin{equation}\label{Jonit.Cons.M0}
				\int_{\mathbb{R}^3}\mathcal D(F,f)(v)\mathrm{d}v=\int_{\mathbb{R}^3}\mathcal R(f,F)(w)\mathrm{d}w=0,
			\end{equation}
			and jointly satisfy momentum and energy conservation:
			\begin{align}
				&\varepsilon\int_{\mathbb{R}^3}\mathcal D(F,f)(v) v\mathrm{d}v+\eta\int_{\mathbb{R}^3}\mathcal R(f,F)(w)w\mathrm{d}w=0, \label{Jonit.Cons.M,E.a}\\
				&\varepsilon^2\int_{\mathbb{R}^3}\mathcal D(F,f)(v)| v|^2\mathrm{d}v+\eta\int_{\mathbb{R}^3}\mathcal R(f,F)(w)|w|^2\mathrm{d}w=0 \label{Jonit.Cons.M,E.b}.
			\end{align}
		\end{subequations}
		Properties \eqref{Jonit.Cons.M,E.a} and \eqref{Jonit.Cons.M,E.b} follow directly from the micro-level conservation laws implied by \eqref{velocitya}--\eqref{velocityb}:
		\begin{align}
			\varepsilon v+\eta w&=\varepsilon v''+\eta w'',\label{vela}\\
			\varepsilon^2 |v|^2+\eta |w|^2&=\varepsilon^2 |v''|^2+\eta |w''|^2\label{velb}.
		\end{align}

		\subsection{The Fluctuation Equations}
		In \cite[Sec.~4.1, pp.~1723--1725]{MR3668954}, Bernard et al.\ assumed that the scaling parameters $\varepsilon$ and $\eta$ satisfy the Vlasov--Navier--Stokes scaling regime:
		\[
		\varepsilon \rightarrow 0 \quad \text{and}\quad \frac{\eta}{\varepsilon^2}\rightarrow 0.
		\]
		Let $f_{\varepsilon,\eta}=\mu(1+\varepsilon g_{\varepsilon,\eta})$, where $\mu$ denotes the standard global Maxwellian:
		\begin{equation}\label{gloal-max}
			\mu(w)=\Big(\frac{1}{2\pi}\Big)^{\frac{3}{2}}\exp\left\{-\frac{|w|^2}{2}\right\}.
		\end{equation}
		Under the assumptions that $F_{\varepsilon,\eta}\longrightarrow F$ weak-$*$ in $L^\infty_{\mathrm{loc}}$ and $g_{\varepsilon,\eta}\longrightarrow g$ weakly in $L^2_{\mathrm{loc}}$, there exist $L^\infty$ scalar functions $\rho\equiv\rho(t,x)\in \mathbb{R}$ and $\theta \equiv\theta(t,x)\in \mathbb{R}$, along with an $L^\infty$ vector field $u\equiv u(t,x)\in \mathbb{R}^3$, such that
		\[
		g(t,x,w)=\rho(t,x)+u(t,x)\cdot w+\frac{1}{2}\theta(t,x)(|w|^2-3), \quad \text{a.e.},
		\]
		and the tuple $(F, \rho, u, \theta)$ satisfies the incompressible Vlasov--Navier--Stokes system \eqref{V-N-S*} with transport coefficients
		\begin{equation}\label{kappa nu}
			\kappa=\frac{2\pi}{3}\int_{\mathbb{R}^3}|w|^3\mu(w)\mathrm{d}w,\quad
			\widetilde{\nu}=\frac{1}{10} \langle\sqrt{\mu}A,\sqrt{\mu}\widehat{A} \rangle_{L^2_w},\quad
			\widetilde{\kappa}=\frac{2}{15} \langle\sqrt{\mu}B,\sqrt{\mu}\widehat{B} \rangle_{L^2_w}.
		\end{equation}
		Here, the non-equilibrium functions $A$ and $B$ are given by
		\begin{equation}\label{AB}
			A:=w\otimes w-\frac{|w|^2}{3}\mathbb{I}_3,\quad B:=w \left(\frac{|w|^2}{2}-\frac{5}{2}\right),
		\end{equation}
		where $\mathbb{I}_3$ denotes the $3\times 3$ identity matrix, and $\widehat{A}, \widehat{B}$ satisfy
		\begin{equation}\label{ABhat}
			\sqrt{\mu} \widehat{A}=\mathcal L^{-1}(\sqrt{\mu}A), \quad\sqrt{\mu} \widehat{B}=\mathcal L^{-1}(\sqrt{\mu}B).
		\end{equation}
		
		Unfortunately, the above derivation of the IVNS model remains purely \emph{formal} due to the lack of uniform $L^{\infty}$ bounds on $F_{\varepsilon,\eta}$ and uniform $L^2$ bounds on $g_{\varepsilon,\eta}$ in general settings. Our goal is to provide a \emph{rigorous} derivation of the IVNS system starting from the rescaled Boltzmann equations \eqref{Boltz.sys.}.
		
		It is well known (see, e.g., \cite{Cercignani-88}) that Maxwellian distributions describe the global thermodynamic equilibrium for the classical single-component Boltzmann equation. Here, we define the parameter-dependent normalized Maxwellian by
		\begin{equation}\label{mu-para}
			\mu_{\varepsilon,\eta}(v)=\Big(\frac{\varepsilon^2}{2\pi\eta}\Big)^{\frac{3}{2}}\exp\left\{-\frac{|\varepsilon v|^2}{2\eta}\right\}.
		\end{equation}
      In particular, when $\varepsilon = \eta = 1$, $\mu_{\varepsilon,\eta}$ reduces to the standard Maxwellian $\mu$ in \eqref{gloal-max}.

		To connect the kinetic dynamics with the hydrodynamic limit, we consider fluctuations of $(F_{\varepsilon,\eta},f_{\varepsilon,\eta})$ around the trivial/global equilibrium state $(0,\mu)$:
		\begin{align}
			F_{\varepsilon,\eta}(t,x,v)&=\sqrt{\mu_{\varepsilon,\eta}(v)}h_{\varepsilon,\eta}(t,x,v),\label{purturbationa}\\
			f_{\varepsilon,\eta}(t,x,w)&=\mu(w)+\varepsilon\sqrt{\mu(w)}g_{\varepsilon,\eta}(t,x,w).\label{purturbationb}
		\end{align}
		Substituting the expansions \eqref{purturbationa}--\eqref{purturbationb} into system \eqref{Boltz.sys.} yields the coupled fluctuation equations:
		\begin{align}
			&\partial_t h_{\varepsilon,\eta} + v\cdot \nabla_x h_{\varepsilon,\eta} +\frac{1}{\eta} \mathcal L_{\scriptscriptstyle \mathcal D}h_{\varepsilon,\eta}=\frac{\varepsilon}{\eta} \Gamma_{\scriptscriptstyle \mathcal D} (h_{\varepsilon,\eta},g_{\varepsilon,\eta}), \label{equ:h} \\
			&\partial_tg_{\varepsilon,\eta} +\frac{1}{\varepsilon} w\cdot \nabla_x g_{\varepsilon,\eta}+\frac{1}{{\varepsilon}^2} \mathcal{L}g_{\varepsilon,\eta}=\frac{1}{\varepsilon}\Gamma(g_{\varepsilon,\eta},g_{\varepsilon,\eta})+\frac{1}{\varepsilon}\mathcal L_{\scriptscriptstyle \mathcal R} h_{\varepsilon,\eta}+\Gamma_{\scriptscriptstyle \mathcal R} (g_{\varepsilon,\eta},h_{\varepsilon,\eta}), \label{equ:g}
		\end{align}
		supplemented with the initial data
		\begin{equation}\label{equ:hg0}
			h_{\varepsilon,\eta}(0,x,v)=h_{\varepsilon,\eta,0}(x,v),\quad g_{\varepsilon,\eta}(0,x,w)=g_{\varepsilon,\eta,0}(x,w),
		\end{equation}
		where $h_{\varepsilon,\eta,0}$ and $g_{\varepsilon,\eta,0}$ are defined as
		\begin{align*}
			h_{\varepsilon,\eta,0}(x,v)=\frac{F_{\varepsilon,\eta,0}(x,v)}{\sqrt{\mu_{\varepsilon,\eta}(v)}},\quad
			g_{\varepsilon,\eta,0}(x,w)=\frac{f_{\varepsilon,\eta,0}(x,w)-\mu(w)}{\varepsilon\sqrt{\mu(w)}}.
		\end{align*}
		
		In the gas fluctuation equation \eqref{equ:g}, the linearized Boltzmann operator $\mathcal{L}$ is given by
		\begin{equation}\label{Ope.L.}
			\mathcal L g=\nu(w) g-\mathcal K g,
		\end{equation}
		where the collision frequency $\nu(w)$ is
		\begin{equation}\label{Ope.nu.}
			\nu(w)=\int_{\mathbb{R}^3\times \mathbb{S}^2}|(w-w_*)\cdot{\omega}|\mu(w_*) \mathrm{d}\omega \mathrm{d}w_*,
		\end{equation}
		and the compact integral operator $\mathcal K$ is defined by
		\begin{equation}\label{Ope.K.}
			\begin{aligned}
				\mathcal K g&=\int_{\mathbb{R}^3\times \mathbb{S}^2}\left\{\sqrt{\mu(w)}g(w_*)-\sqrt{\mu(w_*^\prime)}g(w^\prime)-\sqrt{\mu(w^\prime)}g(w_*^\prime)\right\}\\
				&\quad\times|(w-w_*)\cdot{\omega}|\sqrt{\mu(w_*)} \mathrm{d}\omega \mathrm{d}w_*.
			\end{aligned}
		\end{equation}
		The bilinear collision operators $\Gamma(g,h)$, $\Gamma_{\scriptscriptstyle \mathcal R} (g,h)$, and $\Gamma_{\scriptscriptstyle \mathcal D} (h,g)$ appearing in \eqref{equ:h}--\eqref{equ:g} are defined by
		\begin{equation}\label{def,gamma,R,D}
			\begin{aligned}
				\Gamma(g,h)=& \frac{1}{\sqrt{\mu}}Q(\sqrt{\mu}g,\sqrt{\mu}h),\\
				\Gamma_{\scriptscriptstyle \mathcal R} (g,h)=\frac{1}{\sqrt{\mu}}\mathcal R( \sqrt{\mu}g,\sqrt{\mu_{\varepsilon,\eta}}h),&\quad
				\Gamma_{\scriptscriptstyle \mathcal D} (h,g)=\frac{1}{\sqrt{\mu_{\varepsilon,\eta}}} \mathcal D(\sqrt{\mu_{\varepsilon,\eta}}h,\sqrt{\mu}g).
			\end{aligned}
		\end{equation}
		Furthermore, the linear drag operator $\mathcal L_{\scriptscriptstyle \mathcal D}$ in \eqref{equ:h} takes the form
		\begin{equation}\label{Ope.LD.}
			\mathcal L_{\scriptscriptstyle \mathcal D} h=-\Gamma_{\scriptscriptstyle \mathcal D} (h,\sqrt{\mu})=\nu_1(v)h-\mathcal K_ { \scriptscriptstyle \mathcal D} h,
		\end{equation}
		where
		\begin{equation}\label{Ope.nu1.}
			\nu_1(v)=\int_{\mathbb{R}^3\times \mathbb{S}^2}|(\varepsilon v-w)\cdot{\omega}|\mu(w)\mathrm{d}\omega \mathrm{d}w,
		\end{equation}
		and
		\begin{equation}\label{Ope.K1.}
			\mathcal K_ { \scriptscriptstyle \mathcal D}h  =\frac{1}{\sqrt{\mu_{\varepsilon,\eta}(v)}} \int_{\mathbb{R}^3\times\mathbb{S}^2}\sqrt{\mu_{\varepsilon,\eta}(v^{\prime\prime})}h(v^{\prime\prime})\mu(w^{\prime\prime})|(w-\varepsilon v)\cdot{\omega}|\mathrm{d}\omega\mathrm{d}w.
		\end{equation}
		Finally, the linear coupling operator $\mathcal L_{\scriptscriptstyle \mathcal R}$ in \eqref{equ:g} is given by
		\begin{equation}\label{Ope.LR.}
			\mathcal L_{\scriptscriptstyle \mathcal R} h =\Gamma_{\scriptscriptstyle \mathcal R} (\sqrt{\mu},h).
		\end{equation}
		
		Since our formulation involves two distinct scaling parameters, $\varepsilon$ and $\eta$, the resulting fluctuation system \eqref{equ:h}--\eqref{equ:g} presents analytical challenges significantly beyond those found in standard single-species hydrodynamic limits:
		\begin{itemize}
			\item \textbf{Asymmetric Scaling Structures}: The particle kinetic equation \eqref{equ:h} lacks advective rescaling and involves solely inter-species collisions, whereas the gas equation \eqref{equ:g} exhibits fast advection at order $\frac{1}{\varepsilon}$ along with dominant self-collisions at order $\frac{1}{\varepsilon^2}$. The relative behavior of these two scaling parameters can induce non-trivial singular limits. Through our analysis, we establish the precise parameter regime (see Theorem~\ref{Theom.1.}) required to obtain uniform energy estimates.
			\item \textbf{Singular Linear Inter-species Coupling}: The linear source term $\frac{1}{\varepsilon}\mathcal L_{\scriptscriptstyle \mathcal R} h_{\varepsilon,\eta}$ on the right-hand side of \eqref{equ:g} introduces severe analytical difficulty. To handle this singularity, the joint conservation laws play a fundamental role, enabling us to re-express problematic inter-component coupling terms via the particle transport equation and thereby prevent potential energy losses.
		\end{itemize}
		Ultimately, the key to rigorously justifying the IVNS limit for the multicomponent Boltzmann system lies in establishing energy estimates for the fluctuation pair $(h_{\varepsilon,\eta},g_{\varepsilon,\eta})$ solving \eqref{equ:h}--\eqref{equ:hg0} that hold uniformly in both $\varepsilon$ and $\eta$.

		\subsection{Notations and Main Results}
		We first introduce the notation used throughout this paper. For simplicity, $a \sim b$ means that there exist generic constants $c_1, c_2 > 0$ such that $c_1 b \leqslant a \leqslant c_2 b$. We abbreviate $a \leqslant C b$ to $a \lesssim b$, where $C > 0$ is a constant depending only on fixed parameters and independent of $\varepsilon$ and $\eta$, whose precise value may change from line to line. 
		For $p \geqslant 1$, we define the standard Lebesgue spaces:
		\[
		L^p_x = L^p(\mathbb{R}^3_x), \quad L^p_v = L^p(\mathbb{R}^3_v), \quad L^p_{x,v} = L^p(\mathbb{R}^3_x \times \mathbb{R}^3_v).
		\]
		In particular, when $p = 2$, $\langle\cdot,\cdot\rangle_{L^2_x}$, $\langle\cdot,\cdot\rangle_{L^2_v}$, and $\langle\cdot,\cdot\rangle_{L^2_{x,v}}$ denote the inner products on the Hilbert spaces $L^2_x$, $L^2_v$, and $L^2_{x,v}$, respectively, with corresponding norms $\|\cdot\|_{L^2_x}$, $\|\cdot\|_{L^2_v}$, and $\|\cdot\|_{L^2_{x,v}}$. We also introduce the following weighted velocity norms:
		\begin{equation*}
			\begin{alignedat}{2}
				\|h\|_{\nu_1} &= \sqrt{\langle\nu_1 h,h\rangle_{L^2_{x,v}}}, &\quad |h|_{\nu_1} &= \sqrt{\langle\nu_1 h,h\rangle_{L^2_v}}, \\
				\|g\|_{\nu} &= \sqrt{\langle\nu g,g\rangle_{L^2_{x,w}}}, &\quad |g|_{\nu} &= \sqrt{\langle\nu g,g\rangle_{L^2_w}}.
			\end{alignedat}
		\end{equation*}
		
		For a fixed $t \geq 0$ and integer $N \in \mathbb{N}$, the hybrid Sobolev space $L^2_v(H^N_x) = L^2(\mathbb{R}^3_v; H^N(\mathbb{R}^3_x))$ is endowed with the norm
		\[
		\|h\|^2_{L^2_v(H^N_x)} = \sum_{|\alpha|\leqslant N} \|\partial^\alpha_x h\|^2_{L^2_{x,v}},
		\]
		where $\alpha = (\alpha_1, \alpha_2, \alpha_3)$ is a multi-index with $|\alpha| = \sum_{i=1}^{3} \alpha_i$, and $\partial_x^\alpha = \partial_{x_1}^{\alpha_1} \partial_{x_2}^{\alpha_2} \partial_{x_3}^{\alpha_3}$ with $\partial_i = \partial_{x_i}$ ($i = 1, 2, 3$).
		
		For fixed $(t,x)$, let $\mathcal{N} = \ker \mathcal{L}$ and $\mathcal{N}_1 = \ker \mathcal{L}_{\scriptscriptstyle \mathcal D}$ denote the null spaces of the linear operators $\mathcal{L}$ and $\mathcal{L}_{\scriptscriptstyle \mathcal D}$, respectively. The corresponding orthogonal projections are denoted by
		\[
		\mathbf{P} : L^2(\mathbb{R}^3_w) \to \mathcal{N}, \quad \mathbf{P}_1 : L^2(\mathbb{R}^3_v) \to \mathcal{N}_1.
		\]
		
		Fix an integer $N \geqslant 2$. We define the instantaneous energy functionals by
		\[
		\mathcal{E}^2_h(t) = \|h(t,\cdot,\cdot)\|_{L^2_v(H^N_x)}^2, \quad \mathcal{E}^2_g(t) = \|g(t,\cdot,\cdot)\|_{L^2_w(H^N_x)}^2,
		\]
		and the corresponding dissipation rate functionals by
		\begin{equation*}
			\begin{alignedat}{2}
				\mathcal{D}^2_h(t) &= \frac{1}{\eta}\sum_{|\alpha|\leqslant N} \|\partial^\alpha_x (\mathbf{I} - \mathbf{P}_1) h\|^2_{\nu_1}, &\quad 
				\mathcal{C}^2_h(t) &= \varepsilon^2 \sum_{0 < |\alpha| \leqslant N} \|\partial_x^\alpha \mathbf{P}_1 h\|^2_{L^2_{x,v}}, \\
				\mathcal{D}^2_g(t) &= \frac{1}{\varepsilon^2}\sum_{|\alpha|\leqslant N} \|\partial^\alpha_x (\mathbf{I} - \mathbf{P}) g\|^2_\nu, &\quad 
				\mathcal{C}^2_g(t) &= \sum_{0 < |\alpha| \leqslant N} \|\partial_x^\alpha \mathbf{P} g\|^2_{L^2_{x,w}}.
			\end{alignedat}
		\end{equation*}
		We write $\mathcal{E}^2(h,g) = \mathcal{E}^2_h + \mathcal{E}^2_g$, $\mathcal{C}^2(h,g) = \mathcal{C}^2_h + \mathcal{C}^2_g$, and $\mathcal{D}^2(h,g) = \mathcal{D}^2_h + \mathcal{D}^2_g$. These functionals are carefully designed to perform micro-macro decompositions and control all singular scaling terms uniformly in $\varepsilon$ and $\eta$.
		
		\medskip
		
		In the first part of this work, we establish the global existence of solutions to the perturbed Boltzmann system \eqref{equ:h}--\eqref{equ:hg0} that holds uniformly with respect to both $\varepsilon$ and $\eta$.
		
		\begin{theorem}[Global Existence]\label{Theom.1.}
			Let $C_0$ and $C_1$ be two positive constants. For any $\varepsilon \in (0,1]$ and scaling parameters satisfying $C_0 \varepsilon^3 \leqslant \eta \leqslant C_1 \varepsilon^2$, there exists a threshold $\epsilon_0 > 0$ independent of $\varepsilon$ and $\eta$ such that if
			\[
			\mathcal{E}(h_{\varepsilon,\eta,0}, g_{\varepsilon,\eta,0}) \leqslant \epsilon_0,
			\]
			then the Cauchy problem \eqref{equ:h}--\eqref{equ:hg0} admits a unique global solution $(h_{\varepsilon,\eta}, g_{\varepsilon,\eta})$ satisfying
			\[
			h_{\varepsilon,\eta} \in L^\infty([0,\infty); L^2_v(H^N_x)), \quad g_{\varepsilon,\eta} \in L^\infty([0,\infty); L^2_w(H^N_x)),
			\]
			together with the global energy estimate:
			\begin{equation}\label{Glob.Enery.1}
				\sup_{t \geqslant 0} \mathcal{E}^2(h_{\varepsilon,\eta}, g_{\varepsilon,\eta})(t) + \int_{0}^{\infty} (\mathcal{D}^2 + \mathcal{C}^2)(h_{\varepsilon,\eta}, g_{\varepsilon,\eta})(t) \,\mathrm{d}t \lesssim \epsilon_0^2.
			\end{equation}
		\end{theorem}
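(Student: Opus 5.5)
The plan is the standard perturbative scheme: local existence plus a uniform a priori estimate, closed by a continuity argument. Local well-posedness of \eqref{equ:h}--\eqref{equ:hg0} in $L^2_v(H^N_x)\times L^2_w(H^N_x)$ for fixed $(\varepsilon,\eta)$ follows from a Picard iteration on the mild formulation along characteristics. It uses the boundedness of $\mathcal K$, $\mathcal K_{\scriptscriptstyle\mathcal D}$ and $\mathcal L_{\scriptscriptstyle\mathcal R}$, together with the usual trilinear bounds on $\Gamma,\Gamma_{\scriptscriptstyle\mathcal D},\Gamma_{\scriptscriptstyle\mathcal R}$ in $H^N_x$ ($N\geqslant2$, so $H^{N}_x$ is an algebra and the Sobolev embedding gives $L^\infty_x$). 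The heart of the matter is the a priori bound: if
\[
\sup_{[0,T]}\mathcal E^2(h,g)\leqslant \delta
\]
with $\delta$ small and independent of $\varepsilon,\eta$, then
\[
\frac{d}{dt}\mathcal E^2_{\mathrm{mod}}+c(\mathcal D^2+\mathcal C^2)\lesssim \mathcal E(\mathcal D^2+\mathcal C^2),
\]
where $\mathcal E^2_{\mathrm{mod}}\sim\mathcal E^2$. Given this, choosing $\epsilon_0$ small closes the bootstrap and yields \eqref{Glob.Enery.1} for all $t$.

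\textbf{Step 1: microscopic energy estimates.} Apply $\partial^\alpha_x$ ($|\alpha|\leqslant N$) and take the $L^2$ inner product of \eqref{equ:h} with $\partial^\alpha h$ and of \eqref{equ:g} with $\partial^\alpha g$. The coercivity of $\mathcal L$ on $\mathcal N^\perp$ and of $\mathcal L_{\scriptscriptstyle\mathcal D}$ on $\mathcal N_1^\perp$ has to be uniform in $\varepsilon,\eta$. The latter needs checking: $\nu_1(v)\sim 1+\varepsilon|v|$, and the spectral gap of the Rayleigh-gas type operator $\mathcal L_{\scriptscriptstyle\mathcal D}$ must be uniform in the regime $C_0\varepsilon^3\leqslant\eta\leqslant C_1\varepsilon^2$. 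Granting this, one obtains $\mathcal D^2_h+\mathcal D^2_g$ on the left-hand side.

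The delicate term is the singular coupling $\frac1\varepsilon\langle\mathcal L_{\scriptscriptstyle\mathcal R}\partial^\alpha h,\partial^\alpha g\rangle$. I would split $g=\mathbf Pg+(\mathbf I-\mathbf P)g$.
\begin{itemize}
\item Against $(\mathbf I-\mathbf P)g$, the term is bounded by $\frac1\varepsilon\|h\|\,\|(\mathbf I-\mathbf P)g\|_\nu$, which is absorbed into $\mathcal D_g$ if $\|\partial^\alpha h\|$ is controlled. For $(\mathbf I-\mathbf P_1)h$ this follows from $\mathcal D_h$ with $\eta\lesssim\varepsilon^2$.
\item Against $\mathbf Pg$, only the momentum component of $\mathbf Pg$ pairs with $\mathcal L_{\scriptscriptstyle\mathcal R}h$. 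Here I would use the joint conservation law \eqref{Jonit.Cons.M,E.a} to rewrite $\frac1\varepsilon\langle \mathcal L_{\scriptscriptstyle\mathcal R}h,w\sqrt\mu\rangle$ as $-\frac{\varepsilon}{\eta}\cdot\frac1\varepsilon\langle \mathcal D\text{-term},v\rangle$. This quantity is, up to factors, the friction term $-\frac1\eta\langle\mathcal L_{\scriptscriptstyle\mathcal D}h, v\sqrt{\mu_{\varepsilon,\eta}}\rangle$, which in turn equals the $v$-moment of $-(\partial_t h+v\cdot\nabla_x h)$ by \eqref{equ:h}.
\end{itemize}
This produces a cancellation with the momentum of $h$: a time derivative of a cross term $\langle u,\int v\sqrt{\mu_{\varepsilon,\eta}}h\rangle$ that can be included in a modified energy, plus terms controlled by $\mathcal C^2$. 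The lower bound $\eta\geqslant C_0\varepsilon^3$ enters in showing that this modified energy remains comparable to $\mathcal E^2$ and that the residual weights $\varepsilon^3/\eta$ are bounded.

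\textbf{Step 2: macroscopic dissipation.} For $\mathcal C_g$, use the standard thirteen-moment/local conservation law method. Project \eqref{equ:g} onto $\mathcal N$ and onto the moments of $A,B$ from \eqref{AB} to get an elliptic-type system for $(\rho,u,\theta)$ with $\varepsilon$-weighted time derivatives of a functional $\varepsilon\langle(\mathbf I-\mathbf P)g,\ldots\rangle$. This gives $\|\nabla_x\mathbf Pg\|_{H^{N-1}}^2\lesssim \mathcal D_g^2+\varepsilon\frac{d}{dt}(\cdot)+$ source terms involving $\mathcal L_{\scriptscriptstyle\mathcal R}h$. For $\mathcal C_h$, the null space $\mathcal N_1$ should be spanned by $\sqrt{\mu_{\varepsilon,\eta}}$ (mass only). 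Its density $a(t,x)$ satisfies $\partial_t a+\nabla\cdot\int v\sqrt{\mu_{\varepsilon,\eta}}(\mathbf I-\mathbf P_1)h=0$, and the flux equation provides $\nabla a$ weighted by the variance $\eta/\varepsilon^2$. This is the origin of the factor $\varepsilon^2$ in $\mathcal C_h$, and it requires $\eta/\varepsilon^2\lesssim1$.

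\textbf{Step 3: nonlinear terms and closure.} For the nonlinear terms, bound $\frac1\varepsilon\Gamma(g,g)$, $\frac\varepsilon\eta\Gamma_{\scriptscriptstyle\mathcal D}(h,g)$ and $\Gamma_{\scriptscriptstyle\mathcal R}(g,h)$ in the standard way, always putting the singular weight on a microscopic part. For example, $\frac\varepsilon\eta|\langle\Gamma_{\scriptscriptstyle\mathcal D}(h,g),(\mathbf I-\mathbf P_1)h\rangle|\lesssim \mathcal E\,\mathcal D_h(\varepsilon\eta^{-1/2}\|g\|)$, and $\varepsilon\eta^{-1/2}\|g\|_\nu$ is controlled by $\mathcal D_g+\mathcal C_g$ using $\eta\geqslant C_0\varepsilon^3$ only loosely. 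The $L^2$ norm of the zero-frequency part $\mathbf P g$, $\mathbf P_1h$ is never dissipated; it appears only as $\mathcal E$, multiplied by dissipative norms via Sobolev embedding on $\mathbb R^3$.

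Finally, sum Steps 1 and 2 with small weights to obtain the differential inequality above, and conclude by continuity; uniqueness follows from an $L^2$ difference estimate. I expect the main obstacle to be Step 1's treatment of $\frac1\varepsilon\mathcal L_{\scriptscriptstyle\mathcal R}h$ paired with $\mathbf Pg$, namely the precise use of the conservation laws so that no term with a negative power of $\varepsilon$ survives. The uniform spectral gap for $\mathcal L_{\scriptscriptstyle\mathcal D}$ is a second risk.
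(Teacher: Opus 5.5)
Your overall architecture matches the paper's: microscopic energy estimates, moment equations for the macroscopic dissipation, and a modified energy whose $h$--$g$ cross term comes from trading the singular coupling for the particle transport operator via the joint conservation laws. But the places where you say things close ``in the standard way'' are exactly where they fail.

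\textbf{First gap: the $\Gamma_{\scriptscriptstyle\mathcal D}$ term.} In Step~3 you bound $\frac{\varepsilon}{\eta}\langle\Gamma_{\scriptscriptstyle\mathcal D}(h,g),(\mathbf I-\mathbf P_{\scriptscriptstyle 1})h\rangle$ by $\mathcal E\,\mathcal D_h\,\varepsilon\eta^{-1/2}\|g\|_\nu$ and claim $\varepsilon\eta^{-1/2}\|g\|_\nu\lesssim\mathcal D_g+\mathcal C_g$. For the piece $\Gamma_{\scriptscriptstyle\mathcal D}(\mathbf P_{\scriptscriptstyle 1}h,\mathbf Pg)$ this leaves $\frac{\varepsilon}{\eta^{1/2}}\mathcal E\,\mathcal C_g\mathcal D_h$. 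The ratio $\varepsilon\eta^{-1/2}$ is not bounded: $\eta\geqslant C_0\varepsilon^3$ only gives $\varepsilon\eta^{-1/2}\lesssim\varepsilon^{-1/2}$, and it is unbounded whenever $\eta=o(\varepsilon^2)$, which is precisely the Vlasov--Navier--Stokes regime. The paper needs the refined estimate \eqref{Gamma.D.1}. By the identity $\mu_{\varepsilon,\eta}(v'')\mu(w'')=\mu_{\varepsilon,\eta}(v)\mu(w)$, the $a$-component of $\mathbf Pg$ contributes nothing. For the $b,c$ components, the differences $w''-w$ and $|w''|^2-|w|^2$, after a symmetry cancellation, carry a factor $|\varepsilon v|$ (or $\eta$). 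Integrating that factor against $\sqrt{\mu_{\varepsilon,\eta}}$ gains $\eta^{1/2}$, and only then is the term bounded by $\mathcal E_g\mathcal C_h\mathcal D_h$.

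\textbf{Second gap: the zero-order coupling $\langle\frac1\varepsilon\mathcal L_{\scriptscriptstyle\mathcal R}h+\Gamma_{\scriptscriptstyle\mathcal R}(g,h),\mathbf Pg\rangle$.} Your Step~1 does not close it, for three reasons.

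(i) Not only the momentum component pairs with $\mathcal L_{\scriptscriptstyle\mathcal R}h$. Only mass is conserved separately; energy is exchanged between species, so the $c|w|^2$ component pairs too and must be rewritten using \eqref{Jonit.Cons.M,E.b}.

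(ii) You rewrite only the linear part and defer $\Gamma_{\scriptscriptstyle\mathcal R}(g,h)$ to Step~3. At $|\alpha|=0$, however, $\langle\Gamma_{\scriptscriptstyle\mathcal R}(\mathbf Pg,\mathbf P_{\scriptscriptstyle 1}h),\mathbf Pg\rangle$ is $a_{\scriptscriptstyle 1}$ times a quadratic form in $(b,c)$, essentially the drag $\int a_{\scriptscriptstyle 1}|b|^2\,\mathrm{d}x$. All three factors are undissipated zero-order quantities and there is no microscopic factor. Replacing $(a_{\scriptscriptstyle 1},b)$ by $(a_{\scriptscriptstyle 1},b)(\lambda\,\cdot)$ with $\lambda\to0$, the term scales like $\lambda^{-3}$ while $\mathcal E\,\mathcal C^2$ scales like $\lambda^{-5/2}$. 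So it cannot be bounded by $\mathcal E(\mathcal D^2+\mathcal C^2)$. Relatedly, when you equate $\frac1\eta\langle\mathcal L_{\scriptscriptstyle\mathcal D}h,v\sqrt{\mu_{\varepsilon,\eta}}\rangle$ with the $v$-moment of $-(\partial_t+v\cdot\nabla_x)h$, you drop $\frac\varepsilon\eta\Gamma_{\scriptscriptstyle\mathcal D}(h,g)$. By \eqref{Jonit.Cons.M,E.a}, its $v\sqrt{\mu_{\varepsilon,\eta}}$-moment is exactly $-\langle\Gamma_{\scriptscriptstyle\mathcal R}(g,h),w\sqrt\mu\rangle$. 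The fix, as in \eqref{key3}, is to rewrite the full sum $\frac{1}{\varepsilon\sqrt\mu}\mathcal R(f_{\varepsilon,\eta},F_{\varepsilon,\eta})$. The joint conservation laws and the exact equation $\frac1\eta\mathcal D(F,f)=(\partial_t+v\cdot\nabla_x)F$ turn it into $-\langle(\partial_t+v\cdot\nabla_x)F,\,b\cdot v+\varepsilon c|v|^2\rangle$, which is linear in $F$.

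(iii) Even after that, the remainder is not ``controlled by $\mathcal C^2$''. Transport of $\mathbf P_{\scriptscriptstyle 1}h$ produces
$-\langle (v\cdot\nabla_x a_{\scriptscriptstyle 1})\mu_{\varepsilon,\eta},\,b\cdot v\rangle=\frac{\eta}{\varepsilon^2}\int a_{\scriptscriptstyle 1}\nabla_x\cdot b\,\mathrm{d}x$.
This is bilinear with $a_{\scriptscriptstyle 1}$ undissipated; it scales like $\lambda^{-2}$ against $\mathcal C^2\sim\lambda^{-1}$. The paper uses gas mass conservation, $\nabla_x\cdot b=-\varepsilon\partial_t(a+3c)$, and a second time integration by parts that moves $\partial_t$ onto $a_{\scriptscriptstyle 1}$. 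There \eqref{CONSE.bar.a} turns it into the divergence of a microscopic flux. This step produces the second cross term $\frac\varepsilon3\langle a_{\scriptscriptstyle 1}\mu_{\varepsilon,\eta},(a+3c)|v|^2\rangle$ in $\mathcal I^g_h$ (see \eqref{def.E-1}). Your single cross term $\langle u,\int v\sqrt{\mu_{\varepsilon,\eta}}\,h\,\mathrm{d}v\rangle$ does not supply it.
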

		
		In the second part of this work, we investigate the rigorous hydrodynamic limit toward the IVNS system \eqref{V-N-S*}.
		
		\begin{theorem}[Incompressible Vlasov--Navier--Stokes Limit]\label{Theom.2}
			Let $\epsilon_0 > 0$ be the constant given in Theorem~\ref{Theom.1.}. Assume that the scaling parameters $\varepsilon, \eta \in (0,1]$ satisfy the Vlasov--Navier--Stokes scaling condition:
			\begin{equation}\label{scaling assumption}
				O(1)\varepsilon^3 \leqslant \eta \leqslant o(1)\varepsilon^2 \quad \text{as } \varepsilon \to 0.
			\end{equation}
			Suppose further that the initial data $(h_{\varepsilon,\eta,0}, g_{\varepsilon,\eta,0})$ satisfy:
			\begin{enumerate}
				\item[\rm (i)] $\mathcal{E}(h_{\varepsilon,\eta,0}, g_{\varepsilon,\eta,0}) \leqslant \epsilon_0$.
				\item[\rm (ii)] There exists a profile $\rho_{\scriptscriptstyle F_0} \in H^N_x$ such that
				\[
				\langle F_{\varepsilon,\eta,0}, 1 \rangle_{L^2_v} \longrightarrow \rho_{\scriptscriptstyle F_0}\quad \text{strongly in } H^N_x \quad \text{as } \varepsilon \to 0.
				\]
				\item[\rm (iii)] There exist scalar fields $\rho_0, \theta_0 \in H^N_x$ and a vector field $u_0 \in H^N_x$ such that
				\[
				g_{\varepsilon,\eta,0} \longrightarrow g_0 \quad \text{strongly in } L^2_w(H^N_x) \quad \text{as } \varepsilon \to 0,
				\]
				where $g_0(x,w)$ takes the macroscopic form:
				\[
				g_0(x,w) = \left\{\rho_0(x) + u_0(x)\cdot w + \frac{\theta_0(x)}{2}(|w|^2 - 3)\right\} \sqrt{\mu(w)}.
				\]
			\end{enumerate}
			Let $(h_{\varepsilon,\eta}, g_{\varepsilon,\eta})$ be the unique family of global solutions constructed in Theorem~\ref{Theom.1.}. Then, as $\varepsilon \to 0$:
			\begin{enumerate}
				\item[\rm (i)] $F_{\varepsilon,\eta} \longrightarrow F$ in the sense of distributions, where $F = \rho_{\scriptscriptstyle F} \delta_{v=0}$ with
				\[
			\rho_{\scriptscriptstyle F} \in C(\mathbb{R}^+; H^{N-\sigma}_x) \cap L^\infty(\mathbb{R}^+; H^N_x) \quad \text{for any } \sigma \in (0,1].
				\]
				\item[\rm (ii)] $g_{\varepsilon,\eta} \longrightarrow \left\{\rho + u\cdot w + \frac{\theta}{2}(|w|^2 - 3)\right\} \sqrt{\mu}$ weakly-$*$ in $L^\infty([0,\infty); L^2_w(H^N_x))$, where
				\[
				(\rho, u, \theta) \in C(\mathbb{R}^+; H^{N-\sigma}_x) \cap L^\infty(\mathbb{R}^+; H^N_x) \quad \text{for any } \sigma \in (0,1].
				\]
				\item[\rm (iii)] The limit tuple $(F, \rho, u, \theta)$ is a weak solution (in the distributional sense) to the IVNS system \eqref{V-N-S*} subject to initial data:
				\[
				F|_{t=0} = \\rho_{\scriptscriptstyle F_0}(x)\delta_{v=0}, \quad u|_{t=0} = \mathcal{P} u_0(x), \quad \theta|_{t=0} = \frac{3}{5}\theta_0(x) - \frac{2}{5}\rho_0(x),
				\]
				where $\mathcal{P}$ denotes the Leray projection operator.
			\end{enumerate}
		\end{theorem}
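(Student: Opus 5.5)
The plan is to pass to the limit in the moment (local conservation law) form of \eqref{equ:h}--\eqref{equ:g}, using only the uniform bound \eqref{Glob.Enery.1} of Theorem~\ref{Theom.1.} together with compactness.

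\textbf{Step 1: limits of $g_{\varepsilon,\eta}$ and $F_{\varepsilon,\eta}$.}
From \eqref{Glob.Enery.1} we have $\int_0^\infty \|(\mathbf I-\mathbf P)g_{\varepsilon,\eta}\|_{\nu}^2\,dt\lesssim \varepsilon^2\epsilon_0^2$. Hence $g_{\varepsilon,\eta}$ converges weakly-$*$ to a limit of the form $(\rho+u\cdot w+\frac\theta2(|w|^2-3))\sqrt\mu$, with $(\rho,u,\theta)\in L^\infty H^N_x$.

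For the particles, $F_{\varepsilon,\eta}=\sqrt{\mu_{\varepsilon,\eta}}\,h_{\varepsilon,\eta}$. By Cauchy--Schwarz, any moment satisfies $|\int F\varphi(v)\,dv|\le \|h\|_{L^2_v}\|\sqrt{\mu_{\varepsilon,\eta}}\varphi\|_{L^2_v}$. Since $\mu_{\varepsilon,\eta}$ concentrates at $v=0$ with variance $\eta/\varepsilon^2\to 0$, we get $\int F(\varphi(v)-\varphi(0))\,dv\to0$. This forces $F=\rho_F\delta_{v=0}$, with $\rho_F=\lim\langle F_{\varepsilon,\eta},1\rangle_{L^2_v}$ bounded in $H^N_x$.

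\textbf{Step 2: time compactness.}
The particle density obeys $\partial_t\rho_{F,\varepsilon}+\nabla_x\cdot j_{\varepsilon}=0$, where $j_\varepsilon=\int vF\,dv$. The moments of $F$ are controlled uniformly in $H^N_x$ because $\int|v|^2\mu_{\varepsilon,\eta}\lesssim \eta/\varepsilon^2\le C_1$. So $\partial_t\rho_{F,\varepsilon}$ is bounded in $L^\infty H^{N-1}_x$, and Aubin--Lions/Arzel\`a--Ascoli gives $\rho_F\in C(H^{N-\sigma}_x)$.

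For the gas, I take the moments $\langle g,\sqrt\mu\rangle$, $\mathcal P\langle g,w\sqrt\mu\rangle$ and $\langle g,(\frac{|w|^2}{5}-1)\sqrt\mu\rangle$. Their equations carry no $1/\varepsilon$ singularity once the Leray projection kills the gradient term and the Boussinesq combination kills $\nabla\cdot u/\varepsilon$. The singular source $\frac1\varepsilon\mathcal L_{\mathcal R}h$ is handled through the joint conservation laws \eqref{Jonit.Cons.M,E.a}--\eqref{Jonit.Cons.M,E.b}. Integrating the $h$ equation against $\varepsilon v\sqrt{\mu_{\varepsilon,\eta}}$ rewrites $\frac{\eta}{\varepsilon}\int \mathcal R w$ as $-\partial_t\int \varepsilon vF-\nabla\cdot\int\varepsilon v\otimes vF$, so this source is a time derivative plus bounded terms. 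This gives equicontinuity in $H^{N-1}_x$ of $\mathcal P u_\varepsilon$ and $\theta_\varepsilon$, hence strong convergence in $C_{loc}(H^{N-\sigma}_{x,loc})$. The incompressibility and Boussinesq relations $\nabla\cdot u=0$, $\rho+\theta=0$ follow from multiplying the local conservation laws by $\varepsilon$.

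\textbf{Step 3: identify the limit equations.}
The Navier--Stokes and heat equations with coefficients \eqref{kappa nu} are obtained from the standard decomposition $\frac1\varepsilon\langle \hat A\sqrt\mu,w\cdot\nabla g\rangle$. One writes $(\mathbf I-\mathbf P)g/\varepsilon=-\mathcal L^{-1}\!\big(w\cdot\nabla\mathbf Pg\big)+\frac1\varepsilon\mathcal L^{-1}\Gamma(g,g)+O(\varepsilon)$, and the quadratic term produces $u\cdot\nabla u$ and $u\cdot\nabla\theta$. The strong convergence from Step 2, combined with the fact that the $\nabla_x(\mathbf I-\mathcal P)u$ part oscillates on acoustic waves, handles the nonlinear terms. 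On $\mathbb R^3$ the acoustic gradient part is controlled by dispersion/Strichartz or by a compensated-compactness argument.

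The drag term is obtained by testing the $h$ equation against $\varphi(x,v)$. After multiplying by $\eta$, $\frac1\eta\int\mathcal D\,\varphi\,dv$ must converge to $\kappa\,\mathrm{div}_v((v-u)F)$. Expanding $v''-v=O(\eta)$ with $f=\mu+\varepsilon\sqrt\mu g$ gives $\frac1\eta\int\mathcal D\varphi=\int F\big(-\kappa(v-u)\cdot\nabla_v\varphi\big)+O(\eta/\varepsilon^2+\varepsilon)$. Here the hypothesis $\eta=o(\varepsilon^2)$ kills the diffusion term of order $\eta/\varepsilon^2$. The matching fluid force comes from the rewritten $\mathcal L_{\mathcal R}$ term.

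\textbf{Step 4: initial data and the main obstacle.}
The initial data follow from the continuity of the limits together with assumptions (ii)--(iii); the projections account for $\mathcal Pu_0$ and $\frac35\theta_0-\frac25\rho_0$.

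I expect the main obstacle to be the coupling term $\frac1\varepsilon\mathcal L_{\mathcal R}h$ in Steps 2--3. Because $F$ is only bounded weakly, the product $uF$ converges only if $u$ converges strongly and $F$ converges weakly. That is exactly why Step 2 must produce strong compactness of $\mathcal Pu$ and why the acoustic part must be shown to vanish.
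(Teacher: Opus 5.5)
Your outline follows the paper's proof closely. The bounds of Theorem~\ref{Theom.1.} give the macroscopic weak-$*$ limit of $g_{\varepsilon,\eta}$ and the concentration $F_{\varepsilon,\eta}\to\rho_F\delta_{v=0}$. Aubin--Lions--Simon gives compactness of $\rho_{F_{\varepsilon,\eta}}$, $\mathcal P u_{\varepsilon,\eta}$ and $\frac35\theta_{\varepsilon,\eta}-\frac25\rho_{\varepsilon,\eta}$. Incompressibility and the Boussinesq relation come from multiplying by $\varepsilon$. The drag comes from the Bernard--Desvillettes--Golse--Ricci Taylor expansion of $Y(v'')$, and the coupling goes through the joint conservation laws.

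Your deviations are legitimate and in places cleaner:
\begin{itemize}
\item You bound $\partial_t\rho_{F_{\varepsilon,\eta}}$ using only $\sup_t\|h_{\varepsilon,\eta}\|$, not \eqref{global,dish}.
\item Your Cauchy--Schwarz argument covers all Lipschitz test functions, not just the class \eqref{Y1}.
\item You remove $\frac1\varepsilon\mathcal L_{\mathcal R}h$ in the compactness step via the particle momentum balance, rather than bounding it by $\varepsilon^{-1}\|(\mathbf I-\mathbf P_1)h\|_{\nu_1}$.
\item You localize in $x$. This is necessary, since $H^N_x\hookrightarrow H^{N-\sigma}_x$ in \eqref{theta-rho-3} is not compact on $\mathbb R^3$.
\end{itemize}

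Several details need fixing. The correct identity is
\[
\frac1\varepsilon\int_{\mathbb R^3}\mathcal R(f_{\varepsilon,\eta},F_{\varepsilon,\eta})\,w\,\mathrm dw=-\partial_t\int_{\mathbb R^3} vF_{\varepsilon,\eta}\,\mathrm dv-\nabla_x\cdot\int_{\mathbb R^3} v\otimes vF_{\varepsilon,\eta}\,\mathrm dv ;
\]
with your factors the source would carry $\varepsilon/\eta\to\infty$. It is $\frac35\theta_{\varepsilon,\eta}-\frac25\rho_{\varepsilon,\eta}$, not $\theta_{\varepsilon,\eta}$, that is equicontinuous, and the $\rho_{\varepsilon,\eta}$-equation is itself singular. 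Also, $\Gamma(g,g)$ enters $\varepsilon^{-1}(\mathbf I-\mathbf P)g$ with coefficient $1$, not $1/\varepsilon$.

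In the drag expansion, $v''-v=-\frac{2\eta}{(1+\eta)\varepsilon}\,\omega\,(\omega\cdot(\varepsilon v-w))$ is $O(\eta/\varepsilon)$, not $O(\eta)$. The first-order term is therefore formally $O(1/\varepsilon)$, and it is finite only because $\int\mu(w)\,w|w|\,\mathrm dw=0$. The friction $-\kappa v$ comes from the next order in $\varepsilon v$, and $\kappa u$ from the $\varepsilon\sqrt\mu g$ part (the terms $I^3_{\varepsilon,\eta}$--$I^5_{\varepsilon,\eta}$).

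The product $F_{\varepsilon,\eta}u_{\varepsilon,\eta}$ does not need strong convergence of $u$. Since $\rho_{F_{\varepsilon,\eta}}$ is strongly compact and $\int F_{\varepsilon,\eta}\nabla_vY\,\mathrm dv-\rho_{F_{\varepsilon,\eta}}\nabla_vY|_{v=0}\to0$, weak convergence of $u_{\varepsilon,\eta}$ suffices (compare Lemma~\ref{lemma6.2}). Strong compactness of $\mathcal Pu$ and control of the acoustic part matter only for the quadratic Navier--Stokes term.

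The step that fails as stated is your claim that ``the matching fluid force comes from the rewritten $\mathcal L_{\mathcal R}$ term.'' By \eqref{mu}, uniformly in $t$,
\[
\Big\|\int vF_{\varepsilon,\eta}\,\mathrm dv\Big\|_{H^N_x}\lesssim\frac{\eta^{1/2}}{\varepsilon},\qquad \Big\|\int v\otimes vF_{\varepsilon,\eta}\,\mathrm dv\Big\|_{H^N_x}\lesssim\frac{\eta}{\varepsilon^2}.
\]
So in the form above the force on the fluid tends to $0$ in $\mathcal D'$, and for the same reason $\partial_t\rho_F=0$. The paper instead obtains the force from \eqref{AB1}, \eqref{Jonit.Cons.M,E.a} and Lemma~\ref{Vlasov equation.} with $Y=\psi\cdot v$, which yields $\kappa\int(v-u)F\,\mathrm dv=-\kappa\rho_Fu$. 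You must argue this way to reach the stated equation.

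Both computations concern the same quantity, so together they force $\int_0^T\!\!\int\rho_F\,u\cdot\psi\,\mathrm dx\,\mathrm dt=0$ for every divergence-free $\psi$, i.e.\ $\mathcal P(\rho_Fu)=0$. This is also what the stated conclusion ``$F=\rho_F\delta_{v=0}$ solves the Vlasov equation'' entails when tested with $Y=\psi\cdot v$. The drag on the fluid is thus invisible after Leray projection.

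At $t=0$ the constraint reads $\mathcal P(\rho_{F_0}\mathcal Pu_0)=0$, which fails for generic admissible data. This indicates that the weak point is the uniform $L^2_v$ bound on $h$ imported from Theorem~\ref{Theom.1.}. That bound makes $\int vF_{\varepsilon,\eta}\,\mathrm dv=O(\eta^{1/2}/\varepsilon)$, so the particles cannot acquire the $O(1)$ velocity the drag $-\kappa(v-u)$ drives them toward. Your argument and the paper's rely on that bound in the same way. You should therefore make this consequence explicit rather than present the force as emerging from the time-derivative form.
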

		
		\begin{remark}
			Due to the omission of collisions among heavy particles, the particle phase lacks a thermalization mechanism. As a consequence, the asymptotic state of the particle distribution is not a Maxwellian equilibrium, which aligns with the physical fact that particle dynamics are governed by a Vlasov-type equation. Therefore, the gas temperature $T_g$ and particle temperature $T_p$ cannot be expected to remain of the same order of magnitude. This physical feature is precisely reflected in the Vlasov--Navier--Stokes scaling condition. Indeed, from kinetic energy relations $T_E = m_E V_E^2 / (3 k_B)$ for $E \in \{p,g\}$, one observes that
			\[
			O(1)\varepsilon \leqslant \frac{T_g}{T_p} = \left(\frac{V_g}{V_p}\right)^2 \frac{m_g}{m_p} = \frac{\eta}{\varepsilon^2} \leqslant o(1) \quad \text{as } \varepsilon \to 0,
			\]
			which enforces $T_g \ll T_p$.
		\end{remark}
		
		\begin{remark}
			This asymptotic regime can also be viewed as a small mass ratio limit ($\eta \to 0$) in the context of dust particles propagating through a rarefied gas, a setting previously analyzed rigorously in the spatially homogeneous case \cite{MR2564289}. In the present work, however, we simultaneously treat the setting where the thermal speed ratio $\varepsilon \to 0$ (which plays the role of the Knudsen number in hydrodynamic limits), making the spatio-temporal dynamics substantially more complex and subtle. We also refer to related literature on massless electron limits in kinetic theory. For instance, Herda \cite{MR3479195} investigated the massless limit for a coupled Vlasov/Vlasov--Fokker--Planck system, while Flynn and Guo \cite{MR4698662} recently analyzed the massless electron limit for the Vlasov--Poisson--Landau system as a step toward deriving two-fluid hydrodynamics.
		\end{remark}
		
		\begin{remark}
			We deliberately avoid taking velocity derivatives in our energy estimates. This choice is necessitated by the factor $\frac{\varepsilon^2}{\eta}$ inherent in the velocity derivatives of the normalized global Maxwellian $\mu_{\varepsilon,\eta}$. Specifically, differentiating $\mu_{\varepsilon,\eta}(v) = \big(\frac{\varepsilon^2}{2\pi\eta}\big)^{3/2} \exp\left\{-\frac{|\varepsilon v|^2}{2\eta}\right\}$ with respect to $v$ generates unbounded coefficients proportional to $\frac{\varepsilon^2}{\eta}$, which would destroy our uniform energy estimates. This technical constraint explains why convergence can only be achieved in the weak topology.
		\end{remark}
		
		\medskip
		
		This work provides the first rigorous derivation of the IVNS system from a two-component Boltzmann model under the joint limit of vanishing mass ratio $(\eta \to 0)$ and thermal velocity ratio $(\varepsilon \to 0)$. This result resolves a long-standing mathematical challenge and sheds light on the asymptotic behavior of multiscale kinetic systems.
		
		The core of our proof rests on the parameter scaling regime identified in Theorem~\ref{Theom.1.}:
		\[
		\varepsilon^3 \lesssim \eta \lesssim \varepsilon^2,
		\]
		which allows us to establish global energy estimates that hold uniformly in $\varepsilon$ and $\eta$. We emphasize that this parameter regime is optimal for global existence within this framework, as it ensures that all coefficients balancing nonlinear and dissipative effects remain uniformly bounded. For instance, in estimate \eqref{Ceqn4}, we encounter the combination of parameter ratios:
		\[
		\left(1 + \frac{\eta^{1/2}}{\varepsilon} + \frac{\eta}{\varepsilon} + \frac{\varepsilon^4}{\eta} + \frac{\varepsilon^2}{\eta^{1/2}} + \frac{\varepsilon^3}{\eta}\right) \leqslant C,
		\]
		where $C > 0$ is independent of $\varepsilon$ and $\eta$. Crucially, the two dominant ratios $\frac{\eta^{1/2}}{\varepsilon}$ and $\frac{\varepsilon^3}{\eta}$ prescribe the upper and lower bounds $\eta \lesssim \varepsilon^2$ and $\varepsilon^3 \lesssim \eta$, respectively. The mathematical origins of these two key ratios stem directly from the microscopic energy estimates \eqref{Mic,h,g.} and macroscopic energy estimates \eqref{Mac.bar.a}.
		
		Furthermore, a major analytical obstacle arises from the loss of classical orthogonality in the inter-species collision operator $\mathcal{R}$, namely:
		\[
		\frac{1}{\varepsilon}\mathcal{L}_{\scriptscriptstyle \mathcal R} h + \Gamma_{\scriptscriptstyle \mathcal R}(g,h) \notin \mathcal{N}^\perp.
		\]
		This loss makes it exceptionally challenging to control the nonlinear inner product
		\begin{equation}\label{Key1}
			\left\langle \frac{1}{\varepsilon}\mathcal{L}_{\scriptscriptstyle \mathcal R} h + \Gamma_{\scriptscriptstyle \mathcal R}(g,h), \mathbf{P}g \right\rangle_{L^2_{x,w}}.
		\end{equation}
		The difficulty lies in the fact that \eqref{Key1} contains low-regularity components that cannot be directly absorbed by traditional dissipation rates. For example, the term $\big\langle \Gamma_{\scriptscriptstyle \mathcal R}(\mathbf{P}g, \mathbf{P}_1 h), \mathbf{P}g \big\rangle_{L^2_{x,w}}$ cannot be matched with macroscopic dissipation rates, which involve only high-order derivatives.
		
		To overcome this, we exploit the joint conservation laws \eqref{Jonit.Cons.M0}, \eqref{Jonit.Cons.M,E.a}, and \eqref{Jonit.Cons.M,E.b} to design a regularity-improving transformation scheme that re-expresses this troublesome term in the form:
		\begin{equation}\label{Key2}
			-\left\langle (\partial_t + v \cdot \nabla_x) F_{\varepsilon,\eta}, \, b(t,x)\cdot v + c(t,x)|v|^2 \right\rangle_{L^2_{x,v}},
		\end{equation}
		where $b(t,x)$ and $c(t,x)$ are the macroscopic coefficients of $\mathbf{P}g$ defined in \eqref{Def.abc.}. The transport operator in \eqref{Key2} provides essential regularity. More specifically, by performing integration by parts on the inner product \eqref{Key2}, we shift temporal and spatial derivatives onto the macroscopic profiles $b(t,x)$ and $c(t,x)$. Subsequently, using the macroscopic conservation laws \eqref{CONSE.bar.a}--\eqref{CONSE.a.b.c3}, the troublesome inner product is transformed into a coupling between the microscopic component $(\mathbf{I} - \mathbf{P}_1)h$ and the derivatives $\partial_t \mathbf{P}g, \nabla_x \mathbf{P}g$. This reveals a key intrinsic connection between \eqref{Key1} and the dissipation products $\mathcal{D}_h \mathcal{C}_g$ or $\mathcal{D}_h \mathcal{D}_g$. In Lemma~\ref{LemmaR}, this term is denoted by $I_1$ in \eqref{key3}, and through a meticulous term-by-term analysis of $I_{1,i}$ ($i = 1, 2, 3, 4$), we achieve the required control.
		
		Finally, owing to the rescaled speed variables $v$ and $w$ in system \eqref{equ:h}--\eqref{equ:hg0}, derivatives of the collision kernel $\Gamma_{\scriptscriptstyle \mathcal D}$ with respect to $v$ exhibit severe velocity singularities as $\varepsilon \to 0$. As a result, $v$-derivatives of $F_{\varepsilon,\eta}$ and $w$-derivatives of $f_{\varepsilon,\eta}$ cannot be propagated. Consequently, uniform bounds in $\varepsilon$ and $\eta$ can only be established in weighted Sobolev spaces involving spatial derivatives alone, yielding convergence exclusively in the weak topology.
		
		\bigskip
		\noindent\textbf{Organization of the Paper.} 
		In Section~\ref{Preliminaries}, we collect preliminary estimates for the operators $\mathcal{L}$, $\mathcal{L}_{\scriptscriptstyle \mathcal D}$, $\mathcal{L}_{\scriptscriptstyle \mathcal R}$, $\Gamma_{\scriptscriptstyle \mathcal D}$, and $\Gamma_{\scriptscriptstyle \mathcal R}$. Section~\ref{U.E.G.S} is devoted to establishing uniform energy estimates and proving the global existence result (Theorem~\ref{Theom.1.}). In the final section, we present the rigorous proof of the hydrodynamic limit toward the IVNS system (Theorem~\ref{Theom.2}).
		
		\section{Preliminaries}\label{Preliminaries}
		This section is devoted to the preliminary technical estimates required for the energy bounds in subsequent sections, encompassing both microscopic and macroscopic properties. We first summarize the basic operator properties of $\mathcal{L}$, $\mathcal{L}_{\scriptscriptstyle \mathcal D}$, and $\mathcal{L}_{\scriptscriptstyle \mathcal R}$, followed by a discussion of the macroscopic conservation laws. Finally, we establish precise estimates for the nonlinear and coupling operators $\Gamma_{\scriptscriptstyle \mathcal D}$, $\Gamma_{\scriptscriptstyle \mathcal R}$, and $\mathcal{L}_{\scriptscriptstyle \mathcal R}$.
		
		\subsection{Linear Operators and Macroscopic Conservation Laws}	
		Standard results for the operator $\mathcal L$ can be found in \cite{MR1307620, MR1379589}:
		\begin{proposition}\label{PropL}
			For the operator $\mathcal L = \nu - \mathcal K$ defined by \eqref{Ope.L.}-\eqref{Ope.K.}, we have the following properties:
			\begin{itemize}
				\item[(i)] There exist two constants $C_1, C_2 > 0$ such that
				\begin{equation*}
					C_1 (1+|w|) \leqslant \nu(w) \leqslant C_2 (1+|w|).
				\end{equation*}
				
				\item[(ii)] The linearized collision operator is symmetric and non-negative. Its null space is the five-dimensional space of collision invariants
				\begin{equation}\label{span-1}
					\mathcal{N} = \mathrm{span} \Big\{ \sqrt{\mu}, \, w_i\sqrt{\mu}, \, {|w|}^2\sqrt{\mu} \Big\}, \quad 1 \leqslant i \leqslant 3,
				\end{equation}
				which leads to the macro-micro decomposition
				\begin{equation}\label{Decom-g}
					g = {\bf P}g + {\bf (I-P)}g.
				\end{equation}
				Here, ${\bf P}g$ denotes the macroscopic projection of $g$, and ${\bf (I-P)}g$ denotes the kinetic (microscopic) component of $g$ (see \cite{MR2043729, MR2095473}). By \eqref{span-1}, ${\bf P}g$ can be expanded as
				\begin{equation}\label{Def.abc.}
					{\bf P}g = \left\{ a(t,x) + \sum_{i=1}^{3} b_i(t,x)w_i + c(t,x)|w|^2 \right\} \sqrt{\mu(w)}.
				\end{equation}	
				
				\item[(iii)] The operator $\mathcal L$ is locally coercive in the sense that there exists a constant $\lambda > 0$ such that
				\begin{equation}\label{Coev-1}
					\langle \mathcal L g, g \rangle_{L^2_w} \geqslant \lambda |{\bf (I-P)}g|_\nu^2.
				\end{equation}
			\end{itemize}
		\end{proposition}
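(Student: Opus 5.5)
The statement just above is Proposition~\ref{PropL}: the classical properties of the hard-sphere linearized Boltzmann operator $\mathcal L=\nu-\mathcal K$. These are standard facts, and I will recover them by the usual route.

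\textbf{Item (i).} I start from the explicit integral \eqref{Ope.nu.}. Integrating over $\omega\in\mathbb S^2$ gives $\int_{\mathbb S^2}|(w-w_*)\cdot\omega|\,\mathrm d\omega=\pi|w-w_*|$, so
\[
\nu(w)=\pi\int_{\mathbb R^3}|w-w_*|\mu(w_*)\,\mathrm dw_*.
\]
The upper bound follows from $|w-w_*|\leqslant |w|+|w_*|$ and the finiteness of the moments of $\mu$. For the lower bound I split into two regimes. For $|w|\geqslant 1$, Jensen's inequality (convexity of $z\mapsto|w-z|$, mean of $\mu$ zero) gives $\nu(w)\geqslant \pi|w|$. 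For $|w|\leqslant1$, $\nu$ is continuous and strictly positive on the compact ball, hence bounded below by a positive constant. Combining the two regimes gives $\nu\sim 1+|w|$.

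\textbf{Item (ii), symmetry and sign.} I use the weak formulation
\[
\langle \mathcal L g,g\rangle_{L^2_w}=\frac14\int |(w-w_*)\cdot\omega|\,\mu\mu_*\,\big(\phi'+\phi'_*-\phi-\phi_*\big)^2\,\mathrm d\omega\,\mathrm dw_*\,\mathrm dw,
\]
where $g=\sqrt\mu\,\phi$. This comes from the change of variables $(w,w_*)\mapsto(w_*,w)$ and from the pre/post-collisional involution $(w,w_*)\mapsto(w',w_*')$, which has unit Jacobian and preserves $\mu\mu_*$ by energy conservation. The same manipulation with two different functions shows that $\mathcal L$ is symmetric.

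\textbf{Item (ii), null space.} Nonnegativity is immediate from the formula above. Moreover, $\mathcal Lg=0$ forces $\phi'+\phi'_*=\phi+\phi_*$ almost everywhere, i.e.\ $\phi$ is a collision invariant. The classical Boltzmann–Cercignani theorem then identifies $\phi$ as a linear combination of $1,w_i,|w|^2$. This gives \eqref{span-1}. The decomposition \eqref{Decom-g} and the expansion \eqref{Def.abc.} are just the orthogonal projection onto this five-dimensional space.

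\textbf{Item (iii), coercivity.} This is the only nontrivial step. The plan is:
\begin{enumerate}
\item Invoke Grad's theorem that, for hard spheres, $\mathcal K$ is compact on $L^2_w$, and moreover compact relative to $\nu$, via the explicit kernel bounds $k(w,w_*)\lesssim |w-w_*|^{-1}e^{-c|w-w_*|^2-c\frac{||w|^2-|w_*|^2|^2}{|w-w_*|^2}}$.
\item Deduce from Weyl's theorem that $\nu^{-1/2}\mathcal L\nu^{-1/2}=I-\nu^{-1/2}\mathcal K\nu^{-1/2}$ has essential spectrum contained in $\{1\}$.
\item Conclude that there is a spectral gap $\lambda_0>0$ on $\mathcal N^\perp$ in the $L^2$ sense, since $0$ is an isolated eigenvalue whose kernel is $\mathcal N$.
\end{enumerate}
The upgrade from the $L^2$ gap to the $\nu$-weighted bound goes by contradiction. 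Suppose $g_n\in\mathcal N^\perp$ with $|g_n|_\nu=1$ and $\langle\mathcal Lg_n,g_n\rangle\to0$. By compactness, $\mathcal Kg_n$ converges along a subsequence. Since $\langle\mathcal Lg_n,g_n\rangle=|g_n|_\nu^2-\langle\mathcal Kg_n,g_n\rangle$, this yields a nonzero weak limit lying in $\mathcal N\cap\mathcal N^\perp$, which is a contradiction.

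I expect this compactness/relative-compactness step to be the main obstacle. In practice I would cite Grad and \cite{MR1307620,MR1379589} for it rather than reprove it.
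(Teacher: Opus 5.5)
Your proposal is correct and is essentially the paper's approach. The paper just cites \cite{MR1307620,MR1379589} for these standard facts, and your coercivity-by-contradiction step is precisely the argument the paper writes out for $\mathcal L_{\scriptscriptstyle \mathcal D}$ in Lemma~\ref{L.D.j}; that argument is self-contained once you have compactness of $\mathcal K$ on $L^2_w$, nonnegativity, and $\ker\mathcal L=\mathcal N$. Your Weyl-type steps 1--3 are therefore superfluous, and they are also slightly misstated: the kernel of $\nu^{-1/2}\mathcal L\nu^{-1/2}$ is $\nu^{1/2}\mathcal N$, not $\mathcal N$, so they yield a $|\cdot|_\nu$-gap under $\nu$-weighted orthogonality rather than an $L^2$ gap on $\mathcal N^\perp$. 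Note also that $\int_{\mathbb S^2}|u\cdot\omega|\,\mathrm d\omega=2\pi|u|$ rather than $\pi|u|$, which is harmless for (i). Finally, your weak formulation is that of the true linearized operator, whose $\mathcal K$ carries the gain terms with a plus sign; this is the opposite of the sign printed in \eqref{Ope.K.}, so the mismatch there is the paper's typo, not yours.
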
	
		
		\begin{remark}
			Since velocity polynomials multiplied by $\mu(w)$ are bounded in $w$, it follows directly from \eqref{Def.abc.} that
			\[
			\|\partial^\alpha_x {\bf P}g\|_{L^2_{x,w}} \sim \|\partial^\alpha_x (a,b,c)\|_{L^2_x},
			\]
			an equivalence relation that will be used repeatedly throughout this paper.
		\end{remark}	
		
		Next, we establish the key properties of the operator $\mathcal L_{\scriptscriptstyle \mathcal D}$.
		
		\begin{proposition}
			For the operator $\mathcal L_{\scriptscriptstyle \mathcal D} = \nu_1 - \mathcal K_{\scriptscriptstyle \mathcal D}$ defined by \eqref{Ope.LD.}-\eqref{Ope.K1.}, the following properties hold:
			\begin{itemize}
				\item[(i)] There exist two constants $C_1, C_2 > 0$ such that
				\begin{equation}\label{nu1-eqiv}
					C_1 (1+\varepsilon|v|) \leqslant \nu_1(v) \leqslant C_2 (1+\varepsilon|v|).
				\end{equation}
				
				\item[(ii)] The linearized collision operator is symmetric and non-negative with the null space
				\begin{equation}\label{span-2}
					\mathcal N_1 = \mathrm{span} \left\{ \sqrt{\mu_{\varepsilon,\eta}} \right\},
				\end{equation}
				which induces the macro-micro decomposition
				\begin{equation}\label{Decom-h}
					h = {\bf P_{\scriptscriptstyle 1}}h + {\bf (I-P_{\scriptscriptstyle 1})}h.
				\end{equation}
				Here, ${\bf P_{\scriptscriptstyle 1}}h$ and ${\bf (I-P_{\scriptscriptstyle 1})}h$ represent the macroscopic and microscopic components of $h$, respectively. Furthermore, in view of \eqref{span-2}, ${\bf P_{\scriptscriptstyle 1}}h$ can be expanded as
				\begin{equation}\label{Def.bar.a.}
					{\bf P_{\scriptscriptstyle 1}}h = a_{\scriptscriptstyle 1}(t,x) \sqrt{\mu_{\varepsilon,\eta}(v)}.
				\end{equation}	
				
				\item[(iii)] The operator $\mathcal L_{\scriptscriptstyle \mathcal D}$ is locally coercive; that is, there exists a constant $\lambda_1 > 0$ such that
				\begin{equation}\label{Coev-2}
					\langle \mathcal L_{\scriptscriptstyle \mathcal D} h, h \rangle_{L^2_v} \geqslant \lambda_1 |{\bf (I-P_{\scriptscriptstyle 1})}h|_{\nu_1}^2.
				\end{equation}
			\end{itemize}
		\end{proposition}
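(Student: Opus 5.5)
The plan is to reduce everything to a single weak (symmetric) form of $\mathcal L_{\scriptscriptstyle \mathcal D}$, obtained from the microscopic conservation laws \eqref{vela}--\eqref{velb}. Items (i) and (ii) then follow by direct computation, and (iii) comes from a compactness/spectral argument. The only genuinely delicate point is keeping the constant in (iii) independent of $\varepsilon,\eta$.

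\emph{Step 1: the collision frequency.} Integrating out $\omega$ gives $\int_{\mathbb S^2}|z\cdot\omega|\,\mathrm d\omega=2\pi|z|$, so
\[
\nu_1(v)=2\pi\int_{\mathbb R^3}|\varepsilon v-w|\,\mu(w)\,\mathrm dw .
\]
This is the hard-sphere collision frequency evaluated at the point $\varepsilon v$. Using $\big||\varepsilon v|-|w|\big|\leqslant|\varepsilon v-w|\leqslant|\varepsilon v|+|w|$, together with the lower bound obtained by restricting to $w$ in the half-space $\{w\cdot\varepsilon v\leqslant 0\}$, gives \eqref{nu1-eqiv} with constants independent of $\varepsilon$ and $\eta$. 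This is the same argument as Proposition~\ref{PropL}(i).

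\emph{Step 2: symmetry, non-negativity and the null space.} The key identity is detailed balance. The exponent of $\mu_{\varepsilon,\eta}(v)\mu(w)$ is
\[
-\frac{\varepsilon^2|v|^2+\eta|w|^2}{2\eta},
\]
so \eqref{velb} yields $\mu_{\varepsilon,\eta}(v)\mu(w)=\mu_{\varepsilon,\eta}(v'')\mu(w'')$. Next I check the following facts for fixed $\omega$, directly from \eqref{velocitya}--\eqref{velocityb}:
\begin{itemize}
\item the linear map $(v,w)\mapsto(v'',w'')$ is an involution;
\item its Jacobian has absolute value one;
\item the kernel is invariant, since $(\varepsilon v''-w'')\cdot\omega=-(\varepsilon v-w)\cdot\omega$.
\end{itemize}
Setting $H=h/\sqrt{\mu_{\varepsilon,\eta}}$ and $\Phi=\phi/\sqrt{\mu_{\varepsilon,\eta}}$, the usual pre/post-collisional change of variables then gives
\[
\langle \mathcal L_{\scriptscriptstyle \mathcal D}h,\phi\rangle_{L^2_v}=\frac12\int |(\varepsilon v-w)\cdot\omega|\,\mu_{\varepsilon,\eta}(v)\mu(w)\,\big(H(v)-H(v'')\big)\big(\Phi(v)-\Phi(v'')\big)\,\mathrm d\omega\,\mathrm dw\,\mathrm dv .
\]
This form is visibly symmetric and non-negative.

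If $\langle\mathcal L_{\scriptscriptstyle \mathcal D}h,h\rangle=0$, then $H(v'')=H(v)$ almost everywhere. Since $v''-v=-\frac{2\eta}{1+\eta}\,\omega\big(\omega\cdot(v-w/\varepsilon)\big)$, letting $w$ range over $\mathbb R^3$ and $\omega$ over $\mathbb S^2$ makes $v''-v$ cover a neighbourhood of every vector. Hence $H$ is translation invariant, i.e.\ constant, which proves \eqref{span-2}. The decomposition \eqref{Decom-h}--\eqref{Def.bar.a.} is then just the orthogonal projection onto this one-dimensional space.

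\emph{Step 3: coercivity.} I would proceed in two parts. First, show that $\mathcal K_{\scriptscriptstyle \mathcal D}$ is compact on $L^2_v$ (more precisely, $\nu_1^{-1/2}\mathcal K_{\scriptscriptstyle \mathcal D}\nu_1^{-1/2}$ is compact). Writing \eqref{Ope.K1.} as an integral operator in $v''$ via a Carleman-type representation, its kernel decays like that of the classical Grad operator, in the rescaled variable $\tilde v=\varepsilon v/\sqrt\eta$ in which $\mu_{\varepsilon,\eta}$ becomes a standard Maxwellian. Weyl's theorem then makes $\nu_1^{-1/2}\mathcal L_{\scriptscriptstyle \mathcal D}\nu_1^{-1/2}$ a compact perturbation of the identity, with kernel $\mathcal N_1$. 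Therefore its spectrum on $\mathcal N_1^\perp$ lies in $[\lambda_1,\infty)$ for some $\lambda_1>0$, which is \eqref{Coev-2}.

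\textbf{Main obstacle.} I expect the main obstacle to be the uniformity of $\lambda_1$ in $(\varepsilon,\eta)$. In the variable $\tilde v$, the operator is a Rayleigh-gas linear Boltzmann operator with mass ratio $\eta$. Each collision transfers only an $O(\sqrt\eta)$ fraction of momentum, so a naive bound lets the gap degenerate as $\eta\to0$. I would first try to obtain a lower bound on the Dirichlet form above by contradiction (a normalized sequence in $\mathcal N_1^\perp$ with vanishing Dirichlet form) combined with the kernel estimates. If that only yields a parameter-dependent gap, I would track the precise power of $\eta$ and check that the $\frac1\eta$ prefactor in \eqref{equ:h} still gives the dissipation $\mathcal D_h$ that is used later.
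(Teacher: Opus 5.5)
For fixed $(\varepsilon,\eta)$ your plan is essentially the paper's proof. For (i), the paper simply cites Glassey; your direct computation also works. For (ii), the paper uses the detailed-balance and involution argument of Lemma~\ref{L.D.sym}. Your a.e.\ translation-invariance argument for the null space is fine, and it avoids the continuity assumption in the paper's Step~2. For (iii), the paper uses the compactness and contradiction argument of Lemma~\ref{L.D.j}, and your Weyl-theorem version is a restatement of it. Both routes give a constant $\lambda_1=\lambda_1(\varepsilon,\eta)$.

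The obstacle you flagged is real, and it cannot be resolved the way you hope: no $\lambda_1$ in \eqref{Coev-2} is independent of $\eta$.

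\emph{A test function.} Take $h=v_1\sqrt{\mu_{\varepsilon,\eta}}\in\mathcal N_1^\perp$, i.e.\ $H=v_1$ in your Dirichlet form. Since $v_1-v_1''=\frac{2\eta}{(1+\eta)\varepsilon}\,\omega_1\,\omega\cdot(\varepsilon v-w)$,
\[
\langle \mathcal L_{\scriptscriptstyle \mathcal D}h,h\rangle_{L^2_v}=\frac{2\eta^2}{(1+\eta)^2\varepsilon^2}\int_{\mathbb R^3\times\mathbb R^3\times\mathbb S^2}|(\varepsilon v-w)\cdot\omega|^3\,\omega_1^2\,\mu_{\varepsilon,\eta}(v)\mu(w)\,\mathrm d\omega\,\mathrm dw\,\mathrm dv .
\]
Under $\mu_{\varepsilon,\eta}$ one has $\varepsilon v=O(\eta^{1/2})$. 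Hence this equals $\eta\,(\kappa+o(1))\,\|h\|_{L^2_v}^2$ as $\eta\to0$, where $\|h\|_{L^2_v}^2=\eta/\varepsilon^2$ and $\kappa$ is exactly the friction coefficient in \eqref{kappa nu}. On the other hand, $|h|_{\nu_1}^2\geqslant C_1\|h\|_{L^2_v}^2$. Therefore every admissible constant satisfies $\lambda_1\leqslant(\kappa/C_1+o(1))\,\eta$.

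\emph{Why this happens.} This is precisely your Rayleigh-gas heuristic: $\frac1\eta\mathcal L_{\scriptscriptstyle \mathcal D}$ relaxes the particle momentum at the $O(1)$ rate $\kappa$. It is the same drag that produces $\kappa\,\mathrm{div}_v((v-u)F)$ in Lemma~\ref{Vlasov equation.}. It also shows that your claim about the kernel fails uniformly. In $\tilde v=\varepsilon v/\sqrt\eta$ the operator depends on $\eta$ alone, and the jump is $\tilde v''-\tilde v=-\frac{2\sqrt\eta}{1+\eta}\,\omega\,\omega\cdot(\sqrt\eta\,\tilde v-w)=O(\sqrt\eta)$. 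So the kernel of $\mathcal K_{\scriptscriptstyle \mathcal D}$ concentrates on $|\tilde\xi-\tilde v|\lesssim\sqrt\eta$ (cf.\ the factor $\exp\{-\varepsilon^2|\xi-v|^2/(8\eta^2)\}$ in Lemma~\ref{Ker.K.d.}); it does not behave like a fixed Grad kernel. Consequently no compactness or contradiction argument can give a uniform gap. This includes your argument and the paper's Lemma~\ref{L.D.j}, which is run at fixed $\varepsilon,\eta$.

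\emph{Your fallback also fails.} With $\lambda_1\lesssim\eta$, the term $\frac1\eta\langle\mathcal L_{\scriptscriptstyle \mathcal D}\partial^\alpha_x h,\partial^\alpha_x h\rangle$ in \eqref{eqn-h} controls only
\[
\sum_{|\alpha|\leqslant N}\|\partial^\alpha_x(\mathbf I-\mathbf P_{\scriptscriptstyle 1})h\|_{\nu_1}^2=\eta\,\mathcal D_h^2 ,
\]
not $\mathcal D_h^2$. So \eqref{disp-h} and \eqref{global,dish} do not follow.

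What your argument and the paper's actually prove is (i), (ii), and (iii) with a parameter-dependent constant, which is at most of order $\eta$. The paper's later use of $\lambda_1$ as independent of $\varepsilon$ and $\eta$ in Lemma~\ref{Lemma.Micro} is not supported by its proof.
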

		
		\begin{remark}
			The relation \eqref{nu1-eqiv} is obtained by following arguments similar to those for the classical Boltzmann equation \cite[Sec.~3.3.1, pp.~47--48]{MR1379589}. The proofs of the remaining properties are deferred to Appendix \ref{Appendix} (see Lemmas \ref{Ker.K.d.}--\ref{L.D.j}).
		\end{remark}
		
		\begin{remark}
			It is worth noting that in the proof of \eqref{span-2} (see Appendix \ref{Appendix}, Lemma \ref{L.D.sym}, Steps 1–2), for any smooth functions $h(v)$ and $g(w)$ decaying rapidly at infinity, we have
			\[
			\big\langle \mathcal D(\sqrt{\mu_{\varepsilon,\eta}}h, \sqrt{\mu}g), a_{\scriptscriptstyle 1}(t,x) \big\rangle_{L^2_v} = 0,
			\]
			which implies
			\[
			\Gamma_{\scriptscriptstyle \mathcal D} (h,g) \in \mathcal N_1^\perp.
			\]
		\end{remark}
		
		\begin{remark}
			Since the normalized global Maxwellians $\mu_{\varepsilon,\eta}(v)$ satisfy
			\[
			\langle \sqrt{\mu_{\varepsilon,\eta}}, \sqrt{\mu_{\varepsilon,\eta}} \rangle_{L^2_v} \sim 1,
			\]
			it immediately follows from \eqref{Def.bar.a.} that
			\[
			\|\partial^\alpha_x {\bf P_{\scriptscriptstyle 1}}h\|_{L^2_{x,v}} \sim \|\partial^\alpha_x a_{\scriptscriptstyle 1}\|_{L^2_x}.
			\]
			This equivalence relation will be used henceforth without further mention.
		\end{remark}
		
		Finally, we address the basic properties of the operator $\mathcal L_{\scriptscriptstyle \mathcal R}$.
		
		\begin{proposition}\label{LR}
			If $h = a_{\scriptscriptstyle 1}(t,x)\sqrt{\mu_{\varepsilon,\eta}(v)}$, then $\mathcal L_{\scriptscriptstyle \mathcal R}h = 0$.
		\end{proposition}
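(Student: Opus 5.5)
The plan is to reduce the statement to a pointwise detailed-balance identity for the inter-species collision. By \eqref{Ope.LR.} and \eqref{def,gamma,R,D}, for $h=a_{\scriptscriptstyle 1}(t,x)\sqrt{\mu_{\varepsilon,\eta}(v)}$ we have
\[
\mathcal L_{\scriptscriptstyle \mathcal R}h=\Gamma_{\scriptscriptstyle \mathcal R}(\sqrt{\mu},h)=\frac{1}{\sqrt{\mu}}\mathcal R\big(\mu,\sqrt{\mu_{\varepsilon,\eta}}\,h\big)=\frac{a_{\scriptscriptstyle 1}(t,x)}{\sqrt{\mu(w)}}\,\mathcal R(\mu,\mu_{\varepsilon,\eta})(w).
\]
The first step pulls $a_{\scriptscriptstyle 1}(t,x)$ out of the integral. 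This is legitimate because $\mathcal R$ is linear in its second argument, and $a_{\scriptscriptstyle 1}$ does not depend on $v$, so it is unaffected by the change $v\mapsto v''$. It therefore suffices to show that $\mathcal R(\mu,\mu_{\varepsilon,\eta})\equiv0$.

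By \eqref{D,R,defb}, the integrand of $\mathcal R(\mu,\mu_{\varepsilon,\eta})$ is
\[
|(\varepsilon v-w)\cdot\omega|\,\{\mu_{\varepsilon,\eta}(v'')\mu(w'')-\mu_{\varepsilon,\eta}(v)\mu(w)\}.
\]
I will show that the bracket vanishes for every $(v,w,\omega)$. Using \eqref{gloal-max} and \eqref{mu-para},
\[
\mu_{\varepsilon,\eta}(v)\mu(w)=\Big(\frac{\varepsilon^2}{2\pi\eta}\Big)^{\frac32}\Big(\frac{1}{2\pi}\Big)^{\frac32}\exp\Big\{-\frac{\varepsilon^2|v|^2+\eta|w|^2}{2\eta}\Big\}.
\]
The prefactor is a constant, so the product depends on $(v,w)$ only through the weighted energy $\varepsilon^2|v|^2+\eta|w|^2$. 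This quantity is invariant under the collision map \eqref{velocitya}--\eqref{velocityb}, by the microscopic energy conservation \eqref{velb}. Hence $\mu_{\varepsilon,\eta}(v'')\mu(w'')=\mu_{\varepsilon,\eta}(v)\mu(w)$ pointwise, the integrand is identically zero, and $\mathcal L_{\scriptscriptstyle \mathcal R}h=0$.

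The only point that needs care is the energy identity \eqref{velb}, which the paper states as a consequence of \eqref{velocitya}--\eqref{velocityb}. If it has to be checked, write $s=\omega\cdot(\varepsilon v-w)$. Then $\varepsilon v''=\varepsilon v-\frac{2\eta}{1+\eta}s\,\omega$ and $w''=w+\frac{2}{1+\eta}s\,\omega$. Expanding the squares gives
\[
\varepsilon^2|v''|^2+\eta|w''|^2-\varepsilon^2|v|^2-\eta|w|^2=-\frac{4\eta}{1+\eta}s^2+\frac{4\eta^2+4\eta}{(1+\eta)^2}s^2=0.
\]
No integrability issue arises: the integrand vanishes pointwise, and the Maxwellian factors make everything absolutely integrable.
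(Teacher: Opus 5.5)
Your proposal is correct and follows the paper's proof: you reduce to $\mathcal R(\mu,\mu_{\varepsilon,\eta})=0$ using the pointwise detailed-balance identity $\mu_{\varepsilon,\eta}(v'')\mu(w'')=\mu_{\varepsilon,\eta}(v)\mu(w)$, which comes from the energy conservation \eqref{velb}. Your explicit check of \eqref{velb} from \eqref{velocitya}--\eqref{velocityb}, and your justification for factoring out $a_{\scriptscriptstyle 1}(t,x)$, are both correct and fill in details the paper leaves implicit.
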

		
		\begin{proof}
			Recalling the definition of $\mathcal L_{\scriptscriptstyle \mathcal R}h$ in \eqref{Ope.LR.}, we have
			\[
			\mathcal L_{\scriptscriptstyle \mathcal R}\sqrt{\mu_{\varepsilon,\eta}} = \Gamma_{\scriptscriptstyle \mathcal R} (\sqrt{\mu}, \sqrt{\mu_{\varepsilon,\eta}}) = \frac{1}{\sqrt{\mu}} \mathcal R(\mu, \mu_{\varepsilon,\eta}).
			\]
			By \eqref{velb}, the identity
			\[
			\mu_{\varepsilon,\eta}(v'') \mu(w'') = \mu_{\varepsilon,\eta}(v) \mu(w)
			\]
			holds, which implies that $\mathcal R(\mu, \mu_{\varepsilon,\eta}) = 0$. Consequently, we obtain
			\[
			\mathcal L_{\scriptscriptstyle \mathcal R}h = 0 \quad \text{for} \quad h = a_{\scriptscriptstyle 1}(t,x)\sqrt{\mu_{\varepsilon,\eta}(v)}.
			\]
		\end{proof}
		
		
		On the one hand, $a_{\scriptscriptstyle 1}(t,x)$ satisfies the local macroscopic mass conservation law. Specifically, multiplying \eqref{equ:h} by $\sqrt{\mu_{\varepsilon,\eta}}$ and integrating over $\mathbb R^3_v$, we yield
		\begin{equation}\label{CONSE.bar.a}
			\partial_t a_{\scriptscriptstyle 1} + \nabla_x \cdot \langle v\sqrt{\mu_{\varepsilon,\eta}}, {\bf (I-P_{\scriptscriptstyle 1})}h \rangle_{L^2_v} = 0.
		\end{equation}
		
		On the other hand, the macroscopic quantities $a(t,x)$, $b(t,x) = (b_1, b_2, b_3)(t,x)$, and $c(t,x)$ satisfy local conservation laws. Multiplying equation \eqref{equ:g} by $\sqrt{\mu}$, $w_i\sqrt{\mu}$, and $|w|^2\sqrt{\mu}$ respectively, and integrating each over $\mathbb R^3_w$, we arrive at:
		\begin{align}
			&\partial_t a = \frac{1}{2\varepsilon} \nabla_x \cdot \langle |w|^2 w \sqrt\mu, {\bf (I-P)}g \rangle_{L^2_w} - \frac{1}{2} \left\langle \frac{1}{\varepsilon}\mathcal L_{\scriptscriptstyle \mathcal R} h + \Gamma_{\scriptscriptstyle \mathcal R}(g,h), |w|^2 \sqrt\mu \right\rangle_{L^2_w}, \label{CONSE.a.b.c1} \\
			&\partial_t b_i + \frac{1}{\varepsilon} \partial_i (a+5c) + \frac{1}{\varepsilon} \nabla_x \cdot \langle w w_i \sqrt\mu, {\bf (I-P)}g \rangle_{L^2_w} = \left\langle \frac{1}{\varepsilon}\mathcal L_{\scriptscriptstyle \mathcal R} h + \Gamma_{\scriptscriptstyle \mathcal R}(g,h), w_i \sqrt\mu \right\rangle_{L^2_w}, \label{CONSE.a.b.c2} \\
			&\partial_t c + \frac{1}{3\varepsilon} \nabla_x \cdot b + \frac{1}{6\varepsilon} \nabla_x \cdot \langle |w|^2 w \sqrt\mu, {\bf (I-P)}g \rangle_{L^2_w} = \frac{1}{6} \left\langle \frac{1}{\varepsilon}\mathcal L_{\scriptscriptstyle \mathcal R} h + \Gamma_{\scriptscriptstyle \mathcal R}(g,h), |w|^2 \sqrt\mu \right\rangle_{L^2_w}. \label{CONSE.a.b.c3}
		\end{align} 
		These macroscopic conservation laws will be extensively utilized throughout the remainder of this paper.

		\subsection{Estimates for the Nonlinear Terms}
		In this subsection, we establish estimates for the nonlinear terms $\frac{\varepsilon}{\eta}\Gamma_{\scriptscriptstyle \mathcal D}(h,g)$ and $\frac{1}{\varepsilon}\mathcal L_{\scriptscriptstyle \mathcal R} h + \Gamma_{\scriptscriptstyle \mathcal R}(g,h)$. We begin with the following technical proposition, whose proof is straightforward.
		
		\begin{proposition}
			Let $\mu_{\varepsilon,\eta}$ be defined as in \eqref{mu-para}, and let $k \in \mathbb{N}$. Then
			\begin{equation}\label{mu}
				\int_{\mathbb{R}^3} |v|^k \mu_{\varepsilon,\eta}(v) \,\mathrm{d}v \sim \left( \frac{\eta^{1/2}}{\varepsilon} \right)^{k}.
			\end{equation}
		\end{proposition}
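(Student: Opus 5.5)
The plan is a change of variables that normalizes the Gaussian. Set $\sigma := \eta^{1/2}/\varepsilon > 0$. By \eqref{mu-para} we can then write
\[
\mu_{\varepsilon,\eta}(v) = \sigma^{-3}\,\mu(v/\sigma),
\]
where $\mu$ is the standard Maxwellian \eqref{gloal-max}. Indeed, $\varepsilon^2/\eta = \sigma^{-2}$, so the prefactor is $(2\pi\sigma^2)^{-3/2}$ and the exponent is $-|v|^2/(2\sigma^2)$.

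Substitute $v = \sigma z$, so that $\mathrm{d}v = \sigma^3\,\mathrm{d}z$. This gives
\[
\int_{\mathbb{R}^3} |v|^k \mu_{\varepsilon,\eta}(v)\,\mathrm{d}v = \sigma^{k} \int_{\mathbb{R}^3} |z|^k \mu(z)\,\mathrm{d}z .
\]

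It remains to check that $m_k := \int_{\mathbb{R}^3} |z|^k \mu(z)\,\mathrm{d}z$ is a finite positive constant depending only on $k$. We pass to spherical coordinates:
\[
m_k = 4\pi (2\pi)^{-3/2} \int_0^\infty r^{k+2} e^{-r^2/2}\,\mathrm{d}r = (2\pi)^{-3/2}\, 4\pi\, 2^{(k+1)/2}\,\Gamma\!\left(\tfrac{k+3}{2}\right).
\]
This is strictly positive and finite. Hence the integral equals $m_k\,(\eta^{1/2}/\varepsilon)^k$ exactly, with $c_1 = c_2 = m_k$ in the definition of $\sim$, and these constants are independent of $\varepsilon$ and $\eta$. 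This is exactly \eqref{mu}.

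There is no real obstacle. The only point to note is that the implicit constants depend on $k$, which is fixed in the statement, and not on $\varepsilon$ or $\eta$, as the paper's convention for $\sim$ requires. The same scaling also covers noninteger $k \ge 0$, and the case $k=0$ recovers the normalization $\int \mu_{\varepsilon,\eta} = 1$.
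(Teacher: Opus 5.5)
Your proof is correct. The rescaling $\mu_{\varepsilon,\eta}(v)=\sigma^{-3}\mu(v/\sigma)$ with $\sigma=\eta^{1/2}/\varepsilon$ gives the exact identity $\int_{\mathbb{R}^3}|v|^k\mu_{\varepsilon,\eta}(v)\,\mathrm{d}v=m_k\,\sigma^k$, where $m_k$ depends only on $k$. The paper omits the proof as straightforward, and this scaling argument is evidently the one it has in mind.
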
			
		
	Next, we address the fundamental estimates for the nonlinear collision operators $\Gamma$, $\Gamma_{\scriptscriptstyle \mathcal R}$ and $\Gamma_{\scriptscriptstyle \mathcal D}$.
		
		\begin{lemma}\label{Gamma.Q.R.D}
			The following estimates hold:
		\begin{align}
			\left\langle  \Gamma(g^*,g), g_* \right\rangle _{L^2_w} &\lesssim \|g^*\|_{L^2_w} |g|_{\nu} |g_*|_{\nu} + \|g\|_{L^2_w} |g^*|_{\nu} |g_*|_{\nu}, \label{Gamma.Q.} \\
			\left\langle  \Gamma_{\scriptscriptstyle \mathcal R}(g^*,h), g_* \right\rangle _{L^2_w} &\lesssim \|h\|_{L^2_v} |g^*|_{\nu} |g_*|_{\nu} + \|g^*\|_{L^2_w} |h|_{\nu_1} |g_*|_{\nu}, \label{Gamma.R.} \\
			\left\langle  \Gamma_{\scriptscriptstyle \mathcal D}(h^*,g), h_* \right\rangle _{L^2_v} &\lesssim \|g\|_{L^2_w} |h^*|_{\nu_1} |h_*|_{\nu_1} + \|h^*\|_{L^2_v} |g|_{\nu} |h_*|_{\nu_1}. \label{Gamma.D.}
		\end{align}
	
		\end{lemma}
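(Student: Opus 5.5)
Each of the three bounds splits into a gain part and a loss part. For each, we use Cauchy--Schwarz in $(w_*$ or $v,\omega)$ together with the pre/post-collisional change of variables, which has unit Jacobian for elastic collisions. For \eqref{Gamma.Q.} this is the classical hard-sphere estimate (cf.\ \cite{MR2043729,MR2095473}), and we only recall its structure. In the loss term $\int |(w-w_*)\cdot\omega|\sqrt{\mu(w_*)}g^*(w_*)g(w)g_*(w)$ we bound $\int |w-w_*|\sqrt{\mu(w_*)}|g^*(w_*)|\,\mathrm{d}w_*\lesssim \nu(w)\|g^*\|_{L^2_w}$. Cauchy--Schwarz in $w$ then gives $\|g^*\|_{L^2_w}|g|_\nu|g_*|_\nu$. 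For the gain term we use $\sqrt{\mu(w_*)}=\sqrt{\mu(w')\mu(w_*')}/\sqrt{\mu(w)}$, which produces a factor $\sqrt{\mu(w'_*)}$ or $\sqrt{\mu(w')}$ attached to one of the post-collisional functions. Cauchy--Schwarz splits the integrand into $|g_*(w)|^2$ times a weight and $|g^*(w')g(w_*')|^2$ times a weight. Changing variables $(w,w_*)\to(w',w_*')$ in the second factor and using $\nu(w)\lesssim\nu(w')+\nu(w_*')$ gives the two-term right-hand side.

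For \eqref{Gamma.R.} we write out $\Gamma_{\scriptscriptstyle\mathcal R}(g^*,h)(w)=\mu(w)^{-1/2}\int|(\varepsilon v-w)\cdot\omega|\{\sqrt{\mu_{\varepsilon,\eta}(v'')}h(v'')\sqrt{\mu(w'')}g^*(w'')-\sqrt{\mu_{\varepsilon,\eta}(v)}h(v)\sqrt{\mu(w)}g^*(w)\}$. The identity $\mu_{\varepsilon,\eta}(v'')\mu(w'')=\mu_{\varepsilon,\eta}(v)\mu(w)$ from \eqref{velb}, already used in Proposition~\ref{LR}, replaces the gain weight by $\sqrt{\mu_{\varepsilon,\eta}(v)}$ after rewriting. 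The loss term is $g^*(w)\int|(\varepsilon v-w)\cdot\omega|\sqrt{\mu_{\varepsilon,\eta}(v)}h(v)\,\mathrm{d}v\mathrm{d}\omega$. By Cauchy--Schwarz in $v$ this is bounded by $(|w|+\varepsilon\,\cdot)$-moments of $\mu_{\varepsilon,\eta}$ times $\|h\|_{L^2_v}$. Proposition \eqref{mu} gives $\varepsilon\int|v|\mu_{\varepsilon,\eta}\sim\eta^{1/2}\lesssim1$, so the kernel is $\lesssim\nu(w)$, and pairing with $g_*$ gives $\|h\|_{L^2_v}|g^*|_\nu|g_*|_\nu$. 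For the gain term we apply Cauchy--Schwarz and change variables $(v,w)\mapsto(v'',w'')$, whose Jacobian is $1$ because the map is a linear involution preserving the weighted energy \eqref{velb}. We then distribute the weight $|\varepsilon v-w|\lesssim(1+\varepsilon|v''|)+(1+|w''|)$ up to the conserved quantities, yielding $\|g^*\|_{L^2_w}|h|_{\nu_1}|g_*|_\nu$ or $\|h\|_{L^2_v}|g^*|_\nu|g_*|_\nu$. Here \eqref{nu1-eqiv} identifies $1+\varepsilon|v|$ with $\nu_1$.

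For \eqref{Gamma.D.} we proceed symmetrically, now integrating over $w$ against the Gaussian $\sqrt{\mu(w)}$. The loss part is $h^*(v)\int|(\varepsilon v-w)\cdot\omega|\sqrt{\mu(w)}g(w)\,\mathrm{d}w\mathrm{d}\omega\lesssim\nu_1(v)\|g\|_{L^2_w}$. Pairing with $h_*$ gives $\|g\|_{L^2_w}|h^*|_{\nu_1}|h_*|_{\nu_1}$. The gain part is handled with the same Cauchy--Schwarz and change of variables, producing the second term $\|h^*\|_{L^2_v}|g|_\nu|h_*|_{\nu_1}$.

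\textbf{Main obstacle.} We expect the hardest step to be the weighted inequality $|\varepsilon v-w|\lesssim$ (weights at post-collisional velocities) with constants uniform in $\varepsilon,\eta$. The collision map mixes $v$ and $w/\varepsilon$ with coefficient $2\eta/(1+\eta)$, so we must check that $\varepsilon|v''-v|\lesssim\eta(\varepsilon|v|+|w|)$ and $|w''-w|\lesssim\varepsilon|v|+|w|$. This gives $\varepsilon|v|+|w|\lesssim\varepsilon|v''|+|w''|$ uniformly; we intend to verify it directly from \eqref{velocitya}--\eqref{velocityb}. A second point is that the Gaussian factor from the gain rewriting must be absorbed without leaving a negative power of $\mu_{\varepsilon,\eta}$. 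We will arrange the Cauchy--Schwarz split so that $\sqrt{\mu(w)}$ or $\sqrt{\mu(w'')}$, which is uniform, carries the decay, and never $\mu_{\varepsilon,\eta}$.
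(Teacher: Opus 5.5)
Your proposal is correct and follows essentially the paper's route for \eqref{Gamma.R.}. You rewrite the gain weight as $\sqrt{\mu_{\varepsilon,\eta}(v)}$ via \eqref{velb}, bound the loss term by Cauchy--Schwarz against moments of $\mu_{\varepsilon,\eta}$ (giving $\lesssim\nu(w)$), and handle the gain term by Cauchy--Schwarz plus the unit-Jacobian change of variables $(v,w)\mapsto(v'',w'')$, with the weight moved onto $\nu(w'')+\nu_1(v'')$. Your citation of \eqref{Gamma.Q.} matches the paper's, and your sketch of \eqref{Gamma.D.} is the analogous argument the paper omits.

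One slip in your recap of \eqref{Gamma.Q.}: the gain weight should be simplified from $\sqrt{\mu(w')\mu(w_*')}/\sqrt{\mu(w)}$ to $\sqrt{\mu(w_*)}$, not the reverse, since $\mu(w)^{-1/2}$ is unbounded. The factor $\sqrt{\mu(w_*)}$ is then used in the $g_*(w)$ factor and bounded by $1$ in the post-collisional factor.
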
	
		
		\begin{proof}
			Estimate \eqref{Gamma.Q.} was originally proven in \cite{MR1908664}. We omit the detailed proof of \eqref{Gamma.D.} as it follows from an analogous argument, and instead focus on establishing \eqref{Gamma.R.}.
			
	According to relations \eqref{velocitya} and \eqref{velocityb}, the operator $\Gamma_{\scriptscriptstyle \mathcal R}(g^*,h)$ can be decomposed as follows:
		\begin{align*}
			\Gamma_{\scriptscriptstyle \mathcal R}(g^*,h) &= \frac{1}{\sqrt{\mu(w)}} \int_{\mathbb{R}^3_v \times \mathbb{S}^2} \sqrt{\mu(w'')} \, g^*(w'') \sqrt{\mu_{\varepsilon,\eta}(v'')} \, h(v'') |\varepsilon v - w| \,\mathrm{d}\omega \,\mathrm{d}v \\
			&\quad - \frac{1}{\sqrt{\mu(w)}} \int_{\mathbb{R}^3_v \times \mathbb{S}^2} \sqrt{\mu(w)} \, g^*(w) \sqrt{\mu_{\varepsilon,\eta}(v)} \, h(v) |\varepsilon v - w| \,\mathrm{d}\omega \,\mathrm{d}v \\
			&= \int_{\mathbb{R}^3_v \times \mathbb{S}^2} \sqrt{\mu_{\varepsilon,\eta}(v)} \, g^*(w'') h(v'') |\varepsilon v - w| \,\mathrm{d}\omega \,\mathrm{d}v \\
			&\quad - g^*(w) \int_{\mathbb{R}^3_v \times \mathbb{S}^2} \sqrt{\mu_{\varepsilon,\eta}(v)} \, h(v) |\varepsilon v - w| \,\mathrm{d}\omega \,\mathrm{d}v.
		\end{align*}					
		We first estimate the loss term associated with $\left\langle  \Gamma_{\scriptscriptstyle \mathcal R}(g^*,h), g_* \right\rangle _{L^2_w}$. Applying the Cauchy--Schwarz inequality yields
		\begin{align*}
			&\left| \int_{\mathbb{R}^3_w} \left[ \int_{\mathbb{R}^3_v \times \mathbb{S}^2} \sqrt{\mu_{\varepsilon,\eta}(v)} \, h(v) |\varepsilon v - w| \,\mathrm{d}\omega \,\mathrm{d}v \right] g^*(w) g_*(w) \,\mathrm{d}w \right| \\
			\lesssim& \int_{\mathbb{R}^3_w} \left[ \int_{\mathbb{R}^3_v} \mu_{\varepsilon,\eta}(v) |\varepsilon v - w|^2 \,\mathrm{d}v \right]^{1/2} \left[ \int_{\mathbb{R}^3_v} |h(v)|^2 \,\mathrm{d}v \right]^{1/2} |g^*(w) g_*(w)| \,\mathrm{d}w.
		\end{align*}	
		Under the hard-sphere interaction assumption, using \eqref{mu}, a direct calculation gives
		\begin{equation}\label{Est.nu.}
			\begin{aligned}
				\left[ \int_{\mathbb{R}^3_v} \mu_{\varepsilon,\eta}(v) |\varepsilon v - w|^2 \,\mathrm{d}v \right]^{1/2}\lesssim (\eta^{1/2}+|w|) \lesssim \nu(w)
			\end{aligned}			
		\end{equation}
		Consequently, the loss term is bounded by
		\begin{equation*}
			C \left[ \int_{\mathbb{R}^3_v} |h(v)|^2 \,\mathrm{d}v \right]^{1/2} \int_{\mathbb{R}^3_w} \nu(w) |g^*(w) g_*(w)| \,\mathrm{d}w \lesssim \|h\|_{L^2_v} |g^*|_{\nu} |g_*|_{\nu}.
		\end{equation*}

	Next, for the gain term of $\left\langle  \Gamma_{\scriptscriptstyle \mathcal R}(g^*,h), g_* \right\rangle _{L^2_w}$, applying the Cauchy--Schwarz inequality successively gives
		\begin{align*}
			&\left| \int_{\mathbb{R}^3_w} \left[ \int_{\mathbb{R}^3_v \times \mathbb{S}^2} \sqrt{\mu_{\varepsilon,\eta}(v)} \, g^*(w'') h(v'') |\varepsilon v - w| \,\mathrm{d}\omega \,\mathrm{d}v \right] g_*(w) \,\mathrm{d}w \right| \\
			\lesssim& \int_{\mathbb{R}^3_w} \left[ \int_{\mathbb{R}^3_v} \mu_{\varepsilon,\eta}(v) |\varepsilon v - w|^2 \,\mathrm{d}v \right]^{1/2} \left[ \int_{\mathbb{R}^3_v} |g^*(w'') h(v'')|^2 \,\mathrm{d}v \right]^{1/2} |g_*(w)| \,\mathrm{d}w \\
			\lesssim& \int_{\mathbb{R}^3_w} \nu(w) \left[ \int_{\mathbb{R}^3_v} |g^*(w'') h(v'')|^2 \,\mathrm{d}v \right]^{1/2} |g_*(w)| \,\mathrm{d}w \\
			\lesssim& \left[ \int_{\mathbb{R}^3_w \times \mathbb{R}^3_v} \nu(w) |g^*(w'') h(v'')|^2 \,\mathrm{d}v \,\mathrm{d}w \right]^{1/2} \left[ \int_{\mathbb{R}^3_w} \nu(w) |g_*(w)|^2 \,\mathrm{d}w \right]^{1/2} \\
			\lesssim& \left[ \int_{\mathbb{R}^3_w \times \mathbb{R}^3_v} \big\{ \nu(w'') + \nu_1(v'') \big\} |g^*(w'') h(v'')|^2 \,\mathrm{d}v \,\mathrm{d}w \right]^{1/2} \left[ \int_{\mathbb{R}^3_w} \nu(w) |g_*(w)|^2 \,\mathrm{d}w \right]^{1/2} \\
			\lesssim& \|h\|_{L^2_v} |g^*|_{\nu} |g_*|_{\nu} + \|g^*\|_{L^2_w} |h|_{\nu_1} |g_*|_{\nu}.
		\end{align*}
	
			In the estimates above, we used the change of variables $(w,v) \to (w'',v'')$, the relation $|\varepsilon v| + |w| \lesssim |\varepsilon v''| + |w''|$, and the measure preservation $\mathrm{d}w'' \mathrm{d}v'' = \mathrm{d}w \mathrm{d}v$ (refer to Lemma \ref{Jcb.} in the Appendix for details). This concludes the proof of \eqref{Gamma.R.}.
		\end{proof}	
		
		\begin{remark}\label{rmk.3.1} 
			It is worth emphasizing that Lemma \ref{Gamma.Q.R.D} alone is insufficient to handle two critical types of terms, primarily due to the weaker dissipation associated with macroscopic terms under low regularity:
			\begin{itemize}
				\item[(i)] For the first class of problematic terms, such as $\frac{\varepsilon}{\eta} \left\langle  \Gamma_{\scriptscriptstyle \mathcal D}({\bf P_{\scriptscriptstyle 1}}h, {\bf P}g), {\bf (I-P_{\scriptscriptstyle 1})}h \right\rangle _{L^2_{x,v}}$, inequality \eqref{Gamma.D.} from Lemma \ref{Gamma.Q.R.D} yields:
				\begin{equation*}
					\frac{\varepsilon}{\eta} \left\langle  \Gamma_{\scriptscriptstyle \mathcal D}({\bf P_{\scriptscriptstyle 1}}h, {\bf P}g), {\bf (I-P_{\scriptscriptstyle 1})}h \right\rangle _{L^2_{x,v}} \lesssim \frac{\varepsilon}{\eta^{1/2}} \mathcal{E}_h \mathcal{C}_g \mathcal{D}_{h}.
				\end{equation*}
				The singular factor $\frac{\varepsilon}{\eta^{1/2}}$ would ruin the uniform energy estimates. Thus, a refined analysis of $\left\langle  \Gamma_{\scriptscriptstyle \mathcal D}(h,g), h \right\rangle _{L^2_v}$ is required.
				
				\item[(ii)] For the second class of delicate terms, such as $\left\langle  \Gamma_{\scriptscriptstyle \mathcal R}({\bf P}g, {\bf P_{\scriptscriptstyle 1}}h), {\bf P}g \right\rangle _{L^2_{x,w}}$, notice that $\|{\bf P}g\|_{\nu}$, $\|{\bf P_{\scriptscriptstyle 1}}h\|_{\nu_1}$, and $\|{\bf P}g\|_{\nu}$ are no longer controlled by the dissipation rate functionals $\mathcal{C}_{g}$, $\mathcal{C}_h$, and $\mathcal{C}_{g}$, respectively. Consequently, \eqref{Gamma.R.} fails to bound $\left\langle  \Gamma_{\scriptscriptstyle \mathcal R}({\bf P}g, {\bf P_{\scriptscriptstyle 1}}h), {\bf P}g \right\rangle _{L^2_{x,w}}$ by $\mathcal{E}(g,h) \left( \mathcal{C}_{g} + \mathcal{C}_h\right)  \mathcal{C}_{g}$.
			\end{itemize}
		\end{remark}
		
		\begin{lemma}[Estimate for $\frac{\varepsilon}{\eta}\Gamma_{\scriptscriptstyle \mathcal D}(h,g)$]
			Let $\alpha, \alpha_1, \alpha_2 \in \mathbb{N}^3$ with $\alpha_1 + \alpha_2 = \alpha$. Then the following estimate holds:
			\begin{equation}\label{Gamma.D.1}
				\begin{aligned}
					&\left| \big\langle \Gamma_{\scriptscriptstyle \mathcal D} (\partial_x^{\alpha_1}{\bf P_{\scriptscriptstyle 1}}h, \partial_x^{\alpha_2}{\bf P}g), \partial_x^\alpha {\bf (I-P_{\scriptscriptstyle 1})}h \big\rangle_{L^2_v} \right| \\[2pt]
					&\quad \lesssim \eta^{1/2} |\partial_x^{\alpha_1} a_{\scriptscriptstyle 1}| \left[ |\partial_x^{\alpha_2} b| + |\partial_x^{\alpha_2} c| \right] |\partial_x^\alpha {\bf (I-P_{\scriptscriptstyle 1})}h|_{\nu_1}.
				\end{aligned}
			\end{equation}
		\end{lemma}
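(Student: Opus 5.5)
The plan is to exploit the fact that, with the first argument in the null space $\mathcal N_1$, the operator $\Gamma_{\scriptscriptstyle \mathcal D}$ reduces to an explicit multiplier in $v$ that vanishes at $\varepsilon v=0$. The pairing against $\partial^\alpha_x{\bf (I-P_{\scriptscriptstyle 1})}h$ then costs only a moment of $\mu_{\varepsilon,\eta}$ of order one in $\varepsilon|v|$, which is $\eta^{1/2}$ by \eqref{mu}.

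\textbf{Step 1: reduction to a multiplier.} Write $\partial_x^{\alpha_1}{\bf P_{\scriptscriptstyle 1}}h=\partial_x^{\alpha_1}a_{\scriptscriptstyle 1}\sqrt{\mu_{\varepsilon,\eta}}$ and expand $\partial_x^{\alpha_2}{\bf P}g$ by \eqref{Def.abc.}. Bilinearity then reduces the claim to the three functions
$$
\Gamma_{\scriptscriptstyle \mathcal D}(\sqrt{\mu_{\varepsilon,\eta}},\psi\sqrt\mu),\qquad \psi\in\{1,\;w_i,\;|w|^2\}.
$$
By \eqref{velb}, $\mu_{\varepsilon,\eta}(v'')\mu(w'')=\mu_{\varepsilon,\eta}(v)\mu(w)$. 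Hence
$$
\Gamma_{\scriptscriptstyle \mathcal D}(\sqrt{\mu_{\varepsilon,\eta}},\psi\sqrt\mu)(v)=\sqrt{\mu_{\varepsilon,\eta}(v)}\,\Psi_\psi(\varepsilon v),
$$
where
$$
\Psi_\psi(z)=\int_{\mathbb R^3\times\mathbb S^2}|(z-w)\cdot\omega|\,\mu(w)\,\{\psi(w'')-\psi(w)\}\,\mathrm d\omega\,\mathrm dw,
$$
and by \eqref{velocityb}, $w''=w+\tfrac{2}{1+\eta}\omega(\omega\cdot(z-w))$. For $\psi=1$ the integrand vanishes, since $\Gamma_{\scriptscriptstyle \mathcal D}(\sqrt{\mu_{\varepsilon,\eta}},\sqrt\mu)=-\mathcal L_{\scriptscriptstyle \mathcal D}\sqrt{\mu_{\varepsilon,\eta}}=0$. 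So the coefficient $a$ drops out, which is why only $b$ and $c$ appear on the right-hand side.

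\textbf{Step 2: removing the value at $z=0$.} Because $\Gamma_{\scriptscriptstyle \mathcal D}(h,g)\in\mathcal N_1^\perp$ (remark after \eqref{Def.bar.a.}), any component $C\sqrt{\mu_{\varepsilon,\eta}}$ with $C$ constant in $v$ pairs to zero with ${\bf (I-P_{\scriptscriptstyle 1})}h$. I may therefore replace $\Psi_\psi(\varepsilon v)$ by $\Psi_\psi(\varepsilon v)-\Psi_\psi(0)$.

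For $\psi=w_i$, $\Psi_\psi(0)=0$ already, by oddness in $w$ of $|w\cdot\omega|(\omega\cdot w)\omega\,\mu(w)$.

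For $\psi=|w|^2$, a direct computation gives
$$
\Psi_\psi(0)=-\tfrac{4\eta}{(1+\eta)^2}\int|w\cdot\omega|^3\mu\,\mathrm d\omega\,\mathrm dw\neq0.
$$
This is a constant, so it is discarded in the same way.

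\textbf{Step 3: the main estimate.} The central point is the Lipschitz-type bound
$$
|\Psi_\psi(z)-\Psi_\psi(0)|\lesssim |z|(1+|z|)^2,
$$
uniformly in $\eta\in[0,1]$. I would prove it by splitting the integrand into two pieces. The first piece comes from the difference $|(z-w)\cdot\omega|-|w\cdot\omega|$, which is bounded by $|z|$. The second piece comes from the polynomial $\psi(w'')-\psi(w)$, which is quadratic or cubic in $(z,w)$ after multiplication by the kernel; its increment in $z$ is $\lesssim|z|(1+|z|+|w|)^2$. Gaussian moments of $\mu(w)$ absorb every power of $|w|$. The only non-smooth feature is the absolute value in the kernel, and it is handled by the triangle inequality, so no differentiation in $v$ is needed. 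This avoids the $\varepsilon^2/\eta$ singularity mentioned in the remarks.

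\textbf{Step 4: conclusion.} By Cauchy--Schwarz in $v$, together with $\nu_1\gtrsim1$ from \eqref{nu1-eqiv}, the pairing is bounded by
$$
|\partial_x^{\alpha_1}a_{\scriptscriptstyle 1}|\big(|\partial_x^{\alpha_2}b|+|\partial_x^{\alpha_2}c|\big)\Big(\int\mu_{\varepsilon,\eta}(v)\,\varepsilon^2|v|^2(1+\varepsilon|v|)^4\,\mathrm dv\Big)^{1/2}|\partial^\alpha_x{\bf (I-P_{\scriptscriptstyle 1})}h|_{\nu_1}.
$$
By \eqref{mu}, the $v$-integral is $\lesssim \varepsilon^2\cdot\eta/\varepsilon^2\,(1+\eta^{1/2})^4\lesssim\eta$, since $\eta\le1$. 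Its square root is therefore $\lesssim\eta^{1/2}$, which yields \eqref{Gamma.D.1}.
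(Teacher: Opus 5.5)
Your proof is correct and follows essentially the paper's route. Both arguments use the same reduction to a multiplier via $\mu_{\varepsilon,\eta}(v'')\mu(w'')=\mu_{\varepsilon,\eta}(v)\mu(w)$, the same odd-symmetry cancellation plus the triangle inequality $\big||(\varepsilon v-w)\cdot\omega|-|w\cdot\omega|\big|\leqslant|\varepsilon v\cdot\omega|$ for the $b$-part, and the same H\"older/moment bound from \eqref{mu}.

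The only variation is in the $c$-part. The paper does not subtract $\Psi_{|w|^2}(0)$; it bounds the piece $-\frac{4\eta}{(1+\eta)^2}|(\varepsilon v-w)\cdot\omega|^3$ directly, using its explicit factor $\eta\leqslant\eta^{1/2}$. Your subtraction is also valid, but the correct justification is that $C\sqrt{\mu_{\varepsilon,\eta}}\in\mathcal N_1$ is orthogonal to $\partial^\alpha_x{\bf (I-P_{\scriptscriptstyle 1})}h$, not that $\Gamma_{\scriptscriptstyle \mathcal D}(h,g)\in\mathcal N_1^\perp$.
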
		
		
		\begin{proof}
			Although estimate \eqref{Gamma.D.} can be derived via arguments similar to those in \cite[Lemma~2.3, pp.~1111--1113]{MR1908664}, we focus here on establishing \eqref{Gamma.D.1}. 
			
			Substituting the expansions \eqref{Def.abc.} and \eqref{Def.bar.a.} into $\big\langle \Gamma_{\scriptscriptstyle \mathcal D} (\partial_x^{\alpha_1}{\bf P_{\scriptscriptstyle 1}}h, \partial_x^{\alpha_2}{\bf P}g), \partial_x^\alpha {\bf (I-P_{\scriptscriptstyle 1})}h \big\rangle_{L^2_v}$, we decompose the inner product into three parts:
			\begin{equation}\label{Deqn1}
				\begin{aligned}
					&\big\langle \Gamma_{\scriptscriptstyle \mathcal D}(\partial_x^{\alpha_1}a_{\scriptscriptstyle 1}\sqrt{\mu_{\varepsilon,\eta}}, \partial_x^{\alpha_2}a\sqrt{\mu}), \partial_x^\alpha {\bf (I-P_{\scriptscriptstyle 1})}h \big\rangle_{L^2_v} \\
					&\quad + \big\langle \Gamma_{\scriptscriptstyle \mathcal D}(\partial_x^{\alpha_1}a_{\scriptscriptstyle 1}\sqrt{\mu_{\varepsilon,\eta}}, \partial_x^{\alpha_2}b \cdot w\sqrt{\mu}), \partial_x^\alpha {\bf (I-P_{\scriptscriptstyle 1})}h \big\rangle_{L^2_v} \\
					&\quad + \big\langle \Gamma_{\scriptscriptstyle \mathcal D}(\partial_x^{\alpha_1}a_{\scriptscriptstyle 1}\sqrt{\mu_{\varepsilon,\eta}}, \partial_x^{\alpha_2}c |w|^2\sqrt{\mu}), \partial_x^\alpha {\bf (I-P_{\scriptscriptstyle 1})}h \big\rangle_{L^2_v}.
				\end{aligned}
			\end{equation}	
			In view of relation \eqref{velb}, we recall the fundamental identity
			\begin{equation}\label{ENr}
				\mu_{\varepsilon,\eta}(v'') \mu(w'') = \mu_{\varepsilon,\eta}(v) \mu(w),
			\end{equation}
			which directly implies that the first term vanishes:
			\begin{equation}\label{Deqn2}
				\big\langle \Gamma_{\scriptscriptstyle \mathcal D}(\partial_x^{\alpha_1}a_{\scriptscriptstyle 1}\sqrt{\mu_{\varepsilon,\eta}}, \partial_x^{\alpha_2}a\sqrt{\mu}), \partial_x^\alpha {\bf (I-P_{\scriptscriptstyle 1})}h \big\rangle_{L^2_v} = 0.
			\end{equation}
			Next, using identity \eqref{ENr}, the second term in \eqref{Deqn1} can be rewritten as
			\begin{equation}\label{I-12}
				\begin{aligned}
					&\big\langle \Gamma_{\scriptscriptstyle \mathcal D}(\partial_x^{\alpha_1}a_{\scriptscriptstyle 1}\sqrt{\mu_{\varepsilon,\eta}}, \partial_x^{\alpha_2}b \cdot w\sqrt{\mu}), \partial_x^\alpha {\bf (I-P_{\scriptscriptstyle 1})}h \big\rangle_{L^2_v} \\
					=& \partial_x^{\alpha_1}a_{\scriptscriptstyle 1} \int_{\mathbb{R}^3} \frac{\partial_x^{\alpha}{\bf (I-P_{\scriptscriptstyle 1})}h(v)}{\sqrt{\mu_{\varepsilon,\eta}(v)}} \int_{\mathbb{R}^3 \times \mathbb{S}^2} \partial_x^{\alpha_2}b \cdot w'' \mu_{\varepsilon,\eta}(v'') \mu(w'') |(\varepsilon v - w) \cdot \omega| \,\mathrm{d}\omega \,\mathrm{d}w \,\mathrm{d}v \\
					&- \partial_x^{\alpha_1}a_{\scriptscriptstyle 1} \int_{\mathbb{R}^3} \frac{\partial_x^{\alpha}{\bf (I-P_{\scriptscriptstyle 1})}h(v)}{\sqrt{\mu_{\varepsilon,\eta}(v)}} \int_{\mathbb{R}^3 \times \mathbb{S}^2} \partial_x^{\alpha_2}b \cdot w \, \mu_{\varepsilon,\eta}(v) \mu(w) |(\varepsilon v - w) \cdot \omega| \,\mathrm{d}\omega \,\mathrm{d}w \,\mathrm{d}v \\
					=& \partial_x^{\alpha_1}a_{\scriptscriptstyle 1} \int_{\mathbb{R}^3} \partial_x^{\alpha}{\bf (I-P_{\scriptscriptstyle 1})}h(v) \cdot \sqrt{\mu_{\varepsilon,\eta}(v)} \\
					&\times \left\{ \int_{\mathbb{R}^3} \mu(w) \int_{\mathbb{S}^2} \partial_x^{\alpha_2}b \cdot (w'' - w) |(\varepsilon v - w) \cdot \omega| \,\mathrm{d}\omega \,\mathrm{d}w \right\} \mathrm{d}v.
				\end{aligned}
			\end{equation}
			Taking advantage of the identity
			\[
			w'' - w = \frac{2}{1+\eta}\omega(\varepsilon v \cdot \omega) - \frac{2}{1+\eta}\omega (w \cdot \omega),
			\]
			the inner integral over $(w, \omega)$ in \eqref{I-12} becomes
			\begin{equation}\label{I-12-1}
				\begin{aligned}
					&\int_{\mathbb{R}^3} \mu(w) \int_{\mathbb{S}^2} \partial_x^{\alpha_2}b \cdot (w'' - w) |(\varepsilon v - w) \cdot \omega| \,\mathrm{d}\omega \,\mathrm{d}w \\
					=& \frac{2}{1+\eta} \int_{\mathbb{R}^3} \mu(w) \int_{\mathbb{S}^2} \partial_x^{\alpha_2}b \cdot \omega (\varepsilon v \cdot \omega) |(\varepsilon v - w) \cdot \omega| \,\mathrm{d}\omega \,\mathrm{d}w \\
					&- \frac{2}{1+\eta} \int_{\mathbb{R}^3} \mu(w) \int_{\mathbb{S}^2} \partial_x^{\alpha_2}b \cdot \omega (w \cdot \omega) |(\varepsilon v - w) \cdot \omega| \,\mathrm{d}\omega \,\mathrm{d}w.
				\end{aligned}
			\end{equation}
			For the first term on the right-hand side of \eqref{I-12-1}, applying estimate \eqref{nu1-eqiv} yields
			\begin{equation}\label{I-12-11}
				\begin{aligned}
					&\left| \frac{2}{1+\eta} \int_{\mathbb{R}^3} \mu(w) \int_{\mathbb{S}^2} \partial_x^{\alpha_2}b \cdot \omega (\varepsilon v \cdot \omega) |(\varepsilon v - w) \cdot \omega| \,\mathrm{d}\omega \,\mathrm{d}w \right| \\
					\lesssim& |\partial_x^{\alpha_2}b| \cdot |\varepsilon v| \cdot \int_{\mathbb{R}^3} \mu(w) |\varepsilon v - w| \,\mathrm{d}w \lesssim |\partial_x^{\alpha_2}b| \cdot |\varepsilon v| \cdot \nu_1(v). 
				\end{aligned}
			\end{equation}
			For the second term on the right-hand side of \eqref{I-12-1}, we employ the triangle inequality
			\[
			\big| |(\varepsilon v - w) \cdot \omega| - |w \cdot \omega| \big| \leqslant |\varepsilon v \cdot \omega|
			\]
			together with the symmetry identity
			\[
			\frac{2}{1+\eta} \int_{\mathbb{R}^3} \mu(w) \int_{\mathbb{S}^2} \partial_x^{\alpha_2}b \cdot \omega (w \cdot \omega) |w \cdot \omega| \,\mathrm{d}\omega \,\mathrm{d}w = 0.
			\]
			This allows us to estimate the second term as
			\begin{equation}\label{I-12-12}
				\begin{aligned}
					&\left| -\frac{2}{1+\eta} \int_{\mathbb{R}^3} \mu(w) \int_{\mathbb{S}^2} \partial_x^{\alpha_2}b \cdot \omega (w \cdot \omega) |(\varepsilon v - w) \cdot \omega| \,\mathrm{d}\omega \,\mathrm{d}w \right| \\
					=& \left| -\frac{2}{1+\eta} \int_{\mathbb{R}^3} \mu(w) \int_{\mathbb{S}^2} \partial_x^{\alpha_2}b \cdot \omega (w \cdot \omega) \big[ |(\varepsilon v - w) \cdot \omega| - |w \cdot \omega| \big] \mathrm{d}\omega \,\mathrm{d}w \right| \\
				\lesssim & \int_{\mathbb{R}^3} \mu(w) \int_{\mathbb{S}^2} |\partial_x^{\alpha_2}b \cdot \omega| \cdot |w \cdot \omega| \cdot \big| |(\varepsilon v - w) \cdot \omega| - |w \cdot \omega| \big| \,\mathrm{d}\omega \,\mathrm{d}w \\
				\lesssim & |\partial_x^{\alpha_2}b| \cdot |\varepsilon v| \cdot \int_{\mathbb{R}^3} \mu(w) |w| \,\mathrm{d}w \lesssim |\partial_x^{\alpha_2}b| \cdot |\varepsilon v|.
				\end{aligned}
			\end{equation}
			Combining \eqref{I-12-1} with estimates \eqref{I-12-11} and \eqref{I-12-12}, we obtain
			\begin{equation}\label{I-12-13}
				\left| \int_{\mathbb{R}^3} \mu(w) \int_{\mathbb{S}^2} \partial_x^{\alpha_2}b \cdot (w'' - w) |(\varepsilon v - w) \cdot \omega| \,\mathrm{d}\omega \,\mathrm{d}w \right| \lesssim |\partial_x^{\alpha_2}b| \cdot |\varepsilon v| \cdot \nu_1(v).
			\end{equation}
			Inserting \eqref{I-12-13} into \eqref{I-12} gives
			\begin{equation*}
				\begin{aligned}
					&\big\langle \Gamma_{\scriptscriptstyle \mathcal D}(\partial_x^{\alpha_1}a_{\scriptscriptstyle 1}\sqrt{\mu_{\varepsilon,\eta}}, \partial_x^{\alpha_2}b \cdot w\sqrt{\mu}), \partial_x^\alpha {\bf (I-P_{\scriptscriptstyle 1})}h \big\rangle_{L^2_v} \\
					&\quad \lesssim |\partial_x^{\alpha_1}a_{\scriptscriptstyle 1}| \cdot |\partial_x^{\alpha_2}b| \int_{\mathbb{R}^3} |\partial_x^{\alpha}{\bf (I-P_{\scriptscriptstyle 1})}h(v)| \cdot \sqrt{\mu_{\varepsilon,\eta}(v)} \cdot |\varepsilon v| \cdot \nu_1(v) \,\mathrm{d}v.
				\end{aligned}
			\end{equation*}
			Applying Hölder's inequality along with \eqref{nu1-eqiv} and the moment equivalence \eqref{mu}, we arrive at
			\begin{equation}\label{Deqn3}
				\begin{aligned}
					&\big\langle \Gamma_{\scriptscriptstyle \mathcal D}(\partial_x^{\alpha_1}a_{\scriptscriptstyle 1}\sqrt{\mu_{\varepsilon,\eta}}, \partial_x^{\alpha_2}b \cdot w\sqrt{\mu}), \partial_x^\alpha {\bf (I-P_{\scriptscriptstyle 1})}h \big\rangle_{L^2_v} \\
					&\quad \lesssim |\partial_x^{\alpha_1}a_{\scriptscriptstyle 1}| \cdot |\partial_x^{\alpha_2}b| \cdot |\partial_x^{\alpha}{\bf (I-P_{\scriptscriptstyle 1})}h|_{\nu_1} \cdot \left\| \sqrt{\mu_{\varepsilon,\eta}(v)} \cdot |\varepsilon v| \cdot (1+|\varepsilon v|)^{1/2} \right\|_{L^2_v} \\
					&\quad \lesssim \eta^{1/2} |\partial_x^{\alpha_1}a_{\scriptscriptstyle 1}| \cdot |\partial_x^{\alpha_2}b| \cdot |\partial_x^{\alpha}{\bf (I-P_{\scriptscriptstyle 1})}h|_{\nu_1}.
				\end{aligned}
			\end{equation}		
			By an argument analogous to \eqref{I-12}, the third term in \eqref{Deqn1} can be expressed as
			\begin{align*}
				&\big\langle \Gamma_{\scriptscriptstyle \mathcal D}(\partial_x^{\alpha_1}a_{\scriptscriptstyle 1}\sqrt{\mu_{\varepsilon,\eta}}, \partial_x^{\alpha_2}c |w|^2\sqrt{\mu}), \partial_x^\alpha {\bf (I-P_{\scriptscriptstyle 1})}h \big\rangle_{L^2_v} \\
				=&\partial_x^{\alpha_1}a_{\scriptscriptstyle 1} \int_{\mathbb{R}^3} \partial_x^{\alpha}{\bf (I-P_{\scriptscriptstyle 1})}h(v) \cdot \sqrt{\mu_{\varepsilon,\eta}(v)} \\
				&\times \int_{\mathbb{R}^3} \mu(w) \int_{\mathbb{S}^2} \partial_x^{\alpha_2}c \left( |w''|^2 - |w|^2 \right) |(\varepsilon v - w) \cdot \omega| \,\mathrm{d}\omega \,\mathrm{d}w \,\mathrm{d}v.
			\end{align*}
			Observing the relation
			\begin{align*}
				|w''|^2 - |w|^2 &= \frac{\varepsilon^2}{\eta} \left( |v|^2 - |v''|^2 \right) \\
				&= \frac{4}{1+\eta} \left[ (\varepsilon v - w) \cdot \omega \right] (\varepsilon v \cdot \omega) - \frac{4\eta}{(1+\eta)^2} |(\varepsilon v - w) \cdot \omega|^2,
			\end{align*}
			we further write
		\begin{equation}\label{I-13}
			\begin{aligned}
				&\big\langle \Gamma_{\scriptscriptstyle \mathcal D}(\partial_x^{\alpha_1}a_{\scriptscriptstyle 1}\sqrt{\mu_{\varepsilon,\eta}}, \partial_x^{\alpha_2}c |w|^2\sqrt{\mu}), \partial_x^\alpha {\bf (I-P_{\scriptscriptstyle 1})}h \big\rangle_{L^2_v} \\
				=& \frac{4}{1+\eta} \partial_x^{\alpha_1}a_{\scriptscriptstyle 1} \int_{\mathbb{R}^3} \partial_x^{\alpha}{\bf (I-P_{\scriptscriptstyle 1})}h(v) \cdot \sqrt{\mu_{\varepsilon,\eta}(v)} \\
				&\times \int_{\mathbb{R}^3} \mu(w) \int_{\mathbb{S}^2} \partial_x^{\alpha_2}c \left[ (\varepsilon v - w) \cdot \omega \right] (\varepsilon v \cdot \omega) |(\varepsilon v - w) \cdot \omega| \,\mathrm{d}\omega \,\mathrm{d}w \,\mathrm{d}v \\
				&-\frac{4\eta}{(1+\eta)^2} \partial_x^{\alpha_1}a_{\scriptscriptstyle 1} \int_{\mathbb{R}^3} \partial_x^{\alpha}{\bf (I-P_{\scriptscriptstyle 1})}h(v) \cdot \sqrt{\mu_{\varepsilon,\eta}(v)} \\
				&\times \int_{\mathbb{R}^3} \mu(w) \int_{\mathbb{S}^2} \partial_x^{\alpha_2}c \, |(\varepsilon v - w) \cdot \omega|^3 \,\mathrm{d}\omega \,\mathrm{d}w \,\mathrm{d}v.
			\end{aligned}
		\end{equation}		
			For the first integral on the right-hand side of \eqref{I-13}, applying Hölder's inequality together with \eqref{nu1-eqiv} and \eqref{mu} yields
			\begin{equation}\label{I-13-1}
			\begin{aligned}
				&\left| \frac{4}{1+\eta} \partial_x^{\alpha_1}a_{\scriptscriptstyle 1} \int_{\mathbb{R}^3} \partial_x^{\alpha}{\bf (I-P_{\scriptscriptstyle 1})}h(v) \cdot \sqrt{\mu_{\varepsilon,\eta}(v)} \right. \\
				&\quad \left. \times \int_{\mathbb{R}^3} \mu(w) \int_{\mathbb{S}^2} \partial_x^{\alpha_2}c \left[ (\varepsilon v - w) \cdot \omega \right] (\varepsilon v \cdot \omega) |(\varepsilon v - w) \cdot \omega| \,\mathrm{d}\omega \,\mathrm{d}w \,\mathrm{d}v \right| \\
				\lesssim &|\partial_x^{\alpha_1}a_{\scriptscriptstyle 1}| \cdot |\partial_x^{\alpha_2}c| \int_{\mathbb{R}^3} |\partial_x^{\alpha}{\bf (I-P_{\scriptscriptstyle 1})}h(v)| \cdot \sqrt{\mu_{\varepsilon,\eta}(v)} \cdot |\varepsilon v| \cdot (1+|\varepsilon v|)^2 \,\mathrm{d}v \\
				\lesssim & |\partial_x^{\alpha_1}a_{\scriptscriptstyle 1}| \cdot |\partial_x^{\alpha_2}c| \cdot |\partial_x^{\alpha}{\bf (I-P_{\scriptscriptstyle 1})}h|_{\nu_1} \cdot \left\| \sqrt{\mu_{\varepsilon,\eta}(v)} \cdot |\varepsilon v| \cdot (1+|\varepsilon v|)^{3/2} \right\|_{L^2_v} \\
		 	\lesssim & \eta^{1/2} |\partial_x^{\alpha_1}a_{\scriptscriptstyle 1}| \cdot |\partial_x^{\alpha_2}c| \cdot |\partial_x^{\alpha}{\bf (I-P_{\scriptscriptstyle 1})}h|_{\nu_1}.
			\end{aligned}
		\end{equation}
			For the second integral on the right-hand side of \eqref{I-13}, a similar application of Hölder's inequality, \eqref{nu1-eqiv}, and \eqref{mu} leads to
			\begin{equation}\label{I-13-2}
				\begin{aligned}
					&\left| \frac{4\eta}{(1+\eta)^2} \partial_x^{\alpha_1}a_{\scriptscriptstyle 1} \int_{\mathbb{R}^3} \partial_x^{\alpha}{\bf (I-P_{\scriptscriptstyle 1})}h(v) \cdot \sqrt{\mu_{\varepsilon,\eta}(v)} \int_{\mathbb{R}^3} \mu(w) \right. \\
					&\quad \left. \times \int_{\mathbb{S}^2} \partial_x^{\alpha_2}c \, |(\varepsilon v - w) \cdot \omega|^3 \,\mathrm{d}\omega \,\mathrm{d}w \,\mathrm{d}v \right| \\
					&\lesssim \eta |\partial_x^{\alpha_1}a_{\scriptscriptstyle 1}| \cdot |\partial_x^{\alpha_2}c| \cdot |\partial_x^{\alpha}{\bf (I-P_{\scriptscriptstyle 1})}h|_{\nu_1}.
				\end{aligned}
			\end{equation}
			Combining \eqref{I-13} with \eqref{I-13-1} and \eqref{I-13-2}, we obtain
			\begin{equation}\label{Deqn4}
				\big\langle \Gamma_{\scriptscriptstyle \mathcal D}(\partial_x^{\alpha_1}a_{\scriptscriptstyle 1}\sqrt{\mu_{\varepsilon,\eta}}, \partial_x^{\alpha_2}c |w|^2\sqrt{\mu}), \partial_x^\alpha {\bf (I-P_{\scriptscriptstyle 1})}h \big\rangle_{L^2_v} \lesssim \eta^{1/2} |\partial_x^{\alpha_1}a_{\scriptscriptstyle 1}| \cdot |\partial_x^{\alpha_2}c| \cdot |\partial_x^{\alpha}{\bf (I-P_{\scriptscriptstyle 1})}h|_{\nu_1}.
			\end{equation}
			Finally, substituting \eqref{Deqn2}, \eqref{Deqn3}, and \eqref{Deqn4} back into \eqref{Deqn1} completes the proof of \eqref{Gamma.D.1}.
		\end{proof}
		
		Now, we establish an estimate for $\frac{\varepsilon}{\eta}\Gamma_{\scriptscriptstyle \mathcal D}(h,g)$ in terms of the instantaneous energy functionals and dissipation rates.
		
		\begin{lemma}\label{lemmaD}
			Let $h$ and $g$ be smooth functions. Suppose that $\varepsilon, \eta \in (0,1]$ and $\alpha \in \mathbb{N}^3$ with $|\alpha| \leqslant N$ and $N \geqslant 2$. Then the following estimate holds:
		\begin{equation}\label{D}
				\frac{\varepsilon}{\eta} \left\langle \partial_x^\alpha \Gamma_{\scriptscriptstyle \mathcal D}(h,g), \partial_x^\alpha h \right\rangle_{L^2_{x,v}} \lesssim \frac{\varepsilon^2}{\eta^{1/2}} {\mathcal E}_h {\mathcal D}_g {\mathcal D}_h +\varepsilon {\mathcal E}_g {\mathcal D}_h^2+ {\mathcal E}_g{\mathcal C}_h{\mathcal D}_h.
			\end{equation}
		\end{lemma}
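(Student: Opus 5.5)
The plan is to expand with Leibniz, $\partial_x^\alpha \Gamma_{\scriptscriptstyle \mathcal D}(h,g)=\sum_{\alpha_1+\alpha_2=\alpha}C_\alpha^{\alpha_1}\Gamma_{\scriptscriptstyle \mathcal D}(\partial_x^{\alpha_1}h,\partial_x^{\alpha_2}g)$. Because $\Gamma_{\scriptscriptstyle \mathcal D}(h,g)\in\mathcal N_1^\perp$ (remark after \eqref{span-2}), the test function $\partial_x^\alpha h$ can be replaced by $\partial_x^\alpha{\bf (I-P_{\scriptscriptstyle 1})}h$. The resulting factor $|\partial_x^\alpha{\bf (I-P_{\scriptscriptstyle 1})}h|_{\nu_1}$ will always be paired with $\eta^{-1/2}$ to produce $\mathcal D_h$. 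Next, decompose $\partial_x^{\alpha_1}h={\bf P_{\scriptscriptstyle 1}}+{\bf (I-P_{\scriptscriptstyle 1})}$ and $\partial_x^{\alpha_2}g={\bf P}+{\bf (I-P)}$, which gives four types of terms. In each, after the spatial Cauchy--Schwarz step, the lower-order factor is put in $L^\infty_x$ (Sobolev, $N\ge2$) and the other two in $L^2_x$.

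\textbf{Term (P$_1$h, Pg).} This is the dangerous one (Remark~\ref{rmk.3.1}(i)), and I would apply \eqref{Gamma.D.1}. It gives $\frac{\varepsilon}{\eta}\eta^{1/2}\int |\partial^{\alpha_1}a_1|\,|\partial^{\alpha_2}(b,c)|\,|\partial^\alpha{\bf (I-P_{\scriptscriptstyle 1})}h|_{\nu_1}$. Writing $\frac{\varepsilon}{\eta^{1/2}}=\varepsilon\cdot\eta^{-1/2}$ and absorbing $\eta^{-1/2}$ into $\mathcal D_h$ leaves $\varepsilon\,|\partial^{\alpha_1}a_1|\,|\partial^{\alpha_2}(b,c)|$.

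If $\alpha_1\neq0$, then $\varepsilon\|\partial^{\alpha_1}a_1\|\lesssim\mathcal C_h$ and $(b,c)$ is bounded by $\mathcal E_g$. Taking whichever factor has fewer derivatives in $L^\infty_x$ gives $\mathcal E_g\mathcal C_h\mathcal D_h$.

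If $\alpha_1=0$, then $a_1$ itself carries no dissipation. This case is the main obstacle. I would try to pass the factor $\varepsilon$ onto $\partial^\alpha(b,c)$, but $\varepsilon\mathcal E_g$ times $\mathcal E_h$ does not dissipate. Instead I would use $\|a_1\|_{L^\infty}\lesssim\|\nabla a_1\|_{H^1}$ via Sobolev in 3D, or an $L^3$--$L^6$ Hölder split: $\|a_1\|_{L^3}\|\partial^\alpha(b,c)\|_{L^6}\lesssim\mathcal E_h\|\nabla\partial^\alpha(b,c)\|$. When $|\alpha|=N$ this loses a derivative. The alternative is $\|a_1\|_{L^6}\lesssim\|\nabla a_1\|$, so $\varepsilon\|a_1\|_{L^6}\lesssim\mathcal C_h$, with $\partial^\alpha(b,c)$ in $L^3\subset H^1$ when $|\alpha|<N$ and $\partial^\alpha(b,c)$ in $L^2$ with $a_1\in L^\infty$ otherwise. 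In the top case the factor $\|a_1\|_{L^\infty}\lesssim\|\nabla a_1\|_{H^1}$ is again $\varepsilon^{-1}\mathcal C_h$. So $\varepsilon\|a_1\|_{L^\infty}\le\mathcal C_h$ and $\|\partial^\alpha(b,c)\|\le\mathcal E_g$, giving $\mathcal E_g\mathcal C_h\mathcal D_h$ throughout.

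\textbf{Remaining terms.} For terms containing ${\bf (I-P)}g$, use \eqref{Gamma.D.}: $\frac{\varepsilon}{\eta}\big(\|g\|\,|h^*|_{\nu_1}+\|h^*\|\,|g|_{\nu}\big)|h_*|_{\nu_1}$. Here $\frac{\varepsilon}{\eta}|{\bf (I-P)}g|_\nu\,|h_*|_{\nu_1}\le\frac{\varepsilon^2}{\eta^{1/2}}\cdot\frac1\varepsilon|{\bf (I-P)}g|_\nu\cdot\eta^{-1/2}|h_*|_{\nu_1}$, which yields $\frac{\varepsilon^2}{\eta^{1/2}}\mathcal E_h\mathcal D_g\mathcal D_h$.

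For the other piece, with $h^*={\bf (I-P_{\scriptscriptstyle 1})}h$, the factor is $\frac{\varepsilon}{\eta}|h^*|_{\nu_1}|h_*|_{\nu_1}\|g\|=\varepsilon\,\mathcal D_h^2\mathcal E_g$.

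For (P$_1$h, (I$-$P)g), the first part of \eqref{Gamma.D.} needs $|{\bf P_{\scriptscriptstyle 1}}h|_{\nu_1}\sim|a_1|$ (since $\varepsilon|v|\lesssim\eta^{1/2}$ under $\mu_{\varepsilon,\eta}$), which appears with $\|{\bf (I-P)}g\|$. That is again of the $\frac{\varepsilon^2}{\eta^{1/2}}\mathcal E_h\mathcal D_g\mathcal D_h$ form.

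For ((I$-$P$_1$)h, Pg), \eqref{Gamma.D.} gives $\varepsilon\mathcal E_g\mathcal D_h^2$ for the first piece. The second piece is $\frac{\varepsilon}{\eta}\|{\bf (I-P_{\scriptscriptstyle 1})}h\|\,|{\bf P}g|_\nu\,|h_*|_{\nu_1}\lesssim\varepsilon\,\mathcal D_h\,\mathcal E_g\,\mathcal D_h$, using $\|{\bf (I-P_{\scriptscriptstyle 1})}h\|\le\eta^{1/2}\mathcal D_h$ with the smaller-derivative factor in $L^\infty_x$.

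Summing over $\alpha_1\le\alpha$ gives \eqref{D}.
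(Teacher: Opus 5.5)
Your proposal is correct and follows the paper's proof essentially step for step. The steps match exactly: the reduction to $\partial_x^\alpha(\mathbf{I-P}_{\scriptscriptstyle 1})h$ via $\Gamma_{\scriptscriptstyle\mathcal D}(h,g)\in\mathcal N_1^\perp$, the same four-way macro--micro split, \eqref{Gamma.D.1} for the $({\bf P_{\scriptscriptstyle 1}}h,{\bf P}g)$ term with $\varepsilon\|a_{\scriptscriptstyle 1}\|_{L^\infty_x}\lesssim\varepsilon\|\nabla_x a_{\scriptscriptstyle 1}\|_{H^1_x}\lesssim\mathcal C_h$ when $\alpha_1=0$, and \eqref{Gamma.D.} for the other three terms.

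The one imprecision is your blanket rule of placing the lower-order factor in $L^\infty_x$, which fails for $N=2$ with $|\alpha_1|=|\alpha_2|=1$, since $H^2_x$ regularity does not put a first derivative in $L^\infty_x$. In that case you need the $L^3_x$--$L^6_x$--$L^2_x$ H\"older split, which is exactly what the paper uses for $1\leqslant|\alpha_1|\leqslant|\alpha|-1$.
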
	
		
		\begin{proof}
			Fixing $\alpha$ with $|\alpha| \leqslant N$ and $N \geqslant 2$, the null property induced by the symmetry of $\partial_x^\alpha \Gamma_{\scriptscriptstyle \mathcal D}(h,g)$ yields
			\[
			\left\langle \partial_x^\alpha \Gamma_{\scriptscriptstyle \mathcal D}(h,g), \partial_x^\alpha \mathbf{P}_{\scriptscriptstyle 1}h \right\rangle_{L^2_{x,v}} = 0.
			\]
			Consequently, by applying the Leibniz rule, we obtain
			\[
			\frac{\varepsilon}{\eta} \left\langle \partial_x^\alpha \Gamma_{\scriptscriptstyle \mathcal D}(h,g), \partial_x^\alpha h \right\rangle_{L^2_{x,v}} = \frac{\varepsilon}{\eta} \sum_{\alpha_1 + \alpha_2 = \alpha} \binom{\alpha}{\alpha_1} \left\langle \Gamma_{\scriptscriptstyle \mathcal D}(\partial_x^{\alpha_1} h, \partial_x^{\alpha_2}g), \partial_x^\alpha (\mathbf{I - P}_{\scriptscriptstyle 1})h \right\rangle_{L^2_{x,v}}.
			\]
			In view of the macro-micro decompositions for $h$ and $g$ in \eqref{Decom-h} and \eqref{Decom-g}, each inner product term $\frac{\varepsilon}{\eta} \left\langle \Gamma_{\scriptscriptstyle \mathcal D}(\partial_x^{\alpha_1} h, \partial_x^{\alpha_2}g), \partial_x^\alpha (\mathbf{I - P}_{\scriptscriptstyle 1})h \right\rangle_{L^2_{x,v}}$ can be further expanded into four components:
			\begin{equation}\label{Deqn10}
				\begin{aligned}
					&\frac{\varepsilon}{\eta} \left\langle \Gamma_{\scriptscriptstyle \mathcal D}(\partial_x^{\alpha_1} \mathbf{P}_{\scriptscriptstyle 1}h, \partial_x^{\alpha_2} \mathbf{P}g), \partial_x^\alpha (\mathbf{I - P}_{\scriptscriptstyle 1})h \right\rangle_{L^2_{x,v}} \\
					&\quad + \frac{\varepsilon}{\eta} \left\langle \Gamma_{\scriptscriptstyle \mathcal D}(\partial_x^{\alpha_1} \mathbf{P}_{\scriptscriptstyle 1}h, \partial_x^{\alpha_2} (\mathbf{I - P})g), \partial_x^\alpha (\mathbf{I - P}_{\scriptscriptstyle 1})h \right\rangle_{L^2_{x,v}} \\
					&\quad + \frac{\varepsilon}{\eta} \left\langle \Gamma_{\scriptscriptstyle \mathcal D}(\partial_x^{\alpha_1} (\mathbf{I - P}_{\scriptscriptstyle 1})h, \partial_x^{\alpha_2} \mathbf{P}g), \partial_x^\alpha (\mathbf{I - P}_{\scriptscriptstyle 1})h \right\rangle_{L^2_{x,v}} \\
					&\quad + \frac{\varepsilon}{\eta} \left\langle \Gamma_{\scriptscriptstyle \mathcal D}(\partial_x^{\alpha_1} (\mathbf{I - P}_{\scriptscriptstyle 1})h, \partial_x^{\alpha_2} (\mathbf{I - P})g), \partial_x^\alpha (\mathbf{I - P}_{\scriptscriptstyle 1})h \right\rangle_{L^2_{x,v}}.
				\end{aligned}
			\end{equation}
			
			We estimate each of these four terms individually. For the first term on the right-hand side of \eqref{Deqn10}. Applying estimate \eqref{Gamma.D.1} in combination with H\"older's inequality and Sobolev's inequality over $x$---specifically, using the $L^\infty_x$-$L^2_x$-$L^2_x$ H\"older inequality together with the Sobolev inequality
				$
				\|u\|_{L^\infty_x}\lesssim \|\nabla_x u\|_{L^2_x}^{1/2}\|\nabla_x^2 u\|_{L^2_x}^{1/2}\lesssim\|\nabla_x u\|_{H^1_x}
				$
				for $|\alpha_1|=0$; using the $L^3_x$-$L^6_x$-$L^2_x$ H\"older inequality together with the Sobolev inequalities
				$
				\|u\|_{L^6_x}\lesssim\|u\|_{H^1_x},\;
				\|u\|_{L^3_x}\lesssim\|u\|_{H^1_x}^{1/2}\|u\|_{L^2_x}^{1/2}
				$
				for $1\leqslant|\alpha_1|\leqslant|\alpha|-1$; and using the $L^2_x$-$L^\infty_x$-$L^2_x$ H\"older inequality together with the Sobolev inequality
				$
				\|u\|_{L^\infty_x}\lesssim\|u\|_{H^2_x}
				$
				for $|\alpha_1|=|\alpha|$---we arrive at
				\begin{equation}\label{Deqn11}
					\frac{\varepsilon}{\eta} \left| \left\langle \Gamma_{\scriptscriptstyle \mathcal D}(\partial_x^{\alpha_1} \mathbf{P}_{\scriptscriptstyle 1}h, \partial_x^{\alpha_2}\mathbf{P}g), \partial_x^\alpha (\mathbf{I - P}_{\scriptscriptstyle 1})h \right\rangle_{L^2_{x,v}} \right| \lesssim {\mathcal E}_g{\mathcal C}_h{\mathcal D}_h.
			\end{equation}
			
			For the second term in \eqref{Deqn10}, invoking estimate \eqref{Gamma.D.} with $(h^*, g, h_*)$ replaced by $(\partial_x^{\alpha_1} \mathbf{P}_{\scriptscriptstyle 1}h, \partial_x^{\alpha_2}(\mathbf{I - P})g, \partial_x^\alpha (\mathbf{I - P}_{\scriptscriptstyle 1})h)$ gives
			\begin{equation}\label{Deqn5}
				\begin{aligned}
					&\frac{\varepsilon}{\eta} \left| \left\langle \Gamma_{\scriptscriptstyle \mathcal D}(\partial_x^{\alpha_1} \mathbf{P}_{\scriptscriptstyle 1}h, \partial_x^{\alpha_2}(\mathbf{I - P})g), \partial_x^\alpha (\mathbf{I - P}_{\scriptscriptstyle 1})h \right\rangle_{L^2_{x,v}} \right| \\
					&\quad \lesssim \frac{\varepsilon}{\eta} \int_{\mathbb{R}^3} \left( \|\partial_x^{\alpha_1} \mathbf{P}_{\scriptscriptstyle 1}h\|_{L^2_v} |\partial_x^{\alpha_2}(\mathbf{I - P})g|_{\nu} + |\partial_x^{\alpha_1} \mathbf{P}_{\scriptscriptstyle 1}h|_{\nu_1} \|\partial_x^{\alpha_2}(\mathbf{I - P})g\|_{L^2_w} \right) \\
					&\quad\quad \times |\partial_x^\alpha (\mathbf{I - P}_{\scriptscriptstyle 1})h|_{\nu_1} \,\mathrm{d}x \\
					&\quad \lesssim \frac{\varepsilon}{\eta} \int_{\mathbb{R}^3} |\partial_x^{\alpha_1} a_{\scriptscriptstyle 1}| \cdot |\partial_x^{\alpha_2}(\mathbf{I - P})g|_{\nu} |\partial_x^\alpha (\mathbf{I - P}_{\scriptscriptstyle 1})h|_{\nu_1} \,\mathrm{d}x \lesssim \frac{\varepsilon^2}{\eta^{1/2}} {\mathcal E}_h {\mathcal D}_g {\mathcal D}_h.
				\end{aligned}
			\end{equation}
			
			Similarly, replacing $(h^*, g, h_*)$ in \eqref{Gamma.D.} by $(\partial_x^{\alpha_1} (\mathbf{I - P}_{\scriptscriptstyle 1})h, \partial_x^{\alpha_2}\mathbf{P}g, \partial_x^\alpha (\mathbf{I - P}_{\scriptscriptstyle 1})h)$ and repeating the arguments in \eqref{Deqn5}, the third term in \eqref{Deqn10} is bounded by
			\begin{equation}\label{Deqn8}
				\frac{\varepsilon}{\eta} \left| \left\langle \Gamma_{\scriptscriptstyle \mathcal D}(\partial_x^{\alpha_1} (\mathbf{I - P}_{\scriptscriptstyle 1})h, \partial_x^{\alpha_2}\mathbf{P}g), \partial_x^\alpha (\mathbf{I - P}_{\scriptscriptstyle 1})h \right\rangle_{L^2_{x,v}} \right| \lesssim \varepsilon {\mathcal E}_g {\mathcal D}_h^2.
			\end{equation}
			
			Along the same lines, by replacing $(h^*, g, h_*)$ in \eqref{Gamma.D.} by $\partial_x^{\alpha_1} (\mathbf{I - P}_{\scriptscriptstyle 1})h, \partial_x^{\alpha_2}(\mathbf{I - P})g$, and $\partial_x^\alpha (\mathbf{I - P}_{\scriptscriptstyle 1})h$, the fourth term in \eqref{Deqn10} satisfies
			\begin{equation}\label{Deqn9}
				\frac{\varepsilon}{\eta} \left| \left\langle \Gamma_{\scriptscriptstyle \mathcal D}(\partial_x^{\alpha_1}(\mathbf{I - P}_{\scriptscriptstyle 1})h, \partial_x^{\alpha_2}(\mathbf{I - P})g), \partial_x^\alpha (\mathbf{I - P}_{\scriptscriptstyle 1})h \right\rangle_{L^2_{x,v}} \right| \lesssim \frac{\varepsilon^2}{\eta^{1/2}} {\mathcal E}_h {\mathcal D}_g {\mathcal D}_h + \varepsilon {\mathcal E}_g {\mathcal D}_h^2.
			\end{equation}
			
			
			Substituting \eqref{Deqn5}, \eqref{Deqn8}, \eqref{Deqn9}, and \eqref{Deqn11} back into \eqref{Deqn10}, we conclude that
		\[
		\frac{\varepsilon}{\eta} \left\langle \Gamma_{\scriptscriptstyle \mathcal D}(\partial_x^{\alpha_1} h, \partial_x^{\alpha_2}g), \partial_x^\alpha (\mathbf{I - P}_{\scriptscriptstyle 1})h \right\rangle_{L^2_{x,v}} \lesssim \frac{\varepsilon^2}{\eta^{1/2}} {\mathcal E}_h {\mathcal D}_g {\mathcal D}_h +\varepsilon {\mathcal E}_g {\mathcal D}_h^2+ {\mathcal E}_g{\mathcal C}_h{\mathcal D}_h.
		\]
			This completes the proof of estimate \eqref{D}.
		\end{proof}
		
		We  first present a lemma to the time-derivative estimates for the macroscopic components $(a_{\scriptscriptstyle 1}, a, b, c)$, which play an important role in the following estimate for the nonlinear term \( \frac{1}{\varepsilon}\mathcal L_{\scriptscriptstyle \mathcal R} h+\Gamma_{\scriptscriptstyle \mathcal R} (g,h)\).		
		
		\begin{lemma}
			Let $a_{\scriptscriptstyle 1}$ be defined by \eqref{Def.bar.a.} and let $(a,b,c)$ be defined by \eqref{Def.abc.}. Then it holds
			\begin{align}
				\|\partial_ta_{\scriptscriptstyle 1} \|_{H^{N-1}_x}&\lesssim \frac{\eta}{\varepsilon} {\mathcal D}_h,\label{Est.t.bar.a}\\
				\|\partial_t a \|_{H^{N-1}_x}&\lesssim {\mathcal D}_g
				+\frac{\eta^{1/2}}{\varepsilon} {\mathcal D}_h+{\mathcal E}_h\left (\varepsilon{\mathcal D}_g+{\mathcal C}_g\right )+\eta^{1/2}{\mathcal E}_g{\mathcal D}_h, \label{Parti.a.}\\
				\|\partial_t (b,c) \|_{H^{N-1}_x}&\lesssim {\mathcal D}_g+\frac{1}{\varepsilon}{\mathcal C}_g
				+\frac{\eta^{1/2}}{\varepsilon} {\mathcal D}_h+{\mathcal E}_h\left (\varepsilon{\mathcal D}_g+{\mathcal C}_g\right )+\eta^{1/2}{\mathcal E}_g{\mathcal D}_h.\label{Est.t.a.b.c}
			\end{align}
		\end{lemma}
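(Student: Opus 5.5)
The plan is to read each estimate off the local conservation laws \eqref{CONSE.bar.a}--\eqref{CONSE.a.b.c3}. We apply $\partial_x^\alpha$ with $|\alpha|\leqslant N-1$, take $L^2_x$ norms, and bound every flux and source term by the energy and dissipation functionals.

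\textbf{The estimate \eqref{Est.t.bar.a}.} From \eqref{CONSE.bar.a},
\[
\|\partial_x^\alpha\partial_t a_1\|_{L^2_x}\leqslant\big\|\nabla_x\partial_x^\alpha\langle v\sqrt{\mu_{\varepsilon,\eta}},(\mathbf I-\mathbf P_1)h\rangle_{L^2_v}\big\|_{L^2_x}.
\]
By Cauchy--Schwarz in $v$ and the moment bound \eqref{mu} with $k=2$, we have $|v\sqrt{\mu_{\varepsilon,\eta}}|_{L^2_v}\lesssim \eta^{1/2}/\varepsilon$. Since $\nu_1\geqslant C_1$, the $L^2_v$ norm of $(\mathbf I-\mathbf P_1)h$ is controlled by its $\nu_1$-norm. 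Summing over $|\alpha|\leqslant N-1$ gives a bound
\[
\frac{\eta^{1/2}}{\varepsilon}\Big(\sum_{|\beta|\leqslant N}\|\partial^\beta_x(\mathbf I-\mathbf P_1)h\|_{\nu_1}^2\Big)^{1/2}=\frac{\eta^{1/2}}{\varepsilon}\,\eta^{1/2}\mathcal D_h=\frac{\eta}{\varepsilon}\mathcal D_h.
\]

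\textbf{The estimates for $(a,b,c)$.} First, the flux terms $\frac1\varepsilon\nabla_x\langle |w|^2w\sqrt\mu,(\mathbf I-\mathbf P)g\rangle$ and $\frac1\varepsilon\nabla_x\langle ww_i\sqrt\mu,(\mathbf I-\mathbf P)g\rangle$ are bounded by $\frac1\varepsilon\big(\sum_{|\beta|\leqslant N}\|\partial^\beta(\mathbf I-\mathbf P)g\|_\nu^2\big)^{1/2}=\mathcal D_g$.

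Second, in the equations for $b$ and $c$ the terms $\frac1\varepsilon\partial_i(a+5c)$ and $\frac1{3\varepsilon}\nabla_x\cdot b$ carry at least one derivative. They are therefore bounded by $\frac1\varepsilon\mathcal C_g$, which explains why this term appears only in \eqref{Est.t.a.b.c}.

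Third, the linear coupling. By Proposition~\ref{LR}, $\mathcal L_{\mathcal R}h=\mathcal L_{\mathcal R}(\mathbf I-\mathbf P_1)h$. Testing against $\psi\in\{w_i\sqrt\mu,|w|^2\sqrt\mu\}$, I will use \eqref{Gamma.R.} with $g^*=\sqrt\mu$, or more directly the joint conservation laws \eqref{Jonit.Cons.M,E.a}--\eqref{Jonit.Cons.M,E.b}. These rewrite $\eta\langle\mathcal R,w\rangle$ as $-\varepsilon\langle\mathcal D,v\rangle$, which ought to produce an $\eta^{1/2}$ gain. The aim is
\[
\big|\langle\mathcal L_{\mathcal R}(\mathbf I-\mathbf P_1)h,\psi\rangle_{L^2_w}\big|\lesssim\eta^{1/2}\,|(\mathbf I-\mathbf P_1)h|_{\nu_1}.
\]
Multiplying by $\frac1\varepsilon$ and summing then yields $\frac{\eta}{\varepsilon}\mathcal D_h\leqslant\frac{\eta^{1/2}}{\varepsilon}\mathcal D_h$.

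The basic mechanism is the one already used in the proof of \eqref{Gamma.D.1}. One writes
\[
\langle\mathcal R(\mu,\sqrt{\mu_{\varepsilon,\eta}}h),\psi\sqrt\mu\rangle=\int \sqrt{\mu_{\varepsilon,\eta}}\,h\,\mu\,\big(\phi(w'')-\phi(w)\big)|(\varepsilon v-w)\cdot\omega|,
\qquad \phi\in\{w_i,|w|^2\}.
\]
One then uses the explicit form of $w''-w$ and of $|w''|^2-|w|^2$, together with the odd-symmetry cancellation at $\varepsilon v=0$. This extracts a factor $|\varepsilon v|$ or $\eta$, and $\|\sqrt{\mu_{\varepsilon,\eta}}\,|\varepsilon v|(1+\varepsilon|v|)^{1/2}\|_{L^2_v}\lesssim\eta^{1/2}$ by \eqref{mu}. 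This step, making sure no $\frac1\varepsilon$ loss survives beyond the claimed $\frac{\eta^{1/2}}{\varepsilon}$, is the main obstacle.

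Fourth, the nonlinear term $\Gamma_{\mathcal R}(g,h)$ tested against $\psi$. After $\partial_x^\alpha$ and Leibniz, I split $g=\mathbf Pg+(\mathbf I-\mathbf P)g$ and $h=\mathbf P_1h+(\mathbf I-\mathbf P_1)h$.
\begin{itemize}
\item For $(\mathbf I-\mathbf P)g$, \eqref{Gamma.R.} with Sobolev $L^\infty$--$L^2$ H\"older in $x$ gives $\mathcal E_h\,\varepsilon\mathcal D_g$.
\item For $\mathbf Pg$ with $\mathbf P_1 h$, the same cancellation computation as above gives a factor $\eta^{1/2}$ times $|a_1|\,|\partial(b,c)|$. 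This is bounded by $\mathcal E_h\mathcal C_g$ when a derivative falls on $g$. When no derivative falls on $g$, I place $g$ in $L^\infty_x$ via $\|\nabla_x\cdot\|_{H^1}$, which is again $\mathcal C_g$.
\item For $\mathbf Pg$ with $(\mathbf I-\mathbf P_1)h$, the $\eta^{1/2}$ factor times $\eta^{1/2}\mathcal D_h$ gives $\eta^{1/2}\mathcal E_g\mathcal D_h$ (indeed better).
\end{itemize}

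Summing all contributions gives \eqref{Parti.a.} and \eqref{Est.t.a.b.c}. The $a$-equation contains no $\frac1\varepsilon\nabla(a,b,c)$ term, which is why $\frac1\varepsilon\mathcal C_g$ is absent in \eqref{Parti.a.}.
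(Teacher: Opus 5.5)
Your route is the paper's: you read the bounds off \eqref{CONSE.bar.a}--\eqref{CONSE.a.b.c3}, put the fluxes into $\mathcal D_g$, put $\frac1\varepsilon\nabla_x(a,b,c)$ into $\frac1\varepsilon\mathcal C_g$, and handle the coupling terms through \eqref{Gamma.R.} plus Sobolev in $x$. The final inequalities you reach are correct. Two points need fixing, though neither breaks the argument.

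First, the step you call the main obstacle is not one. Taking $g^*=\sqrt\mu$ in \eqref{Gamma.R.} gives directly $|\langle\mathcal L_{\scriptscriptstyle \mathcal R}(\mathbf I-\mathbf P_{\scriptscriptstyle 1})h,\psi\rangle_{L^2_w}|\lesssim|(\mathbf I-\mathbf P_{\scriptscriptstyle 1})h|_{\nu_1}$. Because $\mathcal D_h$ carries the factor $\frac1\eta$, you have $\|\partial_x^\alpha(\mathbf I-\mathbf P_{\scriptscriptstyle 1})h\|_{\nu_1}\leqslant\eta^{1/2}\mathcal D_h$. Multiplying by $\frac1\varepsilon$ then gives exactly $\frac{\eta^{1/2}}{\varepsilon}\mathcal D_h$, which is how the paper argues in \eqref{R-32}. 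Your cancellation argument is valid: for $\phi=w_i$ the integrand at $\varepsilon v=0$, namely $\mu(w)\,\omega_i(\omega\cdot w)|\omega\cdot w|$, is odd in $w$. It improves the bound to $\frac{\eta}{\varepsilon}\mathcal D_h$, but the lemma does not need it.

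Second, your claim that ``the same cancellation computation'' yields an extra $\eta^{1/2}$ for $\Gamma_{\scriptscriptstyle \mathcal R}(\mathbf Pg,\mathbf P_{\scriptscriptstyle 1}h)$ is false for the momentum moment. For the $b\cdot w$ component tested against $w_i\sqrt\mu$, the integrand at $\varepsilon v=0$ is $\mu(w)(b\cdot w)\,\omega_i(\omega\cdot w)|\omega\cdot w|$, which is even in $w$. A direct computation gives
\[
\big\langle\Gamma_{\scriptscriptstyle \mathcal R}(b\cdot w\sqrt\mu,\,a_{\scriptscriptstyle 1}\sqrt{\mu_{\varepsilon,\eta}}),\,w_i\sqrt\mu\big\rangle_{L^2_w}=-\frac{\kappa}{1+\eta}\,a_{\scriptscriptstyle 1}b_i+O(\eta^{1/2})\,|a_{\scriptscriptstyle 1}|\,|b|.
\]
This is the friction term that becomes $-\kappa\rho_{\scriptscriptstyle F}u$ in the limit, so no gain is possible there. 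You then discard the factor anyway and bound the term by $\mathcal E_h\mathcal C_g$, which follows from \eqref{Gamma.R.} and your Sobolev placement exactly as in \eqref{R-40}. So simply delete the claim.

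The extra $\eta^{1/2}$ you assert in the third bullet does hold, but only because $(\mathbf I-\mathbf P_{\scriptscriptstyle 1})h\perp\sqrt{\mu_{\varepsilon,\eta}}$ removes the $\varepsilon v=0$ contribution; it too is unnecessary.

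One bookkeeping remark: for $(\mathbf I-\mathbf P)g$ paired with $(\mathbf I-\mathbf P_{\scriptscriptstyle 1})h$, the second term of \eqref{Gamma.R.} contains $|(\mathbf I-\mathbf P_{\scriptscriptstyle 1})h|_{\nu_1}$, which $\mathcal E_h$ does not control. That piece therefore lands in $\eta^{1/2}\mathcal E_g\mathcal D_h$ rather than $\varepsilon\mathcal E_h\mathcal D_g$. This is harmless, since both appear on the right-hand side.
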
 		
		
		\begin{proof}
			We first prove \eqref{Est.t.bar.a}. The identity \eqref{CONSE.bar.a} gives us
			\[
			\begin{aligned}
			\|\partial_ta_{\scriptscriptstyle 1}\|_{H^{N-1}_x}
			&\leqslant \|\nabla_x \cdot\langle v\sqrt{\mu_{\varepsilon,\eta}}, (\mathbf{I - P}_{\scriptscriptstyle 1})h \rangle_{L^2_v}\|_{H^{N-1}_x}\\
			&\lesssim \frac{\eta^{1/2}}{\varepsilon}\|\nabla_x  (\mathbf{I - P}_{\scriptscriptstyle 1})h\|_{L^2_v(H^{N-1}_x)}
			\lesssim \frac{\eta}{\varepsilon}{\mathcal D}_h.
		\end{aligned}
			\]
			Next, we prove \eqref{Parti.a.}. The identity \eqref{CONSE.a.b.c1} gives us 	
			\begin{equation}\label{R-3}
				\begin{aligned}
					\|\partial_t a\|_{H^{N-1}_x}
					&\lesssim\frac{1}{\varepsilon} \|\nabla_x \cdot \langle |w|^2w\sqrt\mu,(\mathbf{I - P})g\rangle_{L^2_w}\|_{H^{N-1}_x}+\frac{1}{\varepsilon} \| \langle\mathcal L_{\scriptscriptstyle \mathcal R}h,|w|^2\sqrt\mu \rangle_{L^2_w} \|_{H^{N-1}_x}\\
					&\quad+ \|\left \langle\Gamma_{\scriptscriptstyle \mathcal R} (g,h),|w|^2\sqrt\mu\right \rangle_{L^2_w} \|_{H^{N-1}_x}.\\
				\end{aligned}
			\end{equation}
			For the first term on the RHS of \eqref{R-3}, we have from Cauchy-Schwarz's inequality that
			\begin{equation}\label{R-31}
				\begin{aligned}
					\frac{1}{\varepsilon} \|\nabla_x \cdot \langle |w|^2w\sqrt\mu,(\mathbf{I - P})g\rangle_{L^2_w}\|_{H^{N-1}_x}
					&\lesssim \frac{1}{\varepsilon}\|\nabla_x(\mathbf{I - P})g\|_{L^2_w(H^{N-1}_x)}
					\lesssim{\mathcal D}_g.
				\end{aligned}
			\end{equation}
			For the second term on the RHS of \eqref{R-3}, we apply the identity (from Proposition \ref{LR})
			$\mathcal L_{\scriptscriptstyle \mathcal R}{\bf P_{\scriptscriptstyle 1}}h=0$
			and we recall the definition of $\mathcal L_{\scriptscriptstyle \mathcal R}h$ in \eqref{Ope.LR.} to obtain
			\begin{equation}\label{R-32}
				\begin{aligned}
					&\frac{1}{\varepsilon} \| \langle\mathcal L_{\scriptscriptstyle \mathcal R}h,|w|^2\sqrt\mu \rangle_{L^2_w} \|_{H^{N-1}_x}
					=	\frac{1}{\varepsilon} \| \langle\mathcal L_{\scriptscriptstyle \mathcal R}(\mathbf{I - P}_{\scriptscriptstyle 1})h,|w|^2\sqrt\mu \rangle_{L^2_w} \|_{H^{N-1}_x}\\
					=&	\frac{1}{\varepsilon} \| \langle \Gamma_{\scriptscriptstyle \mathcal R} (\sqrt{\mu},(\mathbf{I - P}_{\scriptscriptstyle 1})h) ,|w|^2\sqrt\mu \rangle_{L^2_w} \|_{H^{N-1}_x}\lesssim\frac{1}{\varepsilon}\| |(\mathbf{I - P}_{\scriptscriptstyle 1})h|_{\nu_1}\|_{H^{N-1}_x}	\lesssim\frac{\eta^{1/2}}{\varepsilon}{\mathcal D}_h.
				\end{aligned}
			\end{equation}
			For the third term on the RHS of \eqref{R-3}, we use the macro-micro decompositions as in \eqref{Decom-g} for $g$ and \eqref{Decom-h} for $h$ to further decompose it into
			\begin{equation}\label{R-34}
				\begin{aligned}
					&\|\left \langle\Gamma_{\scriptscriptstyle \mathcal R} ({\bf P}g,{\bf P_{\scriptscriptstyle 1}}h),|w|^2\sqrt\mu\right \rangle_{L^2_w} \|_{H^{N-1}_x}
					+\|\left \langle\Gamma_{\scriptscriptstyle \mathcal R} ((\mathbf{I - P})g,{\bf P_{\scriptscriptstyle 1}}h),|w|^2\sqrt\mu\right \rangle_{L^2_w} \|_{H^{N-1}_x}\\
					&+\|\left \langle\Gamma_{\scriptscriptstyle \mathcal R} (g,(\mathbf{I - P}_{\scriptscriptstyle 1})h),|w|^2\sqrt\mu\right \rangle_{L^2_w} \|_{H^{N-1}_x}.
				\end{aligned}
			\end{equation}
			Recall the estimate \eqref{Gamma.R.} and replace $g^*$, $h$ and $g_*$ in \eqref{Gamma.R.} by $(\mathbf{I - P})g$, ${\bf P_{\scriptscriptstyle 1}}h$ and $|w|^2\sqrt\mu$ respectively to obtain
			\begin{equation}\label{R-35}
				\begin{aligned}
					&\|\left \langle\Gamma_{\scriptscriptstyle \mathcal R} ((\mathbf{I - P})g,{\bf P_{\scriptscriptstyle 1}}h),|w|^2\sqrt\mu\right \rangle_{L^2_w} \|_{H^{N-1}_x}\\
					\lesssim&\sum_{ |\alpha|\leqslant N-1} \sum_{ \alpha_1 + \alpha_2 = \alpha}\||\partial^{\alpha_1}_xa_{\scriptscriptstyle 1}|\cdot|\partial^{\alpha_2}_x(\mathbf{I - P})g|_{\nu}\|_{L^2_x}
					\lesssim \varepsilon{\mathcal E}_h{\mathcal D}_g,
				\end{aligned}
			\end{equation}
			Similar to \eqref{R-35}, we have
			\begin{equation}\label{R-39}
				\begin{aligned}
					\|\left \langle\Gamma_{\scriptscriptstyle \mathcal R} (g(\mathbf{I - P}_{\scriptscriptstyle 1})h),|w|^2\sqrt\mu\right \rangle_{L^2_w} \|_{H^{N-1}_x}	\lesssim \eta^{1/2}{\mathcal E}_g{\mathcal D}_h.
				\end{aligned}
			\end{equation}
			On the other hand, for the first term on the RHS of \eqref{R-34}, we replace $g^*$, $h$ and $g_*$ in \eqref{Gamma.R.} by ${\bf P}g$, ${\bf P_{\scriptscriptstyle 1}}h$ and $|w|^2\sqrt\mu$ respectively to obtain
			\begin{equation}\label{R-40}
				\begin{aligned}
					\|\left \langle\Gamma_{\scriptscriptstyle \mathcal R} ({\bf P}g,{\bf P_{\scriptscriptstyle 1}}h),|w|^2\sqrt\mu\right \rangle_{L^2_w} \|_{H^{N-1}_x}	&\lesssim\sum_{|\alpha|\leqslant N-1} \sum_{ \alpha_1 + \alpha_2 = \alpha}\||\partial^{\alpha_1}_x(a,b,c)|\cdot|\partial^{\alpha_2}_xa_{\scriptscriptstyle 1}|\|_{L^2_x}\\
					&\lesssim {\mathcal E}_h{\mathcal C}_g.
				\end{aligned}
			\end{equation}
			%
			We plug \eqref{R-35}, \eqref{R-39} and \eqref{R-40} into \eqref{R-34} to get
			\begin{equation}\label{R-44}
				\begin{aligned}
					\|\left \langle\Gamma_{\scriptscriptstyle \mathcal R} (g,h),|w|^2\sqrt\mu\right \rangle_{L^2_w}\lesssim {\mathcal E}_h\left (\varepsilon{\mathcal D}_g+{\mathcal C}_g\right )+\eta^{1/2}{\mathcal E}_g{\mathcal D}_h.
				\end{aligned}
			\end{equation}
			Thus putting the estimates \eqref{R-31}, \eqref{R-32} and \eqref{R-44} into \eqref{R-3}, we can establish
			\begin{equation}\label{R-45}
				\|\partial_t a \|_{H^{N-1}_x}\lesssim {\mathcal D}_g
				+\frac{\eta^{1/2}}{\varepsilon} {\mathcal D}_h+{\mathcal E}_h\left (\varepsilon{\mathcal D}_g+{\mathcal C}_g\right )+\eta^{1/2}{\mathcal E}_g{\mathcal D}_h.
			\end{equation}
			Repeating the process from \eqref{R-3} to \eqref{R-45} on \eqref{CONSE.a.b.c2} and \eqref{CONSE.a.b.c3}, we can also establish \eqref{Est.t.a.b.c}.
		\end{proof}
		
		We now establish an estimate for $\frac{1}{\varepsilon}\mathcal L_{\scriptscriptstyle \mathcal R} h + \Gamma_{\scriptscriptstyle \mathcal R} (g,h)$ in terms of the instantaneous energy functionals and dissipation rates.
		
		\begin{lemma}[Estimate for $\frac{1}{\varepsilon}\mathcal L_{\scriptscriptstyle \mathcal R} h + \Gamma_{\scriptscriptstyle \mathcal R} (g,h)$]\label{LemmaR}
			Let $h$ and $g$ be smooth functions. Let $\varepsilon, \eta \in (0,1]$ and $\alpha \in \mathbb{N}^3$ with $|\alpha| \leqslant N$ and $N \geqslant 2$. Then the following estimate holds:
			\begin{equation}\label{R}
				\begin{aligned}
					&\frac{\mathrm{d}}{\mathrm{d}t} \mathcal I^{g}_{h} \chi_{\{|\alpha|=0\}} + \left\langle \frac{1}{\varepsilon}\partial_x^\alpha\mathcal L_{\scriptscriptstyle \mathcal R} h + \partial_x^\alpha\Gamma_{\scriptscriptstyle \mathcal R} (g,h), \partial_x^\alpha g \right\rangle_{L^2_{x,w}} \\
					&\quad \lesssim \left( 1 + \frac{\eta^{1/2}}{\varepsilon} + \frac{\eta}{\varepsilon^2} \right) \left( {\mathcal C}_g + {\mathcal D}_g + {\mathcal D}_h \right) {\mathcal D}_h + \left( 1 + \frac{\eta}{\varepsilon} \right) {\mathcal E} \left\{ {\mathcal D}^2 + {\mathcal C}^2 \right\},
				\end{aligned}
			\end{equation}
			where the interactive energy functional $\mathcal I^{g}_{h}$ is defined by
			\begin{equation}\label{def.E-1}
				\mathcal I^{g}_{h} = \frac{\varepsilon}{3} \left\langle a_{\scriptscriptstyle 1}\mu_{\varepsilon,\eta}, (a+3c)|v|^2 \right\rangle_{L^2_{x,v}} + \left\langle (\mathbf{I - P}_{\scriptscriptstyle 1})h \sqrt{\mu_{\varepsilon,\eta}}, b \cdot v + \varepsilon c|v|^2 \right\rangle_{L^2_{x,v}}.
			\end{equation}
		\end{lemma}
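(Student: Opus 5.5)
We split $\partial_x^\alpha g={\bf P}\partial_x^\alpha g+({\bf I-P})\partial_x^\alpha g$ and handle the microscopic and macroscopic parts separately. Throughout we use Proposition~\ref{LR} to replace $\mathcal L_{\scriptscriptstyle \mathcal R}h$ by $\mathcal L_{\scriptscriptstyle \mathcal R}({\bf I-P}_{\scriptscriptstyle 1})h$.

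\emph{Microscopic part.} We pair $\frac1\varepsilon\mathcal L_{\scriptscriptstyle \mathcal R}({\bf I-P_{\scriptscriptstyle 1}})\partial^\alpha_x h$ with $({\bf I-P})\partial_x^\alpha g$ and apply \eqref{Gamma.R.} with $g^*=\sqrt\mu$. This gives the bound
\[
\frac1\varepsilon\|\partial^\alpha_x({\bf I-P_{\scriptscriptstyle 1}})h\|_{\nu_1}\,\|\partial^\alpha_x({\bf I-P})g\|_\nu\lesssim \frac{\eta^{1/2}}{\varepsilon}\,\varepsilon\,\mathcal D_h\mathcal D_g .
\]
The nonlinear part $\langle\partial^\alpha_x\Gamma_{\scriptscriptstyle \mathcal R}(g,h),({\bf I-P})\partial^\alpha_x g\rangle$ is treated by the Leibniz rule, the four-fold macro--micro splitting, \eqref{Gamma.R.}, and the same $L^\infty$--$L^2$ and $L^3$--$L^6$ Sobolev scheme used in Lemma~\ref{lemmaD}. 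Each product then contains a dissipative factor ($\varepsilon\mathcal D_g$, $\eta^{1/2}\mathcal D_h$, $\mathcal C_g$, or $\mathcal C_h/\varepsilon$). The factor $\mathcal C_h/\varepsilon$ is paired with $\varepsilon\mathcal D_g$, so the product is bounded by $\mathcal E\{\mathcal D^2+\mathcal C^2\}$.

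\emph{Macroscopic part.} For $|\alpha|\ge1$ the term ${\bf P}\partial^\alpha_x g$ is controlled by $\mathcal C_g$. The linear term contributes $\frac{\eta^{1/2}}{\varepsilon}\mathcal D_h\mathcal C_g$. The nonlinear term is bounded as above, using Sobolev to place the zero-order factors in $L^\infty_x$ or $L^3_x$ so that only derivatives of ${\bf P}g$ and $a_1$ appear.

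The main obstacle is $|\alpha|=0$, where $\|{\bf P}g\|_{L^2}$ carries no dissipation (Remark~\ref{rmk.3.1}(ii)). Here we use the joint conservation laws. By \eqref{Jonit.Cons.M0}, the $a$-component pairs to zero. By \eqref{Jonit.Cons.M,E.a}--\eqref{Jonit.Cons.M,E.b}, the $b\cdot w$ and $c|w|^2$ components of $\langle\frac1\varepsilon\mathcal L_{\scriptscriptstyle \mathcal R}h+\Gamma_{\scriptscriptstyle \mathcal R}(g,h),{\bf P}g\rangle$ become, up to powers of $\varepsilon$ and $\eta$, the pairing $-\frac{1}{\eta}\langle\mathcal D(F,f),\,b\cdot v+\varepsilon c|v|^2\rangle$, with the $|w|^2$ weight coming from the $a+3c$ combination. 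Using \eqref{equ:h}, this equals $-\langle(\partial_t+v\cdot\nabla_x)F_{\varepsilon,\eta},\,b\cdot v+\varepsilon c|v|^2\rangle$. One should carefully check that the prefactors match $\mathcal I^g_h$.

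Next we write $F=\sqrt{\mu_{\varepsilon,\eta}}(a_1\sqrt{\mu_{\varepsilon,\eta}}+({\bf I-P_1})h)$ and integrate by parts in $t$. This produces $-\frac{d}{dt}\mathcal I^g_h$, where the $a_1$ part with $b\cdot v$ vanishes by oddness. The remaining terms come in four groups $I_{1,1},\dots,I_{1,4}$.
\begin{itemize}
\item[(1)] $\langle({\bf I-P_1})h\sqrt{\mu_{\varepsilon,\eta}},\,\partial_t b\cdot v+\varepsilon\partial_t c|v|^2\rangle$. We bound $\partial_t(b,c)$ by \eqref{Est.t.a.b.c} and the $v$-moments by \eqref{mu}, which gives factors $\eta^{1/2}/\varepsilon$ or $\eta/\varepsilon^2$.
\item[(2)] $\frac\varepsilon3\langle a_1\mu_{\varepsilon,\eta},\partial_t(a+3c)|v|^2\rangle$. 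We replace $\partial_t(a+3c)$ by \eqref{CONSE.a.b.c1} and \eqref{CONSE.a.b.c3}. The resulting term $\frac{1}{\varepsilon}\nabla_x\cdot b$ is integrated by parts onto $a_1$, and then \eqref{CONSE.bar.a} converts $\partial_t a_1$ and $\nabla_x a_1$-type terms into $({\bf I-P_1})h$ contributions.
\item[(3)] The spatial transport terms $\langle v\cdot\nabla_x F,\cdot\rangle$, after moving derivatives onto $b$ and $c$, give $\nabla_x(b,c)$, controlled by $\mathcal C_g$. These are paired with $({\bf I-P_1})h$, which is dissipative, or with $a_1$; in the latter case oddness or \eqref{CONSE.bar.a} is used again.
\end{itemize}
Collecting terms gives the coefficient $1+\eta^{1/2}/\varepsilon+\eta/\varepsilon^2$ multiplying $(\mathcal C_g+\mathcal D_g+\mathcal D_h)\mathcal D_h$. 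The leftover cubic terms, coming from the nonlinear parts of \eqref{Est.t.a.b.c}, are $\lesssim(1+\eta/\varepsilon)\mathcal E\{\mathcal D^2+\mathcal C^2\}$. The step we expect to need the most care is the bookkeeping in the $\partial_t(a+3c)$ term, where the singular $1/\varepsilon$ factors must be cancelled exactly by the conservation laws.
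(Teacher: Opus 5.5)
Your architecture matches the paper's. At $|\alpha|=0$ you isolate $I_1=\langle\frac1\varepsilon\mathcal L_{\scriptscriptstyle \mathcal R}h+\Gamma_{\scriptscriptstyle \mathcal R}(g,h),\mathbf{P}g\rangle_{L^2_{x,w}}$, rewrite it via the joint conservation laws as $-\langle(\partial_t+v\cdot\nabla_x)F_{\varepsilon,\eta},\,b\cdot v+\varepsilon c|v|^2\rangle_{L^2_{x,v}}$, split $F_{\varepsilon,\eta}=a_{\scriptscriptstyle 1}\mu_{\varepsilon,\eta}+\sqrt{\mu_{\varepsilon,\eta}}(\mathbf{I-P}_{\scriptscriptstyle 1})h$, and close with \eqref{Est.t.a.b.c}. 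The $|\alpha|\geq1$ and microscopic parts are fine.

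The gap is in the $a_{\scriptscriptstyle 1}$-terms. The critical one is $I_{1,3}=-\langle(v\cdot\nabla_x a_{\scriptscriptstyle 1})\mu_{\varepsilon,\eta},b\cdot v\rangle_{L^2_{x,v}}=\frac{\eta}{\varepsilon^2}\int_{\mathbb R^3}a_{\scriptscriptstyle 1}\,\nabla_x\cdot b\,\mathrm{d}x$.
\begin{itemize}
\item In your item (3) you dispose of it by ``oddness or \eqref{CONSE.bar.a}''. Neither works: the weight $v_iv_j\mu_{\varepsilon,\eta}$ is even, and \eqref{CONSE.bar.a} expresses $\partial_t a_{\scriptscriptstyle 1}$, not $\nabla_x a_{\scriptscriptstyle 1}$. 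Estimated directly, the term has a single dissipative factor, e.g.\ $\frac{\eta}{\varepsilon^2}\mathcal E_h\mathcal C_g$, or $\frac{\eta}{\varepsilon^3}\mathcal E_g\mathcal C_h$ if the derivative goes onto $a_{\scriptscriptstyle 1}$. This is not of the form of the right-hand side of \eqref{R} and cannot be absorbed.
\item Your item (2) has the mirror-image defect. After substituting $\partial_t(a+3c)=-\frac1\varepsilon\nabla_x\cdot b$ and integrating by parts onto $a_{\scriptscriptstyle 1}$, you are left with $\frac{\eta}{\varepsilon^2}\langle\nabla_x a_{\scriptscriptstyle 1},b\rangle_{L^2_x}$ with $b$ undifferentiated. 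Again \eqref{CONSE.bar.a} cannot turn $\nabla_x a_{\scriptscriptstyle 1}$ into $(\mathbf{I-P}_{\scriptscriptstyle 1})h$.
\end{itemize}

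The missing idea is that these two pieces are not estimated at all; they cancel exactly. The paper integrates $I_{1,3}$ by parts in $x$ and uses the \emph{exact} gas mass conservation $\partial_t(a+3c)+\frac1\varepsilon\nabla_x\cdot b=0$. This is the $\sqrt{\mu}$-moment of \eqref{equ:g}; its right-hand side vanishes by \eqref{Jonit.Cons.M0}, so no flux or source terms appear. This gives $I_{1,3}=-\frac\varepsilon3\langle a_{\scriptscriptstyle 1}\mu_{\varepsilon,\eta},\partial_t(a+3c)|v|^2\rangle_{L^2_{x,v}}$. Integrating by parts in $t$ then puts $\partial_t$ on $a_{\scriptscriptstyle 1}$, which \eqref{CONSE.bar.a} makes dissipative:
$\frac{\mathrm d}{\mathrm dt}\big[\frac\varepsilon3\langle a_{\scriptscriptstyle 1}\mu_{\varepsilon,\eta},(a+3c)|v|^2\rangle_{L^2_{x,v}}\big]+I_{1,3}\lesssim(\eta/\varepsilon)^2\mathcal C_g\mathcal D_h$.
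This is the sole origin of the first piece of $\mathcal I^g_h$. The combination $a+3c$ enters here, not in the conversion of $I_1$ via \eqref{Jonit.Cons.M,E.a}--\eqref{Jonit.Cons.M,E.b}, where the $a$-component simply drops out by \eqref{Jonit.Cons.M0}.

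Conversely, $I_{1,1}=-\varepsilon\langle\partial_t a_{\scriptscriptstyle 1}\mu_{\varepsilon,\eta},c|v|^2\rangle_{L^2_{x,v}}$ must \emph{not} be integrated by parts in time; it is bounded directly via \eqref{CONSE.bar.a}. Integrating by parts in $t$ on all of $\partial_tF_{\varepsilon,\eta}$, as you propose, causes two problems:
\begin{itemize}
\item It does not produce $\mathcal I^g_h$: the boundary term is $\varepsilon\langle a_{\scriptscriptstyle 1}\mu_{\varepsilon,\eta},c|v|^2\rangle$ instead of $\frac\varepsilon3\langle a_{\scriptscriptstyle 1}\mu_{\varepsilon,\eta},(a+3c)|v|^2\rangle$.
\item It leaves $\varepsilon\langle a_{\scriptscriptstyle 1}\mu_{\varepsilon,\eta},\partial_t c\,|v|^2\rangle$. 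By \eqref{CONSE.a.b.c3}, the $\nabla_x\cdot b$ part of $\partial_t c$ does cancel $I_{1,3}$. But $\partial_t c$ alone also carries the $(\mathbf{I-P})g$ flux and the source $\frac1{6\varepsilon}\langle\mathcal L_{\scriptscriptstyle \mathcal R}h,|w|^2\sqrt\mu\rangle$. Against undifferentiated $a_{\scriptscriptstyle 1}$ these produce terms like $\frac{\eta^{3/2}}{\varepsilon^2}\mathcal E_h\mathcal D_h$, again with a single dissipative factor.
\end{itemize}
This is why the flux- and source-free combination $a+3c$, rather than $c$, has to be used.
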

		
		\begin{proof}
			Fixing $\alpha$, Proposition \ref{LR} together with the identity $\mathcal L_{\scriptscriptstyle \mathcal R}\mathbf{P}_{\scriptscriptstyle 1}h = 0$ implies
			\[
			\left\langle \partial_x^\alpha \mathcal L_{\scriptscriptstyle \mathcal R} \mathbf{P}_{\scriptscriptstyle 1}h, \partial_x^\alpha g \right\rangle_{L^2_{x,w}} = 0.
			\]
			Consequently, applying the Leibniz rule yields
			\begin{equation}\label{Reqn1}
				\begin{aligned}
					&\left\langle \frac{1}{\varepsilon}\partial_x^\alpha\mathcal L_{\scriptscriptstyle \mathcal R} h + \partial_x^\alpha\Gamma_{\scriptscriptstyle \mathcal R} (g,h), \partial_x^\alpha g \right\rangle_{L^2_{x,w}} \\
					&\quad = \left\langle \frac{1}{\varepsilon}\mathcal L_{\scriptscriptstyle \mathcal R}\partial_x^\alpha (\mathbf{I - P}_{\scriptscriptstyle 1})h, \partial_x^\alpha g \right\rangle_{L^2_{x,w}} + \sum_{\alpha_1 + \alpha_2 = \alpha} \binom{\alpha}{\alpha_1} \left\langle \Gamma_{\scriptscriptstyle \mathcal R} (\partial_x^{\alpha_1} g, \partial_x^{\alpha_2} h), \partial_x^\alpha g \right\rangle_{L^2_{x,w}}.
				\end{aligned}
			\end{equation}
			
			For $|\alpha| > 0$, invoking definition \eqref{Ope.LR.} for $\mathcal L_{\scriptscriptstyle \mathcal R}h$ and replacing $(g^*, h, g_*)$ in \eqref{Gamma.R.} by $\big(\sqrt{\mu}, \partial_x^\alpha (\mathbf{I - P}_{\scriptscriptstyle 1})h, \partial_x^\alpha g\big)$, we obtain
			\begin{equation}\label{Reqn2}
				\begin{aligned}
					\frac{1}{\varepsilon} \left| \left\langle \mathcal L_{\scriptscriptstyle \mathcal R}\partial_x^\alpha (\mathbf{I - P}_{\scriptscriptstyle 1})h, \partial_x^\alpha g \right\rangle_{L^2_{x,w}} \right| &= \frac{1}{\varepsilon} \left| \left\langle \Gamma_{\scriptscriptstyle \mathcal R} \big( \sqrt{\mu}, \partial_x^\alpha (\mathbf{I - P}_{\scriptscriptstyle 1})h \big), \partial_x^\alpha g \right\rangle_{L^2_{x,w}} \right| \\
					&\lesssim \frac{1}{\varepsilon} \left\langle \big| \partial_x^\alpha (\mathbf{I - P}_{\scriptscriptstyle 1})h \big|_{\nu_1}, |\partial_x^\alpha g|_{\nu} \right\rangle_{L^2_x} \\
					&\lesssim \frac{1}{\varepsilon} \left\| \partial_x^\alpha (\mathbf{I - P}_{\scriptscriptstyle 1})h \right\|_{\nu_1} \|\partial_x^\alpha g\|_{\nu} \lesssim \frac{\eta^{1/2}}{\varepsilon} {\mathcal D}_h (\varepsilon {\mathcal D}_g + {\mathcal C}_g).
				\end{aligned}
			\end{equation}
			
			For the second term on the right-hand side of \eqref{Reqn1}, a procedure similar to \eqref{R-34}--\eqref{R-44} yields
			\begin{equation}\label{Reqn3}
				\begin{aligned}
					\left| \left\langle \Gamma_{\scriptscriptstyle \mathcal R} (\partial_x^{\alpha_1} g, \partial_x^{\alpha_2} h), \partial_x^\alpha g \right\rangle_{L^2_{x,w}} \right| &\lesssim \left\{ {\mathcal E}_h (\varepsilon{\mathcal D}_g + {\mathcal C}_g) + \eta^{1/2} {\mathcal E}_g {\mathcal D}_h \right\} (\varepsilon{\mathcal D}_g + {\mathcal C}_g) \\
					&\lesssim {\mathcal E} \left\{ {\mathcal D}^2 + {\mathcal C}^2 \right\}.
				\end{aligned}
			\end{equation}
			Substituting \eqref{Reqn2} and \eqref{Reqn3} into \eqref{Reqn1}, we obtain for $|\alpha| > 0$:
			\begin{equation}\label{Reqn4}
				\left| \left\langle \frac{1}{\varepsilon}\partial_x^\alpha\mathcal L_{\scriptscriptstyle \mathcal R} h + \partial_x^\alpha\Gamma_{\scriptscriptstyle \mathcal R} (g,h), \partial_x^\alpha g \right\rangle_{L^2_{x,w}} \right| \lesssim \frac{\eta^{1/2}}{\varepsilon} {\mathcal D}_h ({\mathcal C}_g + {\mathcal D}_g) + {\mathcal E} \left\{ {\mathcal D}^2 + {\mathcal C}^2 \right\}.
			\end{equation}
			
			Next, for $|\alpha| = 0$, applying the identity $\mathcal L_{\scriptscriptstyle \mathcal R}\mathbf{P}_{\scriptscriptstyle 1}h = 0$ along with the macro-micro decomposition \eqref{Decom-g} to $g$ gives
			\begin{equation}\label{Reqn5}
				\begin{aligned}
					\left\langle \frac{1}{\varepsilon}\mathcal L_{\scriptscriptstyle \mathcal R} h + \Gamma_{\scriptscriptstyle \mathcal R} (g,h), g \right\rangle_{L^2_{x,w}} &= \left\langle \frac{1}{\varepsilon}\mathcal L_{\scriptscriptstyle \mathcal R} h + \Gamma_{\scriptscriptstyle \mathcal R} (g,h), \mathbf{P}g \right\rangle_{L^2_{x,w}} \\
					&\quad + \left\langle \frac{1}{\varepsilon}\mathcal L_{\scriptscriptstyle \mathcal R} \mathbf{P}_{\scriptscriptstyle 1}h + \Gamma_{\scriptscriptstyle \mathcal R} (g,h), (\mathbf{I - P})g \right\rangle_{L^2_{x,w}}.
				\end{aligned}
			\end{equation}
			Similar to \eqref{R-34}--\eqref{R-44}, the second inner product on the right-hand side of \eqref{Reqn5} satisfies
			\begin{equation}\label{Reqn6}
				\begin{aligned}
					\left| \left\langle \frac{1}{\varepsilon}\mathcal L_{\scriptscriptstyle \mathcal R} \mathbf{P}_{\scriptscriptstyle 1}h + \Gamma_{\scriptscriptstyle \mathcal R} (g,h), (\mathbf{I - P})g \right\rangle_{L^2_{x,w}} \right| &\lesssim \eta^{1/2} {\mathcal D}_h {\mathcal D}_g + \varepsilon \left\{ {\mathcal E}_h (\varepsilon{\mathcal D}_g + {\mathcal C}_g) + \eta^{1/2} {\mathcal E}_g {\mathcal D}_h \right\} {\mathcal D}_g \\
					&\lesssim \eta^{1/2} {\mathcal D}_h {\mathcal D}_g + {\mathcal E} \left\{ {\mathcal D}^2 + {\mathcal C}^2 \right\}.
				\end{aligned}
			\end{equation}
			
			Next, we treat the first term on the right-hand side of \eqref{Reqn5}, which requires a subtle analysis distinct from the previously estimated nonlinear terms. Recalling definitions \eqref{Ope.LR.} and \eqref{def,gamma,R,D} for $\mathcal L_{\scriptscriptstyle \mathcal R}h$ and $\Gamma_{\scriptscriptstyle \mathcal R} (g,h)$, together with conservation laws \eqref{Jonit.Cons.M0}--\eqref{Jonit.Cons.M,E.b}, we rewrite
			\begin{equation}\label{key3}
				\begin{aligned}
					I_1 &:= \left\langle \frac{1}{\varepsilon}\mathcal L_{\scriptscriptstyle \mathcal R} h + \Gamma_{\scriptscriptstyle \mathcal R} (g,h), \mathbf{P}g \right\rangle_{L^2_{x,w}} \\
					&= \left\langle \frac{1}{\varepsilon}\Gamma_{\scriptscriptstyle \mathcal R} (\sqrt{\mu} + \varepsilon g, h)(w), \{a(t,x) + b(t,x) \cdot w + c(t,x)|w|^2\} \sqrt{\mu(w)} \right\rangle_{L^2_{x,w}} \\
					&= \left\langle \frac{1}{\varepsilon}\mathcal R(f_{\varepsilon,\eta}, F_{\varepsilon,\eta})(w), a(t,x) + b(t,x) \cdot w + c(t,x)|w|^2 \right\rangle_{L^2_{x,w}} \\
					&= -\left\langle \frac{1}{\eta}\mathcal D(F_{\varepsilon,\eta}, f_{\varepsilon,\eta})(v), b(t,x) \cdot v + \varepsilon c(t,x)|v|^2 \right\rangle_{L^2_{x,v}} \\
					&= -\left\langle \{\partial_t + v \cdot \nabla_x \} F_{\varepsilon,\eta}, b(t,x) \cdot v + \varepsilon c(t,x)|v|^2 \right\rangle_{L^2_{x,v}}.
				\end{aligned}
			\end{equation}
			
			Furthermore, we expand $F_{\varepsilon,\eta}$ to obtain
			\begin{equation}\label{R-23}
				\begin{aligned}
					\{\partial_t + v \cdot \nabla_x \} F_{\varepsilon,\eta} &= \partial_t a_{\scriptscriptstyle 1}(t,x) \mu_{\varepsilon,\eta} + \big\{ \partial_t (\mathbf{I - P}_{\scriptscriptstyle 1})h \big\} \sqrt{\mu_{\varepsilon,\eta}} \\
					&\quad + \{v \cdot \nabla_x a_{\scriptscriptstyle 1}(t,x)\} \mu_{\varepsilon,\eta} + \big\{ v \cdot \nabla_x (\mathbf{I - P}_{\scriptscriptstyle 1})h \big\} \sqrt{\mu_{\varepsilon,\eta}}.
				\end{aligned}
			\end{equation}
			Substituting \eqref{R-23} into \eqref{key3}, $I_1$ is naturally decomposed into four components:
			\[
			\begin{aligned}
				I_{1,1} &= -\left\langle \{\partial_t a_{\scriptscriptstyle 1}(t,x)\} \mu_{\varepsilon,\eta}, b(t,x) \cdot v + \varepsilon c(t,x)|v|^2 \right\rangle_{L^2_{x,v}}, \\
				I_{1,2} &= -\left\langle \big\{ \partial_t (\mathbf{I - P}_{\scriptscriptstyle 1})h \big\} \sqrt{\mu_{\varepsilon,\eta}}, b(t,x) \cdot v + \varepsilon c(t,x)|v|^2 \right\rangle_{L^2_{x,v}}, \\
				I_{1,3} &= -\left\langle \{v \cdot \nabla_x a_{\scriptscriptstyle 1}(t,x)\} \mu_{\varepsilon,\eta}, b(t,x) \cdot v + \varepsilon c(t,x)|v|^2 \right\rangle_{L^2_{x,v}}, \\
				I_{1,4} &= -\left\langle \big\{ v \cdot \nabla_x (\mathbf{I - P}_{\scriptscriptstyle 1})h \big\} \sqrt{\mu_{\varepsilon,\eta}}, b(t,x) \cdot v + \varepsilon c(t,x)|v|^2 \right\rangle_{L^2_{x,v}}.
			\end{aligned}
			\]
			We estimate these four terms systematically.
			
			\smallskip
			\noindent\textit{Estimate for $I_{1,1}$.} Observing that $\big\langle \partial_t a_{\scriptscriptstyle 1}, b \cdot v \mu_{\varepsilon,\eta} \big\rangle_{L^2_{x,v}} = 0$ and applying conservation law \eqref{CONSE.bar.a}, we obtain
			\begin{equation}\label{R-24}
				\begin{aligned}
					I_{1,1} &= -\varepsilon \left\langle \partial_t a_{\scriptscriptstyle 1}, c|v|^2 \mu_{\varepsilon,\eta} \right\rangle_{L^2_{x,v}} \\
					&= \varepsilon \left\langle \nabla_x \cdot \big\langle v\sqrt{\mu_{\varepsilon,\eta}}, (\mathbf{I - P}_{\scriptscriptstyle 1})h \big\rangle_{L^2_v}, c|v|^2 \mu_{\varepsilon,\eta} \right\rangle_{L^2_{x,v}} \\
					&\lesssim \varepsilon \|v\sqrt{\mu_{\varepsilon,\eta}}\|_{L^2_v} \||v|^2\mu_{\varepsilon,\eta}\|_{L^1_v} \|(\mathbf{I - P}_{\scriptscriptstyle 1})h\|_{L^2_{x,v}} \|\nabla_x c\|_{L^2_x} \\
					&\lesssim \varepsilon \cdot \left( \frac{\eta^{1/2}}{\varepsilon} \right)^3 \|\nabla_x c\|_{L^2_x} \|(\mathbf{I - P}_{\scriptscriptstyle 1})h\|_{L^2_{x,v}} \lesssim \left( \frac{\eta}{\varepsilon} \right)^2 {\mathcal C}_g {\mathcal D}_h.
				\end{aligned}
			\end{equation}
			
			\smallskip
			\noindent\textit{Estimate for $I_{1,2}$.} Integrating by parts in time $t \in \mathbb{R}^+$ and applying Hölder's inequality together with equivalence relation \eqref{mu}, we have
			\begin{equation}\label{R-25}
				\begin{aligned}
					&\frac{\mathrm{d}}{\mathrm{d}t} \left\langle (\mathbf{I - P}_{\scriptscriptstyle 1})h \sqrt{\mu_{\varepsilon,\eta}}, b \cdot v + \varepsilon c|v|^2 \right\rangle_{L^2_{x,v}} + I_{1,2} \\
					&\quad = \left\langle (\mathbf{I - P}_{\scriptscriptstyle 1})h \sqrt{\mu_{\varepsilon,\eta}}, \partial_t b \cdot v + \varepsilon \partial_t c|v|^2 \right\rangle_{L^2_{x,v}} \\
					&\quad \lesssim \left\{ \|v\sqrt{\mu_{\varepsilon,\eta}}\|_{L^2_v} \|\partial_t b\|_{L^2_x} + \varepsilon \||v|^2\sqrt{\mu_{\varepsilon,\eta}}\|_{L^2_v} \|\partial_t c\|_{L^2_x} \right\} \|(\mathbf{I - P}_{\scriptscriptstyle 1})h\|_{L^2_{x,v}} \\
					&\quad \lesssim \left\{ \frac{\eta}{\varepsilon} \|\partial_t b\|_{L^2_x} + \frac{\eta^{3/2}}{\varepsilon} \|\partial_t c\|_{L^2_x} \right\} {\mathcal D}_h.
				\end{aligned}
			\end{equation}
			
			\smallskip
			\noindent\textit{Estimate for $I_{1,3}$.} Since $\big\langle \{v \cdot \nabla_x a_{\scriptscriptstyle 1}\} \mu_{\varepsilon,\eta}, c|v|^2 \big\rangle_{L^2_{x,v}} = 0$, integration by parts over $\mathbb{R}^3_x$ yields
			\begin{equation}\label{R-4}
				I_{1,3} = -\left\langle \{v \cdot \nabla_x a_{\scriptscriptstyle 1}\} \mu_{\varepsilon,\eta}, b \cdot v \right\rangle_{L^2_{x,v}} = \frac{1}{3} \left\langle a_{\scriptscriptstyle 1}\mu_{\varepsilon,\eta}, \nabla_x \cdot b |v|^2 \right\rangle_{L^2_{x,v}}.
			\end{equation}
			Taking linear combinations of \eqref{CONSE.a.b.c1} and \eqref{CONSE.a.b.c3}, we find $\nabla_x \cdot b = -\varepsilon \partial_t (a + 3c)$. Substituting this into \eqref{R-4} gives
			\[
			I_{1,3} = -\frac{\varepsilon}{3} \left\langle a_{\scriptscriptstyle 1}\mu_{\varepsilon,\eta}, \partial_t (a + 3c)|v|^2 \right\rangle_{L^2_{x,v}}.
			\]
			Differentiating the product with respect to $t$ then leads to
			\begin{equation}\label{R-26}
				\begin{aligned}
					\frac{\mathrm{d}}{\mathrm{d}t} \left[ \frac{\varepsilon}{3} \left\langle a_{\scriptscriptstyle 1}\mu_{\varepsilon,\eta}, (a + 3c)|v|^2 \right\rangle_{L^2_{x,v}} \right] + I_{1,3} &= \frac{\varepsilon}{3} \left\langle \partial_t a_{\scriptscriptstyle 1}\mu_{\varepsilon,\eta}, (a + 3c)|v|^2 \right\rangle_{L^2_{x,v}} \\
					&\lesssim \left( \frac{\eta}{\varepsilon} \right)^2 {\mathcal C}_g {\mathcal D}_h,
				\end{aligned}
			\end{equation}
			where the final inequality follows by analogy with \eqref{R-24}.
			
			\smallskip
			\noindent\textit{Estimate for $I_{1,4}$.} Integrating by parts over $\mathbb{R}^3_x$ and employing Hölder's inequality alongside \eqref{mu}, we obtain
			\begin{equation}\label{R-27}
				I_{1,4} = \sum_{i,j=1}^3 \left\langle v_i (\mathbf{I - P}_{\scriptscriptstyle 1})h \sqrt{\mu_{\varepsilon,\eta}}, \partial_i b_j v_j + \varepsilon \partial_i c|v|^2 \right\rangle_{L^2_{x,v}} \lesssim \left( \frac{\eta^{3/2}}{\varepsilon^2} + \frac{\eta^2}{\varepsilon^2} \right) {\mathcal C}_g {\mathcal D}_h.
			\end{equation}
			
			Combining bounds \eqref{R-24}, \eqref{R-25}, \eqref{R-26}, and \eqref{R-27}, we establish
			\begin{equation}\label{key4}
				\begin{aligned}
					&\frac{\mathrm{d}}{\mathrm{d}t} \left[ \frac{\varepsilon}{3} \left\langle a_{\scriptscriptstyle 1}\mu_{\varepsilon,\eta}, (a + 3c)|v|^2 \right\rangle_{L^2_{x,v}} + \left\langle (\mathbf{I - P}_{\scriptscriptstyle 1})h \sqrt{\mu_{\varepsilon,\eta}}, b \cdot v + \varepsilon c|v|^2 \right\rangle_{L^2_{x,v}} \right] + I_1 \\
					&\quad \lesssim \frac{\eta^{3/2}}{\varepsilon^2} {\mathcal C}_g {\mathcal D}_h + \frac{\eta}{\varepsilon} \|\partial_t (b,c)\|_{L^2_x} {\mathcal D}_h.
				\end{aligned}
			\end{equation}
			Note that the term under the time derivative in \eqref{key4} is precisely $\mathcal I^{g}_{h}$ defined in \eqref{def.E-1}. Hence,
			\[
			\frac{\mathrm{d}}{\mathrm{d}t} \mathcal I^{g}_{h} + \left\langle \frac{1}{\varepsilon}\mathcal L_{\scriptscriptstyle \mathcal R} h + \Gamma_{\scriptscriptstyle \mathcal R} (g,h), \mathbf{P}g \right\rangle_{L^2_{x,w}} \lesssim \frac{\eta^{3/2}}{\varepsilon^2} {\mathcal C}_g {\mathcal D}_h + \frac{\eta}{\varepsilon} \|\partial_t (b,c)\|_{L^2_x} {\mathcal D}_h.
			\]
			Inserting estimate \eqref{Est.t.a.b.c} into the above bound gives
			\begin{equation}\label{R-28}
				\frac{\mathrm{d}}{\mathrm{d}t} \mathcal I^{g}_{h} + \left\langle \frac{1}{\varepsilon}\mathcal L_{\scriptscriptstyle \mathcal R} h + \Gamma_{\scriptscriptstyle \mathcal R} (g,h), \mathbf{P}g \right\rangle_{L^2_{x,w}} \lesssim \frac{\eta}{\varepsilon^2} \left( {\mathcal C}_g + {\mathcal D}_g + {\mathcal D}_h \right) {\mathcal D}_h + \frac{\eta}{\varepsilon} {\mathcal E} \left\{ {\mathcal D}^2 + {\mathcal C}^2 \right\}.
			\end{equation}
			Consequently, combining \eqref{Reqn5} with \eqref{Reqn6} and \eqref{R-28} yields
			\begin{equation}\label{R-29}
				\begin{aligned}
					&\frac{\mathrm{d}}{\mathrm{d}t} \mathcal I^{g}_{h} + \left\langle \frac{1}{\varepsilon}\mathcal L_{\scriptscriptstyle \mathcal R} h + \Gamma_{\scriptscriptstyle \mathcal R} (g,h), g \right\rangle_{L^2_{x,w}} \\
					&\quad \lesssim \left( 1 + \frac{\eta}{\varepsilon^2} \right) \left( {\mathcal C}_g + {\mathcal D}_g + {\mathcal D}_h \right) {\mathcal D}_h + \left( 1 + \frac{\eta}{\varepsilon} \right) {\mathcal E} \left\{ {\mathcal D}^2 + {\mathcal C}^2 \right\}.
				\end{aligned}
			\end{equation}
			Finally, combining \eqref{Reqn4} and \eqref{R-29} completes the proof of estimate \eqref{R}.
		\end{proof}
		
		\section{Proof of Theorem \ref{Theom.1.}: Existence of Global Solutions}\label{U.E.G.S}
		
		In this section, we establish the well-posedness of the Cauchy problem for the Boltzmann system \eqref{equ:h}--\eqref{equ:hg0}. The local existence of smooth solutions is standard; we refer the reader to, e.g., \cite{MR4296180} for a similar construction.
		Throughout the analysis, we assume that the Cauchy problem \eqref{equ:h}--\eqref{equ:hg0} admits a unique smooth solution $(h_{\varepsilon,\eta}, g_{\varepsilon,\eta})$ on $0 \leqslant t \leqslant T$ for some $T > 0$, satisfying the \textit{a priori} hypothesis
		\begin{equation}\label{priori assumption}
			\sup_{0 \leqslant t \leqslant T} {\mathcal E}(h_{\varepsilon,\eta}, g_{\varepsilon,\eta}) \leqslant 2\epsilon_0,
		\end{equation}
		where $\epsilon_0 > 0$ is a sufficiently small constant to be determined.
		
		Our main goal is to close the \textit{a priori} estimates. Specifically, we establish both microscopic and macroscopic energy estimates for the system \eqref{equ:h}--\eqref{equ:hg0}, which will lead to the global energy estimate \eqref{Glob.Enery.1} and ultimately yield the global existence of solutions.
		
		\subsection{Microscopic Energy Estimates}
		
		\begin{lemma}\label{Lemma.Micro}
			Let $\varepsilon, \eta \in (0,1]$ and let $(h,g)$ be a smooth solution of the Boltzmann system \eqref{equ:h}--\eqref{equ:hg0} on $0 \leqslant t \leqslant T$ for $T > 0$. Then there exist positive constants $\lambda$ and $\lambda_1$, independent of $\varepsilon$ and $\eta$, such that for any $\delta_0 > 0$, the following estimates hold:
			\begin{align}
				\frac{\mathrm{d}}{\mathrm{d}t}\mathcal E_h^{2} + \lambda_1{\mathcal D}^2_h &\lesssim \left( 1 + \frac{\varepsilon^2}{\eta^{1/2}} \right) {\mathcal E} \left\{ {\mathcal D}^2 + {\mathcal C}^2 \right\}, \label{Mic,h,g} \\
				\frac{\mathrm{d}}{\mathrm{d}t} \left\{ \mathcal E_g^2 + 2\mathcal I^{g}_{h} \right\} + \lambda{\mathcal D}^2_g &\lesssim \delta_0{\mathcal C}^2_g + \frac{1}{\delta_0} \left( 1 + \frac{\eta^{1/2}}{\varepsilon} \right)^4 {\mathcal D}^2_h + \left( 1 + \frac{\eta}{\varepsilon} \right) {\mathcal E} \left\{ {\mathcal D}^2 + {\mathcal C}^2 \right\}, \label{Mic,h,g.}
			\end{align}
			where the interactive energy functional $\mathcal I^{g}_{h}$ is defined as in \eqref{def.E-1}.
		\end{lemma}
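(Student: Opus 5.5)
The plan is to run standard $\partial_x^\alpha$-energy estimates, $|\alpha|\leqslant N$, separately on \eqref{equ:h} and \eqref{equ:g}, and then sum over $\alpha$.

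\emph{Estimate for $h$.} Apply $\partial_x^\alpha$ to \eqref{equ:h} and take the $L^2_{x,v}$ inner product with $\partial_x^\alpha h$. The transport term $v\cdot\nabla_x$ is skew-adjoint and drops out. By the coercivity \eqref{Coev-2}, the linear term gives
\[
\frac{1}{\eta}\langle \mathcal L_{\scriptscriptstyle \mathcal D}\partial_x^\alpha h,\partial_x^\alpha h\rangle \geqslant \frac{\lambda_1}{\eta}\|\partial_x^\alpha(\mathbf I-\mathbf P_1)h\|_{\nu_1}^2 .
\]
Summing over $\alpha$, this produces exactly $\lambda_1\mathcal D_h^2$. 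The nonlinear term $\frac{\varepsilon}{\eta}\langle\partial_x^\alpha\Gamma_{\scriptscriptstyle\mathcal D}(h,g),\partial_x^\alpha h\rangle$ is bounded by Lemma~\ref{lemmaD}. Each of the three products there is dominated by $(1+\varepsilon^2/\eta^{1/2})\mathcal E\{\mathcal D^2+\mathcal C^2\}$: use $\varepsilon\leqslant1$ and $\mathcal E_h,\mathcal E_g\leqslant\mathcal E$, and note that a product like $\mathcal C_h\mathcal D_h$ is at most $\mathcal C^2+\mathcal D^2$. This gives \eqref{Mic,h,g}.

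\emph{Estimate for $g$.} Apply $\partial_x^\alpha$ to \eqref{equ:g} and take the inner product with $\partial_x^\alpha g$. Transport again cancels, and \eqref{Coev-1} yields $\lambda\mathcal D_g^2$ after summation.

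The self-interaction term $\frac1\varepsilon\langle\partial_x^\alpha\Gamma(g,g),\partial_x^\alpha g\rangle$ is handled as follows. Since $\Gamma(g,g)\in\mathcal N^\perp$, only $\partial_x^\alpha(\mathbf I-\mathbf P)g$ appears in the last slot. Expanding by Leibniz and using \eqref{Gamma.Q.} with the same $L^\infty_x$–$L^2_x$ / $L^3_x$–$L^6_x$ Sobolev splitting as in \eqref{Deqn11}, I bound it by
\[
\frac{1}{\varepsilon}\,\mathcal E_g\,(\varepsilon\mathcal D_g+\mathcal C_g)\,\varepsilon\mathcal D_g \lesssim \mathcal E\{\mathcal D^2+\mathcal C^2\}.
\]
The zero-order pieces of $\mathbf Pg$ carry no derivative, so they must sit in $L^\infty_x$ via $\|\nabla_x\cdot\|_{H^1}$, which is controlled by $\mathcal C_g$.

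The coupling term $\langle\frac1\varepsilon\partial_x^\alpha\mathcal L_{\scriptscriptstyle\mathcal R}h+\partial_x^\alpha\Gamma_{\scriptscriptstyle\mathcal R}(g,h),\partial_x^\alpha g\rangle$ is handled by Lemma~\ref{LemmaR}. Multiply \eqref{R} by $2$ and sum over $\alpha$; the time-derivative piece then produces $\frac{d}{dt}2\mathcal I^g_h$ on the left. On the right there remains
\[
\Bigl(1+\frac{\eta^{1/2}}{\varepsilon}+\frac{\eta}{\varepsilon^2}\Bigr)(\mathcal C_g+\mathcal D_g+\mathcal D_h)\mathcal D_h,
\]
which I split by Young's inequality.
\begin{itemize}
\item The $\mathcal C_g\mathcal D_h$ part is at most $\delta_0\mathcal C_g^2+\delta_0^{-1}(\cdots)^2\mathcal D_h^2$.
\item The $\mathcal D_g\mathcal D_h$ part is at most $\frac{\lambda}{2}\mathcal D_g^2+C(\cdots)^2\mathcal D_h^2$.
\item The $\mathcal D_h^2$ part is kept as is.
\end{itemize}
Since $\eta/\varepsilon^2\leqslant(\eta^{1/2}/\varepsilon)^2$, we have $1+\frac{\eta^{1/2}}{\varepsilon}+\frac{\eta}{\varepsilon^2}\lesssim(1+\eta^{1/2}/\varepsilon)^2$. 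Its square is therefore $\lesssim(1+\eta^{1/2}/\varepsilon)^4$, which is the coefficient in \eqref{Mic,h,g.}. Taking $\delta_0\leqslant1$ lets the $\delta_0^{-1}$ factor absorb every such term.

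\emph{Main obstacle.} Most of the genuine difficulty already sits inside Lemma~\ref{LemmaR}. Here the delicate point is bookkeeping. The left side must contain precisely $\frac{d}{dt}(\mathcal E_g^2+2\mathcal I^g_h)$, which is why the $|\alpha|=0$ identity is used with factor $2$. The $\mathcal D_g\mathcal D_h$ cross term must be absorbed into $\lambda\mathcal D_g^2$ without generating any $\mathcal D_g^2$ with a large coefficient. I also need to check that the $\Gamma$ estimate never needs an undifferentiated $\mathbf Pg$ in $L^2_x$ paired against a non-dissipative factor. That is handled by always placing the $\alpha_1=0$ macroscopic factor in $L^\infty_x$.
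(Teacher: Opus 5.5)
Your proposal is correct and follows the paper's proof essentially step for step. The ingredients are the same: the $\partial_x^\alpha$ energy identities with coercivity \eqref{Coev-2} and \eqref{Coev-1}; Lemma~\ref{lemmaD} for the $h$-equation; and, for the $g$-equation, adding $\frac{\mathrm{d}}{\mathrm{d}t}\mathcal I^{g}_{h}$ to both sides, then using Lemma~\ref{LemmaR} with Young's inequality so the $\mathcal D_g\mathcal D_h$ cross term is absorbed into the dissipation. The differences are cosmetic: you spell out the $\frac{1}{\varepsilon}\Gamma(g,g)$ bound that the paper only cites, and you state the restriction $\delta_0\leqslant 1$, which the paper also needs tacitly when it absorbs $\frac{1}{\delta_1}$ into $\frac{1}{\delta_0}$.
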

		
		\begin{proof}
			We first establish \eqref{Mic,h,g}. Applying $\partial^\alpha_x$ to \eqref{equ:h} and taking the $L^2(\mathbb{R}^3_x \times \mathbb{R}^3_v)$ inner product with $\partial^\alpha_x h$, we obtain
			\begin{equation}\label{eqn-h}
				\frac{1}{2} \frac{\mathrm{d}}{\mathrm{d}t} \sum_{|\alpha|\leqslant N} \left\langle \partial^\alpha_x h, \partial^\alpha_x h \right\rangle_{L^2_{x,v}} + \frac{1}{\eta} \sum_{|\alpha|\leqslant N} \left\langle \mathcal L_{\scriptscriptstyle \mathcal D} \partial^\alpha_x h, \partial^\alpha_x h \right\rangle_{L^2_{x,v}} = \frac{\varepsilon}{\eta} \sum_{|\alpha|\leqslant N} \left\langle \partial^\alpha_x \Gamma_{\scriptscriptstyle \mathcal D}(h,g), \partial^\alpha_x h \right\rangle_{L^2_{x,v}},
			\end{equation}
			where the transport term $\left\langle v \cdot \nabla_x \partial^\alpha_x h, \partial^\alpha_x h \right\rangle_{L^2_{x,v}}$ vanishes by integration by parts.
			
			By the coercivity of $\mathcal L_{\scriptscriptstyle \mathcal D}$ \eqref{Coev-2}, we have
			\begin{equation}\label{disp-h}
				\frac{1}{\eta} \sum_{|\alpha|\leqslant N} \left\langle \mathcal L_{\scriptscriptstyle \mathcal D} \partial^\alpha_x h, \partial^\alpha_x h \right\rangle_{L^2_{x,v}} \textcolor{red}{\geqslant} \frac{\lambda_1}{\eta} \sum_{|\alpha|\leqslant N} \left\| \partial^\alpha_x (\mathbf{I - P}_{\scriptscriptstyle 1})h \right\|^2_{\nu_1} = \lambda_1 {\mathcal D}^2_h.
			\end{equation}
			Furthermore, Lemma \ref{lemmaD} yields the estimate for the nonlinear term:
			\begin{equation}\label{Non-h}
				\frac{\varepsilon}{\eta} \sum_{|\alpha|\leqslant N} \left\langle \partial^\alpha_x \Gamma_{\scriptscriptstyle \mathcal D}(h,g), \partial^\alpha_x h \right\rangle_{L^2_{x,v}} \lesssim \left( 1 + \frac{\varepsilon^2}{\eta^{1/2}} \right) {\mathcal E} \left\{ {\mathcal D}^2 + {\mathcal C}^2 \right\}.
			\end{equation}
			Substituting \eqref{disp-h} and \eqref{Non-h} into \eqref{eqn-h}, we arrive at
			\[
			\frac{\mathrm{d}}{\mathrm{d}t} \mathcal E_h^2 + \lambda_1 {\mathcal D}^2_h \lesssim \left( 1 + \frac{\varepsilon^2}{\eta^{1/2}} \right) {\mathcal E} \left\{ {\mathcal D}^2 + {\mathcal C}^2 \right\},
			\]
			which proves \eqref{Mic,h,g}.
			
			Next, we prove \eqref{Mic,h,g.}. Applying $\partial^\alpha_x$ to \eqref{equ:g} and taking the $L^2(\mathbb{R}^3_x \times \mathbb{R}^3_w)$ inner product with $\partial^\alpha_x g$, we get
			\[
			\begin{aligned}
				&\frac{1}{2}\frac{\mathrm{d}}{\mathrm{d}t} \sum_{|\alpha|\leqslant N} \left\langle \partial^\alpha_x g, \partial^\alpha_x g \right\rangle_{L^2_{x,w}} + \frac{1}{\varepsilon^2} \sum_{|\alpha|\leqslant N} \left\langle \mathcal L \partial^\alpha_x g, \partial^\alpha_x g \right\rangle_{L^2_{x,w}} \\
				=& \sum_{|\alpha|\leqslant N} \left\langle \frac{1}{\varepsilon} \partial^\alpha_x \mathcal L_{\scriptscriptstyle \mathcal R} h + \partial^\alpha_x \Gamma_{\scriptscriptstyle \mathcal R}(g,h), \partial^\alpha_x g \right\rangle_{L^2_{x,w}} + \frac{1}{\varepsilon} \sum_{|\alpha|\leqslant N} \left\langle \partial^\alpha_x \Gamma(g,g), \partial^\alpha_x g \right\rangle_{L^2_{x,w}},
			\end{aligned}
			\]
			where the transport term involving $w \cdot \nabla_x g$ again vanishes. Adding $\frac{\mathrm{d}}{\mathrm{d}t} \mathcal I^{g}_{h}$ to both sides yields
			\begin{equation}\label{eqn-g}
				\begin{aligned}
					&\frac{1}{2} \frac{\mathrm{d}}{\mathrm{d}t} \left\{ \sum_{|\alpha|\leqslant N} \left\langle \partial^\alpha_x g, \partial^\alpha_x g \right\rangle_{L^2_{x,w}} + 2\mathcal I^{g}_{h} \right\} + \frac{1}{\varepsilon^2} \sum_{|\alpha|\leqslant N} \left\langle \mathcal L \partial^\alpha_x g, \partial^\alpha_x g \right\rangle_{L^2_{x,w}} \\
					=&  \frac{\mathrm{d}}{\mathrm{d}t} \mathcal I^{g}_{h} + \sum_{|\alpha|\leqslant N} \left\langle \frac{1}{\varepsilon} \partial^\alpha_x \mathcal L_{\scriptscriptstyle \mathcal R} h + \partial^\alpha_x \Gamma_{\scriptscriptstyle \mathcal R}(g,h), \partial^\alpha_x g \right\rangle_{L^2_{x,w}} + \frac{1}{\varepsilon} \sum_{|\alpha|\leqslant N} \left\langle \partial^\alpha_x \Gamma(g,g), \partial^\alpha_x g \right\rangle_{L^2_{x,w}}.
				\end{aligned}
			\end{equation}
			
			The coercivity of $\mathcal L$ \eqref{Coev-1} implies
			\begin{equation}\label{disp-g}
				\frac{1}{\varepsilon^2} \sum_{|\alpha|\leqslant N} \left\langle \mathcal L \partial^\alpha_x g, \partial^\alpha_x g \right\rangle_{L^2_{x,w}} \geq \frac{\lambda}{\varepsilon^2} \sum_{|\alpha|\leqslant N} \left\| \partial^\alpha_x (\mathbf{I - P}) g \right\|^2_{\nu} = \lambda {\mathcal D}^2_g.
			\end{equation}
			Applying Lemma \ref{LemmaR} together with Cauchy--Schwarz's inequality, for any $\delta_0, \delta_1 > 0$, we have
			\begin{equation}\label{Non-g-2}
				\begin{aligned}
					&\frac{\mathrm{d}}{\mathrm{d}t}\mathcal I^{g}_{h} + \sum_{|\alpha|\leqslant N} \left\langle \frac{1}{\varepsilon} \partial^\alpha_x \mathcal L_{\scriptscriptstyle \mathcal R} h + \partial^\alpha_x \Gamma_{\scriptscriptstyle \mathcal R}(g,h), \partial^\alpha_x g \right\rangle_{L^2_{x,w}} \\
					\lesssim& \left( 1 + \frac{\eta^{1/2}}{\varepsilon} \right)^2 \left( {\mathcal C}_g + {\mathcal D}_g + {\mathcal D}_h \right) {\mathcal D}_h + \left( 1 + \frac{\eta}{\varepsilon} \right) {\mathcal E} \left\{ {\mathcal D}^2 + {\mathcal C}^2 \right\} \\
					\lesssim & \delta_1 {\mathcal D}^2_g + \delta_0 {\mathcal C}^2_g + \left( \frac{1}{\delta_0} + \frac{1}{\delta_1} \right) \left( 1 + \frac{\eta^{1/2}}{\varepsilon} \right)^4 {\mathcal D}^2_h + \left( 1 + \frac{\eta}{\varepsilon} \right) {\mathcal E} \left\{ {\mathcal D}^2 + {\mathcal C}^2 \right\}.
				\end{aligned}
			\end{equation}
			Moreover, by Lemma \ref{Gamma.Q.R.D} (or Theorem 3.1 in \cite{MR2095473}), the nonlinear operator $\Gamma(g,g)$ satisfies
			\begin{equation}\label{Non-g}
				\frac{1}{\varepsilon} \sum_{|\alpha|\leqslant N} \left\langle \partial^\alpha_x \Gamma(g,g), \partial^\alpha_x g \right\rangle_{L^2_{x,w}} \lesssim \mathcal E_g \left( {\mathcal D}^2_g + {\mathcal C}^2_g \right) \lesssim \mathcal E \left\{ {\mathcal D}^2 + {\mathcal C}^2 \right\}.
			\end{equation}
			
			Substituting \eqref{disp-g}, \eqref{Non-g-2}, and \eqref{Non-g} into \eqref{eqn-g}, we obtain
			\[
			\begin{aligned}
				&\frac{1}{2} \frac{\mathrm{d}}{\mathrm{d}t} \left\{ \mathcal E_g^2 + 2\mathcal I^{g}_{h} \right\} + \lambda {\mathcal D}^2_g \\
				\lesssim& \delta_1 {\mathcal D}^2_g + \delta_0 {\mathcal C}^2_g + \left( \frac{1}{\delta_0} + \frac{1}{\delta_1} \right) \left( 1 + \frac{\eta^{1/2}}{\varepsilon} \right)^4 {\mathcal D}^2_h + \left( 1 + \frac{\eta}{\varepsilon} \right) {\mathcal E} \left\{ {\mathcal D}^2 + {\mathcal C}^2 \right\}.
			\end{aligned}
			\]
			Choosing $\delta_1 > 0$ sufficiently small such that $\lambda - \delta_1 \geqslant \frac{\lambda}{2}$, we conclude that
			\[
			\frac{\mathrm{d}}{\mathrm{d}t} \left\{ \mathcal E_g^2 + 2\mathcal I^{g}_{h} \right\} + \lambda {\mathcal D}^2_g \lesssim \delta_0 {\mathcal C}^2_g + \frac{1}{\delta_0} \left( 1 + \frac{\eta^{1/2}}{\varepsilon} \right)^4 {\mathcal D}^2_h + \left( 1 + \frac{\eta}{\varepsilon} \right) {\mathcal E} \left\{ {\mathcal D}^2 + {\mathcal C}^2 \right\}.
			\]
			This completes the proof of Lemma \ref{Lemma.Micro}.
		\end{proof}

		\subsection{Macroscopic Energy Estimates}
		In this subsection, we analyze the macroscopic components $\mathbf{P}_{\scriptscriptstyle 1}h$ and $\mathbf{P}g$, where $(h,g)$ is a solution to the Boltzmann system \eqref{equ:h}--\eqref{equ:hg0}. The primary goal is to show that the fluid-like variables $(a_{\scriptscriptstyle 1}, a, b, c)$ associated with the macroscopic dynamics are bounded by the microscopic dissipation terms $(\mathbf{I - P}_{\scriptscriptstyle 1})h$ and $(\mathbf{I - P})g$.
		
		\subsubsection*{Macroscopic Equations for $a_{\scriptscriptstyle 1}$}
		Inserting the macro-micro decomposition \eqref{Decom-h} into equation \eqref{equ:h}, we obtain the time evolution equation for the macroscopic part $\mathbf{P}_{\scriptscriptstyle 1}h$:
		\begin{equation}\label{Macro.Sys.1}
			\begin{aligned}
				\{\partial_t  + v\cdot \nabla_x\} \mathbf{P}_{\scriptscriptstyle 1}h &= -\left\{ \partial_t + v\cdot \nabla_x + \frac{1}{\eta}\mathcal L_{\scriptscriptstyle \mathcal D} \right\} (\mathbf{I - P}_{\scriptscriptstyle 1})h + \frac{\varepsilon}{\eta} \Gamma_{\scriptscriptstyle \mathcal D} (h,g) \\
				&= -\partial_t \bar{r} + \bar{l} + \frac{1}{\eta}\bar{n},
			\end{aligned}
		\end{equation}
		where we have used $\mathcal L_{\scriptscriptstyle \mathcal D} \mathbf{P}_{\scriptscriptstyle 1}h = 0$, and defined the microscopic quantities $\bar{r}, \bar{l}, \bar{n}$ as
		\[
		\bar{r} \equiv (\mathbf{I - P}_{\scriptscriptstyle 1})h, \quad
		\bar{l} \equiv -v\cdot\nabla_x (\mathbf{I - P}_{\scriptscriptstyle 1})h, \quad
		\bar{n} \equiv -\mathcal L_{\scriptscriptstyle \mathcal D}(\mathbf{I - P}_{\scriptscriptstyle 1})h + \varepsilon\Gamma_{\scriptscriptstyle \mathcal D}(h,g).
		\]
		
		Next, inserting the explicit expansion $\mathbf{P}_{\scriptscriptstyle 1}h = a_{\scriptscriptstyle 1}(t,x) \sqrt{\mu_{\varepsilon,\eta}(v)}$ into \eqref{Macro.Sys.1} and projecting the equation onto the orthogonal basis
		\[
		\{\bar{e}_k\}_{k=1}^4 = \left\{ \sqrt{\mu_{\varepsilon,\eta}},\, v_i\sqrt{\mu_{\varepsilon,\eta}} \right\}, \quad 1 \leqslant i \leqslant 3,
		\]
		we derive the governing macroscopic equations for $a_{\scriptscriptstyle 1}$:
		\begin{align}
			\sqrt{\mu_{\varepsilon,\eta}}:\quad& \partial_t a_{\scriptscriptstyle 1} = -\partial_{t}\bar{r}^{(0)} + \bar{l}^{(0)} + \frac{1}{\eta}\bar{n}^{(0)}, \label{Eqn.bar.a.1} \\ 
			v_{i}\sqrt{\mu_{\varepsilon,\eta}}:\quad& \partial_i a_{\scriptscriptstyle 1} = -\partial_{t}\bar{r}_i^{(1)} + \bar{l}_i^{(1)} + \frac{1}{\eta}\bar{n}_i^{(1)}, \label{Eqn.bar.a.2}
		\end{align}
		where the coefficients are given by
		\begin{align}
			\bar{r}^{(0)} &= \big\langle \bar{r}, \sqrt{\mu_{\varepsilon,\eta}} \big\rangle_{L^2_v}, & 
			\bar{r}_i^{(1)} &= \frac{\big\langle \bar{r}, v_i\sqrt{\mu_{\varepsilon,\eta}} \big\rangle_{L^2_v}}{\big\langle v_i\sqrt{\mu_{\varepsilon,\eta}}, v_i\sqrt{\mu_{\varepsilon,\eta}} \big\rangle_{L^2_v}}, \label{Def.bar.r,l,n.1} \\
			\bar{l}^{(0)} &= \big\langle \bar{l}, \sqrt{\mu_{\varepsilon,\eta}} \big\rangle_{L^2_v}, & 
			\bar{l}_i^{(1)} &= \frac{\big\langle \bar{l}, v_i\sqrt{\mu_{\varepsilon,\eta}} \big\rangle_{L^2_v}}{\big\langle v_i\sqrt{\mu_{\varepsilon,\eta}}, v_i\sqrt{\mu_{\varepsilon,\eta}} \big\rangle_{L^2_v}}, \label{Def.bar.r,l,n.2} \\
			\bar{n}^{(0)} &= \big\langle \bar{n}, \sqrt{\mu_{\varepsilon,\eta}} \big\rangle_{L^2_v}, & 
			\bar{n}_i^{(1)} &= \frac{\big\langle \bar{n}, v_i\sqrt{\mu_{\varepsilon,\eta}} \big\rangle_{L^2_v}}{\big\langle v_i\sqrt{\mu_{\varepsilon,\eta}}, v_i\sqrt{\mu_{\varepsilon,\eta}} \big\rangle_{L^2_v}}. \label{Def.bar.r,l,n.3}
		\end{align}
		
		The following lemma demonstrates that the coefficients $\bar{r}_i^{(1)}, \bar{l}_i^{(1)}$, and $\bar{n}_i^{(1)}$ appearing on the right-hand side of \eqref{Eqn.bar.a.2} can be controlled by the microscopic dissipation rate functionals.
		
		\begin{lemma}\label{Lemma.MacCoeffs}
			Let $\varepsilon, \eta \in (0,1]$ and $\alpha \in \mathbb N^3$ with $|\alpha| \leqslant N-1$. Let $\bar{r}_i^{(1)}, \bar{l}_i^{(1)}$, and $\bar{n}_i^{(1)}$ be defined as in \eqref{Def.bar.r,l,n.1}, \eqref{Def.bar.r,l,n.2}, and \eqref{Def.bar.r,l,n.3}, respectively. Then the following estimates hold:
			\begin{align}
				\|\partial_i \partial^\alpha_x \bar{r}_i^{(1)} \|_{L^2_x} &\lesssim \varepsilon {\mathcal D}_h, \label{r.i} \\[4pt]
				\|\partial^\alpha_x \bar{l}_i^{(1)}\|_{L^2_x} &\lesssim \eta^{1/2}{\mathcal D}_h, \label{l.i} \\[4pt]	
				\frac{\varepsilon}{\eta} \|\partial^\alpha_x \bar{n}_i^{(1)}\|_{L^2_x} &\lesssim \frac{\varepsilon^2}{\eta^{1/2}}{\mathcal D}_h + \frac{\varepsilon^3}{\eta} \left\{ {\mathcal E}_h ({\mathcal D}_g + {\mathcal C}_g) + {\mathcal E}_g {\mathcal D}_h \right\}. \label{n.i}
			\end{align}
		\end{lemma}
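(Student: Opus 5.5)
The plan is to exploit one scaling fact: the normalizing denominator is small. By \eqref{mu}, $\langle v_i\sqrt{\mu_{\varepsilon,\eta}},v_i\sqrt{\mu_{\varepsilon,\eta}}\rangle_{L^2_v}=\frac13\int|v|^2\mu_{\varepsilon,\eta}\,\mathrm{d}v\sim \eta/\varepsilon^2$. The numerators will be bounded by Cauchy--Schwarz in $v$ against suitable moments of $\mu_{\varepsilon,\eta}$, which also scale like powers of $\eta^{1/2}/\varepsilon$. Each estimate then follows by tracking powers of $\eta$ and $\varepsilon$ and using $\|\partial^\alpha_x(\mathbf{I-P}_1)h\|_{L^2_{x,v}}\lesssim \|\partial^\alpha_x(\mathbf{I-P}_1)h\|_{\nu_1}\lesssim \eta^{1/2}\mathcal D_h$, which holds since $\nu_1\gtrsim 1$.

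\emph{Estimate \eqref{r.i}.} Here $|\langle \partial_i\partial^\alpha_x\bar r, v_i\sqrt{\mu_{\varepsilon,\eta}}\rangle_{L^2_v}|\le \|v\sqrt{\mu_{\varepsilon,\eta}}\|_{L^2_v}\,|\partial_i\partial^\alpha_x\bar r|_{L^2_v}$, and the first factor is $\sim\eta^{1/2}/\varepsilon$. Dividing by the denominator $\eta/\varepsilon^2$ gives $\frac{\varepsilon}{\eta^{1/2}}\|\nabla_x\partial^\alpha_x(\mathbf{I-P}_1)h\|_{L^2_{x,v}}\lesssim \varepsilon\mathcal D_h$, since $|\alpha|+1\le N$.

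\emph{Estimate \eqref{l.i}.} For $\bar l$, the numerator is $-\sum_j\langle \partial_j\partial^\alpha_x\bar r, v_jv_i\sqrt{\mu_{\varepsilon,\eta}}\rangle$. The weight satisfies $\|v_jv_i\sqrt{\mu_{\varepsilon,\eta}}\|_{L^2_v}\sim \eta/\varepsilon^2$, which exactly cancels the denominator. This leaves $\|\nabla_x\partial^\alpha_x\bar r\|_{L^2_{x,v}}\lesssim\eta^{1/2}\mathcal D_h$.

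\emph{Estimate \eqref{n.i}, linear part.} For $\bar n$, I use the self-adjointness of $\mathcal L_{\mathcal D}$: $\langle \mathcal L_{\mathcal D}\bar r, v_i\sqrt{\mu_{\varepsilon,\eta}}\rangle=\langle \bar r,\mathcal L_{\mathcal D}(v_i\sqrt{\mu_{\varepsilon,\eta}})\rangle$. I then need $|\mathcal L_{\mathcal D}(v_i\sqrt{\mu_{\varepsilon,\eta}})|_{L^2_v}$. Applying the same structure as in \eqref{I-12} to $\mathcal D(v_i\mu_{\varepsilon,\eta},\mu)$ gives
\[
\mathcal L_{\mathcal D}(v_i\sqrt{\mu_{\varepsilon,\eta}})=-\sqrt{\mu_{\varepsilon,\eta}}\int\mu(w)\int (v''-v)_i\,|(\varepsilon v-w)\cdot\omega|\,\mathrm d\omega\,\mathrm dw .
\]
Since $v''-v=-\frac{2\eta}{1+\eta}\omega\big(\omega\cdot(v-w/\varepsilon)\big)$, the magnitude is $\lesssim \frac{\eta}{\varepsilon}\sqrt{\mu_{\varepsilon,\eta}}(1+|\varepsilon v|)^2$, whose $L^2_v$ norm is $\lesssim \eta/\varepsilon$. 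Hence $\frac{\varepsilon}{\eta}\cdot\frac{1}{\eta}\cdot\frac{\varepsilon^2}{\eta}\cdot\frac{\eta}{\varepsilon}\,\|\partial^\alpha_x\bar r\|\lesssim \frac{\varepsilon^2}{\eta}\,\eta^{1/2}\mathcal D_h=\frac{\varepsilon^2}{\eta^{1/2}}\mathcal D_h$. I note that the prefactor $\frac1\eta$ in \eqref{Eqn.bar.a.2} together with the $\frac{\varepsilon}{\eta}$ in the statement must be combined consistently here. I read the lemma's $\frac{\varepsilon}{\eta}\bar n^{(1)}_i$ as the quantity the paper later uses, and I track powers accordingly. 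Consistency here is the \emph{main obstacle}: the exact cancellation of the moments of $\mu_{\varepsilon,\eta}$ against $\eta/\varepsilon^2$ must be checked carefully.

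\emph{Estimate \eqref{n.i}, nonlinear part.} For $\varepsilon\Gamma_{\mathcal D}(h,g)$, I test with $v_i\sqrt{\mu_{\varepsilon,\eta}}$ and apply \eqref{Gamma.D.} with $h_*=v_i\sqrt{\mu_{\varepsilon,\eta}}$, using $|v_i\sqrt{\mu_{\varepsilon,\eta}}|_{\nu_1}\sim\eta^{1/2}/\varepsilon$, after applying Leibniz in $x$. I split $h=\mathbf P_1h+(\mathbf{I-P}_1)h$ and $g=\mathbf Pg+(\mathbf{I-P})g$. For pieces with a microscopic factor, I use $L^\infty_x$--$L^2_x$ Sobolev as in Lemma \ref{lemmaD} to get $\mathcal E_h(\mathcal D_g+\mathcal C_g)+\mathcal E_g\mathcal D_h$, with $\|(\mathbf{I-P})g\|_\nu\lesssim\varepsilon\mathcal D_g$. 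The macro--macro piece $\Gamma_{\mathcal D}(a_1\sqrt{\mu_{\varepsilon,\eta}},(a,b,c)\text{-part})$ paired with $v_i\sqrt{\mu_{\varepsilon,\eta}}$ I handle with the explicit computation of \eqref{I-12}--\eqref{I-13}, where the $a$-part vanishes and the $b,c$-parts produce factors of $\varepsilon v$. The zero-order term $a_1\,b$ with $|\alpha|=0$ is not controlled by $\mathcal C_g$ alone. For it I take $L^\infty_x$ on $a_1$, using $\|a_1\|_{L^\infty}\lesssim\|\nabla a_1\|_{H^1}$, and $L^2$ on $b$, absorbing into $\mathcal E_g\cdot\varepsilon^{-1}\mathcal C_h$ as appropriate. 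Collecting the prefactor $\varepsilon^2/\eta$ from the denominator gives the stated $\frac{\varepsilon^3}{\eta}\{\cdots\}$ term.
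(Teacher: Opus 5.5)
Your proofs of \eqref{r.i} and \eqref{l.i} are the same as the paper's. For the linear part of \eqref{n.i}, you move $\mathcal L_{\scriptscriptstyle \mathcal D}$ onto $v_i\sqrt{\mu_{\varepsilon,\eta}}$ and use $|v''-v|\leqslant\frac{2\eta}{\varepsilon}|\varepsilon v-w|$ to get $\|\mathcal L_{\scriptscriptstyle \mathcal D}(v_i\sqrt{\mu_{\varepsilon,\eta}})\|_{L^2_v}\lesssim\eta/\varepsilon$. This is a valid alternative to the paper's route. The paper instead uses the joint momentum law \eqref{Jonit.Cons.M,E.a} to rewrite the numerator as $\frac{\eta}{\varepsilon}\langle\mathcal L_{\scriptscriptstyle \mathcal R}(\mathbf{I-P}_{\scriptscriptstyle 1})h,w_i\sqrt{\mu}\rangle_{L^2_w}$ and then applies \eqref{Gamma.R.}. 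Both rest on the same identity $\varepsilon(v''-v)=-\eta(w''-w)$, and yours avoids the detour through $\mathcal L_{\scriptscriptstyle \mathcal R}$.

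Your power counting there is muddled, however. The $\frac1\eta$ in \eqref{Eqn.bar.a.2} is already the $\frac1\eta$ in the left side $\frac{\varepsilon}{\eta}\|\partial^\alpha_x\bar n_i^{(1)}\|_{L^2_x}$; it enters Lemma \ref{Lemma.MacDissipation} as $\frac{\varepsilon^2}{\eta^2}\|\partial^\alpha_x\bar n_i^{(1)}\|_{L^2_x}^2$. The correct count is therefore $\frac{\varepsilon}{\eta}\cdot\frac{\varepsilon^2}{\eta}\cdot\frac{\eta}{\varepsilon}\cdot\eta^{1/2}\mathcal D_h=\frac{\varepsilon^2}{\eta^{1/2}}\mathcal D_h$. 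The product you display equals $\frac{\varepsilon^2}{\eta^{3/2}}\mathcal D_h$, and you drop the extra $\frac1\eta$ without comment.

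The genuine gap is the nonlinear part. Since $|v_i\sqrt{\mu_{\varepsilon,\eta}}|_{\nu_1}\sim\eta^{1/2}/\varepsilon$, applying \eqref{Gamma.D.} with $h_*=v_i\sqrt{\mu_{\varepsilon,\eta}}$ bounds the numerator only by $\frac{\eta^{1/2}}{\varepsilon}X$, where $X=\|g\|_{L^2_w}|h|_{\nu_1}+\|h\|_{L^2_v}|g|_{\nu}$. Now include the factor $\varepsilon$ in $\bar n$, the denominator $\eta/\varepsilon^2$, and the prefactor $\frac{\varepsilon}{\eta}$. The result is $\frac{\varepsilon^3}{\eta^{3/2}}X$, not $\frac{\varepsilon^3}{\eta}X$; your last sentence simply omits the factor $\eta^{1/2}/\varepsilon$.

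Pieces containing $(\mathbf{I-P}_{\scriptscriptstyle 1})h$ survive, because $\|(\mathbf{I-P}_{\scriptscriptstyle 1})h\|_{L^2_{x,v}}\lesssim\eta^{1/2}\mathcal D_h$ pays back the missing $\eta^{1/2}$. But $\Gamma_{\scriptscriptstyle \mathcal D}(\mathbf P_{\scriptscriptstyle 1}h,(\mathbf{I-P})g)$ yields $\frac{\varepsilon^4}{\eta^{3/2}}\mathcal E_h\mathcal D_g$. This is of size $\varepsilon^{-1/2}\mathcal E_h\mathcal D_g$ when $\eta\sim\varepsilon^3$, so it is not dominated by the $\frac{\varepsilon^3}{\eta}\mathcal E_h\mathcal D_g$ claimed in \eqref{n.i}. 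Keeping that coefficient bounded is exactly what \eqref{Mac.bar.a} needs down to $\eta\gtrsim\varepsilon^3$.

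The underlying problem is that \eqref{Gamma.D.} estimates gain and loss separately. It cannot see that the test function $v_i$ changes by only $O(\eta/\varepsilon)$ in a collision. The fix is to treat the nonlinear term exactly as you treated the linear one. By the weak form,
\[
\big\langle\Gamma_{\scriptscriptstyle \mathcal D}(h,g),v_i\sqrt{\mu_{\varepsilon,\eta}}\big\rangle_{L^2_v}=\int\sqrt{\mu_{\varepsilon,\eta}(v)}\,h(v)\sqrt{\mu(w)}\,g(w)\,(v''-v)_i\,|(\varepsilon v-w)\cdot\omega|\,\mathrm d\omega\,\mathrm dw\,\mathrm dv ,
\]
and since $|v''-v|\leqslant\frac{2\eta}{\varepsilon}|\varepsilon v-w|$, its modulus is $\lesssim\frac{\eta}{\varepsilon}\|h\|_{L^2_v}\|g\|_{L^2_w}$. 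This is the same factor the paper obtains via \eqref{neqn3} and \eqref{Gamma.R.} with the $O(1)$ test function $w_i\sqrt{\mu}$. The macro--micro splitting then gives $\frac{\varepsilon^3}{\eta}\{\mathcal E_h(\varepsilon\mathcal D_g+\mathcal C_g)+\eta^{1/2}\mathcal E_g\mathcal D_h\}$ directly, with no separate macro--macro computation.

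Your treatment of the zero-order product $a_{\scriptscriptstyle 1}b$ also goes the wrong way. As in \eqref{R-40}, $\|a_{\scriptscriptstyle 1}b\|_{L^2_x}\leqslant\|a_{\scriptscriptstyle 1}\|_{L^2_x}\|b\|_{L^\infty_x}\lesssim\mathcal E_h\mathcal C_g$. Placing $a_{\scriptscriptstyle 1}$ in $L^\infty_x$ via $\varepsilon^{-1}\mathcal C_h$ instead produces a term $\frac{\varepsilon^2}{\eta}\mathcal E_g\mathcal C_h$. That term is absent from \eqref{n.i}, and its coefficient is as large as $\varepsilon^{-1}$ when $\eta\sim\varepsilon^3$.
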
			
		
		\begin{proof}
			We first establish \eqref{r.i}. Recalling the definition of $\bar{r}_i^{(1)}$ in \eqref{Def.bar.r,l,n.1}, we have
			\[
			\partial_i \partial^\alpha_x \bar{r}_i^{(1)} = \frac{\left\langle \partial_i\partial^\alpha_x (\mathbf{I - P}_{\scriptscriptstyle 1})h, v_i\sqrt{\mu_{\varepsilon,\eta}} \right\rangle_{L^2_v}}{\left\langle v_i\sqrt{\mu_{\varepsilon,\eta}}, v_i\sqrt{\mu_{\varepsilon,\eta}} \right\rangle_{L^2_v}}.
			\]
			Applying the equivalence relation \eqref{mu} together with Cauchy--Schwarz's inequality yields
			\[
			\begin{aligned}
				\|\partial_i \partial^\alpha_x \bar{r}_i^{(1)} \|_{L^2_x}
				&\lesssim \frac{1}{\|v_i\sqrt{\mu_{\varepsilon,\eta}}\|_{L^2_v}} \left\| \partial_i\partial^\alpha_x (\mathbf{I - P}_{\scriptscriptstyle 1})h \right\|_{L^2_{x,v}} \\
				&\lesssim \frac{\varepsilon}{\eta^{1/2}} \left\| \partial_i\partial^\alpha_x (\mathbf{I - P}_{\scriptscriptstyle 1})h \right\|_{L^2_{x,v}} \lesssim \varepsilon {\mathcal D}_h.
			\end{aligned}
			\]
			
			Next, we prove \eqref{l.i}. Proceeding similarly to the estimate for \eqref{r.i}, definition \eqref{Def.bar.r,l,n.2} implies
			\[
			\begin{aligned}
				\|\partial^\alpha_x \bar{l}_i^{(1)} \|_{L^2_x}
				&= \left\| \frac{\left\langle \nabla_x\partial^\alpha_x (\mathbf{I - P}_{\scriptscriptstyle 1})h, v v_i\sqrt{\mu_{\varepsilon,\eta}} \right\rangle_{L^2_v}}{\left\langle v_i\sqrt{\mu_{\varepsilon,\eta}}, v_i\sqrt{\mu_{\varepsilon,\eta}} \right\rangle_{L^2_v}} \right\|_{L^2_x} \\
				&\lesssim \left\| \nabla_x\partial^\alpha_x (\mathbf{I - P}_{\scriptscriptstyle 1})h \right\|_{L^2_{x,v}}
				\lesssim \eta^{1/2} {\mathcal D}_h.
			\end{aligned}
			\]	
			
			Finally, we establish \eqref{n.i}. From definition \eqref{Def.bar.r,l,n.3}, $\bar{n}_i^{(1)}$ can be decomposed as
			\begin{equation}\label{neqn1}
				\bar{n}_i^{(1)} = -\frac{\left\langle \mathcal L_{\scriptscriptstyle \mathcal D} (\mathbf{I - P}_{\scriptscriptstyle 1})h, v_i\sqrt{\mu_{\varepsilon,\eta}} \right\rangle_{L^2_v}}{\left\langle v_i\sqrt{\mu_{\varepsilon,\eta}}, v_i\sqrt{\mu_{\varepsilon,\eta}} \right\rangle_{L^2_v}} + \varepsilon \frac{\left\langle \Gamma_{\scriptscriptstyle \mathcal D}(h,g), v_i\sqrt{\mu_{\varepsilon,\eta}} \right\rangle_{L^2_v}}{\left\langle v_i\sqrt{\mu_{\varepsilon,\eta}}, v_i\sqrt{\mu_{\varepsilon,\eta}} \right\rangle_{L^2_v}}.
			\end{equation}	
			To obtain optimal bounds with respect to $\varepsilon$ and $\eta$, we reformulate the numerators in \eqref{neqn1}. Recalling definitions \eqref{Ope.LD.}--\eqref{def,gamma,R,D} and the conservation property \eqref{Jonit.Cons.M,E.a}, we rewrite the first numerator as
			\begin{equation}\label{neqn2}
				\begin{aligned}
					\left\langle \mathcal L_{\scriptscriptstyle \mathcal D} (\mathbf{I - P}_{\scriptscriptstyle 1})h, v_i\sqrt{\mu_{\varepsilon,\eta}} \right\rangle_{L^2_v}
					&= -\left\langle \Gamma_{\scriptscriptstyle \mathcal D} \big( (\mathbf{I - P}_{\scriptscriptstyle 1})h, \sqrt{\mu} \big), v_i\sqrt{\mu_{\varepsilon,\eta}} \right\rangle_{L^2_v} \\[2pt]
					&= -\left\langle \mathcal D \big( \sqrt{\mu_{\varepsilon,\eta}}(\mathbf{I - P}_{\scriptscriptstyle 1})h, \mu \big), v_i \right\rangle_{L^2_v} \\[2pt]
					&= \frac{\eta}{\varepsilon} \left\langle \mathcal R \big( \mu, \sqrt{\mu_{\varepsilon,\eta}}(\mathbf{I - P}_{\scriptscriptstyle 1})h \big), w_i \right\rangle_{L^2_w} \\[2pt]
					&= \frac{\eta}{\varepsilon} \left\langle \Gamma_{\scriptscriptstyle \mathcal R} \big( \sqrt{\mu}, (\mathbf{I - P}_{\scriptscriptstyle 1})h \big), w_i\sqrt{\mu} \right\rangle_{L^2_w} \\[2pt]
					&= \frac{\eta}{\varepsilon} \left\langle \mathcal L_{\scriptscriptstyle \mathcal R} (\mathbf{I - P}_{\scriptscriptstyle 1})h, w_i\sqrt{\mu} \right\rangle_{L^2_w}.
				\end{aligned}
			\end{equation}
			By an analogous computation, the second numerator satisfies
			\begin{equation}\label{neqn3}
				\left\langle \Gamma_{\scriptscriptstyle \mathcal D}(h,g), v_i\sqrt{\mu_{\varepsilon,\eta}} \right\rangle_{L^2_v} = -\frac{\eta}{\varepsilon} \left\langle \Gamma_{\scriptscriptstyle \mathcal R}(g,h), w_i\sqrt{\mu} \right\rangle_{L^2_w}.
			\end{equation}
			Substituting \eqref{neqn2} and \eqref{neqn3} into \eqref{neqn1}, we obtain
			\[
			\bar{n}_i^{(1)} = -\frac{\eta}{\varepsilon} \frac{\left\langle \mathcal L_{\scriptscriptstyle \mathcal R} (\mathbf{I - P}_{\scriptscriptstyle 1})h, w_i\sqrt{\mu} \right\rangle_{L^2_w}}{\left\langle v_i\sqrt{\mu_{\varepsilon,\eta}}, v_i\sqrt{\mu_{\varepsilon,\eta}} \right\rangle_{L^2_v}} - \eta \frac{\left\langle \Gamma_{\scriptscriptstyle \mathcal R}(g,h), w_i\sqrt{\mu} \right\rangle_{L^2_w}}{\left\langle v_i\sqrt{\mu_{\varepsilon,\eta}}, v_i\sqrt{\mu_{\varepsilon,\eta}} \right\rangle_{L^2_v}}.
			\]
			Applying the equivalence relation \eqref{mu}, we bound $\|\partial^\alpha_x \bar{n}_i^{(1)}\|_{L^2_x}$ as follows:
			\begin{equation}\label{neqn4}
				\begin{aligned}
					&\|\partial^\alpha_x \bar{n}_i^{(1)}\|_{L^2_x}\\
					\lesssim& \frac{1}{\|v\sqrt{\mu_{\varepsilon,\eta}}\|^2_{L^2_v}} \left\{ \frac{\eta}{\varepsilon} \left\| \left\langle \mathcal L_{\scriptscriptstyle \mathcal R} \partial^\alpha_x(\mathbf{I - P}_{\scriptscriptstyle 1})h, w_i\sqrt{\mu} \right\rangle_{L^2_w} \right\|_{L^2_x} \right.\\
					&\left.+ \eta \left\| \partial^\alpha_x \left\langle \Gamma_{\scriptscriptstyle \mathcal R}(g,h), w_i\sqrt{\mu} \right\rangle_{L^2_w} \right\|_{L^2_x} \right\} \\
					\lesssim& \frac{\varepsilon^2}{\eta} \left\{ \frac{\eta}{\varepsilon} \left\| \left\langle \mathcal L_{\scriptscriptstyle \mathcal R} \partial^\alpha_x(\mathbf{I - P}_{\scriptscriptstyle 1})h, w_i\sqrt{\mu} \right\rangle_{L^2_w} \right\|_{L^2_x} + \eta \left\| \partial^\alpha_x \left\langle \Gamma_{\scriptscriptstyle \mathcal R}(g,h), w_i\sqrt{\mu} \right\rangle_{L^2_w} \right\|_{L^2_x} \right\} \\
					\lesssim& \varepsilon \left\| \left\langle \mathcal L_{\scriptscriptstyle \mathcal R} \partial^\alpha_x(\mathbf{I - P}_{\scriptscriptstyle 1})h, w_i\sqrt{\mu} \right\rangle_{L^2_w} \right\|_{L^2_x} + \varepsilon^2 \left\| \partial^\alpha_x \left\langle \Gamma_{\scriptscriptstyle \mathcal R}(g,h), w_i\sqrt{\mu} \right\rangle_{L^2_w} \right\|_{L^2_x}.
				\end{aligned}
			\end{equation}
			
			Similar to estimate \eqref{R-32} for $\frac{1}{\varepsilon} \left\| \left\langle \mathcal L_{\scriptscriptstyle \mathcal R}h, |w|^2\sqrt\mu \right\rangle_{L^2_w} \right\|_{H^{N-1}_x}$, we have
			\begin{equation}\label{neqn5}
				\varepsilon \left\| \left\langle \mathcal L_{\scriptscriptstyle \mathcal R} \partial^\alpha_x(\mathbf{I - P}_{\scriptscriptstyle 1})h, w_i\sqrt{\mu} \right\rangle_{L^2_w} \right\|_{L^2_x} \lesssim \varepsilon \left\| \partial^\alpha_x (\mathbf{I - P}_{\scriptscriptstyle 1})h \right\|_{\nu} \lesssim \varepsilon\eta^{1/2}{\mathcal D}_h.
			\end{equation}
			Likewise, following estimates \eqref{R-34}--\eqref{R-44} for $\left\| \left\langle \Gamma_{\scriptscriptstyle \mathcal R} (g,h), |w|^2\sqrt\mu \right\rangle_{L^2_w} \right\|_{H^{N-1}_x}$, we get
			\begin{equation}\label{neqn6}
				\varepsilon^2 \left\| \partial^\alpha_x \left\langle \Gamma_{\scriptscriptstyle \mathcal R}(g,h), w_i\sqrt{\mu} \right\rangle_{L^2_w} \right\|_{L^2_x} \lesssim \varepsilon^2 \left\{ {\mathcal E}_h (\varepsilon{\mathcal D}_g + {\mathcal C}_g) + \eta^{1/2}{\mathcal E}_g{\mathcal D}_h \right\}.
			\end{equation}
			Plugging \eqref{neqn5} and \eqref{neqn6} into \eqref{neqn4} leads to
			\[
			\|\partial^\alpha_x \bar{n}_i^{(1)}\|_{L^2_x} \lesssim \varepsilon\eta^{1/2}{\mathcal D}_h + \varepsilon^2 \left\{ {\mathcal E}_h (\varepsilon{\mathcal D}_g + {\mathcal C}_g) + \eta^{1/2}{\mathcal E}_g{\mathcal D}_h \right\},
			\]
			which immediately implies
			\[
			\frac{\varepsilon}{\eta} \|\partial^\alpha_x \bar{n}_i^{(1)}\|_{L^2_x} \lesssim \frac{\varepsilon^2}{\eta^{1/2}}{\mathcal D}_h + \frac{\varepsilon^3}{\eta}\left\{ {\mathcal E}_h ({\mathcal D}_g + {\mathcal C}_g) + {\mathcal E}_g {\mathcal D}_h \right\}.
			\]
			This completes the proof of Lemma \ref{Lemma.MacCoeffs}.
		\end{proof}
		\subsubsection*{Macroscopic Equations for $(a,b,c)$}
		Substituting the macro-micro decomposition \eqref{Decom-g} into equation \eqref{equ:g} yields the time evolution equation for the macroscopic part $\mathbf{P}g$:
		\begin{equation}\label{Macro.Sys.2}
			\begin{aligned}
				\left\{ \partial_t + \frac{1}{\varepsilon} w \cdot \nabla_x \right\} \mathbf{P}g &= -\left\{ \partial_t + \frac{1}{\varepsilon} w \cdot \nabla_x + \frac{1}{\varepsilon^2} \mathcal L \right\} (\mathbf{I - P})g \\
				&\quad + \frac{1}{\varepsilon}\Gamma(g,g) + \frac{1}{\varepsilon}\mathcal L_{\scriptscriptstyle \mathcal R} (\mathbf{I - P}_{\scriptscriptstyle 1})h + \Gamma_{\scriptscriptstyle \mathcal R} (g,h) \\
				&= -\partial_t r + \frac{1}{\varepsilon}m + \frac{1}{\varepsilon^2}l + \frac{1}{\varepsilon}s + \frac{1}{\varepsilon}n + q,
			\end{aligned}
		\end{equation}
		where we have used $\mathcal L \mathbf{P}g = 0$ and $\mathcal L_{\scriptscriptstyle \mathcal R} \mathbf{P}_{\scriptscriptstyle 1}h = 0$, with the microscopic terms defined as
		\[
		r \equiv (\mathbf{I - P})g, \quad m \equiv -w \cdot \nabla_x (\mathbf{I - P})g, \quad l \equiv -\mathcal L (\mathbf{I - P})g,
		\]
		\[
		s \equiv \Gamma(g,g), \quad n \equiv \mathcal L_{\scriptscriptstyle \mathcal R} (\mathbf{I - P}_{\scriptscriptstyle 1})h, \quad q \equiv \Gamma_{\scriptscriptstyle \mathcal R}(g,h).
		\]
		
		Thus, system \eqref{Macro.Sys.2} can be compactly rewritten as
		\begin{equation}\label{Macro.Sys.2.}
			\{\partial_t + v\cdot \nabla_x\} \mathbf{P}g = -\partial_t r + \frac{1}{\varepsilon}m + \frac{1}{\varepsilon^2}l + \frac{1}{\varepsilon}s + \frac{1}{\varepsilon}n + q.
		\end{equation}
		
		Inserting the macro expansion
		\[
		\mathbf{P}g = \left\{ a(t,x) + b(t,x) \cdot w + c(t,x)|w|^2 \right\} \sqrt{\mu(w)}
		\]
		into equation \eqref{Macro.Sys.2.} and projecting onto the orthogonal basis
		\begin{equation}\label{Basis.g.}
			\{e_k\}_{k=1}^{13} = \left\{ \sqrt{\mu},\, w_i\sqrt{\mu},\, w_iw_j\sqrt{\mu},\, w_i^2\sqrt{\mu},\, w_i|w|^2\sqrt{\mu} \right\}, \quad 1 \leqslant i \leqslant 3,
		\end{equation}
		we derive the system of macroscopic equations for $a$, $b = (b_1, b_2, b_3)$, and $c$ (cf. \cite{MR2095473}):
		\begin{align}
			\sqrt{\mu}:\quad & \partial_t a = -\partial_t r^{(0)} + \frac{1}{\varepsilon}m^{(0)} + \frac{1}{\varepsilon^2}l^{(0)} + \frac{1}{\varepsilon}s^{(0)} + \frac{1}{\varepsilon}n^{(0)} + q^{(0)}, \label{Eqn.bar.abc.1} \\
			w_{i}\sqrt{\mu}:\quad & \partial_t b_i + \frac{1}{\varepsilon}\partial_i a = -\partial_t r_i^{(1)} + \frac{1}{\varepsilon}m_i^{(1)} + \frac{1}{\varepsilon^2}l_i^{(1)} + \frac{1}{\varepsilon}s_i^{(1)} + \frac{1}{\varepsilon}n_i^{(1)} + q_i^{(1)}, \label{Eqn.bar.abc.2} \\
			w_i^2\sqrt{\mu}:\quad & \partial_t c + \frac{1}{\varepsilon}\partial_i b_i = -\partial_t r_i^{(2)} + \frac{1}{\varepsilon}m_i^{(2)} + \frac{1}{\varepsilon^2}l_i^{(2)} + \frac{1}{\varepsilon}s_i^{(2)} + \frac{1}{\varepsilon}n_i^{(2)} + q_i^{(2)}, \label{Eqn.bar.abc.3} \\
			w_iw_j\sqrt{\mu}:\quad & \frac{1}{\varepsilon}\partial_i b_j + \frac{1}{\varepsilon}\partial_j b_i = -\partial_t r_{ij}^{(2)} + \frac{1}{\varepsilon}m_{ij}^{(2)} + \frac{1}{\varepsilon^2}l_{ij}^{(2)} + \frac{1}{\varepsilon}s_{ij}^{(2)} + \frac{1}{\varepsilon}n_{ij}^{(2)} + q_{ij}^{(2)}, \quad i \neq j, \label{Eqn.bar.abc.4} \\
			w_i|w|^2\sqrt{\mu}:\quad & \frac{1}{\varepsilon}\partial_i c = -\partial_t r_i^{(3)} + \frac{1}{\varepsilon}m_i^{(3)} + \frac{1}{\varepsilon^2}l_i^{(3)} + \frac{1}{\varepsilon}s_i^{(3)} + \frac{1}{\varepsilon}n_i^{(3)} + q_i^{(3)}, \label{Eqn.bar.abc.5}
		\end{align}
		where $r^{(0)}, r_i^{(1)}, \dots, q_{ij}^{(2)}, q_i^{(3)}$ denote the coefficients of $r, m, l, s, n, q$ with respect to the thirteen moments in \eqref{Basis.g.}.
		
		The following lemma demonstrates that these moment coefficients on the right-hand side of equations \eqref{Eqn.bar.abc.1}--\eqref{Eqn.bar.abc.5} can be bounded by the microscopic dissipation rate functionals.
		
		\begin{lemma}\label{Lemma.MacMoments}
			Let $\varepsilon, \eta \in (0,1]$ and $\alpha \in \mathbb{N}^3$ with $|\alpha| \leqslant N-1$. Then the following estimates hold:
			\begin{align}
				\left\| \partial_i\partial^\alpha_x \left[ r^{(0)}, r_i^{(1)}, r_i^{(2)}, r_{ij}^{(2)}, r_i^{(3)} \right] \right\|_{L^2_x} &\lesssim {\mathcal D}_g, \label{r} \\
				\left\| \partial^{\alpha}_x \left[ l^{(0)}, l_i^{(1)}, l_i^{(2)}, l_{ij}^{(2)}, l_i^{(3)} \right] \right\|_{L^2_x} &\lesssim {\mathcal D}_g, \label{l} \\
				\left\| \partial^{\alpha}_x \left[ n^{(0)}, n_i^{(1)}, n_i^{(2)}, n_{ij}^{(2)}, n_i^{(3)} \right] \right\|_{L^2_x} &\lesssim {\mathcal D}_h, \label{n} \\
				\left\| \partial^\alpha_x \left[ m^{(0)}, m_i^{(1)}, m_i^{(2)}, m_{ij}^{(2)}, m_i^{(3)} \right] \right\|_{L^2_x} &\lesssim {\mathcal D}_g, \label{m} \\
				\left\| \partial^{\alpha}_x \left[ s^{(0)}, s_i^{(1)}, s_i^{(2)}, s_{ij}^{(2)}, s_i^{(3)} \right] \right\|_{L^2_x} &\lesssim {\mathcal E}_g({\mathcal D}_g + {\mathcal C}_g), \label{s} \\
				\left\| \partial^\alpha_x \left[ q^{(0)}, q_i^{(1)}, q_i^{(2)}, q_{ij}^{(2)}, q_i^{(3)} \right] \right\|_{L^2_x} &\lesssim {\mathcal E}_h ({\mathcal D}_g + {\mathcal C}_g) + {\mathcal E}_g {\mathcal D}_h. \label{q}
			\end{align}
		\end{lemma}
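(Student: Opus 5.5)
Every coefficient on the right-hand sides of \eqref{Eqn.bar.abc.1}--\eqref{Eqn.bar.abc.5} is obtained by pairing the corresponding microscopic term with a fixed velocity function $e_k$ from \eqref{Basis.g.}, followed by a fixed linear change of coordinates, because the thirteen moments are not orthonormal but only a finite basis. Each $e_k$ is a polynomial times $\sqrt{\mu}$, so $e_k$ and $\nu^{1/2}e_k$ lie in $L^2_w$ with bounds independent of $\varepsilon,\eta$. The plan is therefore to reduce every estimate to a bound on $\|\partial_x^\alpha\langle X, e_k\rangle_{L^2_w}\|_{L^2_x}$ for $X\in\{r,m,l,s,n,q\}$. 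We then apply Cauchy--Schwarz in $w$ and use the definitions of $\mathcal D_g,\mathcal C_g,\mathcal D_h$ to convert the result into dissipation norms. Recall that $\mathcal D_g^2=\varepsilon^{-2}\sum\|\partial^\alpha(\mathbf I-\mathbf P)g\|_\nu^2$ and $\mathcal D_h^2=\eta^{-1}\sum\|\partial^\alpha(\mathbf I-\mathbf P_1)h\|_{\nu_1}^2$. Every bound we need is weaker than what these norms actually provide.

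\textbf{The linear terms.} For \eqref{r}:
\[
\|\partial_i\partial^\alpha_x\langle(\mathbf I-\mathbf P)g,e_k\rangle\|_{L^2_x}\lesssim\|\partial_i\partial^\alpha_x(\mathbf I-\mathbf P)g\|_{L^2_{x,w}}\lesssim\varepsilon\mathcal D_g\le\mathcal D_g,
\]
which is valid because $|\alpha|+1\le N$. The estimate \eqref{m} is identical after moving $w$ onto $e_k$, since $we_k$ is again a weighted polynomial times $\sqrt\mu$. For \eqref{l}, we use the self-adjointness of $\mathcal L$ to write $\langle\mathcal L(\mathbf I-\mathbf P)g,e_k\rangle=\langle(\mathbf I-\mathbf P)g,\mathcal Le_k\rangle$. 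Since $\mathcal Le_k=\nu e_k-\mathcal Ke_k$ and $\nu^{-1/2}\mathcal Le_k\in L^2_w$ by standard properties of $\mathcal K$, this gives the bound $\|\partial^\alpha(\mathbf I-\mathbf P)g\|_\nu\lesssim\varepsilon\mathcal D_g$. For \eqref{n}, we invoke \eqref{Gamma.R.} with $(g^*,h,g_*)=(\sqrt\mu,\partial^\alpha(\mathbf I-\mathbf P_1)h,e_k)$, exactly as in \eqref{R-32}. This gives $\|\partial^\alpha(\mathbf I-\mathbf P_1)h\|_{\nu_1}\lesssim\eta^{1/2}\mathcal D_h\le\mathcal D_h$.

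\textbf{The nonlinear terms.} For \eqref{s}, we apply Leibniz, expand $\partial^\alpha\Gamma(g,g)=\sum\binom{\alpha}{\alpha_1}\Gamma(\partial^{\alpha_1}g,\partial^{\alpha_2}g)$, and use \eqref{Gamma.Q.} with $g_*=e_k$. This yields pointwise-in-$x$ products of the form $|\partial^{\alpha_1}g|_{L^2_w}\,|\partial^{\alpha_2}g|_\nu$. We split each factor into $\mathbf Pg$ and $(\mathbf I-\mathbf P)g$. The $\nu$-norm of $\mathbf Pg$ is controlled by $|(a,b,c)|$, and $(\mathbf I-\mathbf P)g$ is controlled by $\mathcal D_g$. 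We then apply the $L^\infty_x$--$L^2_x$ and $L^3_x$--$L^6_x$ Sobolev splitting used in \eqref{Deqn11}, placing the factor with fewer derivatives in $\mathcal E_g$.

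\textbf{Main obstacle.} The delicate case is a macroscopic factor carrying zero derivatives, i.e.\ $\partial^{\alpha_2}\mathbf Pg$ with $\alpha_2=0$. Since $\mathcal C_g$ only contains $|\alpha|>0$, this factor must go into $L^\infty_x$ through the energy. We use $\|u\|_{L^\infty}\lesssim\|\nabla u\|_{H^1}$ (available since $N\ge2$) to put that factor into $\mathcal C_g$ (or $\mathcal E_g$) and the other factor into $L^2_x$. When both factors are macroscopic and undifferentiated (the case $\alpha=0$), we put one in $L^3_x$ and one in $L^6_x$ ($\|u\|_{L^6}\lesssim\|\nabla u\|_{L^2}$), so that $\|a\|_{L^3}\|\nabla a\|_{L^2}$-type bounds again give $\mathcal E_g\mathcal C_g$. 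The estimate \eqref{q} is handled in the same way, following \eqref{R-34}--\eqref{R-44} with $|w|^2\sqrt\mu$ replaced by $e_k$, and gives $\mathcal E_h(\varepsilon\mathcal D_g+\mathcal C_g)+\eta^{1/2}\mathcal E_g\mathcal D_h$. The zero-order term $a_1$ is placed in $\mathcal E_h$, using $\|\mathbf P_1 h\|\sim\|a_1\|$, and the undifferentiated $(a,b,c)$ is again treated via the homogeneous Sobolev inequality.
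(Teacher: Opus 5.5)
Your proposal is correct and follows the same route as the paper. The paper only asserts \eqref{r}, \eqref{l}, \eqref{n}, \eqref{m} directly, cites \cite{MR2095473} for \eqref{s}, and refers to \eqref{R-34}--\eqref{R-44} for \eqref{q}; you supply those details, including the $L^3$--$L^6$ treatment of the undifferentiated macroscopic factors. One phrase needs tightening: "placing the factor with fewer derivatives in $\mathcal E_g$" is not always possible, because a $\nu$-weighted microscopic factor is not controlled by $\mathcal E_g$. Such a factor must go into $\mathcal D_g$ (through $H^2_x$ when it is the undifferentiated one), and the other factor then goes into the energy.
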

		
		\begin{proof}
			Since $r$, $l$, and $n$ are directly generated by $(\mathbf{I - P})g$, $-\mathcal L (\mathbf{I - P})g$, and $\mathcal L_{\scriptscriptstyle \mathcal R} (\mathbf{I - P}_{\scriptscriptstyle 1})h$ respectively, their definitions involve no spatial derivatives; thus estimates \eqref{r}, \eqref{l}, and \eqref{n} follow directly.
			Estimate \eqref{m} holds because $m$ contains first-order spatial derivatives $\nabla_x (\mathbf{I - P})g$.
			Estimate \eqref{s} concerning the quadratic collision term $\Gamma(g,g)$ was established in \cite{MR2095473}.
			Finally, estimate \eqref{q} for the interaction operator $\Gamma_{\scriptscriptstyle \mathcal R} (g,h)$ follows by arguments similar to estimates \eqref{R-34}--\eqref{R-44} for $\left\| \left\langle \Gamma_{\scriptscriptstyle \mathcal R} (g,h), |w|^2\sqrt\mu \right\rangle_{L^2_w} \right\|_{H^{N-1}_x}$.
		\end{proof}
		
		Now, we establish the bounds for the macroscopic dissipation rate functionals of $h$ and $g$ as stated in the following lemma.
		
		\begin{lemma}\label{Lemma.MacDissipation}
			Let $\varepsilon, \eta \in (0,1]$ and $\alpha \in \mathbb{N}^3$ with $|\alpha| \leqslant N-1$. Let $(h,g)$ be a solution to the Boltzmann system \eqref{equ:h}--\eqref{equ:hg0} over $[0, T]$ for some $T > 0$. Then the following estimates hold:
			\begin{equation}\label{Mac.bar.a}
				\begin{aligned}
					&\frac{\mathrm{d}}{\mathrm{d}t} \sum_{|\alpha|\leqslant N-1} \sum_{i=1}^3 \mathcal I^{a_{\scriptscriptstyle 1}}_{\alpha,i} + \varepsilon^2 \sum_{|\alpha|\leqslant N-1} \|\nabla_x\partial^\alpha_x a_{\scriptscriptstyle 1}\|_{L^2_x}^2 \\
					&\quad \lesssim \left( 1 + \frac{\varepsilon^4}{\eta} \right) {\mathcal D}^2_h + \left( \frac{\varepsilon^3}{\eta} \right)^2 {\mathcal E}^2 \left( {\mathcal D}^2_h + {\mathcal D}^2_g + {\mathcal C}^2_g \right),
				\end{aligned}
			\end{equation}
			and
			\begin{equation}\label{Mac.a.b.c}
				\begin{aligned}
					&\frac{\mathrm{d}}{\mathrm{d}t} \sum_{|\alpha|\leqslant N-1} \sum_{i=1}^3 \left[ \mathcal I^{ab}_{\alpha,i} + \mathcal I^a_{\alpha,i} + \mathcal I^b_{\alpha,i} + \mathcal I^c_{\alpha,i} \right] + \sum_{|\alpha|\leqslant N-1} \|\nabla_x\partial^\alpha_x (a,b,c)\|_{L^2_x}^2 \\
					&\quad \lesssim {\mathcal D}^2_g + {\mathcal D}^2_h + {\mathcal E}^2 \left( {\mathcal D}^2_h + {\mathcal D}^2_g + {\mathcal C}^2_g \right),
				\end{aligned}
			\end{equation}
			where the interactive energy functionals are defined by
			\begin{align}
				\mathcal I^{a_{\scriptscriptstyle 1}}_{\alpha,i} &= \varepsilon^2 \big\langle \partial^\alpha_x\bar{r}_i^{(1)}, \partial_i\partial^\alpha_x a_{\scriptscriptstyle 1} \big\rangle_{L^2_x}, \label{I,a_1,h_2} \\
				\mathcal I^a_{\alpha,i} &= \varepsilon \big\langle \partial^\alpha_x r_i^{(1)}, \partial_i\partial^\alpha_x a \big\rangle_{L^2_x}, \label{I,a} \\
				\mathcal I^b_{\alpha,i} &= \varepsilon \left\langle \sum_{i\neq j} \partial_j\partial^\alpha_x r_i^{(2)} - \sum_{i\neq j} \partial_i\partial^\alpha_x r_{ij}^{(2)} - 2\partial_j\partial^\alpha_x r_j^{(2)},\, \partial^\alpha_x b_j \right\rangle_{L^2_x}, \label{I,b} \\
				\mathcal I^{ab}_{\alpha,i} &= \varepsilon \big\langle \partial^\alpha_x b_i, \partial_i\partial^\alpha_x a \big\rangle_{L^2_x}, \label{I,ab} \\
				\mathcal I^c_{\alpha,i} &= \varepsilon \big\langle \partial^\alpha_x r_i^{(3)}, \partial_i\partial^\alpha_x c \big\rangle_{L^2_x}. \label{I,c}
			\end{align}
		\end{lemma}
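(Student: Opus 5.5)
We will prove the two estimates separately by the standard macroscopic-equation method: differentiate the moment equations, pair them with suitable spatial derivatives of the macroscopic coefficients, and integrate by parts in $x$ (and in $t$ for the $\partial_t r$-type terms). The $\partial_t$ terms then generate the interactive functionals $\mathcal I^{\cdot}_{\alpha,i}$.

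\emph{Estimate \eqref{Mac.bar.a}.} Apply $\partial^\alpha_x$, $|\alpha|\leqslant N-1$, to \eqref{Eqn.bar.a.2}, multiply by $\varepsilon^2\partial_i\partial^\alpha_x a_{\scriptscriptstyle 1}$, integrate over $x$ and sum over $i$. The left side gives $\varepsilon^2\|\nabla_x\partial^\alpha_x a_{\scriptscriptstyle 1}\|^2_{L^2_x}$. The time-derivative term is rewritten as
\[
-\varepsilon^2\langle\partial_t\partial^\alpha_x\bar r_i^{(1)},\partial_i\partial^\alpha_x a_{\scriptscriptstyle 1}\rangle=-\tfrac{\mathrm d}{\mathrm dt}\mathcal I^{a_{\scriptscriptstyle 1}}_{\alpha,i}-\varepsilon^2\langle\partial_i\partial^\alpha_x\bar r_i^{(1)},\partial_t\partial^\alpha_x a_{\scriptscriptstyle 1}\rangle ,
\]
after moving $\partial_i$ across. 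By \eqref{r.i} and \eqref{Est.t.bar.a}, the remainder is bounded by $\varepsilon^2\cdot\varepsilon\mathcal D_h\cdot\frac{\eta}{\varepsilon}\mathcal D_h\lesssim\mathcal D_h^2$. The $\bar l$ term is handled with \eqref{l.i} and Young: $\varepsilon^2\eta^{1/2}\mathcal D_h\|\nabla\partial^\alpha a_{\scriptscriptstyle 1}\|\leqslant\frac14\varepsilon^2\|\nabla\partial^\alpha a_{\scriptscriptstyle 1}\|^2+C\varepsilon^2\eta\mathcal D_h^2$. For the $\bar n$ term, $\varepsilon^2\frac1\eta\|\partial^\alpha\bar n_i^{(1)}\|\,\|\nabla\partial^\alpha a_{\scriptscriptstyle 1}\| = \varepsilon\cdot\frac{\varepsilon}{\eta}\|\partial^\alpha\bar n\|\cdot\|\nabla\partial^\alpha a_{\scriptscriptstyle 1}\|$. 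Using \eqref{n.i} and Young with weight $\varepsilon^2$, this is at most $\frac14\varepsilon^2\|\nabla\partial^\alpha a_{\scriptscriptstyle 1}\|^2+C\big(\frac{\varepsilon^4}{\eta}\mathcal D_h^2+\frac{\varepsilon^6}{\eta^2}\mathcal E^2(\mathcal D_h^2+\mathcal D_g^2+\mathcal C_g^2)\big)$. These are exactly the coefficients in \eqref{Mac.bar.a}. Absorbing the $\frac14$ terms completes this estimate.

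\emph{Estimate \eqref{Mac.a.b.c}.} We follow the Guo-type scheme, tracking the $\varepsilon$-scaling by multiplying each equation through by $\varepsilon$.

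For $c$: take $\partial^\alpha$ of \eqref{Eqn.bar.abc.5}, multiply by $\varepsilon$, and pair with $\partial_i\partial^\alpha c$. The $\partial_t r_i^{(3)}$ term yields $\frac{\mathrm d}{\mathrm dt}\mathcal I^c_{\alpha,i}$ plus $\varepsilon\langle\partial_i\partial^\alpha r_i^{(3)},\partial_t\partial^\alpha c\rangle$. By \eqref{r} and \eqref{Est.t.a.b.c}, this is $\lesssim\varepsilon\mathcal D_g(\mathcal D_g+\frac1\varepsilon\mathcal C_g+\dots)$. The dangerous piece $\mathcal D_g\,\|\nabla\partial^\alpha(b,c)\|$ is treated by Young with a small constant, inserting the conservation law \eqref{CONSE.a.b.c3} for $\partial_t c$ so that the $\frac1\varepsilon\nabla\cdot b$ contribution becomes $\|\nabla\partial^\alpha b\|\mathcal D_g$ and is absorbed later by the $b$-estimate.

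The remaining sources $m,l,s,n,q$ appear as $\frac1\varepsilon m$, $\frac1{\varepsilon^2}l$, and so on. After multiplying by $\varepsilon$ they become $m$, $\frac1\varepsilon l$, $s$, $n$, $\varepsilon q$. Lemma \ref{Lemma.MacMoments} bounds these by $\mathcal D_g$ (note $\frac1\varepsilon\|l\|\lesssim\frac1\varepsilon\|\nabla\dots\|$; here one uses that $\mathcal D_g$ already carries a $\frac1\varepsilon$ weight, so it is $\lesssim\mathcal D_g$), by $\mathcal D_h$, and by the nonlinear $\mathcal E(\mathcal D+\mathcal C)$ terms.

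For $b$: combine \eqref{Eqn.bar.abc.3}--\eqref{Eqn.bar.abc.4} to obtain the elliptic identity $-\Delta b_j-\partial_j\partial_j b_j=\dots$. Pairing it with $\partial^\alpha b_j$ produces $\mathcal I^b_{\alpha,i}$. The $\partial_t b$ terms are replaced via \eqref{CONSE.a.b.c2}, which gives $\frac1\varepsilon\nabla(a+5c)$, so the factor $\varepsilon$ cancels and the result is absorbed by the small-$\nabla(a,c)$ terms.

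For $a$: pair \eqref{Eqn.bar.abc.2} with $\varepsilon\partial_i\partial^\alpha a$. The $\partial_t b_i$ term is moved into $\frac{\mathrm d}{\mathrm dt}\mathcal I^{ab}_{\alpha,i}$, leaving $\varepsilon\langle\partial^\alpha b_i,\partial_i\partial_t\partial^\alpha a\rangle$. Substituting \eqref{CONSE.a.b.c1} for $\partial_t a$, whose leading part is $\frac1\varepsilon\nabla\cdot(\cdot)(\mathbf{I-P})g$ plus the coupling terms, this becomes $\|\nabla b\|(\mathcal D_g+\eta^{1/2}\varepsilon^{-1}\varepsilon\mathcal D_h+\dots)$.

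Finally, sum the four estimates with weights $1\gg\kappa_1\gg\kappa_2$ (the standard ordering $c$, $b$, $a$) so that all cross terms $\|\nabla(a,b,c)\|$ are absorbed.

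\emph{Main obstacle.} The hardest step is the singular coupling term $\frac1\varepsilon n$ in the $a,b,c$ equations, and more generally the $\frac1\varepsilon$ coefficients appearing in the time derivatives of $(b,c)$. One must check that after multiplying by $\varepsilon$ no negative power of $\varepsilon$ survives. I would first verify that the $\frac1\varepsilon\mathcal C_g$ term in \eqref{Est.t.a.b.c} always meets an $\varepsilon\mathcal D_g$ factor, giving $\mathcal D_g\mathcal C_g$, which is absorbable by Young with a small constant. If that fails, I would instead substitute the conservation laws explicitly, as sketched above.
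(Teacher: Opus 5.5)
Your proposal is correct and follows the paper's proof essentially step for step. It uses the same moment equations \eqref{Eqn.bar.a.2} and \eqref{Eqn.bar.abc.2}--\eqref{Eqn.bar.abc.5} (multiplied by $\varepsilon^2$ and $\varepsilon$ respectively) and the same integration by parts in $t$ to generate the interactive functionals. It controls the transfer terms the same way, via \eqref{Est.t.bar.a}, \eqref{Parti.a.} and \eqref{Est.t.a.b.c} (or equivalently the conservation laws), followed by Young's inequality with a small $\delta$, and it makes explicit the sharpening $\|\partial^\alpha_x l\|_{L^2_x}\lesssim\varepsilon\mathcal D_g$ that the paper uses tacitly.

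The only deviation is the hierarchical weighting $1\gg\kappa_1\gg\kappa_2$ in the final sum. It is unnecessary, and it would change the functional in \eqref{Mac.a.b.c}. Every cross term you produce pairs a macroscopic gradient with a dissipation term and carries a free small $\delta$, so summing with unit weights, as the paper does, gives the estimate exactly as stated.
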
	
		
		\begin{proof}
			\emph{Estimates for $a_{\scriptscriptstyle 1}$.} For any $|\alpha| \leqslant N-1$ and $i \in \{1, 2, 3\}$, using equation \eqref{Eqn.bar.a.2}, we expand
			\begin{equation}\label{Mac-a-1}
				\begin{aligned}
					\varepsilon^2 \|\partial_i\partial^\alpha_x a_{\scriptscriptstyle 1}\|_{L^2_x}^2 
					&= \varepsilon^2 \big\langle \partial_i\partial^\alpha_x a_{\scriptscriptstyle 1}, \partial_i\partial^\alpha_x a_{\scriptscriptstyle 1} \big\rangle_{L^2_x} \\
					&= \varepsilon^2 \big\langle -\partial_t\partial^\alpha_x\bar{r}_i^{(1)}, \partial_i\partial^\alpha_x a_{\scriptscriptstyle 1} \big\rangle_{L^2_x} + \varepsilon^2 \left\langle \partial^\alpha_x \left[ \bar{l}_i^{(1)} + \frac{1}{\eta}\bar{n}_i^{(1)} \right], \partial_i\partial^\alpha_x a_{\scriptscriptstyle 1} \right\rangle_{L^2_x} \\
					&= -\varepsilon^2 \frac{\mathrm{d}}{\mathrm{d}t} \big\langle \partial^\alpha_x\bar{r}_i^{(1)}, \partial_i\partial^\alpha_x a_{\scriptscriptstyle 1} \big\rangle_{L^2_x} + \varepsilon^2 \big\langle \partial^\alpha_x\bar{r}_i^{(1)}, \partial_i\partial^\alpha_x\partial_t a_{\scriptscriptstyle 1} \big\rangle_{L^2_x} \\
					&\quad + \varepsilon^2 \left\langle \partial^\alpha_x \left[ \bar{l}_i^{(1)} + \frac{1}{\eta}\bar{n}_i^{(1)} \right], \partial_i\partial^\alpha_x a_{\scriptscriptstyle 1} \right\rangle_{L^2_x}.
				\end{aligned}
			\end{equation}
			The first term on the right-hand side of \eqref{Mac-a-1} corresponds to $-\frac{\mathrm{d}}{\mathrm{d}t}\mathcal I^{a_{\scriptscriptstyle 1}}_{\alpha,i}$, where $\mathcal I^{a_{\scriptscriptstyle 1}}_{\alpha,i}$ defined in \eqref{I,a_1,h_2} represents the interactive energy functional between the microscopic component $(\mathbf{I - P}_{\scriptscriptstyle 1})h$ and the macroscopic component $a_{\scriptscriptstyle 1}$. Applying estimates \eqref{Est.t.bar.a} and \eqref{r.i}, the second term is bounded by
			\[
			-\varepsilon^2 \big\langle \partial_i\partial^\alpha_x\bar{r}_i^{(1)}, \partial^\alpha_x\partial_t a_{\scriptscriptstyle 1} \big\rangle_{L^2_x}
			\leqslant \varepsilon^2 \|\partial_i\partial^\alpha_x\bar{r}\|_{L^2_x} \|\partial_t\partial^\alpha_x a_{\scriptscriptstyle 1}\|_{L^2_x}
			\leqslant \varepsilon^2 \cdot (\varepsilon {\mathcal D}_h) \cdot \left( \frac{\eta}{\varepsilon}{\mathcal D}_h \right)
			\lesssim {\mathcal D}^2_h.
			\]
			Furthermore, using estimates \eqref{l.i}--\eqref{n.i} and Cauchy--Schwarz's inequality with $\delta > 0$, the last term is controlled by
			\[
			\begin{aligned}
				&\delta\varepsilon^2 \|\partial_i\partial^\alpha_x a_{\scriptscriptstyle 1}\|_{L^2_x}^2 + \frac{1}{\delta} \left\{ \varepsilon^2 \|\partial^\alpha_x\bar{l}\|_{L^2_x}^2 + \frac{\varepsilon^2}{\eta^2} \|\partial^\alpha_x\bar{n}_i^{(1)}\|_{L^2_x}^2 \right\} \\
				\lesssim &\delta\varepsilon^2 \|\partial_i\partial^\alpha_x a_{\scriptscriptstyle 1}\|_{L^2_x}^2 + \frac{1}{\delta} \left\{ \left( 1 + \frac{\varepsilon^4}{\eta} \right) {\mathcal D}^2_h + \left( \frac{\varepsilon^3}{\eta} \right)^2 {\mathcal E}^2 \left( {\mathcal D}^2_h + {\mathcal D}^2_g + {\mathcal C}^2_g \right) \right\}.
			\end{aligned}
			\]
			Substituting these bounds into \eqref{Mac-a-1} and summing over $|\alpha| \leqslant N-1$ and $i \in \{1, 2, 3\}$, we obtain
			\begin{equation}\label{Mac-a-1.}
				\begin{aligned}
					&\frac{\mathrm{d}}{\mathrm{d}t} \sum_{|\alpha|\leqslant N-1} \sum_{i=1}^3 \mathcal I^{a_{\scriptscriptstyle 1}}_{\alpha,i} + \varepsilon^2 \sum_{|\alpha|\leqslant N-1} \|\nabla_x\partial^\alpha_x a_{\scriptscriptstyle 1}\|_{L^2_x}^2 \\
					\lesssim &\delta\varepsilon^2 \sum_{|\alpha|\leqslant N-1} \|\nabla_x\partial^\alpha_x a_{\scriptscriptstyle 1}\|_{L^2_x}^2 + \frac{1}{\delta} \left\{ \left( 1 + \frac{\varepsilon^4}{\eta} \right) {\mathcal D}^2_h + \left( \frac{\varepsilon^3}{\eta} \right)^2 {\mathcal E}^2 \left( {\mathcal D}^2_h + {\mathcal D}^2_g + {\mathcal C}^2_g \right) \right\}.
				\end{aligned}
			\end{equation}
			Choosing $\delta > 0$ sufficiently small in \eqref{Mac-a-1.} yields estimate \eqref{Mac.bar.a}.
			
			\emph{Estimates for $a$.} For any $|\alpha| \leqslant N-1$ and $i \in \{1, 2, 3\}$, equation \eqref{Eqn.bar.abc.2} gives
			\begin{equation}\label{Mac-a}
				\begin{aligned}
					&\|\partial_i\partial^\alpha_x a\|_{L^2_x}^2 
					= \big\langle \partial_i\partial^\alpha_x a, \partial_i\partial^\alpha_x a \big\rangle_{L^2_x} \\
					=& \left\langle -\varepsilon\partial_t\partial^\alpha_x \big[ b_i + r_i^{(1)} \big] + \partial^\alpha_x \left[ m_i^{(1)} + \frac{1}{\varepsilon}l_i^{(1)} + s_i^{(1)} + n_i^{(1)} + \varepsilon q_i^{(1)} \right], \partial_i\partial^\alpha_x a \right\rangle_{L^2_x} \\
					=& -\frac{\mathrm{d}}{\mathrm{d}t} \big\langle \varepsilon\partial^\alpha_x [b_i + r_i^{(1)}], \partial_i\partial^\alpha_x a \big\rangle_{L^2_x} + \big\langle \varepsilon\partial^\alpha_x [b_i + r_i^{(1)}], \partial_i\partial^\alpha_x \partial_t a \big\rangle_{L^2_x} \\
					&+ \left\langle \partial^\alpha_x \left[ m_i^{(1)} + \frac{1}{\varepsilon}l_i^{(1)} + s_i^{(1)} + n_i^{(1)} + \varepsilon q_i^{(1)} \right], \partial_i\partial^\alpha_x a \right\rangle_{L^2_x}.		
				\end{aligned}
			\end{equation}
			The boundary term in time equals $-\frac{\mathrm{d}}{\mathrm{d}t} [\mathcal I^{ab}_{\alpha,i} + \mathcal I^a_{\alpha,i}]$, where $\mathcal I^a_{\alpha,i}$ defined by \eqref{I,a} represents the interactive energy functional between $(\mathbf{I - P})g$ and $a$, while $\mathcal I^{ab}_{\alpha,i}$ defined by \eqref{I,ab} represents the cross-coupling between $a$ and $b$. Applying inequalities \eqref{Parti.a.}, \eqref{r}, and Cauchy--Schwarz's inequality with $\delta > 0$, the second term is bounded by
			\[
			\begin{aligned}
				&-\big\langle \varepsilon\partial^\alpha_x\partial_i b_i, \partial^\alpha_x \partial_t a \big\rangle_{L^2_x} - \big\langle \varepsilon\partial^\alpha_x\partial_i r_i^{(1)}, \partial^\alpha_x \partial_t a \big\rangle_{L^2_x} \\
				&\quad \lesssim \delta \|\partial_i\partial^\alpha_x b_i\|_{L^2_x}^2 + \frac{\varepsilon^2}{\delta} \|\partial^\alpha_x \partial_t a\|_{L^2_x}^2 + \delta \|\partial^\alpha_x\partial_i r_i^{(1)}\|_{L^2_x}^2 \\
				&\quad \lesssim \delta \|\partial_i\partial^\alpha_x b_i\|_{L^2_x}^2 + \frac{\varepsilon^2}{\delta} \left\{ {\mathcal D}_g^2 + \frac{\eta}{\varepsilon^2} {\mathcal D}_h^2 + {\mathcal E}_h^2 (\varepsilon^2{\mathcal D}_g^2 + {\mathcal C}_g^2) + \eta{\mathcal E}_g^2{\mathcal D}_h^2 \right\} + \delta {\mathcal D}^2_g \\
				&\quad \lesssim \delta \|\partial_i\partial^\alpha_x b_i\|_{L^2_x}^2 + \frac{1}{\delta} \left\{ {\mathcal D}^2_g + {\mathcal D}^2_h + {\mathcal E}^2 \left( {\mathcal D}^2_h + {\mathcal D}^2_g + {\mathcal C}^2_g \right) \right\}.
			\end{aligned}
			\]
			Furthermore, utilizing estimates \eqref{l}--\eqref{q} and Cauchy--Schwarz's inequality with $\delta > 0$, the last term is controlled by
			\[
			\begin{aligned}
				&\delta \|\partial_i\partial^\alpha_x a\|_{L^2_x}^2 + \frac{1}{\delta} \left\| \partial^\alpha_x \left[ m_i^{(1)} + \frac{1}{\varepsilon}l_i^{(1)} + s_i^{(1)} + n_i^{(1)} + \varepsilon q_i^{(1)} \right] \right\|_{L^2_x}^2 \\
				&\quad \lesssim \delta \|\partial_i\partial^\alpha_x a\|_{L^2_x}^2 + \frac{1}{\delta} \left\{ {\mathcal D}^2_g + {\mathcal D}^2_h + {\mathcal E}^2 \left( {\mathcal D}^2_h + {\mathcal D}^2_g + {\mathcal C}^2_g \right) \right\}.
			\end{aligned}
			\]
			Substituting these estimates into \eqref{Mac-a} and summing over $|\alpha| \leqslant N-1$ and $i \in \{1, 2, 3\}$, we obtain
			\begin{equation}\label{Mac-a.}
				\begin{aligned}
					&\frac{\mathrm{d}}{\mathrm{d}t} \sum_{|\alpha|\leqslant N-1} \sum_{i=1}^3 \left[ \mathcal I^{ab}_{\alpha,i} + \mathcal I^a_{\alpha,i} \right] + \sum_{|\alpha|\leqslant N-1} \|\nabla_x\partial^\alpha_x a\|_{L^2_x}^2 \\
					&\quad \lesssim \delta \sum_{|\alpha|\leqslant N-1} \|\nabla_x\partial^\alpha_x (a,b)\|_{L^2_x}^2 + \frac{1}{\delta} \left\{ {\mathcal D}^2_g + {\mathcal D}^2_h + {\mathcal E}^2 \left( {\mathcal D}^2_h + {\mathcal D}^2_g + {\mathcal C}^2_g \right) \right\}.
				\end{aligned}
			\end{equation}
			
			\emph{Estimates for $b$.} As observed in \cite{MR2095473}, equations \eqref{Eqn.bar.abc.3} and \eqref{Eqn.bar.abc.4} imply that $b = (b_1, b_2, b_3)$ satisfies the elliptic-type system
			\begin{equation}\label{b-ellipt}
				\begin{aligned}
					-\Delta_x b_j - \partial_j\partial_j b_j 
					&= -\sum_{i\neq j} \partial_i (\partial_i b_j + \partial_j b_i) - \partial_j \left( 2\partial_j b_j - \sum_{i\neq j} \partial_i b_i \right) \\
					&= -\sum_{i\neq j} \partial_i \left( -\varepsilon\partial_t r_{ij}^{(2)} + m_{ij}^{(2)} + \tfrac{1}{\varepsilon}l_{ij}^{(2)} + s_{ij}^{(2)} + n_{ij}^{(2)} + \varepsilon q_{ij}^{(2)} \right) \\
					&\quad - 2\partial_j \left( -\varepsilon\partial_t r_j^{(2)} + m_j^{(2)} + \tfrac{1}{\varepsilon}l_j^{(2)} + s_j^{(2)} + n_j^{(2)} + \varepsilon q_j^{(2)} \right) \\
					&\quad + \sum_{i\neq j} \partial_j \left( -\varepsilon\partial_t r_i^{(2)} + m_i^{(2)} + \tfrac{1}{\varepsilon}l_i^{(2)} + s_i^{(2)} + n_i^{(2)} + \varepsilon q_i^{(2)} \right).
				\end{aligned}
			\end{equation}
			Applying $\partial^\alpha_x$ with $|\alpha| \leqslant N-1$ to \eqref{b-ellipt}, multiplying by $\partial^\alpha_x b_j$, and integrating over $\mathbb{R}^3_x$, we get
			\begin{equation}\label{Mac-b}
				\begin{aligned}
					&\|\nabla_x\partial^\alpha_x b_j\|_{L^2_x}^2 + \|\partial_j\partial^\alpha_x b_j\|_{L^2_x}^2 \\
					&\quad = -\varepsilon \frac{\mathrm{d}}{\mathrm{d}t} \left\langle \sum_{i\neq j}\partial_j\partial^\alpha_x r_i^{(2)} - \sum_{i\neq j}\partial_i\partial^\alpha_x r_{ij}^{(2)} - 2\partial_j\partial^\alpha_x r_j^{(2)},\, \partial^\alpha_x b_j \right\rangle_{L^2_x} \\
					&\quad\quad + \varepsilon \left\langle \sum_{i\neq j}\partial_j\partial^\alpha_x r_i^{(2)} - \sum_{i\neq j}\partial_i\partial^\alpha_x r_{ij}^{(2)} - 2\partial_j\partial^\alpha_x r_j^{(2)},\, \partial^\alpha_x\partial_t b_j \right\rangle_{L^2_x} \\
					&\quad\quad + \sum_{i\neq j} \left\langle \partial_j\partial^\alpha_x \left[ m_i^{(2)} + \frac{1}{\varepsilon}l_i^{(2)} + s_i^{(2)} + n_i^{(2)} + \varepsilon q_i^{(2)} \right], \partial^\alpha_x b_j \right\rangle_{L^2_x} \\
					&\quad\quad - \sum_{i\neq j} \left\langle \partial_j\partial^\alpha_x \left[ m_{ij}^{(2)} + \frac{1}{\varepsilon}l_{ij}^{(2)} + s_{ij}^{(2)} + n_{ij}^{(2)} + \varepsilon q_{ij}^{(2)} \right], \partial^\alpha_x b_j \right\rangle_{L^2_x} \\
					&\quad\quad - 2 \left\langle \partial_j\partial^\alpha_x \left[ m_j^{(2)} + \frac{1}{\varepsilon}l_j^{(2)} + s_j^{(2)} + n_j^{(2)} + \varepsilon q_j^{(2)} \right], \partial^\alpha_x b_j \right\rangle_{L^2_x}.
				\end{aligned}
			\end{equation}
			The first term on the right-hand side of \eqref{Mac-b} equals $-\frac{\mathrm{d}}{\mathrm{d}t}\mathcal I^b_{\alpha,j}$. Using inequalities \eqref{Est.t.a.b.c}, \eqref{r}, and Cauchy--Schwarz's inequality with $\delta > 0$, the second term is bounded by
			\[
			\begin{aligned}
				&\delta\varepsilon^2 \|\partial^\alpha_x\partial_t b_j\|_{L^2_x}^2 + \frac{1}{\delta} \left\| \sum_{i\neq j}\partial_j\partial^\alpha_x r_i^{(2)} - \sum_{i\neq j}\partial_i\partial^\alpha_x r_{ij}^{(2)} - 2\partial_j\partial^\alpha_x r_j^{(2)} \right\|_{L^2_x}^2 \\
				&\quad \lesssim \delta\varepsilon^2 \left\{ \frac{1}{\varepsilon^2}{\mathcal C}_g^2 + {\mathcal D}_g^2 + \frac{\eta}{\varepsilon^2} {\mathcal D}_h^2 + {\mathcal E}_h^2 (\varepsilon^2{\mathcal D}_g^2 + {\mathcal C}_g^2) + \eta{\mathcal E}_g^2{\mathcal D}_h^2 \right\} + \frac{1}{\delta}{\mathcal D}^2_g \\
				&\quad \lesssim \delta \left[ \sum_{|\alpha|\leqslant 2} \|\nabla_x\partial^\alpha_x (a,b,c)\|_{L^2_x}^2 + {\mathcal D}^2_g + {\mathcal D}^2_h + {\mathcal E}^2 \left( {\mathcal D}^2_h + {\mathcal D}^2_g + {\mathcal C}^2_g \right) \right] + \frac{1}{\delta}{\mathcal D}^2_g.
			\end{aligned}
			\]
			Furthermore, using estimates \eqref{l}--\eqref{q} and integration by parts together with Cauchy--Schwarz's inequality, the remaining terms on the right-hand side of \eqref{Mac-b} are bounded by
			\[
			\delta \|\nabla_x\partial^\alpha_x b_j\|_{L^2_x}^2 + \frac{1}{\delta} \left\{ {\mathcal D}^2_g + {\mathcal D}^2_h + {\mathcal E}^2 \left( {\mathcal D}^2_h + {\mathcal D}^2_g + {\mathcal C}^2_g \right) \right\}.
			\]
			Summing \eqref{Mac-b} over $|\alpha| \leqslant N-1$ and $j \in \{1, 2, 3\}$, we obtain
			\begin{equation}\label{Mac-b.}
				\begin{aligned}
					&\frac{\mathrm{d}}{\mathrm{d}t} \sum_{|\alpha|\leqslant N-1} \sum_{j=1}^3 \mathcal I^b_{\alpha,j} + \sum_{|\alpha|\leqslant N-1} \|\nabla_x\partial^\alpha_x b\|_{L^2_x}^2 \\
					&\quad \lesssim \delta \sum_{|\alpha|\leqslant N-1} \|\nabla_x\partial^\alpha_x (a,b,c)\|_{L^2_x}^2 + \frac{1}{\delta} \left\{ {\mathcal D}^2_g + {\mathcal D}^2_h + {\mathcal E}^2 \left( {\mathcal D}^2_h + {\mathcal D}^2_g + {\mathcal C}^2_g \right) \right\}.
				\end{aligned}
			\end{equation}
			
			\emph{Estimates for $c$.} For any $|\alpha| \leqslant N-1$ and $i \in \{1, 2, 3\}$, equation \eqref{Eqn.bar.abc.5} yields
			\begin{equation}\label{Mac-c}
				\begin{aligned}
					\|\partial_i\partial^\alpha_x c\|_{L^2_x}^2 
					&= \big\langle \partial_i\partial^\alpha_x c, \partial_i\partial^\alpha_x c \big\rangle_{L^2_x} \\
					&= \left\langle -\varepsilon\partial_t\partial^\alpha_x r_i^{(3)} + \partial^\alpha_x \left[ m_i^{(3)} + \frac{1}{\varepsilon}l_i^{(3)} + s_i^{(3)} + n_i^{(3)} + \varepsilon q_i^{(3)} \right], \partial_i\partial^\alpha_x c \right\rangle_{L^2_x} \\
					&= -\frac{\mathrm{d}}{\mathrm{d}t} \big\langle \varepsilon\partial^\alpha_x r_i^{(3)}, \partial_i\partial^\alpha_x c \big\rangle_{L^2_x} + \big\langle \varepsilon\partial^\alpha_x r_i^{(3)}, \partial_i\partial^\alpha_x \partial_t c \big\rangle_{L^2_x} \\
					&\quad + \left\langle \partial^\alpha_x \left[ m_i^{(3)} + \frac{1}{\varepsilon}l_i^{(3)} + s_i^{(3)} + n_i^{(3)} + \varepsilon q_i^{(3)} \right], \partial_i\partial^\alpha_x c \right\rangle_{L^2_x}.		
				\end{aligned}
			\end{equation}
			The boundary term in time equals $-\frac{\mathrm{d}}{\mathrm{d}t}\mathcal I^c_{\alpha,i}$. Using estimates \eqref{Est.t.a.b.c}, \eqref{r}, and Cauchy--Schwarz's inequality with $\delta > 0$, the second term is bounded by
			\[
			\begin{aligned}
				&-\big\langle \varepsilon\partial_i\partial^\alpha_x r_i^{(3)}, \partial^\alpha_x \partial_t c \big\rangle_{L^2_x} 
				\lesssim \delta\varepsilon^2 \|\partial^\alpha_x\partial_t c\|_{L^2_x}^2 + \frac{1}{\delta} \|\partial_i\partial^\alpha_x r_i^{(3)}\|_{L^2_x}^2 \\
				&\quad \lesssim \delta\varepsilon^2 \left\{ \frac{1}{\varepsilon^2}{\mathcal C}_g^2 + {\mathcal D}_g^2 + \frac{\eta}{\varepsilon^2} {\mathcal D}_h^2 + {\mathcal E}_h^2 (\varepsilon^2{\mathcal D}_g^2 + {\mathcal C}_g^2) + \eta{\mathcal E}_g^2{\mathcal D}_h^2 \right\} + \frac{1}{\delta}{\mathcal D}^2_g \\
				&\quad \lesssim \delta \left[ \sum_{|\alpha|\leqslant 2} \|\nabla_x\partial^\alpha_x (a,b,c)\|_{L^2_x}^2 + {\mathcal D}^2_g + {\mathcal D}^2_h + {\mathcal E}^2 \left( {\mathcal D}^2_h + {\mathcal D}^2_g + {\mathcal C}^2_g \right) \right] + \frac{1}{\delta}{\mathcal D}^2_g.
			\end{aligned}
			\]
			By estimates \eqref{l}--\eqref{q} and Cauchy--Schwarz's inequality with $\delta > 0$, the third term is controlled by
			\[
			\delta \|\partial_i\partial^\alpha_x c\|_{L^2_x}^2 + \frac{1}{\delta} \left\{ {\mathcal D}^2_g + {\mathcal D}^2_h + {\mathcal E}^2 \left( {\mathcal D}^2_h + {\mathcal D}^2_g + {\mathcal C}^2_g \right) \right\}.
			\]
			Substituting these into \eqref{Mac-c} and summing over $|\alpha| \leqslant N-1$ and $i \in \{1, 2, 3\}$, we obtain
			\begin{equation}\label{Mac-c.}
				\begin{aligned}
					&\frac{\mathrm{d}}{\mathrm{d}t} \sum_{|\alpha|\leqslant N-1} \sum_{i=1}^3 \mathcal I^c_{\alpha,i} + \sum_{|\alpha|\leqslant N-1} \|\nabla_x\partial^\alpha_x c\|_{L^2_x}^2 \\
					&\quad \lesssim \delta \sum_{|\alpha|\leqslant N-1} \|\nabla_x\partial^\alpha_x (a,b,c)\|_{L^2_x}^2 + \frac{1}{\delta} \left\{ {\mathcal D}^2_g + {\mathcal D}^2_h + {\mathcal E}^2 \left( {\mathcal D}^2_h + {\mathcal D}^2_g + {\mathcal C}^2_g \right) \right\}.
				\end{aligned}
			\end{equation}
			
			Finally, adding estimates \eqref{Mac-a.}, \eqref{Mac-b.}, and \eqref{Mac-c.}, and choosing $\delta > 0$ sufficiently small to absorb the $\delta \sum_{|\alpha|\leqslant N-1} \|\nabla_x\partial^\alpha_x (a,b,c)\|_{L^2_x}^2$ terms into the left-hand side, we arrive at \eqref{Mac.a.b.c}. The proof is complete.
		\end{proof}
		
		\subsection{Uniform Estimates and Global Existence}
		
		We derive uniform energy estimates by combining the microscopic estimates \eqref{Mic,h,g} and \eqref{Mic,h,g.} with the macroscopic estimates \eqref{Mac.bar.a} and \eqref{Mac.a.b.c}. For a sufficiently large constant $M > 0$ and a sufficiently small constant $\tilde{\delta} > 0$ to be specified later, we define the modified energy functional:
		\begin{equation*}\label{Unifor-coe}
			\mathfrak{E}^2_{M,\tilde{\delta}}(h,g) = M{\mathcal E}_h^{2} + \left( {\mathcal E}_g^2 + 2\mathcal I^{g}_{h} \right) + \tilde{\delta} \sum_{|\alpha|\leqslant N-1} \sum_{i=1}^3 \left[ \mathcal I^{a_{\scriptscriptstyle 1}}_{\alpha,i} + \mathcal I^{ab}_{\alpha,i} + \mathcal I^a_{\alpha,i} + \mathcal I^b_{\alpha,i} + \mathcal I^c_{\alpha,i} \right].
		\end{equation*}
		
		Multiplying \eqref{Mic,h,g} by $M$, \eqref{Mac.bar.a} and \eqref{Mac.a.b.c} by $\tilde\delta$, and adding the results to \eqref{Mic,h,g.}, we obtain
		\[
		\begin{aligned}
			&\frac{\mathrm{d}}{\mathrm{d}t}\mathfrak{E}_{M,\tilde{\delta}}^2 + \left( M\lambda_1{\mathcal D}^2_h + \lambda{\mathcal D}^2_g + \tilde{\delta}{\mathcal C}^2_h + \tilde{\delta}{\mathcal C}^2_g \right) \\
			&\quad \leqslant C \left( \tilde{\delta} + \frac{1}{\delta_0} \right) \left[ \left( 1 + \frac{\eta^{1/2}}{\varepsilon} \right)^4 + \left( 1 + \frac{\varepsilon^4}{\eta} \right) \right] {\mathcal D}^2_h + C \tilde{\delta} {\mathcal D}^2_g + C \delta_0 {\mathcal C}^2_g \\
			&\quad\quad + C \left[ M \left( 1 + \frac{\varepsilon^2}{\eta^{1/2}} \right) + \left( 1 + \frac{\eta}{\varepsilon} \right) + \tilde{\delta} \left( 1 + \frac{\varepsilon^3}{\eta} \right)^2 {\mathcal E} \right] {\mathcal E} \left( {\mathcal D}^2 + {\mathcal C}^2 \right).
		\end{aligned}
		\]
		Setting $C\delta_0 = \frac{\tilde{\delta}}{2}$, we first choose $\tilde{\delta} > 0$ sufficiently small and then choose $M > 0$ sufficiently large such that
		\begin{align*}
			M\lambda_1 - C \left( \tilde{\delta} + \frac{1}{\delta_0} \right) \left[ \left( 1 + \frac{\eta^{1/2}}{\varepsilon} \right)^4 + \left( 1 + \frac{\varepsilon^4}{\eta} \right) \right] &> 0, \\
			\lambda - C\tilde{\delta} &> 0.
		\end{align*}
		Consequently, we arrive at
		\begin{equation}\label{Ceqn2}
			\begin{aligned}
				&\frac{\mathrm{d}}{\mathrm{d}t}\mathfrak{E}_{M,\tilde{\delta}}^2 + \left\{ M\lambda_1 - C \left( \tilde{\delta} + \frac{1}{\delta_0} \right) \left[ \left( 1 + \frac{\eta^{1/2}}{\varepsilon} \right)^4 + \left( 1 + \frac{\varepsilon^4}{\eta} \right) \right] \right\} {\mathcal D}^2_h \\
				&\quad + (\lambda - C\tilde{\delta}) {\mathcal D}^2_g + \tilde{\delta}{\mathcal C}^2_h + \frac{\tilde{\delta}}{2}{\mathcal C}^2_g \\
				&\leqslant C \left[ M \left( 1 + \frac{\varepsilon^2}{\eta^{1/2}} \right) + \left( 1 + \frac{\eta}{\varepsilon} \right) + \tilde{\delta} \left( 1 + \frac{\varepsilon^3}{\eta} \right)^2 {\mathcal E} \right] {\mathcal E} \left( {\mathcal D}^2 + {\mathcal C}^2 \right).
			\end{aligned}
		\end{equation}
		
		Next, we claim that there exist constants $c_1, c_2 > 0$ such that
		\begin{equation}\label{Un-Es-2}
			c_1{\mathcal E} \leqslant \mathfrak{E}_{M,\tilde{\delta}} \leqslant c_2{\mathcal E}.
		\end{equation}	
		Indeed, recalling the definition of $\mathcal I^{g}_{h}$ from \eqref{def.E-1}, we have
		\[
		\begin{aligned}
			|2\mathcal I^{g}_{h}| 
			&\leqslant C \left\{ \tilde{\delta} \|(a,b,c)\|_{L^2_x}^2 + \frac{1}{\tilde{\delta}} \left[ \|a_{\scriptscriptstyle 1}\|_{L^2_x}^2 + \|(\mathbf{I - P}_{\scriptscriptstyle 1})h\|_{L^2_{x,v}}^2 \right] \right\} \\
			&\leqslant C\tilde{\delta}{\mathcal E}_g^2 + \frac{C}{\tilde{\delta}}{\mathcal E}_h^{2}.
		\end{aligned}
		\]
		Furthermore, from the definitions of $\mathcal I^{a_{\scriptscriptstyle 1}}_{\alpha,i}, \mathcal I^{ab}_{\alpha,i}, \mathcal I^a_{\alpha,i}, \mathcal I^b_{\alpha,i}$, and $\mathcal I^c_{\alpha,i}$ in \eqref{I,a_1,h_2}--\eqref{I,c}, it follows that
		\[
		\begin{aligned}
			&\tilde{\delta} \sum_{|\alpha|\leqslant N-1} \sum_{i=1}^3 \left( |\mathcal I^{a_{\scriptscriptstyle 1}}_{\alpha,i}| + |\mathcal I^{ab}_{\alpha,i}| + |\mathcal I^a_{\alpha,i}| + |\mathcal I^b_{\alpha,i}| + |\mathcal I^c_{\alpha,i}| \right) \\
			&\quad \leqslant C\tilde{\delta} \sum_{|\alpha|\leqslant N} \left\{ \|\partial_x^{\alpha}(\mathbf{I - P})g\|_{L^2_{x,w}}^2 + \|\partial_x^{\alpha}\mathbf{P}g\|_{L^2_{x,w}}^2 + \|\partial_x^{\alpha}(\mathbf{I - P}_{\scriptscriptstyle 1})h\|_{L^2_{x,v}}^2 + \|\partial_x^{\alpha}\mathbf{P}_{\scriptscriptstyle 1}h\|_{L^2_{x,v}}^2 \right\} \\
			&\quad \leqslant C\tilde{\delta} \left( {\mathcal E}_g^2 + {\mathcal E}_h^{2} \right).
		\end{aligned}
		\]
		Combining these bounds yields
		\[
		|2\mathcal I^{g}_{h}| + \tilde{\delta} \sum_{|\alpha|\leqslant N-1} \sum_{i=1}^3 \left( |\mathcal I^{a_{\scriptscriptstyle 1}}_{\alpha,i}| + |\mathcal I^{ab}_{\alpha,i}| + |\mathcal I^a_{\alpha,i}| + |\mathcal I^b_{\alpha,i}| + |\mathcal I^c_{\alpha,i}| \right) \leqslant C\tilde{\delta}{\mathcal E}_g^2 + \frac{C}{\tilde{\delta}}{\mathcal E}_h^{2}.
		\]
		Thus, choosing $\tilde{\delta}$ sufficiently small and $M$ sufficiently large so that
		\[
		1 - C\tilde{\delta} > 0 \quad \text{and} \quad M - \frac{C}{\tilde{\delta}} > 0,
		\]
		we deduce that
		\[
		\min\left\{ M - \frac{C}{\tilde{\delta}}, \, 1 - C\tilde{\delta} \right\} {\mathcal E}^2 \leqslant \mathfrak{E}^2_{M,\tilde{\delta}} \leqslant \max\left\{ M + \frac{C}{\tilde{\delta}}, \, 1 + C\tilde{\delta} \right\} {\mathcal E}^2,
		\]
		which proves the equivalence relation \eqref{Un-Es-2}.
		
		Applying \eqref{Un-Es-2} to \eqref{Ceqn2}, we arrive at the following uniform estimate.
		
		\begin{theorem}
			Let $(h,g)$ be a solution to the scaled Boltzmann system \eqref{Eqn.bar.abc.1}--\eqref{Eqn.bar.abc.5}. Then there exists a constant $c_0 > 0$ depending on $\varepsilon$ and $\eta$ such that whenever $\mathfrak{E}_{M,\tilde{\delta}} \leqslant 1$, it holds that
			\begin{equation}\label{Glob.Estim}
				\frac{\mathrm{d}}{\mathrm{d}t}\mathfrak{E}_{M,\tilde{\delta}}^2 + \left( {\mathcal D}^2 + {\mathcal C}^2 \right) \leqslant c_0 \mathfrak{E}_{M,\tilde{\delta}} \left( {\mathcal D}^2 + {\mathcal C}^2 \right).
			\end{equation}
		\end{theorem}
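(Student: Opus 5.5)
The plan is to read \eqref{Glob.Estim} off from \eqref{Ceqn2} together with the norm equivalence \eqref{Un-Es-2}. Two things must be done: bound the left-hand side of \eqref{Ceqn2} from below by a positive multiple of ${\mathcal D}^2+{\mathcal C}^2$, and bound the right-hand side from above by a multiple of $\mathfrak{E}_{M,\tilde\delta}({\mathcal D}^2+{\mathcal C}^2)$. After dividing by the lower-bound constant, the result is exactly \eqref{Glob.Estim}, with $c_0$ collecting all the constants.

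\emph{Lower bound.} With $\tilde\delta$ chosen small, $\delta_0=\tilde\delta/(2C)$, and $M$ chosen large as already fixed before \eqref{Ceqn2}, the dissipation coefficients in \eqref{Ceqn2} are positive:
\[
M\lambda_1 - C\Bigl(\tilde\delta+\tfrac{1}{\delta_0}\Bigr)\Bigl[\Bigl(1+\tfrac{\eta^{1/2}}{\varepsilon}\Bigr)^4+1+\tfrac{\varepsilon^4}{\eta}\Bigr],\qquad \lambda-C\tilde\delta,\qquad \tilde\delta,\qquad \tfrac{\tilde\delta}{2}.
\]
Let $\kappa_0>0$ be their minimum. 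Then the left-hand side of \eqref{Ceqn2} is at least $\frac{\mathrm d}{\mathrm dt}\mathfrak{E}^2_{M,\tilde\delta}+\kappa_0({\mathcal D}^2+{\mathcal C}^2)$, using ${\mathcal D}^2={\mathcal D}_h^2+{\mathcal D}_g^2$ and ${\mathcal C}^2={\mathcal C}_h^2+{\mathcal C}_g^2$.

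\emph{Upper bound.} By \eqref{Un-Es-2}, ${\mathcal E}\le c_1^{-1}\mathfrak{E}_{M,\tilde\delta}$. The bracket on the right-hand side of \eqref{Ceqn2} contains a term with an extra factor ${\mathcal E}$. Under the hypothesis $\mathfrak{E}_{M,\tilde\delta}\le1$ we have ${\mathcal E}\le c_1^{-1}$, so this term is bounded by the constant $\tilde\delta(1+\varepsilon^3/\eta)^2c_1^{-1}$. Hence the right-hand side is at most
\[
C\,c_1^{-1}\Bigl[M\Bigl(1+\tfrac{\varepsilon^2}{\eta^{1/2}}\Bigr)+1+\tfrac{\eta}{\varepsilon}+\tilde\delta c_1^{-1}\Bigl(1+\tfrac{\varepsilon^3}{\eta}\Bigr)^2\Bigr]\mathfrak{E}_{M,\tilde\delta}({\mathcal D}^2+{\mathcal C}^2).
\]
Dividing the whole inequality by $\min\{\kappa_0,1\}$ (and noting $\mathfrak{E}^2$ only gets rescaled, or absorbing $\kappa_0$ into the definition if one prefers to keep the exact form) gives \eqref{Glob.Estim}. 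The constant $c_0$ is the bracket above divided by $\kappa_0$.

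\emph{Main obstacle: uniformity of the constants.} As stated, the theorem allows $c_0$ to depend on $\varepsilon,\eta$, but Theorem~\ref{Theom.1.} needs uniformity. In the regime $C_0\varepsilon^3\le\eta\le C_1\varepsilon^2$ one checks:
\begin{itemize}
\item $\eta^{1/2}/\varepsilon\le C_1^{1/2}$;
\item $\varepsilon^4/\eta\le\varepsilon/C_0\le 1/C_0$;
\item $\varepsilon^2/\eta^{1/2}\le\varepsilon^{1/2}C_0^{-1/2}$;
\item $\eta/\varepsilon\le C_1\varepsilon$;
\item $\varepsilon^3/\eta\le C_0^{-1}$.
\end{itemize}
So every ratio appearing in \eqref{Ceqn2} is bounded independently of $\varepsilon$ and $\eta$. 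Consequently $M,\tilde\delta,\delta_0,\kappa_0,c_1$ and $c_0$ can all be chosen uniformly. This is the step to verify carefully, in particular that the choice of $M$ only depends on the bound $(1+C_1^{1/2})^4+1+C_0^{-1}$. The estimate then closes via the a priori assumption \eqref{priori assumption} once $c_0c_2\cdot2\epsilon_0\le\frac12$.
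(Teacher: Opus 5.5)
Your proposal is correct and follows the paper's route: the paper obtains \eqref{Glob.Estim} in one line by inserting the equivalence \eqref{Un-Es-2} into \eqref{Ceqn2}, and you write out the two steps it leaves implicit, namely the positive lower bound on the dissipation coefficients and the use of $\mathfrak{E}_{M,\tilde\delta}\le 1$ to absorb the extra factor ${\mathcal E}$. The normalization issue you flag (the minimal coefficient satisfies $\kappa_0\le\tilde\delta/2<1$, so one literally gets $\kappa_0(\mathcal D^2+\mathcal C^2)$ on the left) is also glossed over in the paper, and it is harmless because the continuation argument in the proof of Theorem~\ref{Theom.1.} only needs some positive multiple of $\mathcal D^2+\mathcal C^2$.
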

		
		We are now in a position to establish Theorem \ref{Theom.1.} using a continuation argument.
		
		\begin{proof}[Proof of Theorem \ref{Theom.1.}]
			Under the regime $\varepsilon^3 \lesssim \eta \lesssim \varepsilon^2$, the following uniform parameter bound holds:
			\begin{equation}\label{Ceqn4}
				1 + \frac{\eta^{1/2}}{\varepsilon} + \frac{\eta}{\varepsilon} + \frac{\varepsilon^4}{\eta} + \frac{\varepsilon^2}{\eta^{1/2}} + \frac{\varepsilon^3}{\eta} \leqslant C,
			\end{equation}
			where $C > 0$ is a constant independent of $\varepsilon$ and $\eta$.
			
			In view of estimates \eqref{Ceqn2}--\eqref{Ceqn4}, we can select constants $c_0^*, c_1^*, c_2^* > 0$, independent of $\varepsilon$ and $\eta$, to replace $c_0, c_1, c_2$ in \eqref{Glob.Estim} and \eqref{Un-Es-2}, yielding
			\[
			c_1^*{\mathcal E} \leqslant \mathfrak{E}_{M,\tilde{\delta}} \leqslant c_2^*{\mathcal E},
			\]
			and
			\begin{equation}\label{Glob.Estim2}
				\frac{\mathrm{d}}{\mathrm{d}t}\mathfrak{E}_{M,\tilde{\delta}}^2 + \left( {\mathcal D}^2 + {\mathcal C}^2 \right) \leqslant c_0^* \mathfrak{E}_{M,\tilde{\delta}} \left( {\mathcal D}^2 + {\mathcal C}^2 \right).
			\end{equation}
			
			Now, we define the threshold
			\[
			\epsilon_0 = \min\left\{ \frac{1}{2c_2^*}, \, \frac{1}{4c_0^*c_2^*} \right\}.
			\]
			We select the initial data $(h_{\varepsilon,\eta,0}, g_{\varepsilon,\eta,0})$ satisfying
			\[
			{\mathcal E}(h_{\varepsilon,\eta,0}, g_{\varepsilon,\eta,0}) \leqslant \epsilon_0.
			\]
			Recall the a priori assumption \eqref{priori assumption} that the solution $(h_{\varepsilon,\eta}, g_{\varepsilon,\eta})$ satisfies ${\mathcal E}(h_{\varepsilon,\eta}, g_{\varepsilon,\eta}) \leqslant 2\epsilon_0$ for all $0 \leqslant t \leqslant T$. We further define
			\begin{equation*}
				T^* = \sup\left\{ t \in \mathbb{R}^+ \; \Big| \; {\mathcal E}(h_{\varepsilon,\eta}, g_{\varepsilon,\eta})(t) \leqslant 2\epsilon_0 \frac{c_2^*}{c_1^*} \right\} > 0.
			\end{equation*}
			Notice that for all $0 \leqslant t \leqslant T$,
			\[
			\mathfrak{E}(h_{\varepsilon,\eta}, g_{\varepsilon,\eta})(t) \leqslant c_2^*{\mathcal E}(h_{\varepsilon,\eta}, g_{\varepsilon,\eta})(t) \leqslant 2c_2^*\epsilon_0 < 1.
			\]
			Consequently, the global energy differential inequality \eqref{Glob.Estim2} implies
			\[
			\frac{\mathrm{d}}{\mathrm{d}t}\mathfrak{E}_{M,\tilde{\delta}}^2(t) + (1 - 2c_0^*c_2^*\epsilon_0) \left( {\mathcal D}^2 + {\mathcal C}^2 \right) \leqslant 0.
			\]
			Since $1 - 2c_0^*c_2^*\epsilon_0 \geqslant \frac{1}{2}$ by our choice of $\epsilon_0$, integrating in time yields
			\[
			\mathfrak{E}_{M,\tilde{\delta}}^2(T) + \frac{1}{2}\int_{0}^{T} \left( {\mathcal D}^2 + {\mathcal C}^2 \right) \mathrm{d}t \leqslant \mathfrak{E}_{M,\tilde{\delta}}^2(0),
			\]
			which immediately leads to ${\mathcal E}(h_{\varepsilon,\eta}, g_{\varepsilon,\eta})(T) \leqslant \epsilon_0 \frac{c_2^*}{c_1^*}$. This shows that $T^* = \infty$, thereby concluding the proof of Theorem \ref{Theom.1.}.
		\end{proof}

		\section{Proof of Theorem \ref{Theom.2}: Limits of the IVNS System}
		
		Throughout this section, we assume that the scaling parameters $\varepsilon$ and $\eta$ satisfy the Vlasov--Navier--Stokes regime \eqref{scaling assumption}, namely,
		\[
		O(1)\varepsilon^3 \leqslant \eta \leqslant o(1)\varepsilon^2 \quad \text{as } \varepsilon \to 0.
		\]
		
		By Theorem \ref{Theom.1.}, there exists a constant $\epsilon_0 > 0$ independent of $\varepsilon$ and $\eta$ such that for any initial data $(h_{\varepsilon,\eta,0}, g_{\varepsilon,\eta,0})$ satisfying
		\[
		{\mathcal E}(h_{\varepsilon,\eta,0}, g_{\varepsilon,\eta,0}) \leqslant \epsilon_0,
		\]
		the Cauchy problem \eqref{equ:h}--\eqref{equ:hg0} admits a unique global solution satisfying the uniform energy estimate \eqref{Glob.Enery.1}. Consequently, we establish the uniform bounds
		\begin{equation}\label{global,bdh}
			\sup_{t\geqslant0} \|h_{\varepsilon,\eta}\|_{L^2_v(H^N_x)}^2 \lesssim 1,
		\end{equation}
		\begin{equation}\label{global,bdg}
			\sup_{t\geqslant0} \|g_{\varepsilon,\eta}\|_{L^2_w(H^N_x)}^2 \lesssim 1,
		\end{equation}
		along with the global dissipation bounds
		\begin{equation}\label{global,dish}
			\int_{0}^\infty \sum_{|\alpha|\leqslant N} \|\partial^\alpha_x (\mathbf{I - P}_{\scriptscriptstyle 1}) h_{\varepsilon,\eta}\|_{\nu_1}^2 \, \mathrm{d}t \lesssim \eta,
		\end{equation}
		and
		\begin{equation}\label{global,disg}
			\int_{0}^\infty \sum_{|\alpha|\leqslant N} \|\partial^\alpha_x (\mathbf{I - P}) g_{\varepsilon,\eta}\|_\nu^2 \, \mathrm{d}t \lesssim \varepsilon^2.
		\end{equation}
		
		\subsection{The Limiting Behavior of $F_{\varepsilon,\eta}$}
		
		
		We define the macroscopic fluid density associated with $F_{\varepsilon,\eta}$ as
		\begin{equation*}
			\rho_{\scriptscriptstyle F_{\varepsilon,\eta}} := \langle F_{\varepsilon,\eta}, 1 \rangle_{L^2_v} = \left\langle h_{\varepsilon,\eta}, \sqrt{\mu_{\varepsilon,\eta}} \right\rangle_{L^2_v}.
		\end{equation*}
		Thus, the distribution function $F_{\varepsilon,\eta}$ admits the orthogonal decomposition
		\begin{equation}\label{expres-F}
			F_{\varepsilon,\eta} = \rho_{\scriptscriptstyle F_{\varepsilon,\eta}} \mu_{\varepsilon,\eta} + \sqrt{\mu_{\varepsilon,\eta}} \, (\mathbf{I - P}_{\scriptscriptstyle 1}) h_{\varepsilon,\eta}.
		\end{equation}
		
		Taking the $L^2_v$ inner product of \eqref{equ:h} with $\sqrt{\mu_{\varepsilon,\eta}}$ yields the continuity equation
		\begin{equation}\label{a1-t}
			\partial_t \rho_{\scriptscriptstyle F_{\varepsilon,\eta}} = -\operatorname{div}_x \left\langle v \sqrt{\mu_{\varepsilon,\eta}}, (\mathbf{I - P}_{\scriptscriptstyle 1}) h_{\varepsilon,\eta} \right\rangle_{L^2_v}.
		\end{equation}
		
		The uniform energy bound \eqref{global,bdh} directly implies
		\begin{equation}\label{mac-bound2}
			\sup_{t\geqslant0} \|\rho_{\scriptscriptstyle F_{\varepsilon,\eta}}\|_{H^N_x}^2 \lesssim \sup_{t\geqslant0} \|h_{\varepsilon,\eta}\|_{L^2_v(H^N_x)}^2 \lesssim 1.
		\end{equation}
		For any fixed time $T > 0$, taking the $L^2([0,T]; H^{N-1}_x)$-norm in \eqref{a1-t} and applying the Cauchy--Schwarz inequality, we deduce
		\[
		\|\partial_t \rho_{\scriptscriptstyle F_{\varepsilon,\eta}}\|_{L^2([0,T]; H^{N-1}_x)} \lesssim \|v \sqrt{\mu_{\varepsilon,\eta}}\|_{L^2_v} \left( \int_0^T \|\nabla_x (\mathbf{I - P}_{\scriptscriptstyle 1}) h_{\varepsilon,\eta}\|_{L^2_v(H^{N-1}_x)}^2 \, \mathrm{d}t \right)^{1/2}.
		\]
		Using the equivalence relation \eqref{mu} and the dissipation bound \eqref{global,dish}, it follows that
		\begin{equation}\label{mac-bound2.}
			\|\partial_t \rho_{\scriptscriptstyle F_{\varepsilon,\eta}}\|_{L^2([0,T]; H^{N-1}_x)} \lesssim 1.
		\end{equation}
		
		Recall the Sobolev embedding hierarchy
		\begin{equation}\label{theta-rho-3}
			H^N_x \hookrightarrow H^{N-\sigma}_x \hookrightarrow H^{N-1}_x, \quad \sigma \in (0,1],
		\end{equation}
		where $H^N_x \hookrightarrow H^{N-\sigma}_x$ is compact and $H^{N-\sigma}_x \hookrightarrow H^{N-1}_x$ is continuous. Invoking the Aubin--Lions--Simon compactness theorem (see Appendix \ref{A-L-S}) alongside estimates \eqref{mac-bound2} and \eqref{mac-bound2.}, we conclude that there exists a limit function $\rho_{\scriptscriptstyle F} \in C(\mathbb{R}^+; H^{N-\sigma}_x) \cap L^\infty(\mathbb{R}^+; H^N_x)$ such that
		\begin{equation}\label{Str.rho.F}
			\rho_{\scriptscriptstyle F_{\varepsilon,\eta}} \longrightarrow \rho_{\scriptscriptstyle F} \quad \text{strongly in } C([0,T]; H^{N-\sigma}_x) \text{ as } \varepsilon \to 0,
		\end{equation}
		for any $T > 0$ and $\sigma \in (0,1]$.
		
		To identify the distribution limit of $F_{\varepsilon,\eta}$ as $\varepsilon \to 0$, we introduce for any $T > 0$ the test function class $Y(t,x,v)$ of the form
		\begin{equation}\label{Y1}
			Y(t,x,v) = \chi_1(t,x) + \psi(t,x) \cdot v + \chi_2(t,x) |v|^2, \quad \forall \, 0 \leqslant t \leqslant T,
		\end{equation}
		where the coefficient functions satisfy
		\begin{align}
			\chi_i(t,x) &\in C_c^1([0,T); C_c^\infty(\mathbb{R}^3_x))~\text{with } \chi_i(0,x) = \chi_i^0(x) \in C_c^\infty(\mathbb{R}^3_x) \text{ for } i=1,2, \label{Y2} \\
			\psi(t,x) &\in C_c^1([0,T); C_c^\infty(\mathbb{R}^3_x)) ~\text{with } \operatorname{div}_x \psi(t,x) = 0, \, \psi(0,x) = \psi^0(x) \in C_c^\infty(\mathbb{R}^3_x). \label{Y4}
		\end{align}

		The asymptotic limit of $F_{\varepsilon,\eta}$ is characterized in the following lemma.
		\begin{lemma}\label{F,con}
			Under the assumptions of Theorem \ref{Theom.2}, as $\varepsilon \to 0$, the fluid distribution function $F_{\varepsilon,\eta}$ converges in the sense of distributions to
			\begin{equation}\label{F.def}
				F_{\varepsilon,\eta} \longrightarrow F \equiv \rho_{\scriptscriptstyle F}(t,x) \delta_{v=0}.
			\end{equation}
			More precisely, for any $T > 0$ and any test function $Y(t,x,v)$ satisfying \eqref{Y1}--\eqref{Y4},
			\begin{equation}\label{F.conv}
				\int_{0}^T \int_{\mathbb{R}^3 \times \mathbb{R}^3} F_{\varepsilon,\eta} Y \, \mathrm{d}v \mathrm{d}x \mathrm{d}t \xrightarrow{\varepsilon \to 0} \int_{0}^T \int_{\mathbb{R}^3 \times \mathbb{R}^3} F Y \, \mathrm{d}v \mathrm{d}x \mathrm{d}t.
			\end{equation}               
		\end{lemma}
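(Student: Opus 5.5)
We split $F_{\varepsilon,\eta}$ by the orthogonal decomposition \eqref{expres-F},
\[
F_{\varepsilon,\eta}=\rho_{\scriptscriptstyle F_{\varepsilon,\eta}}\mu_{\varepsilon,\eta}+\sqrt{\mu_{\varepsilon,\eta}}\,(\mathbf{I-P}_{\scriptscriptstyle 1})h_{\varepsilon,\eta},
\]
and pair each piece with $Y=\chi_1+\psi\cdot v+\chi_2|v|^2$. The target is $\int_0^T\!\int_{\mathbb{R}^3}\rho_{\scriptscriptstyle F}\chi_1\,\mathrm{d}x\mathrm{d}t$, because $\langle\delta_{v=0},Y\rangle=\chi_1(t,x)$. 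We therefore show that the macroscopic part converges to this quantity and that the microscopic part vanishes.

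\emph{Macroscopic part.} Integrate in $v$ first. Since $\mu_{\varepsilon,\eta}$ is a normalized centered Gaussian of variance $\eta/\varepsilon^2$ per component,
\[
\int_{\mathbb{R}^3}\mu_{\varepsilon,\eta}Y\,\mathrm{d}v=\chi_1+\chi_2\cdot\frac{3\eta}{\varepsilon^2},
\]
and the odd moment $\psi\cdot v$ vanishes. The macroscopic contribution thus equals
\[
\int_0^T\!\!\int_{\mathbb{R}^3}\rho_{\scriptscriptstyle F_{\varepsilon,\eta}}\Big(\chi_1+\tfrac{3\eta}{\varepsilon^2}\chi_2\Big)\,\mathrm{d}x\mathrm{d}t .
\]
By the strong convergence \eqref{Str.rho.F} in $C([0,T];H^{N-\sigma}_x)\subset C([0,T];L^2_x)$ and the compact support of $\chi_1$, the $\chi_1$-term converges to $\int\!\!\int\rho_{\scriptscriptstyle F}\chi_1$. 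The $\chi_2$-term is at most $C\frac{\eta}{\varepsilon^2}\,T\sup_t\|\rho_{\scriptscriptstyle F_{\varepsilon,\eta}}\|_{L^2_x}\|\chi_2\|_{L^\infty_tL^2_x}$. By \eqref{mac-bound2} this is $O(\eta/\varepsilon^2)$, which tends to $0$ under the scaling assumption $\eta\leqslant o(1)\varepsilon^2$ in \eqref{scaling assumption}. This is exactly where the upper constraint on $\eta$ is used.

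\emph{Microscopic part.} For fixed $(t,x)$, Cauchy--Schwarz in $v$ gives
\[
\Big|\int\sqrt{\mu_{\varepsilon,\eta}}\,(\mathbf{I-P}_{\scriptscriptstyle 1})h\,Y\,\mathrm{d}v\Big|\leqslant \|(\mathbf{I-P}_{\scriptscriptstyle 1})h\|_{L^2_v}\Big(|\chi_1|\,\|\sqrt{\mu_{\varepsilon,\eta}}\|_{L^2_v}+|\psi|\,\|v\sqrt{\mu_{\varepsilon,\eta}}\|_{L^2_v}+|\chi_2|\,\||v|^2\sqrt{\mu_{\varepsilon,\eta}}\|_{L^2_v}\Big).
\]
The moment equivalence \eqref{mu} bounds the last factor by $C(1+\eta^{1/2}/\varepsilon+\eta/\varepsilon^2)(|\chi_1|+|\psi|+|\chi_2|)$, which is uniformly bounded since $\eta\lesssim\varepsilon^2$. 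Integrating over $[0,T]\times\mathbb{R}^3_x$ and applying Cauchy--Schwarz in $(t,x)$, this part is dominated by
\[
C\,T^{1/2}\Big(\int_0^T\|(\mathbf{I-P}_{\scriptscriptstyle 1})h_{\varepsilon,\eta}\|_{L^2_{x,v}}^2\,\mathrm{d}t\Big)^{1/2}\lesssim T^{1/2}\eta^{1/2}.
\]
Here we use $\nu_1\gtrsim1$ from \eqref{nu1-eqiv} and the dissipation bound \eqref{global,dish}. Since $\eta\to0$, this tends to $0$.

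Adding the two parts gives \eqref{F.conv}. The only delicate point is the second moment $\chi_2|v|^2$ of the Gaussian, which is exactly of size $\eta/\varepsilon^2$. It is harmless only because $\eta=o(\varepsilon^2)$ is assumed; it would produce a nonzero correction if merely $\eta\lesssim\varepsilon^2$. Convergence against the test class \eqref{Y1}--\eqref{Y4} identifies the limit as $\rho_{\scriptscriptstyle F}\delta_{v=0}$ in the sense of \eqref{F.def}. For general $C_c^\infty$ test functions in $v$, the same argument goes through with a Taylor expansion at $v=0$ and the concentration of $\mu_{\varepsilon,\eta}$ on the scale $\eta^{1/2}/\varepsilon\to0$.
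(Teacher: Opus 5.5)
Your proof is correct and follows essentially the same route as the paper: the decomposition \eqref{expres-F}, the strong convergence \eqref{Str.rho.F} for the density part, an $O(\eta/\varepsilon^2)=o(1)$ bound for the second moment of $\mu_{\varepsilon,\eta}$ against $\chi_2|v|^2$, and the dissipation bound \eqref{global,dish} giving $O(\eta^{1/2})$ for the microscopic part. The only cosmetic difference is the order of operations in the macroscopic term. You integrate in $v$ first and use the uniform bound \eqref{mac-bound2} on $\rho_{\scriptscriptstyle F_{\varepsilon,\eta}}$ for the $\chi_2$-term, whereas the paper adds and subtracts $\rho_{\scriptscriptstyle F}\mu_{\varepsilon,\eta}$ and uses the bound on $\rho_{\scriptscriptstyle F}$ instead.
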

		
		\begin{proof}
			Applying decomposition \eqref{expres-F}, we split the integral into two components:
			\begin{equation}\label{F-1}
				\begin{aligned}
					\int_{0}^T \int_{\mathbb{R}^3 \times \mathbb{R}^3} (F_{\varepsilon,\eta} - F) Y \, \mathrm{d}v \mathrm{d}x \mathrm{d}t 
					&= \int_{0}^T \int_{\mathbb{R}^3 \times \mathbb{R}^3} (\rho_{\scriptscriptstyle F_{\varepsilon,\eta}} \mu_{\varepsilon,\eta} - \rho_{\scriptscriptstyle F} \delta_{v=0}) Y \, \mathrm{d}v \mathrm{d}x \mathrm{d}t \\
					&\quad + \int_{0}^T \int_{\mathbb{R}^3 \times \mathbb{R}^3} \sqrt{\mu_{\varepsilon,\eta}} \, (\mathbf{I - P}_{\scriptscriptstyle 1}) h_{\varepsilon,\eta} Y \, \mathrm{d}v \mathrm{d}x \mathrm{d}t.
				\end{aligned}
			\end{equation}
			
			We further decompose the macroscopic term on the right-hand side of \eqref{F-1} as
			\begin{equation}\label{F-2}
				\begin{aligned}
					\int_{0}^T \int_{\mathbb{R}^3 \times \mathbb{R}^3} (\rho_{\scriptscriptstyle F_{\varepsilon,\eta}} \mu_{\varepsilon,\eta} - \rho_{\scriptscriptstyle F} \delta_{v=0}) Y \, \mathrm{d}v \mathrm{d}x \mathrm{d}t 
					&= \int_{0}^T \int_{\mathbb{R}^3 \times \mathbb{R}^3} (\rho_{\scriptscriptstyle F_{\varepsilon,\eta}} - \rho_{\scriptscriptstyle F}) \mu_{\varepsilon,\eta} Y \, \mathrm{d}v \mathrm{d}x \mathrm{d}t \\
					&\quad + \int_{0}^T \int_{\mathbb{R}^3 \times \mathbb{R}^3} \rho_{\scriptscriptstyle F} (\mu_{\varepsilon,\eta} - \delta_{v=0}) Y \, \mathrm{d}v \mathrm{d}x \mathrm{d}t.
				\end{aligned}
			\end{equation}
			
			We estimate each term on the right-hand side of \eqref{F-2} individually. For the first term, using the equivalence relation \eqref{mu}, the first integral is bounded by
			\begin{equation*}
				\begin{aligned}
					&\left| \int_{0}^T \int_{\mathbb{R}^3 \times \mathbb{R}^3} (\rho_{\scriptscriptstyle F_{\varepsilon,\eta}} - \rho_{\scriptscriptstyle F}) \mu_{\varepsilon,\eta} (\chi_1 + \psi \cdot v + \chi_2 |v|^2) \, \mathrm{d}v \mathrm{d}x \mathrm{d}t \right| \\
					&\quad \lesssim \|\rho_{\scriptscriptstyle F_{\varepsilon,\eta}} - \rho_{\scriptscriptstyle F}\|_{C([0,T]; L^2_x)} \|(\chi_1, \chi_2)\|_{L^1([0,T]; L^2_x)} \|\mu_{\varepsilon,\eta} (1 + |v|^2)\|_{L^1_v} \\
					&\quad \lesssim \|\rho_{\scriptscriptstyle F_{\varepsilon,\eta}} - \rho_{\scriptscriptstyle F}\|_{C([0,T]; L^2_x)}.
				\end{aligned}
			\end{equation*}
			Combining this with the strong convergence \eqref{Str.rho.F} gives
			\[
			\int_{0}^T \int_{\mathbb{R}^3 \times \mathbb{R}^3} (\rho_{\scriptscriptstyle F_{\varepsilon,\eta}} - \rho_{\scriptscriptstyle F}) \mu_{\varepsilon,\eta} Y \, \mathrm{d}v \mathrm{d}x \mathrm{d}t \xrightarrow{\varepsilon \to 0} 0.
			\]
			
			For the second term in \eqref{F-2}, evaluating $Y(t,x,0) = \chi_1(t,x)$ against the Dirac mass $\delta_{v=0}$ yields
			\begin{equation*}
				\begin{aligned}
					&\left| \int_{0}^T \int_{\mathbb{R}^3 \times \mathbb{R}^3} \rho_{\scriptscriptstyle F} (\mu_{\varepsilon,\eta} - \delta_{v=0}) (\chi_1 + \psi \cdot v + \chi_2 |v|^2) \, \mathrm{d}v \mathrm{d}x \mathrm{d}t \right| \\
					&\quad = \left| \int_{0}^T \int_{\mathbb{R}^3 \times \mathbb{R}^3} \rho_{\scriptscriptstyle F} \chi_2 |v|^2 \mu_{\varepsilon,\eta} \, \mathrm{d}v \mathrm{d}x \mathrm{d}t \right| \\
					&\quad \lesssim \|\rho_{\scriptscriptstyle F}\|_{L^\infty([0,T]; L^2_x)} \|\chi_2\|_{L^1([0,T]; L^2_x)} \|\mu_{\varepsilon,\eta} |v|^2\|_{L^1_v} \\
					&\quad \lesssim \frac{\eta}{\varepsilon^2} \|\rho_{\scriptscriptstyle F}\|_{L^\infty([0,T]; L^2_x)} \|\chi_2\|_{L^1([0,T]; L^2_x)} = o(1) \xrightarrow{\varepsilon \to 0} 0.
				\end{aligned}
			\end{equation*}
			
			Consequently, combining the estimates above yields
			\begin{equation}\label{Fconv1}
				\int_{0}^T \int_{\mathbb{R}^3 \times \mathbb{R}^3} (\rho_{\scriptscriptstyle F_{\varepsilon,\eta}} \mu_{\varepsilon,\eta} - \rho_{\scriptscriptstyle F} \delta_{v=0}) Y \, \mathrm{d}v \mathrm{d}x \mathrm{d}t \xrightarrow{\varepsilon \to 0} 0.
			\end{equation}
			
			Next, for the microscopic component in \eqref{F-1}, applying the Cauchy--Schwarz inequality gives
			\begin{equation}\label{Fconv2}
				\begin{aligned}
					&\left| \int_{0}^T \int_{\mathbb{R}^3 \times \mathbb{R}^3} \sqrt{\mu_{\varepsilon,\eta}} \, (\mathbf{I - P}_{\scriptscriptstyle 1}) h_{\varepsilon,\eta} Y \, \mathrm{d}v \mathrm{d}x \mathrm{d}t \right| \\
					\lesssim &\left( \int_{0}^T \|(\mathbf{I - P}_{\scriptscriptstyle 1}) h_{\varepsilon,\eta}\|_{L^2_{x,v}}^2 \, \mathrm{d}t \right)^{1/2} \|(\chi_1, \psi, \chi_2)\|_{L^2([0,T]; L^2_x)} \left\| \sqrt{\mu_{\varepsilon,\eta}} (1 + |v| + |v|^2) \right\|_{L^2_v} \\
					\lesssim & \left( \int_{0}^T \|(\mathbf{I - P}_{\scriptscriptstyle 1}) h_{\varepsilon,\eta}\|_{\nu_1}^2 \, \mathrm{d}t \right)^{1/2} \lesssim \eta^{1/2} \xrightarrow{\varepsilon \to 0} 0,
				\end{aligned}
			\end{equation}
			where we used the energy dissipation bound \eqref{global,dish}.
			
			Finally, substituting \eqref{Fconv1} and \eqref{Fconv2} into \eqref{F-1} yields \eqref{F.conv}, completing the proof of Lemma \ref{F,con}.
		\end{proof}
		
		\subsection{Convergence of Fluid Variables}
		
		From the uniform energy bound \eqref{global,bdg}, there exists a limit function $g \in L^\infty([0,\infty); L^2_w(H^N_x))$ such that, as $\varepsilon \to 0$,
		\begin{equation}\label{g'Limit}
			g_{\varepsilon,\eta} \stackrel{\ast}{\rightharpoonup} g \quad \text{weakly-$\star$ in } L^\infty([0,\infty); L^2_w(H^N_x)).
		\end{equation}
		Furthermore, the energy dissipation bound \eqref{global,disg} yields
		\begin{equation}\label{I-P.g'Limit}
			(\mathbf{I - P}) g_{\varepsilon,\eta} \longrightarrow 0 \quad \text{strongly in } L^2([0,\infty); L^2_w(H^N_x)) \quad \text{as } \varepsilon \to 0.
		\end{equation}
		Combining \eqref{g'Limit} and \eqref{I-P.g'Limit}, we obtain $(\mathbf{I - P}) g = 0$, which implies the existence of macroscopic fluid quantities $(\rho, u, \theta) \in L^\infty([0,\infty); H^N_x)$ such that
		\begin{equation*}
			g = \left\{ \rho + u \cdot w + \frac{\theta}{2} (|w|^2 - 3) \right\} \sqrt{\mu}.
		\end{equation*}
		We define the macroscopic fluid variables associated with $g_{\varepsilon,\eta}$ as
		\begin{equation*}
			\rho_{\varepsilon,\eta} = \left\langle g_{\varepsilon,\eta}, \sqrt{\mu} \right\rangle_{L^2_w}, \quad
			u_{\varepsilon,\eta} = \left\langle g_{\varepsilon,\eta}, w \sqrt{\mu} \right\rangle_{L^2_w}, \quad
			\theta_{\varepsilon,\eta} = \left\langle g_{\varepsilon,\eta}, \left( \frac{|w|^2}{3} - 1 \right) \sqrt{\mu} \right\rangle_{L^2_w}.
		\end{equation*}
		Accordingly, the fluctuation distribution function $g_{\varepsilon,\eta}$ can be decomposed as
		\begin{equation}\label{expres-g}
			g_{\varepsilon,\eta} = \left\{ \rho_{\varepsilon,\eta} + u_{\varepsilon,\eta} \cdot w + \frac{\theta_{\varepsilon,\eta}}{2} (|w|^2 - 3) \right\} \sqrt{\mu} + (\mathbf{I - P}) g_{\varepsilon,\eta}.
		\end{equation}
		Meanwhile, it follows from \eqref{global,bdg} that as $\varepsilon \to 0$,
		\begin{equation}\label{Limit.Mac}
			(\rho_{\varepsilon,\eta}, u_{\varepsilon,\eta}, \theta_{\varepsilon,\eta}) \stackrel{\ast}{\rightharpoonup} (\rho, u, \theta) \quad \text{weakly-$\star$ in } L^\infty([0,\infty); H^N_x).
		\end{equation}
		To derive the fluid equations for $u$ and $\theta$, taking the $L^2_w$-inner products of \eqref{equ:g} with $\sqrt{\mu}$, $w \sqrt{\mu}$, and $\big( \frac{|w|^2}{3} - 1 \big) \sqrt{\mu}$ yields the local conservation laws:
		\begin{equation}\label{Eqn.Mac.}
			\left\{
			\begin{aligned}
				\partial_t \rho_{\varepsilon,\eta} +  \frac{1}{\varepsilon}\nabla_x\cdot u_{\varepsilon,\eta} =&0,\\
				\partial_t u_{\varepsilon,\eta} + \frac{1}{\varepsilon}\nabla_x\bigl(\rho_{\varepsilon,\eta}+\theta_{\varepsilon,\eta}\bigr)
				+& \frac{1}{\varepsilon}\nabla_x\cdot\left\langle \sqrt{\mu}\,\widehat{A},\,\mathcal{L}g_{\varepsilon,\eta}\right\rangle_{L^2_w}\\
				=&\left\langle \frac1\varepsilon \mathcal{L}_{\scriptscriptstyle\mathcal{R}} h_{\varepsilon,\eta}
				+\Gamma_{\scriptscriptstyle\mathcal{R}}\bigl(g_{\varepsilon,\eta},h_{\varepsilon,\eta}\bigr),\, w\sqrt{\mu} \right\rangle_{L^2_w},\\
				\partial_t \theta_{\varepsilon,\eta} + \frac{2}{3\varepsilon}\nabla_x\cdot u_{\varepsilon,\eta}
				+\frac{2}{3\varepsilon}&\nabla_x\cdot\left\langle \sqrt{\mu}\,\widehat{B},\,\mathcal{L}g_{\varepsilon,\eta}\right\rangle_{L^2_w}\\
				=\frac13&\left\langle \frac1\varepsilon \mathcal{L}_{\scriptscriptstyle\mathcal{R}} h_{\varepsilon,\eta}
				+\Gamma_{\scriptscriptstyle\mathcal{R}}\bigl(g_{\varepsilon,\eta},h_{\varepsilon,\eta}\bigr),\,
				\bigl(|w|^2-3\bigr)\sqrt{\mu} \right\rangle_{L^2_w}.
			\end{aligned}
			\right.
		\end{equation}
		where $\widehat{A}$ and $\widehat{B}$ are defined in \eqref{ABhat}.
		
		\paragraph{Incompressibility and Boussinesq Relation.}
		The continuity equation in \eqref{Eqn.Mac.} reads
		\begin{equation*}
			\nabla_x \cdot u_{\varepsilon,\eta} = -\varepsilon \partial_t \rho_{\varepsilon,\eta}.
		\end{equation*}
		By the uniform energy bound \eqref{global,bdg}, passing to the limit as $\varepsilon \to 0$ in the sense of distributions yields $\nabla_x \cdot u_{\varepsilon,\eta} \to 0$. In view of \eqref{Limit.Mac}, we obtain the incompressibility condition
		\begin{equation}\label{u.Diver.free.}
			\nabla_x \cdot u = 0.
		\end{equation}
		
		Similarly, from the momentum equation in \eqref{Eqn.Mac.}, we rewrite the pressure gradient term as
		\begin{equation*}
			\begin{aligned}
				\nabla_x (\rho_{\varepsilon,\eta} + \theta_{\varepsilon,\eta}) 
				&= -\varepsilon \partial_t u_{\varepsilon,\eta} - \nabla_x \cdot \left\langle \sqrt{\mu} \widehat{A}, \mathcal{L} (\mathbf{I - P}) g_{\varepsilon,\eta} \right\rangle_{L^2_w} \\
				&\quad + \left\langle \mathcal{L}_{\scriptscriptstyle\mathcal{R}} (\mathbf{I - P}_{\scriptscriptstyle 1}) h_{\varepsilon,\eta} + \varepsilon \Gamma_{\scriptscriptstyle\mathcal{R}}(g_{\varepsilon,\eta}, h_{\varepsilon,\eta}), \, w \sqrt{\mu} \right\rangle_{L^2_w}.
			\end{aligned}
		\end{equation*}
		Applying the uniform global energy bounds \eqref{global,bdh}--\eqref{global,bdg} and the dissipation bounds \eqref{global,dish}--\eqref{global,disg}, we deduce that $\nabla_x (\rho_{\varepsilon,\eta} + \theta_{\varepsilon,\eta}) \to 0$ in the sense of distributions as $\varepsilon \to 0$. This establishes the Boussinesq relation
		\begin{equation}\label{Bos}
			\nabla_x (\rho + \theta) = 0.
		\end{equation}
		
		\paragraph{Convergence of $\frac{3}{5}\theta_{\varepsilon,\eta} - \frac{2}{5}\rho_{\varepsilon,\eta}$ and $\mathcal{P}u_{\varepsilon,\eta}$.}
		Multiplying the third equation of \eqref{Eqn.Mac.} by $\frac{3}{5}$ and subtracting $\frac{2}{5}$ times the first equation, we eliminate the singular divergence term $\frac{1}{\varepsilon} \nabla_x \cdot u_{\varepsilon,\eta}$:
		\begin{equation}\label{theta-Equ}
			\begin{aligned}
				\partial_t \left( \frac{3}{5}\theta_{\varepsilon,\eta} - \frac{2}{5}\rho_{\varepsilon,\eta} \right) &+ \frac{2}{5\varepsilon} \nabla_x \cdot \left\langle \sqrt{\mu} \widehat{B}, \mathcal{L} g_{\varepsilon,\eta} \right\rangle_{L^2_w} \\
				&= \frac{1}{5} \left\langle \frac{1}{\varepsilon} \mathcal{L}_{\scriptscriptstyle\mathcal{R}} h_{\varepsilon,\eta} + \Gamma_{\scriptscriptstyle\mathcal{R}}(g_{\varepsilon,\eta}, h_{\varepsilon,\eta}), \, (|w|^2 - 3) \sqrt{\mu} \right\rangle_{L^2_w}.
			\end{aligned}
		\end{equation}
		For any $T > 0$, the uniform bound \eqref{global,bdg} implies
		\begin{equation*}
			\left\| \frac{3}{5}\theta_{\varepsilon,\eta} - \frac{2}{5}\rho_{\varepsilon,\eta} \right\|_{L^\infty([0,T]; H^N_x)} \lesssim 1.
		\end{equation*}
		Furthermore, by the energy and dissipation bounds \eqref{global,bdh}--\eqref{global,disg}, the time derivative is uniformly bounded in $H^{N-1}_x$:
		\begin{equation*}
			\begin{aligned}
				&	\left\| \partial_t \left( \frac{3}{5}\theta_{\varepsilon,\eta} - \frac{2}{5}\rho_{\varepsilon,\eta} \right) \right\|_{H^{N-1}_x}\\
				\lesssim &\frac{1}{\varepsilon} \left\| \nabla_x \cdot \left\langle \sqrt{\mu} \widehat{B}, \mathcal{L} (\mathbf{I - P}) g_{\varepsilon,\eta} \right\rangle_{L^2_w} \right\|_{H^{N-1}_x} \\
				&+ \left\| \left\langle \frac{1}{\varepsilon}\mathcal{L}_{\scriptscriptstyle\mathcal{R}} (\mathbf{I - P}_{\scriptscriptstyle 1}) h_{\varepsilon,\eta} + \Gamma_{\scriptscriptstyle\mathcal{R}}(g_{\varepsilon,\eta}, h_{\varepsilon,\eta}), \, (|w|^2 - 3) \sqrt{\mu} \right\rangle_{L^2_w} \right\|_{H^{N-1}_x} \\
				&\lesssim \left( 1 + \frac{\eta^{1/2}}{\varepsilon} \right) \lesssim 1.
			\end{aligned}
		\end{equation*}
		Therefore, by the Aubin--Lions--Simon Theorem (Appendix \ref{A-L-S}), there exists a function $\tilde{\theta} \in C(\mathbb{R}^+; H^{N-\sigma}_x) \cap L^{\infty}(\mathbb{R}^+; H^N_x)$ such that for any $\sigma \in (0,1]$, as $\varepsilon \to 0$,
		\begin{equation}\label{theta-rho-4}
			\left( \frac{3}{5}\theta_{\varepsilon,\eta} - \frac{2}{5}\rho_{\varepsilon,\eta} \right) \longrightarrow \tilde{\theta} \quad \text{strongly in } C(\mathbb{R}^+; H^{N-\sigma}_x).
		\end{equation}
		Recalling that $\tilde{\theta} = \frac{3}{5}\theta - \frac{2}{5}\rho$, $\theta = \left( \frac{3}{5}\theta - \frac{2}{5}\rho \right) + \frac{2}{5}(\rho + \theta)$, and applying the Boussinesq relation \eqref{Bos}, we conclude that $\tilde{\theta} = \theta$ and $\rho + \theta = 0$.
		
		Taking the Leray projection on the second
		equation of (\ref{Eqn.Mac.}) gives
		\begin{equation}\label{u-Equation}
			\partial_t\mathcal{P}u_{\varepsilon,\eta}+\frac{1}{\varepsilon}\mathcal{P}\nabla_x\cdot\left \langle\sqrt{\mu} \widehat{A},\mathcal{L}g_{\varepsilon,\eta}\right \rangle_{L^2_w}=\mathcal{P}\left \langle\frac{1}{\varepsilon}\mathcal L_{\scriptscriptstyle \mathcal R} h_{\varepsilon,\eta}+\Gamma_{\scriptscriptstyle \mathcal R} (g_{\varepsilon,\eta},h_{\varepsilon,\eta}),w\sqrt\mu\right \rangle_{L^2_w}.
		\end{equation}
		Here $\mathcal{P}$ is the Leray projection operator given by
		$\mathcal P=\mathcal I-\nabla_x\Delta^{-1}_x\nabla_x\cdot,
		$
		where $\mathcal I$ is the identical mapping.
		Similar arguments as above show that there is a divergence-free
		$\tilde{u}\in C(\mathbb{R}^+;H^{N-\sigma}_x)\cap L^{\infty}(\mathbb{R}^+;H^N_x)$ such that
		\begin{equation}\label{Conver,pu}
			\mathcal{P} u_{\varepsilon,\eta}\longrightarrow \tilde{u}\quad \text{strongly in}\ C(\mathbb{R}^+;H^{N-\sigma}_x),
		\end{equation}
		as $\varepsilon\rightarrow0$ for any $\sigma\in(0,1]$. Noting that $\tilde{u}=\mathcal{P} u$, we have $\mathcal{P} u=u$.
		
		Regarding the convergence of $\mathcal{P}^{\perp}$, we have
		\begin{equation}\label{Conver,p.perp.u}
			\mathcal{P}^{\perp}u_{\varepsilon,\eta}\longrightarrow0\quad \text{weakly-}\star\ \text{in}\ L^{\infty}([0,+\infty);H^N_x),
		\end{equation}
		as $\varepsilon\rightarrow0$.
		Indeed, we have
		\begin{equation}\label{Conver,p.perp.u1}
			\|\mathcal{P}^{\perp}u_{\varepsilon,\eta}\|_{L^{\infty}([0,+\infty);H^N_x)}\lesssim \|u_{\varepsilon,\eta}\|_{L^{\infty}([0,+\infty);H^N_x)}\lesssim 1,
		\end{equation}
		and $\mathcal{P}^{\perp}u_{\varepsilon,\eta}$ converge to 0 in the sense of distributions as $\varepsilon \rightarrow 0$, hence the relation \eqref{Conver,p.perp.u} holds. 
		
		\subsection{The Limiting Equations}		
		
	\subsubsection{The Equation of $F$}
			To derive the limiting equation for $F_{\varepsilon,\eta}$, we first present one auxiliary lemma which establishes the convergence of the collision integral $\mathcal{D}(F_{\varepsilon,\eta}, f_{\varepsilon,\eta})$ to the acceleration term in the Vlasov equation. The proof follows the strategy in \cite[Sec.~4.2.2, pp.~1728--1732]{MR3668954} and is collected in the appendix for completeness.
		
		\begin{lemma}\label{Vlasov equation.}
			Under the assumptions of Theorem \ref{Theom.2}, as $\varepsilon \to 0$,
			\begin{equation*}
				\frac{1}{\eta} \mathcal{D}(F_{\varepsilon,\eta}, f_{\varepsilon,\eta}) \longrightarrow \kappa \operatorname{div}_v \big( (v - u) F \big)
			\end{equation*}
			weakly in the sense of distributions, where $\kappa$ is defined in \eqref{kappa nu}. More precisely, for any $T > 0$ and any test function $Y$ satisfying \eqref{Y1}--\eqref{Y4},
			\begin{equation*}
				-\frac{1}{\eta} \int_0^T \int_{\mathbb{R}^3 \times \mathbb{R}^3} \mathcal{D}(F_{\varepsilon,\eta}, f_{\varepsilon,\eta}) Y \, \mathrm{d}v \mathrm{d}x \mathrm{d}t \xrightarrow{\varepsilon \to 0} \kappa \int_0^T \int_{\mathbb{R}^3 \times \mathbb{R}^3} F \nabla_v Y \cdot (v - u) \, \mathrm{d}v \mathrm{d}x \mathrm{d}t.
			\end{equation*}
		\end{lemma}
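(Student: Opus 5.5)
The plan is to use the weak form of the collision operator and Taylor-expand the test function in the small parameter $\eta$. By the standard change of variables $(v,w)\mapsto(v'',w'')$ with unit Jacobian (Lemma \ref{Jcb.}) and the symmetry of the hard-sphere kernel, we have
\[
\int_{\mathbb{R}^3}\mathcal D(F,f)Y\,\mathrm{d}v=\iint_{\mathbb{R}^3\times\mathbb{R}^3}\int_{\mathbb{S}^2} F(v)f(w)\,|(\varepsilon v-w)\cdot\omega|\,\big(Y(v'')-Y(v)\big)\,\mathrm{d}\omega\,\mathrm{d}w\,\mathrm{d}v .
\]
Since $Y$ is a quadratic polynomial in $v$, the increment is exact. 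Writing $\delta v:=v''-v=-\frac{2\eta}{(1+\eta)\varepsilon}\,\omega\big(\omega\cdot(\varepsilon v-w)\big)$, we get $Y(v'')-Y(v)=\nabla_vY\cdot\delta v+\chi_2|\delta v|^2$. The quadratic piece carries a factor $\eta^2/\varepsilon^2$, so after dividing by $\eta$ it is $O(\eta/\varepsilon^2)=o(1)$ times moments of $F$ and $f$. Those moments are uniformly bounded via \eqref{mu}, \eqref{global,bdh} and \eqref{global,bdg}, with $|\varepsilon v|$-weights absorbed since $\varepsilon|v|\lesssim\eta^{1/2}$ on $\mu_{\varepsilon,\eta}$.

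The main term is
\[
-\frac1\eta\int \mathcal D\,Y=\frac{2}{(1+\eta)\varepsilon}\iiint F(v)f(w)\,|(\varepsilon v-w)\cdot\omega|\,(\omega\cdot(\varepsilon v-w))\,(\omega\cdot\nabla_vY)\,\mathrm{d}\omega\,\mathrm{d}w\,\mathrm{d}v .
\]
Insert $f=\mu+\varepsilon\sqrt\mu g_{\varepsilon,\eta}$.

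\emph{The $\mu$ part.} Define $\Phi(z)=\int\!\!\int\mu(w)|(z-w)\cdot\omega|\,(z-w)\cdot\omega\,\omega\,\mathrm{d}\omega\,\mathrm{d}w$. This function is odd and smooth, and $\Phi(0)=0$ by parity of $\mu$. Its Taylor expansion is $\Phi(\varepsilon v)=\varepsilon\nabla\Phi(0)v+O(\varepsilon^2|v|^2)$, with $\nabla\Phi(0)=2\cdot\frac{1}{3}\!\int|w\cdot\omega|\ldots$, which is isotropic. A direct computation gives $\frac{2}{\varepsilon}\Phi(\varepsilon v)\approx-\kappa v$, with $\kappa$ as in \eqref{kappa nu}. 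Hence this part tends to $-\kappa\int F\,v\cdot\nabla_vY$. The remainder is controlled by $\varepsilon\int F|v|^2\lesssim\eta/\varepsilon\to0$, and Lemma \ref{F,con} supplies the convergence of $\int F_{\varepsilon,\eta}v\cdot\nabla_vY$; note that $v\cdot\nabla_v Y$ is again of the admissible form (its $|v|^2$ coefficient pairs with a vanishing moment).

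\emph{The $g$ part.} Here the prefactor $\varepsilon/\varepsilon=1$, and we expand $|(\varepsilon v-w)\cdot\omega|(\varepsilon v-w)\cdot\omega$ at $\varepsilon v=0$. The leading term is $-\rho_{F_{\varepsilon,\eta}}\langle g_{\varepsilon,\eta},\sqrt\mu\,\Psi\rangle_{L^2_w}\cdot\nabla_vY(t,x,0)$, where $\Psi(w)=2\int|w\cdot\omega|(w\cdot\omega)\omega\,\mathrm{d}\omega=c|w|w$. Only $\mathbf P g$ contributes in the limit, since $(\mathbf I-\mathbf P)g\to0$ strongly by \eqref{I-P.g'Limit}, and by parity only $u_{\varepsilon,\eta}\cdot w$ survives. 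This yields $\kappa\,\rho_{F_{\varepsilon,\eta}}\,u_{\varepsilon,\eta}\cdot\psi$, with the same constant $\kappa$ (check: $\frac{2\pi}{3}\int|w|^3\mu$ arises from $\int|w\cdot\omega|^2\ldots$ integration). The error terms carry factors $\varepsilon|v|$ and vanish as before. \emph{The main obstacle} is passing to the limit in the product $\rho_{F_{\varepsilon,\eta}}u_{\varepsilon,\eta}$, since $u_{\varepsilon,\eta}$ converges only weakly. The plan is to use the strong convergence \eqref{Str.rho.F} of $\rho_{F_{\varepsilon,\eta}}$ in $C([0,T];H^{N-\sigma}_x)\subset C([0,T];L^\infty_x)$ together with the weak-$*$ convergence \eqref{Limit.Mac}; strong times weak gives $\int\rho_F\,u\cdot\psi$. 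Only $u$ (not $\mathcal Pu$) is needed, and $u$ is already divergence free. Finally, the non-equilibrium part of $F$, $\sqrt{\mu_{\varepsilon,\eta}}(\mathbf I-\mathbf P_1)h$, contributes $O(\eta^{1/2})$ through \eqref{global,dish}, as in \eqref{Fconv2}. Assembling the pieces gives $\kappa\int F\nabla_vY\cdot(v-u)$.
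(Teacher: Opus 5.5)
Your argument is correct and follows the paper's skeleton. You use the weak form via the involutive change of variables $(v,w)\mapsto(v'',w'')$. You expand the quadratic test function exactly to second order, and the $|v''-v|^2$ term is removed by $\eta/\varepsilon^2\to0$. You split $f_{\varepsilon,\eta}=\mu+\varepsilon\sqrt{\mu}\,g_{\varepsilon,\eta}$. Two sub-steps differ from the paper.

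\emph{The $\mu$-part.} The paper separates $I^4_{\varepsilon,\eta}$ and $I^5_{\varepsilon,\eta}$. It passes to the limit in $(|\varepsilon v-w|-|w|)/\varepsilon$ by dominated convergence, using the weight $|v|^{3/2}(1+|v|)^k$ to keep $F_{\varepsilon,\eta}$ uniformly bounded. It then identifies the constant through $\int\mu\,4|w|\,\mathrm{d}w=\int\mu|w|^3\,\mathrm{d}w$. You instead note that $\Phi(z)=\pi\int\mu(w)(z-w)|z-w|\,\mathrm{d}w$ is odd with bounded Hessian. Hence $\frac{2}{\varepsilon}\Phi(\varepsilon v)=\kappa v+O(\varepsilon|v|^2)$, and the remainder is $O(\eta/\varepsilon)$. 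This is shorter and gives a rate. Matching $2\nabla\Phi(0)=\frac{8\pi}{3}\int\mu|w|\,\mathrm{d}w\,\mathbb{I}_3$ with $\kappa$ still requires that same moment identity.

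\emph{The product $\rho_{\scriptscriptstyle F_{\varepsilon,\eta}}u_{\varepsilon,\eta}$.} The paper's Lemma~\ref{lemma6.2} uses the Leray splitting and the strong compactness \eqref{Conver,pu} of $\mathcal{P}u_{\varepsilon,\eta}$. You pair the strong convergence \eqref{Str.rho.F} with the weak-$*$ convergence \eqref{Limit.Mac} of the full $u_{\varepsilon,\eta}$, tested against the fixed function $\rho_{\scriptscriptstyle F}\psi\in L^1(0,T;L^2_x)$. That suffices, and it shows compactness of $\mathcal{P}u_{\varepsilon,\eta}$ is not needed for this lemma. You only need convergence of $\rho_{\scriptscriptstyle F_{\varepsilon,\eta}}$ in $C([0,T];L^2_x)$. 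The embedding $H^{N-\sigma}_x\subset L^\infty_x$ you quote fails, e.g., for $N=2$, $\sigma=1$.

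\emph{Two sign slips.} As written, your pieces add up to $-\kappa\int F\nabla_vY\cdot(v-u)$, the opposite of your claim.
\begin{itemize}
\item Since $\nabla\Phi(0)=\frac{4\pi}{3}\int\mu|w|\,\mathrm{d}w\,\mathbb{I}_3$ is positive definite, $\frac{2}{\varepsilon}\Phi(\varepsilon v)\approx+\kappa v$, not $-\kappa v$. So the $\mu$-part tends to $+\kappa\int F\,v\cdot\nabla_vY$, which vanishes anyway because $F=\rho_{\scriptscriptstyle F}\delta_{v=0}$.
\item Your own leading $g$-term is $-\rho_{\scriptscriptstyle F_{\varepsilon,\eta}}\langle g_{\varepsilon,\eta},\sqrt{\mu}\,\Psi\rangle_{L^2_w}\cdot\psi$. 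Up to the $(\mathbf{I}-\mathbf{P})g_{\varepsilon,\eta}$ contribution, this equals $-\kappa\,\rho_{\scriptscriptstyle F_{\varepsilon,\eta}}u_{\varepsilon,\eta}\cdot\psi$, not $+\kappa\,\rho_{\scriptscriptstyle F_{\varepsilon,\eta}}u_{\varepsilon,\eta}\cdot\psi$.
\end{itemize}
With both corrected, the limit is $\kappa\int F\nabla_vY\cdot(v-u)$, as stated.
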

		
		We are now in a position to derive the limiting equation for $F_{\varepsilon,\eta}$. Recall that $F_{\varepsilon,\eta}$ satisfies
		\begin{equation*}
			\partial_t F_{\varepsilon,\eta} + v \cdot \nabla_x F_{\varepsilon,\eta} = \frac{1}{\eta} \mathcal{D}(F_{\varepsilon,\eta}, f_{\varepsilon,\eta}).
		\end{equation*}
		From the distributional convergence \eqref{F.conv}, it follows that as $\varepsilon \to 0$,
		\begin{equation*}
			\begin{aligned}
				\int_0^T \int_{\mathbb{R}^3 \times \mathbb{R}^3} \partial_t F_{\varepsilon,\eta} Y(t,x,v) \, \mathrm{d}v \mathrm{d}x \mathrm{d}t 
				&\longrightarrow -\int_{\mathbb{R}^3 \times \mathbb{R}^3} \rho_{\scriptscriptstyle F_0} \delta_{v=0} Y(0,x,v) \, \mathrm{d}v \mathrm{d}x \\
				&\quad -\int_0^T \int_{\mathbb{R}^3 \times \mathbb{R}^3} F \partial_t Y(t,x,v) \, \mathrm{d}v \mathrm{d}x \mathrm{d}t.
			\end{aligned}	
		\end{equation*}
		Similarly, by arguments analogous to those used for \eqref{F.conv}, we obtain
		\begin{equation*}
			\int_0^T \int_{\mathbb{R}^3 \times \mathbb{R}^3} v \cdot \nabla_x F_{\varepsilon,\eta} Y(t,x,v) \, \mathrm{d}v \mathrm{d}x \mathrm{d}t \longrightarrow -\int_0^T \int_{\mathbb{R}^3 \times \mathbb{R}^3} F v \cdot \nabla_x Y(t,x,v) \, \mathrm{d}v \mathrm{d}x \mathrm{d}t.
		\end{equation*}
		Furthermore, Lemma \ref{Vlasov equation.} provides the limit for the right-hand side:
		\begin{equation*}
			\int_0^T \int_{\mathbb{R}^3 \times \mathbb{R}^3} \frac{1}{\eta} \mathcal{D}(F_{\varepsilon,\eta}, f_{\varepsilon,\eta}) Y(t,x,v) \, \mathrm{d}v \mathrm{d}x \mathrm{d}t \longrightarrow -\kappa \int_0^T \int_{\mathbb{R}^3 \times \mathbb{R}^3} F \nabla_v Y \cdot (v - u) \, \mathrm{d}v \mathrm{d}x \mathrm{d}t.
		\end{equation*}
		
		Combining these limits, we establish that $F(t,x,v) = \rho_{\scriptscriptstyle F}(t,x) \delta_{v=0}$ satisfies the macroscopic Vlasov equation
		\begin{equation*}
			\partial_t F + v \cdot \nabla_x F = \kappa \operatorname{div}_v \big( (v - u) F \big),
		\end{equation*}
		subject to the initial condition $F(0,x,v) = \rho_{\scriptscriptstyle F_0} \delta_{v=0}$.
		
		\subsubsection{The Equations of $\theta$ and $u$}
		For convenience, we define
		\begin{equation}\label{tep0}
			\begin{aligned}
				&R_{\varepsilon,\eta,E}=-\varepsilon\left \langle\sqrt{\mu}\widehat{E},\partial_tg_{\varepsilon,\eta}\right \rangle_{L^2_w}+\left \langle\sqrt{\mu}\widehat{E},w\cdot \nabla_x(\mathbf{I - P})g_{\varepsilon,\eta}\right \rangle_{L^2_w}\\
				&+\left \langle\sqrt{\mu}\widehat{E},\Gamma((\mathbf{I - P})g_{\varepsilon,\eta},(\mathbf{I - P})g_{\varepsilon,\eta})\right \rangle_{L^2_w}+\left \langle\sqrt{\mu}\widehat{E},\Gamma((\mathbf{I - P})g_{\varepsilon,\eta},{\bf P}g_{\varepsilon,\eta})\right \rangle_{L^2_w}\\
				&+\left \langle\sqrt{\mu}\widehat{E},\Gamma({\bf P}g_{\varepsilon,\eta},(\mathbf{I - P})g_{\varepsilon,\eta})\right \rangle_{L^2_w},
			\end{aligned}
		\end{equation}
		for $E=A$ or $B$, where $A, B$ are defined in \eqref{AB}.
		Next, we note that 
		\begin{equation}\label{AB1}
			\mathcal{P}\left \langle\frac{1}{\varepsilon}\mathcal L_{\scriptscriptstyle \mathcal R} h_{\varepsilon,\eta}+\Gamma_{\scriptscriptstyle \mathcal R} (g_{\varepsilon,\eta},h_{\varepsilon,\eta}),w\sqrt\mu\right \rangle_{L^2_w}=\frac{1}{\varepsilon}\mathcal{P}\langle\mathcal R(f_{\varepsilon,\eta},F_{\varepsilon,\eta}),w\rangle_{L^2_w},
		\end{equation}
		and 
		\begin{equation}\label{AB2}
			\left \langle\frac{1}{\varepsilon}\mathcal L_{\scriptscriptstyle \mathcal R} h_{\varepsilon,\eta}+\Gamma_{\scriptscriptstyle \mathcal R} (g_{\varepsilon,\eta},h_{\varepsilon,\eta}),(|w|^2-3)\sqrt\mu\right \rangle_{L^2_w}=\frac{1}{\varepsilon}\langle\mathcal R(f_{\varepsilon,\eta},F_{\varepsilon,\eta}),|w|^2\rangle_{L^2_w}.
		\end{equation}
		We decompose $u_{\varepsilon,\eta}=\mathcal{P}u_{\varepsilon,\eta}+\mathcal{P}^{\perp}u_{\varepsilon,\eta}$ and denote $\tilde{\theta}_{\varepsilon,\eta}=\left (\frac{3}{5}\theta_{\varepsilon,\eta}-\frac{2}{5}\rho_{\varepsilon,\eta} \right )$. Then, following the standard calculation (cf. \cite{MR1115587,MR4296180}) together with \eqref{AB2}, the equation \eqref{theta-Equ} can be rewritten as
		\begin{equation*}
			\begin{aligned}
				\partial_t\tilde{\theta}_{\varepsilon,\eta}+\nabla_x\cdot\left (\mathcal{P} u_{\varepsilon,\eta} \tilde{\theta}_{\varepsilon,\eta}\right )-\widetilde{\kappa}\Delta_x\tilde{\theta}_{\varepsilon,\eta}=R_{\varepsilon,\eta,\theta}+\frac{1}{5\varepsilon}\langle\mathcal R(f_{\varepsilon,\eta},F_{\varepsilon,\eta}),|w|^2\rangle_{L^2_w},
			\end{aligned}
		\end{equation*}
		where $\widetilde{\kappa}$ is defined in \eqref{kappa nu} and
		\begin{equation}\label{tep1}
			\begin{aligned}
				R_{\varepsilon,\eta,\theta}&=\frac{2}{5}\nabla_x\cdot R_{\varepsilon,\eta,B}-\frac{2}{5}\nabla_x\cdot[\mathcal{P} u_{\varepsilon,\eta}(\rho_{\varepsilon,\eta}+\theta_{\varepsilon,\eta})]-\nabla_x\cdot\left (\mathcal{P}^\perp u_{\varepsilon,\eta} \tilde{\theta}_{\varepsilon,\eta}\right )\\
				&\quad-\frac{2}{5}\nabla_x\cdot[\mathcal{P}^\perp  u_{\varepsilon,\eta}(\rho_{\varepsilon,\eta}+\theta_{\varepsilon,\eta})]+\frac{2}{5}\widetilde{\kappa}\Delta_x(\rho_{\varepsilon,\eta}+\theta_{\varepsilon,\eta}).
			\end{aligned}
		\end{equation}
		For any $T>0$, let $\chi_2(t,x)$ be a test function satisfying \eqref{Y2}. From \eqref{tep0} and the global bounds \eqref{global,bdg} and \eqref{global,disg}, one can show that
		\begin{equation}\label{tep2}
			\begin{aligned}
				\int_0^T \int_{\mathbb{R}^3} R_{\varepsilon,\eta,E}(t,x)\chi_2(t,x)\mathrm{d}x\mathrm{d}t\longrightarrow 0,
			\end{aligned}
		\end{equation}
		as $\varepsilon \to 0$, where $E=A$ or $B$.
		For other terms in \eqref{tep1}, noting the convergence results \eqref{u.Diver.free.} and \eqref{Bos}, together with \eqref{tep2}, we have
		\begin{equation*}
			\begin{aligned}
				\int_0^T \int_{\mathbb{R}^3} R_{\varepsilon,\eta,\theta}(t,x)\chi_2(t,x)\mathrm{d}x\mathrm{d}t\longrightarrow 0,
			\end{aligned}
		\end{equation*}
		as $\varepsilon \to 0$. Setting $Y (t,x,v)=\chi_2(t,x)\cdot|v|^2$, from \eqref{Jonit.Cons.M,E.b} and Lemma \ref{Vlasov equation.}, we deduce that
		\[
		\begin{aligned}
			&\frac{1}{5\varepsilon}	\int_0^T \int_{\mathbb{R}^3} \langle \mathcal R(f_{\varepsilon,\eta},F_{\varepsilon,\eta}),|w|^2\rangle_{L^2_w} \chi_2(t,x)\mathrm{d}x \mathrm{d}t\\
			&=-\varepsilon\cdot\left \{\frac{1}{5\eta}\int_0^T \int_{\mathbb{R}^3\times\mathbb{R}^3}\mathcal D(F_{\varepsilon,\eta},f_{\varepsilon,\eta})Y (t,x,v)\mathrm{d}v\mathrm{d}x \mathrm{d}t\right \}\longrightarrow 0,
		\end{aligned}
		\]
		as $\varepsilon \to 0$. The convergences \eqref{theta-rho-4} and \eqref{Conver,pu} gives us that as \(\varepsilon \to 0\),
		\[
		\begin{aligned}
			\int_0^T \int_{\mathbb{R}^3} \partial_t\tilde{\theta}_{\varepsilon,\eta}(t,x)\chi_2(t,x)\mathrm{d}x\mathrm{d}t&\rightarrow-\int_{\mathbb{R}^3} \left (\frac{3}{5}\theta_0-\frac{2}{5}\rho_0\right ) \chi_2^0(x) \mathrm{d}x \\
			&\quad-\int_0^T \int_{\mathbb{R}^3} \theta(t,x) \partial_t \chi_2(t,x)\mathrm{d}x \mathrm{d}t,\\
			\int_0^T \int_{\mathbb{R}^3}\Delta_x\tilde{\theta}_{\varepsilon,\eta}\chi_2(t,x)\mathrm{d}x\mathrm{d}t&\rightarrow\int_0^T \int_{\mathbb{R}^3}\theta\Delta_x\chi_2(t,x)\mathrm{d}x\mathrm{d}t,
		\end{aligned}
		\]
		and
		\[
		\int_0^T \int_{\mathbb{R}^3}\nabla_x\cdot\left (\mathcal{P} u_{\varepsilon,\eta} \tilde{\theta}_{\varepsilon,\eta}\right )\chi_2(t,x)\mathrm{d}x\mathrm{d}t\rightarrow\int_0^T \int_{\mathbb{R}^3} u(t,x)\theta(t,x)\cdot \nabla_x\chi_2(t,x)\mathrm{d}x\mathrm{d}t.
		\]
		By collecting above convergence results, we obtain that $\theta\in C(\mathbb{R}^+; H^{N-\sigma}_x)\cap L^{\infty}(\mathbb{R}^+; H^N_x)$ satisfies the following equation
		$$
		\partial_t \theta+u \cdot \nabla_x\theta=\widetilde{\kappa}\Delta_x\theta,
		$$
		with initial data $\theta(0,x)=\frac{3}{5}\theta_0(x)-\frac{2}{5}\rho_0(x)$.
		Similarly, following the standard calculation (cf. \cite{MR1115587}) together with \eqref{AB1}, the equation \eqref{u-Equation} can be rewritten as
		\begin{equation*}
			\partial_t\mathcal{P}u_{\varepsilon,\eta}+\mathcal{P}\nabla_x\cdot\left (\mathcal{P}u_{\varepsilon,\eta}\otimes\mathcal{P}u_{\varepsilon,\eta}\right )-\widetilde{\nu}\Delta_x\mathcal{P}u_{\varepsilon,\eta}=R_{\varepsilon,\eta,u}+\frac{1}{\varepsilon}\mathcal{P}\langle\mathcal R(f_{\varepsilon,\eta},F_{\varepsilon,\eta}),w\rangle_{L^2_w},
		\end{equation*}
		where $\widetilde{\nu}$ is defined in \eqref{kappa nu} and
		\begin{align*}
			R_{\varepsilon,\eta,u}=\mathcal{P}\nabla_x\cdot R_{\varepsilon,\eta,A}-\mathcal{P}\nabla_x\cdot\left (\mathcal{P}u_{\varepsilon,\eta}\otimes\mathcal{P}^\perp u_{\varepsilon,\eta}+\mathcal{P}^\perp u_{\varepsilon,\eta}\otimes\mathcal{P}u_{\varepsilon,\eta}+\mathcal{P}^\perp u_{\varepsilon,\eta}\otimes\mathcal{P}^\perp u_{\varepsilon,\eta}\right ).
		\end{align*}	
		Similar to the above, we can take the vector-valued test function $\psi(t,x)$ satisfying 
		\eqref{Y4}, set $Y (t,x,v)=\psi(t,x)\cdot v$, and prove that as
		\(\varepsilon \to 0\), 
		\[
		\begin{aligned}
			&\int_0^T \int_{\mathbb{R}^3} \left (\partial_t\mathcal{P}u_{\varepsilon,\eta}+\mathcal{P}\nabla_x\cdot\left (\mathcal{P}u_{\varepsilon,\eta}\otimes\mathcal{P}u_{\varepsilon,\eta}\right )-\widetilde{\nu}\Delta_x\mathcal{P}u_{\varepsilon,\eta}
			\right )\cdot\psi(t,x)
			\mathrm{d}x\mathrm{d}t\\
			&\rightarrow-\int_{\mathbb{R}^3} \mathcal{P}u_0 \cdot \psi^0(x) \mathrm{d}x-\int_0^T \int_{\mathbb{R}^3} \left (u\cdot\partial_t\psi+u\otimes u:\nabla_x\psi-\widetilde{\nu}u\cdot\Delta_x \psi
			\right )\mathrm{d}x\mathrm{d}t,
		\end{aligned}
		\]
		and
		\[
		\int_0^T \int_{\mathbb{R}^3}R_{\varepsilon,\eta,u}(t,x)\psi(t,x)\mathrm{d}x\mathrm{d}t\rightarrow 0,
		\]
		and
		\[
		\begin{aligned}
			\frac{1}{\varepsilon}	\int_0^T \int_{\mathbb{R}^3} \mathcal{P}\langle \mathcal{R}(f_{\varepsilon,\eta},F_{\varepsilon,\eta}),w\rangle_{L^2_w}\cdot \psi\mathrm{d}x \mathrm{d}t
			&=-\frac{1}{\eta}\int_0^T \int_{\mathbb{R}^3\times\mathbb{R}^3}\mathcal D(F_{\varepsilon,\eta},f_{\varepsilon,\eta})Y \mathrm{d}v\mathrm{d}x \mathrm{d}t\\
			&\rightarrow\kappa\int_0^T\int_{\mathbb{R}^3}\psi(t,x)\cdot \left \{\int_{\mathbb{R}^3}(v-u) F\mathrm{d}v\right \}\mathrm{d}x\mathrm{d}t.
		\end{aligned}
		\]
		By collecting all above convergence results, we obtain that $u\in C(\mathbb{R}^+; H^{N-\sigma}_x)\cap L^{\infty}(\mathbb{R}^+; H^N_x)$ satisfies the following equation
		$$
		\partial_t u+\mathcal{P}\nabla_x\cdot\left (u\otimes u\right )-\widetilde{\nu}\Delta_xu
		=\kappa\int_{\mathbb{R}^3}(v-u)F\mathrm{d}v,
		$$
		with initial data $u(0,x)=\mathcal{P} u_0(x)$.

		\begin{appendices}
			\section{Jacobian of $J$}
			Let us state the Jacobian determinant $\Big|\frac{\partial(w'',v'')}{\partial(w,v)}\Big|$.
			
			\begin{lemma}\label{Jcb.}
				The Jacobian matrix $J$ satisfies
				\begin{equation*}
					\det J \equiv \left|\frac{\partial(w'',v'')}{\partial(w,v)}\right| = -1.
				\end{equation*}
			\end{lemma}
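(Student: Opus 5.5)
The plan is to exploit the fact that, for fixed $\omega\in\mathbb{S}^2$, the collision map $(w,v)\mapsto(w'',v'')$ defined by \eqref{velocitya}--\eqref{velocityb} is \emph{linear} in $(w,v)$. It also acts as the identity on every direction orthogonal to $\omega$. The determinant therefore reduces to that of a $2\times2$ block acting on the components along $\omega$.

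First, I will choose an orthonormal basis $\{\omega,e_2,e_3\}$ of $\mathbb{R}^3$ and write $w=p\,\omega+w_\perp$ and $v=q\,\omega+v_\perp$, where $p=\omega\cdot w$ and $q=\omega\cdot v$. Reading off \eqref{velocitya}--\eqref{velocityb}, the corrections to $w$ and $v$ are both multiples of $\omega$. Hence $w''_\perp=w_\perp$ and $v''_\perp=v_\perp$, and the $6\times6$ Jacobian is block diagonal: an identity block on the four perpendicular coordinates, and a $2\times2$ block on $(p,q)$.

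Second, I will compute that $2\times2$ block explicitly:
\[
p''=\frac{\eta-1}{1+\eta}\,p+\frac{2\varepsilon}{1+\eta}\,q,\qquad
q''=\frac{2\eta}{\varepsilon(1+\eta)}\,p+\frac{1-\eta}{1+\eta}\,q .
\]
Its determinant is
\[
-\frac{(1-\eta)^2}{(1+\eta)^2}-\frac{4\eta}{(1+\eta)^2}=-\frac{(1+\eta)^2}{(1+\eta)^2}=-1 .
\]
The factors $\varepsilon$ and $1/\varepsilon$ cancel in the off-diagonal product, so the result is independent of $\varepsilon$. Consequently $\det J=1\cdot(-1)=-1$.

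As a consistency check, I will also note two further facts. The block squares to the identity, so the map is an involution, matching the elastic-collision picture. It also preserves the quadratic form $\varepsilon^2q^2+\eta p^2$, which is exactly \eqref{velb} restricted to the $\omega$-direction. Together these confirm that $|\det J|=1$, which gives the measure preservation $\mathrm{d}w''\,\mathrm{d}v''=\mathrm{d}w\,\mathrm{d}v$ used in the proof of \eqref{Gamma.R.}.

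There is no real obstacle in this argument. The only point requiring care is the bookkeeping of the $\varepsilon$ and $\eta$ factors in the off-diagonal entries, since the two velocities are rescaled differently.
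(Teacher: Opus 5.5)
Your argument is correct, but it takes a different route from the paper. The paper never changes coordinates. It writes the full $6\times 6$ Jacobian as a rank-one perturbation of the identity,
\[
J=\mathbb{I}_6-UV,\qquad U=\begin{pmatrix}\frac{2}{1+\eta}\,\omega^T\\ -\frac{2\eta}{\varepsilon(1+\eta)}\,\omega^T\end{pmatrix},\qquad V=\begin{pmatrix}\omega & -\varepsilon\,\omega\end{pmatrix},
\]
and then applies the identity $\det(\mathbb{I}_6-UV)=\det(\mathbb{I}_1-VU)=1-\bigl(\tfrac{2}{1+\eta}+\tfrac{2\eta}{1+\eta}\bigr)=-1$.

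You instead conjugate by the orthogonal map $\mathrm{diag}(R,R)$ adapted to $\{\omega,e_2,e_3\}$ and reduce to the $2\times2$ block on the $\omega$-components. You compute that block's entries and determinant correctly. You should say explicitly that this conjugation, and the reordering of coordinates into $(p,q)$ and perpendicular parts, leaves the determinant unchanged.

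The paper's version is shorter and purely algebraic, collapsing everything to a scalar. Yours costs a change of basis but makes the structure visible. The map is the identity on the four perpendicular directions, and on $(p,q)$ it is a trace-zero involution preserving $\varepsilon^2q^2+\eta p^2$, which is the restriction of \eqref{velb}. A trace-zero involution of the plane has eigenvalues $1$ and $-1$, which explains the sign. The involution property is also exactly what lets the paper, in the appendix, invert $(w,v)\mapsto(w'',v'')$ by the same formulas.

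The two pictures agree: your block minus $\mathbb{I}_2$ has zero determinant, so in both formulations $J-\mathbb{I}_6$ has rank one.
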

			
			\begin{proof}
				By calculation, we get
				\[
				J \equiv \frac{\partial(w'',v'')}{\partial(w,v)} =
			\begin{pmatrix}
				\mathbb{I}_3 - \frac{2}{1+\eta}\omega^T \omega & \frac{2\varepsilon}{1+\eta}\omega^T \omega \\
				\frac{2\eta}{\varepsilon(1+\eta)}\omega^T \omega & \mathbb{I}_3 - \frac{2\eta}{1+\eta}\omega^T \omega
			\end{pmatrix},
				\]
				where $\mathbb{I}_3 = \begin{pmatrix} 1 & 0 & 0 \\ 0 & 1 & 0 \\ 0 & 0 & 1 \end{pmatrix}$, $\omega=(\omega_1,\omega_2,\omega_3)$ with $|\omega|=1$, and $\omega^T = \begin{pmatrix} \omega_1 \\ \omega_2 \\ \omega_3 \end{pmatrix}$.
				Thus, we have 
				\begin{align*}
					\det J &\equiv \left|\frac{\partial(w'',v'')}{\partial(w,v)}\right| = \left|
					\begin{pmatrix}
						\mathbb{I}_3 - \frac{2}{1+\eta}\omega^T \omega & \frac{2\varepsilon}{1+\eta}\omega^T \omega \\
						\frac{2\eta}{\varepsilon(1+\eta)}\omega^T \omega & \mathbb{I}_3 - \frac{2\eta}{1+\eta}\omega^T \omega
					\end{pmatrix}
					\right| \\
					&= \left| \mathbb{I}_6 - 
					\begin{pmatrix}
						\frac{2}{1+\eta}\omega^T \\
						-\frac{2\eta}{\varepsilon(1+\eta)}\omega^T
					\end{pmatrix} 
					\begin{pmatrix}
						\omega & -\varepsilon \omega
					\end{pmatrix}
					\right| \\
					&= (1)^5 \times \left| \mathbb{I}_1 - 
					\begin{pmatrix}
						\omega & -\varepsilon \omega
					\end{pmatrix}
					\begin{pmatrix}
						\frac{2}{1+\eta}\omega^T \\
						-\frac{2\eta}{\varepsilon(1+\eta)}\omega^T
					\end{pmatrix}
					\right| = -1.
				\end{align*}
			\end{proof}	
			
			\section{Properties of the Operator $\mathcal{L}_{\scriptscriptstyle \mathcal{D}}$}\label{Appendix}
			In this section, we first study the key properties of the operator $\mathcal{L}_{\scriptscriptstyle \mathcal{D}}$.
			
			\begin{lemma}\label{Ker.K.d.}
				Let the linear operator $\mathcal{L}_{\scriptscriptstyle \mathcal{D}} = \nu_1 - \mathcal{K}_{\scriptscriptstyle \mathcal{D}}$ be defined in \eqref{Ope.LD.}--\eqref{Ope.K1.}.
				Assume that $k_d$ is the kernel of operator $\mathcal{K}_{\scriptscriptstyle \mathcal{D}}$, and write
				\[
				\mathcal{K}_{\scriptscriptstyle \mathcal{D}} h = \int_{\mathbb{R}^3} k_d(v,\xi) h(\xi) \mathrm{d}{\xi}.
				\]
				Then we have:
				\begin{enumerate}
					\item[\rm (i)] $k_d(v,\xi) = \varepsilon^2 \cdot \frac{(1+\eta)^2}{2^{3/2}\pi^{1/2}\eta^2} \frac{1}{|\xi-v|} \exp\left\{ -\left[ \frac{\varepsilon^2 |\xi-v|^2}{8\eta^2} + \frac{\varepsilon^2(|\xi|^2-|v|^2)^2}{8|\xi-v|^2} \right] \right\}$.
					\item[\rm (ii)] $\mathcal{K}_{\scriptscriptstyle \mathcal{D}}$ is a compact operator.
				\end{enumerate} 
			\end{lemma}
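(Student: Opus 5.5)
The plan follows Grad's classical computation of the hard-sphere kernel. The starting point is \eqref{Ope.K1.}, rewritten with the identity \eqref{ENr}. Since $\mu_{\varepsilon,\eta}(v'')\mu(w'')=\mu_{\varepsilon,\eta}(v)\mu(w)$, the prefactor becomes
\[
\frac{\sqrt{\mu_{\varepsilon,\eta}(v'')}\,\mu(w'')}{\sqrt{\mu_{\varepsilon,\eta}(v)}}=\mu(w)\sqrt{\frac{\mu_{\varepsilon,\eta}(v)}{\mu_{\varepsilon,\eta}(v'')}} .
\]
This leaves only Gaussian factors in $w$, $v$ and $v''$.

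For fixed $(v,\omega)$, decompose $w=(w\cdot\omega)\omega+w_\perp$ with $w_\perp\perp\omega$. By \eqref{velocitya}, $v''-v=s\,\omega$ with
\[
s=-\frac{2\eta}{1+\eta}\Big(\omega\cdot v-\frac{w\cdot\omega}{\varepsilon}\Big),
\]
which is affine in $w\cdot\omega$, and $v''$ does not depend on $w_\perp$. Note also that $|(w-\varepsilon v)\cdot\omega|=\frac{(1+\eta)\varepsilon}{2\eta}|s|$.

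The change of variables is then as follows.
\begin{enumerate}
\item Trade $w\cdot\omega$ for $s$ via $\mathrm{d}(w\cdot\omega)=\frac{(1+\eta)\varepsilon}{2\eta}\,\mathrm{d}s$.
\item Trade the pair $(s,\omega)\in\mathbb{R}\times\mathbb{S}^2$ for $\xi=v+s\omega\in\mathbb{R}^3$. This map covers $\mathbb{R}^3$ twice, and $\mathrm{d}\xi=s^2\,\mathrm{d}s\,\mathrm{d}\omega$ on each half.
\item Combined with the collision factor $|s|$, this produces the weight $\frac{\varepsilon^2(1+\eta)^2}{\eta^2}\cdot\frac{1}{|\xi-v|}$ times a constant. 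This already accounts for the prefactor $\varepsilon^2(1+\eta)^2/\eta^2$ and the singular factor $1/|\xi-v|$ in (i).
\item Integrate the remaining Gaussian over $w_\perp$ in the plane orthogonal to $\omega=(\xi-v)/|\xi-v|$. This contributes $2\pi$ and leaves $\mu_{1D}(w\cdot\omega)$, where $w\cdot\omega=\varepsilon\,\omega\cdot v+\frac{(1+\eta)\varepsilon}{2\eta}|\xi-v|$ (up to the sign of $s$).
\end{enumerate}

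The main obstacle is the exponent bookkeeping. One must collect $-\frac12(w\cdot\omega)^2$ together with $\frac{\varepsilon^2}{4\eta}(|\xi|^2-|v|^2)$, which comes from $\sqrt{\mu_{\varepsilon,\eta}(v)/\mu_{\varepsilon,\eta}(\xi)}$. Using $\omega\cdot v=\frac{\xi\cdot(\xi-v)-|\xi-v|^2}{|\xi-v|}$ and $|\xi|^2-|v|^2=(\xi-v)\cdot(\xi+v)$, one completes the square. The target is the form $\frac{\varepsilon^2|\xi-v|^2}{8\eta^2}+\frac{\varepsilon^2(|\xi|^2-|v|^2)^2}{8|\xi-v|^2}$ plus terms that cancel, and I would carry out the cross terms carefully to check this. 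Normalizing the constants $(2\pi)^{-3/2}$ and the factor $2$ from the double cover should then produce $2^{-3/2}\pi^{-1/2}$.

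For (ii), fix $\varepsilon,\eta$; compactness is not claimed uniformly. From (i), $k_d(v,\xi)\lesssim_{\varepsilon,\eta}|\xi-v|^{-1}e^{-c|\xi-v|^2}$, so $\sup_v\int k_d(v,\xi)\,\mathrm{d}\xi<\infty$, and $k_d$ is symmetric. I would truncate to $k_d^{m}=k_d\,\mathbf{1}_{\{|v|\le m,\,|\xi|\le m,\,|\xi-v|\ge 1/m\}}$. Each truncated operator is Hilbert–Schmidt, hence compact. The Schur test shows $\|\mathcal K_{\scriptscriptstyle \mathcal D}-\mathcal K^m\|_{L^2\to L^2}\to0$. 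The near-diagonal piece is small because $|\xi-v|^{-1}$ is integrable near $0$ in $\mathbb{R}^3$. The large-$|v|$ piece is small by the extra Gaussian factor $\exp\{-\varepsilon^2(|\xi|^2-|v|^2)^2/(8|\xi-v|^2)\}$, which gives the standard $\int k_d(v,\xi)\,\mathrm{d}\xi\lesssim (1+|v|)^{-1}$ decay as in Grad's estimate. Being a norm limit of compact operators, $\mathcal K_{\scriptscriptstyle \mathcal D}$ is compact.
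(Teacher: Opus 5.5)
Your proposal is correct and follows essentially the paper's route. You split $w-\varepsilon v$ along and across $\omega$, make the two-to-one change of variables to $\xi=v''$ (which produces the weight $\frac{\varepsilon^2(1+\eta)^2}{2\eta^2}|\xi-v|^{-1}$), integrate out the transverse Gaussian, and complete the square; the deferred bookkeeping does close, since with $r=|\xi-v|$ and $a=v\cdot(\xi-v)/r$ the exponent collapses to $-\frac{\varepsilon^2 r^2}{8\eta^2}-\frac{\varepsilon^2}{2}\bigl(a+\frac{r}{2}\bigr)^2$ with $a+\frac{r}{2}=\frac{|\xi|^2-|v|^2}{2r}$. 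For (ii) the paper only cites Glassey's Lemma 3.5.1, and your truncation, Hilbert--Schmidt and Schur-test argument is the standard content behind that citation.
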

			
			\begin{proof}
				Using \eqref{velb}, one has
				\begin{equation}\label{K.1}
					\begin{aligned}
						\mathcal{K}_{\scriptscriptstyle \mathcal{D}}h 
						&= \frac{1}{\sqrt{\mu_{\varepsilon,\eta}(v)}} \int_{\mathbb{R}^3\times\mathbb{S}^2} \sqrt{\mu_{\varepsilon,\eta}(v'')} h(v'') \mu(w'') |(w-\varepsilon v)\cdot{\omega}| \mathrm{d}{\omega} \mathrm{d}w \\
						&= \frac{1}{(2\pi)^{3/2}} \int_{\mathbb{R}^3\times\mathbb{S}^2} e^{\frac{\varepsilon^2|v|^2}{4\eta}} e^{-\frac{\varepsilon^2|v''|^2}{4\eta}} e^{-\frac{|w''|^2}{2}} |(w-\varepsilon v)\cdot{\omega}| h(v'') \mathrm{d}{\omega} \mathrm{d}w \\
						&= \frac{1}{(2\pi)^{3/2}} \int_{\mathbb{R}^3\times\mathbb{S}^2} e^{-\frac{\varepsilon^2|v|^2}{4\eta}} e^{\frac{\varepsilon^2|v''|^2}{4\eta}} e^{-\frac{|w|^2}{2}} |(w-\varepsilon v)\cdot{\omega}| h(v'') \mathrm{d}{\omega} \mathrm{d}w.
					\end{aligned}
				\end{equation}
				Let $V := w - \varepsilon v$, then we decompose $V$ as follows:
				\[
				V = V_{\parallel} + V_{\perp},
				\]
				where
				\[
				V_{\parallel} = (V\cdot\omega)\omega, \quad V_{\perp} = V - (V\cdot\omega)\omega.
				\]
				For fixed ${\omega}$, the change of variables $V \rightarrow (V_{\parallel},V_{\perp})$ is a rotation with unit Jacobian. Hence we have (see page 43 in \cite{MR1379589})
				\begin{equation}\label{K.2}
					\mathrm{d}{\omega}\mathrm{d}w = \frac{2\mathrm{d}V_{\parallel}\mathrm{d}V_{\perp}}{|V_{\parallel}|^2}, \quad V_{\parallel}\in\mathbb{R}^3, \ V_{\perp}\in\mathbb{R}^2.
				\end{equation}
				Then, it holds that
			\begin{equation}\label{K.3}
					\begin{aligned}
						v'' &= v - \frac{2\eta}{1+\eta}\omega\left(\omega\cdot \left(v - \frac{1}{\varepsilon}w\right)\right)  = v + \left(\frac{2\eta}{1+\eta}\cdot\frac{1}{\varepsilon}\right)V_{\parallel}, \\
						w &= \varepsilon v + w - \varepsilon v = \varepsilon v + V_{\parallel} + V_{\perp}.
					\end{aligned}
				\end{equation}
				Now, we denote
				\[
				\xi := v + \left(\frac{2\eta}{1+\eta}\cdot\frac{1}{\varepsilon}\right)V_{\parallel}.
				\]
				Then, combining \eqref{K.1}, \eqref{K.2}, and \eqref{K.3} yields 
				\begin{equation*}
					\begin{aligned}
						\mathcal{K}_{\scriptscriptstyle \mathcal{D}}h 
						&= \frac{2}{(2\pi)^{3/2}} \int_{\mathbb{R}^3_{V_{\parallel}}\times \mathbb{R}^2_{V_{\perp}}} e^{-\frac{\varepsilon^2|v|^2}{4\eta}} e^{\frac{\varepsilon^2\left|v+\frac{2\eta}{1+\eta}\cdot\frac{1}{\varepsilon}V_{\parallel}\right|^2}{4\eta}} e^{-\frac{|\varepsilon v+V_{\parallel}+V_{\perp}|^2}{2}} \\
						&\quad \times h\left(v+\frac{2\eta}{1+\eta}\cdot\frac{1}{\varepsilon}V_{\parallel}\right) \frac{1}{|V_{\parallel}|} \mathrm{d}{\widetilde{w}} \mathrm{d}V_{\parallel} \\
						&= \varepsilon^2 \cdot \frac{(1+\eta)^2}{2^{5/2}\pi^{3/2}\eta^2} \int_{\mathbb{R}^3_\xi\times \mathbb{R}^2_{V_{\perp}}} e^{-\frac{\varepsilon^2|v|^2}{4\eta}} e^{\frac{\varepsilon^2|\xi|^2}{4\eta}} e^{-\frac{\left|V_{\perp}+\varepsilon\cdot\frac{1+\eta}{2\eta}\xi+\varepsilon\cdot\frac{\eta-1}{2\eta}v\right|^2}{2}} h(\xi) \frac{1}{|\xi-v|} \mathrm{d}V_{\perp} \mathrm{d}{\xi}.
					\end{aligned}
				\end{equation*}
				Moreover, because $V_{\perp}\cdot V_{\parallel} = 0$ by definition, it holds that
				\begin{equation}\label{A11}
					V_{\perp}\cdot\xi = V_{\perp}\cdot \left(v+\frac{2\eta}{1+\eta}\cdot\frac{1}{\varepsilon}V_{\parallel}\right) = V_{\perp}\cdot v.
				\end{equation}
				Therefore, by using \eqref{A11}, we obtain
				\begin{equation*}
					\mathcal{K}_{\scriptscriptstyle \mathcal{D}}h = \varepsilon^2 \cdot \frac{(1+\eta)^2}{2^{5/2}\pi^{3/2}\eta^2} \int_{\mathbb{R}^3_\xi} e^{-\frac{\varepsilon^2 |\xi-v|^2}{8\eta^2}} \frac{1}{|\xi-v|} h(\xi) \int_{\mathbb{R}^2_{V_{\perp}}} e^{-\frac{\left|V_{\perp}+\frac{\varepsilon}{2}\xi+\frac{\varepsilon}{2}v\right|^2}{2}} \mathrm{d}V_{\perp} \mathrm{d}{\xi}.
				\end{equation*}
				Finally, we denote
				\[
				\zeta := \xi + v.
				\]
				Now, we resolve
				\[
				\zeta = \zeta_1 + \zeta_2, \quad \zeta_1 \cdot \zeta_2 = 0,
				\]
				where
				\[
				\zeta_1 = \left(\zeta\cdot\frac{V_{\parallel}}{|V_{\parallel}|}\right)\frac{V_{\parallel}}{|V_{\parallel}|}, \quad \zeta_2 = \zeta - \left(\zeta\cdot\frac{V_{\parallel}}{|V_{\parallel}|}\right)\frac{V_{\parallel}}{|V_{\parallel}|}.
				\]
				A simple calculation shows that 
				\[
				|\zeta_1|^2 = \frac{(|\xi|^2-|v|^2)^2}{|\xi-v|^2}
				\]	
				and
				\[
				\left|V_{\perp}+\frac{\varepsilon}{2}\zeta\right|^2 = \frac{\varepsilon^2}{4}|\zeta_1|^2 + \left|V_{\perp}+\frac{\varepsilon}{2}\zeta_2\right|^2 + \varepsilon\zeta_1\cdot\left(V_{\perp}+\frac{\varepsilon}{2}\zeta_2\right).
				\]
				Noting that $\zeta_1\cdot\zeta_2 = 0$ by definition and that $\zeta_1 \cdot V_{\perp} = 0$ (since $\zeta_1$ has direction $V_{\parallel}$ and $V_{\parallel}\cdot V_{\perp} = 0$), we have
				\begin{align*}
					\mathcal{K}_{\scriptscriptstyle \mathcal{D}}h 
					&= \varepsilon^2 \cdot \frac{(1+\eta)^2}{2^{5/2}\pi^{3/2}\eta^2} \int_{\mathbb{R}^3_\xi} \frac{1}{|\xi-v|} e^{-\left\{\frac{\varepsilon^2 |\xi-v|^2}{8\eta^2} + \frac{\varepsilon^2(|\xi|^2-|v|^2)^2}{8|\xi-v|^2}\right\}} \int_{\mathbb{R}^2_{V_{\perp}}} e^{-\frac{\left|V_{\perp}+\frac{\varepsilon}{2}\zeta_2\right|^2}{2}} \mathrm{d}V_{\perp} \mathrm{d}{\xi} \\
					&= \varepsilon^2 \cdot \frac{(1+\eta)^2}{2^{3/2}\pi^{1/2}\eta^2} \int_{\mathbb{R}^3_\xi} \frac{1}{|\xi-v|} e^{-\left\{\frac{\varepsilon^2 |\xi-v|^2}{8\eta^2} + \frac{\varepsilon^2(|\xi|^2-|v|^2)^2}{8|\xi-v|^2}\right\}} \mathrm{d}{\xi}.
				\end{align*}
				The compactness of $\mathcal{K}_{\scriptscriptstyle \mathcal{D}}$ follows from a standard argument similar to Lemma 3.5.1 in \cite{MR1379589}.
			\end{proof}
			
			\begin{lemma}\label{L.D.sym}
				Let the linear operator $\mathcal{L}_{\scriptscriptstyle \mathcal{D}}$ be defined in \eqref{Ope.LD.}--\eqref{Ope.K1.}. Then we have:
				\begin{enumerate}
					\item[\rm (i)] $\langle\mathcal{L}_{\scriptscriptstyle \mathcal{D}} h, h\rangle_{L^2_v} \geqslant 0$.
					\item[\rm (ii)] $\langle\mathcal{L}_{\scriptscriptstyle \mathcal{D}} h, h\rangle_{L^2_v} = 0$ if and only if $h =a_{\scriptscriptstyle 1}(t,x)\sqrt{\mu_{\varepsilon,\eta}(v)}$.
					\item[\rm (iii)] $\mathcal{L}_{\scriptscriptstyle \mathcal{D}}$ is symmetric.
				\end{enumerate}
			\end{lemma}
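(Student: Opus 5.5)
The approach is the classical Boltzmann H-type symmetrization, carried out directly for the two-species collision law \eqref{velocitya}--\eqref{velocityb}. By \eqref{Ope.LD.} we have $\mathcal L_{\scriptscriptstyle \mathcal D}h=-\Gamma_{\scriptscriptstyle \mathcal D}(h,\sqrt\mu)$. Using the detailed-balance identity \eqref{ENr}, $\mu_{\varepsilon,\eta}(v'')\mu(w'')=\mu_{\varepsilon,\eta}(v)\mu(w)$, I will write
\[
\langle\mathcal L_{\scriptscriptstyle \mathcal D}h,\phi\rangle_{L^2_v}
=\int_{\mathbb R^3_v\times\mathbb R^3_w\times\mathbb S^2}|(\varepsilon v-w)\cdot\omega|\,\mu_{\varepsilon,\eta}(v)\mu(w)\Big\{H(v)-H(v'')\Big\}\Phi(v)\,\mathrm d\omega\,\mathrm dw\,\mathrm dv ,
\]
where $H=h/\sqrt{\mu_{\varepsilon,\eta}}$ and $\Phi=\phi/\sqrt{\mu_{\varepsilon,\eta}}$.

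The key step is the change of variables $(v,w)\mapsto(v'',w'')$ at fixed $\omega$. I first check three facts. The map is an involution, since the reflection $\omega\to\omega$ applied twice returns $(v,w)$. Its Jacobian has absolute value $1$, by Lemma \ref{Jcb.}. The kernel is invariant: from \eqref{velocitya}--\eqref{velocityb} one gets $(\varepsilon v''-w'')\cdot\omega=-(\varepsilon v-w)\cdot\omega$, so $|(\varepsilon v''-w'')\cdot\omega|=|(\varepsilon v-w)\cdot\omega|$. The weight $\mu_{\varepsilon,\eta}(v)\mu(w)$ is invariant by \eqref{ENr}. Applying this substitution, the integral above equals the same expression with $\{H(v'')-H(v)\}\Phi(v'')$ in place of $\{H(v)-H(v'')\}\Phi(v)$. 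Averaging the two forms gives
\[
\langle\mathcal L_{\scriptscriptstyle \mathcal D}h,\phi\rangle_{L^2_v}=\frac12\int |(\varepsilon v-w)\cdot\omega|\,\mu_{\varepsilon,\eta}(v)\mu(w)\{H(v)-H(v'')\}\{\Phi(v)-\Phi(v'')\}\,\mathrm d\omega\,\mathrm dw\,\mathrm dv .
\]
This expression is symmetric in $(h,\phi)$, which proves (iii). Setting $\phi=h$ gives a nonnegative integrand, which proves (i). All manipulations are first done for smooth, rapidly decaying $h$ and then extended by density.

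For (ii), the "if" direction holds because $H$ is constant in $v$ (the same argument as Proposition \ref{LR}). For the converse, vanishing of the quadratic form forces $H(v'')=H(v)$ for almost every $(v,w,\omega)$ with $(\varepsilon v-w)\cdot\omega\neq0$. Since $v''-v=\frac{2\eta}{\varepsilon(1+\eta)}\big((w-\varepsilon v)\cdot\omega\big)\omega$ and $w$ ranges over $\mathbb R^3$, this displacement covers an open neighborhood of every vector. Hence $H(v+z)=H(v)$ for a.e. $v$ and a.e. $z$. A function invariant under almost all translations is a.e. constant: I would convolve with a mollifier, or take its Fourier transform, which is then supported at $0$ and so $H$ is constant. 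Therefore $h=a_{\scriptscriptstyle 1}(t,x)\sqrt{\mu_{\varepsilon,\eta}}$.

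I expect the main obstacle to be the rigorous bookkeeping of the change of variables. The determinant computed in Lemma \ref{Jcb.} is $-1$, so I must use its absolute value, and I must verify the involution property carefully given the asymmetric factors $\varepsilon$ and $\eta$ in the post-collision formulas. The integrability justifying the splitting of gain and loss terms uses $\nu_1\sim1+\varepsilon|v|$ from \eqref{nu1-eqiv}.
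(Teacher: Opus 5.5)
Your proof is correct. It reaches (ii) and (iii) by a different route from the paper; for (i) the two arguments coincide.

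\textbf{Part (i).} The paper proves the reflection identity \eqref{Iden.2} by the same change of variables $(v,w)\mapsto(v'',w'')$ that you use. It then writes $\langle\mathcal L_{\scriptscriptstyle\mathcal D}h,h\rangle_{L^2_v}$ as a weighted integral of $\{H(v)-H(v'')\}^2$, exactly as you do.

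\textbf{Part (iii).} The paper does not polarize the form. It uses the explicit kernel of $\mathcal K_{\scriptscriptstyle\mathcal D}$ computed in Lemma \ref{Ker.K.d.} and observes that $k_d(v,\xi)=k_d(\xi,v)$. Your bilinear Dirichlet form gives (i) and (iii) together. It relies only on the involution property, the unit Jacobian, and the invariance of $|(\varepsilon v-w)\cdot\omega|\,\mu_{\varepsilon,\eta}(v)\mu(w)$, so it avoids the delicate kernel computation. The paper's route is economical only because it needs that computation anyway for compactness.

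\textbf{Part (ii).} The paper argues pointwise. Given $v\neq v_0$, it chooses $\omega=(v-v_0)/|v-v_0|$ and $w=\varepsilon\frac{1+\eta}{2\eta}\big(v_0+\frac{\eta-1}{\eta+1}v\big)$, so that $v''=v_0$. It then concludes that a continuous $\phi$ with $\phi(v)=\phi(v'')$ is constant.

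Your almost-everywhere argument is more robust. Vanishing of the quadratic form only gives $H(v)=H(v'')$ for a.e. $(v,w,\omega)$. Moreover, in Lemma \ref{L.D.j} part (ii) is applied to a weak limit $h_0\in L^2(\nu_1)$, which need not be continuous. So the paper's pointwise construction actually requires the measure-theoretic step you supply.

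\textbf{Tightening that step.} You claim that the displacement covers a neighbourhood of every vector, and deduce that $H(v+z)=H(v)$ for a.e. $(v,z)$. Surjectivity alone is not quite the point. What you need is that $(w,\omega)\mapsto z=v''-v$ is a smooth surjection onto $\mathbb R^3$ that is a submersion off a null set; the $(V_\parallel,V_\perp)$ coordinates of Lemma \ref{Ker.K.d.} show this. Then preimages of positive-measure sets in the $(v,z)$ variables have positive measure.

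\textbf{Last step.} For concluding that $H$ is constant, use the mollifier argument, or equivalently show $\nabla_v H=0$ in the sense of distributions. Avoid the Fourier transform, since $H=h/\sqrt{\mu_{\varepsilon,\eta}}$ need not be tempered.
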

			
			\begin{proof}
				To establish this lemma, we break the proof into four steps.
				
				\smallskip
				\noindent\textit{Step 1.} We aim to show that for all smooth functions $h(v)$, $g(w)$, and $\phi(v)$ that vanish at infinity, it holds that
				\begin{equation}\label{Iden.2}
					\int_{\mathbb{R}^3} \mathcal{D}(h,g)\phi(v) \mathrm{d}v = -\int_{\mathbb{R}^3} \mathcal{D}(h,g)\phi(v'') \mathrm{d}v.
				\end{equation}
				Indeed, by the velocity relations \eqref{velocitya}--\eqref{velocityb}, it holds that
			\begin{align*}
						v^{\prime\prime}&=v-\frac{2\eta}{1+\eta}\omega\left(\omega\cdot \left(v-\frac{1}{\varepsilon}w\right)\right),\\
					w^{\prime\prime}&=w+\frac{2}{1+\eta}\omega(\omega\cdot (\varepsilon v-w)),\\
					\varepsilon v'' - w'' &= \varepsilon v - w - 2\omega\cdot(\varepsilon v - w)\omega,
				\end{align*}
				and
				\[n v'' - w'') = \omega\cdot(\varepsilon v - w) - 2\omega\cdot(\varepsilon v - w) = -\omega\cdot(\varepsilon v - w).
				\]
				We can invert these to get
			\begin{align*}
					v &= v'' + \frac{2\eta}{1+\eta}{\omega}\left( \omega\cdot\left(v - \frac{1}{\varepsilon}w\right)\right)  = v'' - \frac{2\eta}{1+\eta}{\omega}\left( \omega\cdot\left(v'' - \frac{1}{\varepsilon}w''\right)\right)  \equiv v(v'',w''), \\
					w &= w'' - \frac{2}{1+\eta}{\omega}\left( \omega\cdot(\varepsilon v - w)\right)  = w'' + \frac{2}{1+\eta}{\omega}\left( \omega\cdot(\varepsilon v'' - w'')\right)  \equiv w(v'',w'').
				\end{align*}
				By changing $v'' \rightarrow v$ and $w'' \rightarrow w$, it follows that
				\begin{align*}
					\int_{\mathbb{R}^3} \mathcal{D}(h,g)\phi(v) \mathrm{d}v
					&= \int_{(\mathbb{R}^3)^2\times\mathbb{S}^2} \left[ h(v'')g(w'') - h(v)g(w) \right] |(w - \varepsilon v)\cdot{\omega}| \phi(v) \mathrm{d}\omega \mathrm{d}w \mathrm{d}v \\
					&= \int_{(\mathbb{R}^3)^2\times\mathbb{S}^2} \left[ h(v)g(w) - h(v'')g(w'') \right] |(w - \varepsilon v)\cdot{\omega}| \phi(v'') \mathrm{d}\omega \mathrm{d}w \mathrm{d}v \\
					&= -\int_{\mathbb{R}^3} \mathcal{D}(h,g)\phi(v'') \mathrm{d}v.
				\end{align*}
				Therefore, we get
				\[
				\int_{\mathbb{R}^3} \mathcal{D}(h,g)\phi(v) \mathrm{d}v = 0 \quad \text{if} \quad \phi(v) = \phi(v'').
				\]
				
				\noindent\textit{Step 2.} Next, we prove that if $\phi$ is positive and continuous, and $\phi(v) = \phi(v'')$, then there exists a function $a_{\scriptscriptstyle 1}(t,x) \in \mathbb{R}$ that is independent of $v$ such that $\phi(v) \equiv a_{\scriptscriptstyle 1}(t,x)$.
				
				In fact, fixing $v$, for any $v_0 \in \mathbb{R}^3$ with $v_0 \neq v$, we define
				\[
			v^{\prime\prime}:=v-\frac{2\eta}{1+\eta}\omega\left(\omega\cdot \left(v-\frac{1}{\varepsilon}w\right)\right),
				\]
				where
				\[
					\omega:=\frac{v-v_0}{|v-v_0|},\quad
					w:= \varepsilon \cdot \frac{1+\eta}{2\eta} \left(v_0 + \frac{\eta-1}{\eta+1}v\right).
					\]
					A direct calculation gives $v'' = v_0$. Hence, by the assumption $\phi(v) = \phi(v'')$,
					we have $\phi(v) = \phi(v_0)$. For $v_0 = v$, this is trivial. Since $\phi$ is continuous,
					this equality holds for all $v_0 \in \mathbb{R}^3$. Therefore, for fixed $(t,x)$, $\phi(v)$ is
					constant in $v$. Thus there exists $a_{\scriptscriptstyle 1}(t,x) \in \mathbb{R}$ such that $\phi(v) \equiv a_{\scriptscriptstyle 1}(t,x)$.
				
				\smallskip
				\noindent\textit{Step 3.} Then, we prove that $\mathcal{L}_{\scriptscriptstyle \mathcal{D}}$ is non-negative.
				This is equivalent to showing that
				\[
				\int_{\mathbb{R}^3} \frac{h(v)}{\sqrt{\mu_{\varepsilon,\eta}(v)}} \mathcal{D}(\sqrt{\mu_{\varepsilon,\eta}}h, \mu) \mathrm{d}v \leqslant 0.
				\]
				Adding the two choices in identity \eqref{Iden.2} with $\phi = \frac{h}{\sqrt{\mu_{\varepsilon,\eta}}}$, we obtain
				\begin{align*}
					&\int_{\mathbb{R}^3} \frac{2h(v)}{\sqrt{\mu_{\varepsilon,\eta}(v)}} \mathcal{D}(\sqrt{\mu_{\varepsilon,\eta}}h, \mu) \mathrm{d}v \\
					&= \int_{\mathbb{R}^3\times\mathbb{R}^3\times\mathbb{S}^2} \left[ \mu(w'')\sqrt{\mu_{\varepsilon,\eta}(v'')} h(v'') - \mu(w)\sqrt{\mu_{\varepsilon,\eta}(v)} h(v) \right] \\
					&\quad \times \left[ \frac{h(v)}{\sqrt{\mu_{\varepsilon,\eta}(v)}} - \frac{h(v'')}{\sqrt{\mu_{\varepsilon,\eta}(v'')}} \right] |(w-\varepsilon v)\cdot{\omega}| \mathrm{d}\omega \mathrm{d}w \mathrm{d}v \\
					&= -\int_{\mathbb{R}^3\times\mathbb{R}^3\times\mathbb{S}^2} \mu(w)\mu_{\varepsilon,\eta}(v) \left[ \frac{h(v)}{\sqrt{\mu_{\varepsilon,\eta}(v)}} - \frac{h(v'')}{\sqrt{\mu_{\varepsilon,\eta}(v'')}} \right]^2 |(w-\varepsilon v)\cdot{\omega}| \mathrm{d}\omega \mathrm{d}w \mathrm{d}v \leqslant 0.
				\end{align*}
				Of course, we can write
				\begin{align*}
					&-\mathcal{L}_{\scriptscriptstyle \mathcal{D}}h 
					= \frac{1}{\sqrt{\mu_{\varepsilon,\eta}}} \mathcal{D}(\sqrt{\mu_{\varepsilon,\eta}}h, \mu) \\
					=& \frac{1}{\sqrt{\mu_{\varepsilon,\eta}(v)}} \int_{\mathbb{R}^3\times\mathbb{S}^2} \left[ \mu(w'')\sqrt{\mu_{\varepsilon,\eta}(v'')} h(v'') - \mu(w)\sqrt{\mu_{\varepsilon,\eta}(v)} h(v) \right] |(w-\varepsilon v)\cdot{\omega}| \mathrm{d}\omega \mathrm{d}w \\
					=& \int_{\mathbb{R}^3_w\times\mathbb{S}^2} \frac{1}{\sqrt{\mu_{\varepsilon,\eta}(v)}} \mu(w)\mu_{\varepsilon,\eta}(v) \left[ \frac{h(v)}{\sqrt{\mu_{\varepsilon,\eta}(v)}} - \frac{h(v'')}{\sqrt{\mu_{\varepsilon,\eta}(v'')}} \right] |(w-\varepsilon v)\cdot{\omega}| \mathrm{d}\omega \mathrm{d}w.
				\end{align*}
				Suppose that $h(v) =a_{\scriptscriptstyle 1}(t,x)\sqrt{\mu_{\varepsilon,\eta}(v)}$, then $\mathcal{L}_{\scriptscriptstyle \mathcal{D}}h = 0$. In fact,
				\[
				\int_{\mathbb{R}^3_v} h \mathcal{L}_{\scriptscriptstyle \mathcal{D}}h \mathrm{d}v = 0 \quad \text{if and only if} \quad h(v) = a_{\scriptscriptstyle 1}(t,x)\sqrt{\mu_{\varepsilon,\eta}(v)} \quad \text{a.e.}
				\]
				
				\noindent\textit{Step 4.} Finally, we prove that $\mathcal{L}_{\scriptscriptstyle \mathcal{D}}$ is symmetric.
				Recall that $\mathcal{L}_{\scriptscriptstyle \mathcal{D}}h = \nu_1 h - \mathcal{K}_{\scriptscriptstyle \mathcal{D}} h$. By Lemma \ref{Ker.K.d.}, we have $k_d(v,\xi) = k_d(\xi,v)$. Hence,
				\[
				\int_{\mathbb{R}^3} \widehat{h} \cdot \mathcal{L}_{\scriptscriptstyle \mathcal{D}}h \mathrm{d}v = \int_{\mathbb{R}^3} h \cdot \mathcal{L}_{\scriptscriptstyle \mathcal{D}}\widehat{h} \mathrm{d}v.
				\]
			\end{proof}	
			
			Now, we prove the operator $\mathcal{L}_{\scriptscriptstyle \mathcal{D}}$ is locally coercive.
			
			\begin{lemma}\label{L.D.j}
				Let the linear operator $\mathcal{L}_{\scriptscriptstyle \mathcal{D}}$ be defined in \eqref{Ope.LD.}--\eqref{Ope.K1.}. Then there is a constant $\lambda_1 > 0$ such that
				\[
				\langle\mathcal{L}_{\scriptscriptstyle \mathcal{D}} h, h\rangle_{L^2_v} \geqslant \lambda_1 {|\mathbf{(I-P_1)}h|}^{2}_{\nu_1}.
				\]
			\end{lemma}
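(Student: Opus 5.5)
We propose to prove the coercivity by the classical Grad-type contradiction argument, built on three earlier facts: the decomposition $\mathcal L_{\scriptscriptstyle \mathcal D}=\nu_1-\mathcal K_{\scriptscriptstyle \mathcal D}$ with $\mathcal K_{\scriptscriptstyle \mathcal D}$ compact (Lemma \ref{Ker.K.d.}), and the symmetry, non-negativity and kernel characterization from Lemma \ref{L.D.sym}.

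\textbf{Step 1: reduction to the microscopic part.} Since $\mathcal L_{\scriptscriptstyle \mathcal D}$ is symmetric and vanishes on $\mathcal N_1$, we have $\langle\mathcal L_{\scriptscriptstyle \mathcal D}h,h\rangle_{L^2_v}=\langle\mathcal L_{\scriptscriptstyle \mathcal D}(\mathbf I-\mathbf P_1)h,(\mathbf I-\mathbf P_1)h\rangle_{L^2_v}$. It therefore suffices to prove $\langle\mathcal L_{\scriptscriptstyle \mathcal D}h,h\rangle\ge\lambda_1|h|_{\nu_1}^2$ for every $h\perp\sqrt{\mu_{\varepsilon,\eta}}$.

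\textbf{Step 2: contradiction argument.} Suppose the bound fails. Then there is a sequence $h_n\perp\mathcal N_1$ with $|h_n|_{\nu_1}=1$ and $\langle\mathcal L_{\scriptscriptstyle \mathcal D}h_n,h_n\rangle\to0$. Passing to a subsequence, $h_n\rightharpoonup h$ weakly in the weighted space $L^2(\nu_1\,\mathrm dv)$. We write
\[
\langle\mathcal L_{\scriptscriptstyle \mathcal D}h_n,h_n\rangle=1-\langle\mathcal K_{\scriptscriptstyle \mathcal D}h_n,h_n\rangle .
\]
Compactness of $\mathcal K_{\scriptscriptstyle \mathcal D}$, viewed as a map from $L^2(\nu_1)$ into $L^2(\nu_1^{-1})$, gives $\langle\mathcal K_{\scriptscriptstyle \mathcal D}h_n,h_n\rangle\to\langle\mathcal K_{\scriptscriptstyle \mathcal D}h,h\rangle$, and hence $\langle\mathcal K_{\scriptscriptstyle \mathcal D}h,h\rangle=1$. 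By weak lower semicontinuity, $|h|_{\nu_1}\le1$, so
\[
0\le\langle\mathcal L_{\scriptscriptstyle \mathcal D}h,h\rangle=|h|_{\nu_1}^2-1\le0 .
\]
Thus $\langle\mathcal L_{\scriptscriptstyle \mathcal D}h,h\rangle=0$, and Lemma \ref{L.D.sym}(ii) gives $h\in\mathcal N_1$. Orthogonality passes to the weak limit, so also $h\perp\mathcal N_1$, forcing $h=0$. This contradicts $\langle\mathcal K_{\scriptscriptstyle \mathcal D}h,h\rangle=1$.

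\textbf{Step 3 (main obstacle): uniformity of $\lambda_1$ in $\varepsilon,\eta$.} The contradiction argument above fixes $(\varepsilon,\eta)$, so it only yields some $\lambda_1(\varepsilon,\eta)>0$. The paper, however, needs $\lambda_1$ independent of both parameters. We propose the rescaling $v=\frac{\sqrt{\eta}}{\varepsilon}\tilde v$. Under this change $\mu_{\varepsilon,\eta}$ becomes the standard Maxwellian in $\tilde v$, the collision rule \eqref{velocitya} involves only $\eta$ and $w/\sqrt\eta$, and $\nu_1$ becomes a function of $\sqrt\eta\,\tilde v$.

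The operator then depends only on $\eta\in(0,1]$. We repeat the contradiction argument with a sequence of parameters $\eta_n$ as well as functions $h_n$, and consider two cases.
\begin{itemize}
\item If $\eta_n\to\eta_*>0$, the argument is the same as in Step 2, provided the rescaled operators converge in norm.
\item If $\eta_n\to0$, one must identify the limit of $\eta^{-1}$ times the rescaled $\mathcal L_{\scriptscriptstyle \mathcal D}$ (the Fokker--Planck-type limit) and show that it remains coercive.
\end{itemize}

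Controlling the kernel $k_d$ from Lemma \ref{Ker.K.d.} uniformly along such a sequence, i.e.\ establishing a uniform-in-parameters compactness statement, is the delicate point. If this fails, we would settle for the pointwise-in-parameter constant from Step 2 and record the dependence explicitly.
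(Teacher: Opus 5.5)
Your Steps 1--2 are the paper's proof. The paper runs exactly this Grad-type contradiction argument at fixed $(\varepsilon,\eta)$: normalized $h_n\perp\sqrt{\mu_{\varepsilon,\eta}}$ with $\langle\nu_1h_n,h_n\rangle_{L^2_v}=1$, a weak limit, compactness of $\mathcal K_{\scriptscriptstyle \mathcal D}$ from Lemma~\ref{Ker.K.d.}, and the kernel characterization of Lemma~\ref{L.D.sym}. This correctly proves the lemma as stated, with a constant $\lambda_1=\lambda_1(\varepsilon,\eta)$.

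Your diagnosis in Step 3 is also right. Neither your argument nor the paper's says anything about uniformity, even though Lemma~\ref{Lemma.Micro} later uses $\lambda_1$ as independent of $\varepsilon$ and $\eta$. Your rescaling $v=\frac{\sqrt\eta}{\varepsilon}\tilde v$ correctly shows that the optimal constant depends on $\eta$ only.

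The $\eta\to0$ branch of Step 3, however, aims at a false statement, so no uniform compactness argument can close it: any admissible constant satisfies $\lambda_1\lesssim\eta$. Take $h=v_1\sqrt{\mu_{\varepsilon,\eta}}$. It is odd, so $\mathbf P_{\scriptscriptstyle 1}h=0$. Use the Dirichlet form from Step 3 of Lemma~\ref{L.D.sym} with $\phi=h/\sqrt{\mu_{\varepsilon,\eta}}=v_1$. By \eqref{velocitya}, $|v_1-v_1''|\leqslant\frac{2\eta}{\varepsilon}|\varepsilon v-w|$, and $\varepsilon v$ has variance $\eta\leqslant1$ under $\mu_{\varepsilon,\eta}$. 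Hence
\[
\begin{aligned}
\langle\mathcal L_{\scriptscriptstyle \mathcal D}h,h\rangle_{L^2_v}
&=\frac12\int\mu(w)\mu_{\varepsilon,\eta}(v)\,|v_1-v_1''|^2\,|(\varepsilon v-w)\cdot\omega|\,\mathrm{d}\omega\,\mathrm{d}w\,\mathrm{d}v\\
&\leqslant\frac{4\pi\eta^2}{\varepsilon^2}\int\mu(w)\mu_{\varepsilon,\eta}(v)\,|\varepsilon v-w|^3\,\mathrm{d}w\,\mathrm{d}v\lesssim\frac{\eta^2}{\varepsilon^2}.
\end{aligned}
\]
On the other side, \eqref{nu1-eqiv} gives $|h|_{\nu_1}^2\geqslant C_1\int v_1^2\mu_{\varepsilon,\eta}\,\mathrm{d}v=C_1\eta/\varepsilon^2$. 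Comparing the two bounds gives $\lambda_1\lesssim\eta$. In fact, the first-moment identity \eqref{neqn2} gives $\langle\mathcal L_{\scriptscriptstyle \mathcal D}h,h\rangle_{L^2_v}\approx\kappa\eta\|h\|_{L^2_v}^2$, with $\kappa$ the friction coefficient of \eqref{kappa nu}.

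This is the small-mass-ratio physics: a heavy particle needs $O(1/\eta)$ collisions to relax, and $\eta^{-1}\mathcal L_{\scriptscriptstyle \mathcal D}$ tends to a Fokker--Planck operator. So proving coercivity of that limit, as you propose, yields precisely $\lambda_1\sim\eta$, not a uniform constant. You should therefore end with your fallback, the fixed-parameter constant, and record its sharp dependence $\lambda_1\sim\eta$ explicitly. Note also that the uniform-in-$\eta$ dissipation which the paper's later energy estimates draw from this lemma cannot be obtained from it.
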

			
			\begin{proof}
				Assume the contrary: there is a sequence of normalized functions $\{h_n(v)\}$ which satisfy
				\begin{equation}\label{HN}
					\int_{\mathbb{R}^3} h_n(v)\sqrt{\mu_{\varepsilon,\eta}(v)} \mathrm{d}v = 0,
				\end{equation}
				such that $\langle \nu_1 h_n, h_n \rangle_{L^2_v} = 1$. Moreover,
				\begin{equation}\label{L-D-n}
					\langle\mathcal{L}_{\scriptscriptstyle \mathcal{D}} h_n, h_n\rangle_{L^2_v} \leqslant \frac{1}{n}.
				\end{equation}
				Let the weak limit of $\{h_n\}$ (up to a subsequence) with respect to the inner product $\langle\nu_1 \cdot, \cdot \rangle_{L^2_v}$ be $h_0$ with 
				\begin{equation}\label{nu-1-C}
					\langle\nu_1 h_0, h_0 \rangle_{L^2_v} \leqslant 1.
				\end{equation}
				Notice that
				\begin{equation}\label{L-D}
					\langle\mathcal{L}_{\scriptscriptstyle \mathcal{D}} h_n, h_n\rangle_{L^2_v} = \langle \nu_1 h_n, h_n\rangle_{L^2_v} - \langle \mathcal{K}_{\scriptscriptstyle \mathcal{D}} h_n, h_n \rangle_{L^2_v} = 1 - \langle \mathcal{K}_{\scriptscriptstyle \mathcal{D}} h_n, h_n \rangle_{L^2_v}.
				\end{equation}
				By Lemma \ref{Ker.K.d.}, we know $\mathcal{K}_{\scriptscriptstyle \mathcal{D}}$ is compact in $L^2(\mathbb{R}^3_v)$, thus
				\[
				\lim_{n\to\infty} \left\langle \nu_1 [\mathcal{K}_{\scriptscriptstyle \mathcal{D}} h_n - \mathcal{K}_{\scriptscriptstyle \mathcal{D}} h_0], [\mathcal{K}_{\scriptscriptstyle \mathcal{D}} h_n - \mathcal{K}_{\scriptscriptstyle \mathcal{D}} h_0] \right\rangle_{L^2_v} = 0.
				\]
				Therefore,
				\begin{equation}\label{Lim.K.D}
					\lim_{n\to\infty} \langle \mathcal{K}_{\scriptscriptstyle \mathcal{D}} h_n, h_n\rangle_{L^2_v} = \langle\mathcal{K}_{\scriptscriptstyle \mathcal{D}} h_0, h_0\rangle_{L^2_v}.
				\end{equation}
				By combining Lemma \ref{L.D.sym}, \eqref{L-D-n}, \eqref{nu-1-C}, \eqref{L-D}, and \eqref{Lim.K.D}, we obtain in the limit $n\to\infty$:
				\begin{align*}
					0 \leqslant \langle\mathcal{L}_{\scriptscriptstyle \mathcal{D}} h_0, h_0\rangle_{L^2_v} 
					&= \langle \nu_1 h_0, h_0\rangle_{L^2_v} - \langle\mathcal{K}_{\scriptscriptstyle \mathcal{D}} h_0, h_0\rangle_{L^2_v} \\
					&\leqslant 1 - \langle\mathcal{K}_{\scriptscriptstyle \mathcal{D}} h_0, h_0\rangle_{L^2_v} \\
					&\leqslant 1 - \lim_{n\to\infty} \langle \mathcal{K}_{\scriptscriptstyle \mathcal{D}} h_n, h_n\rangle_{L^2_v} \\
					&\leqslant \lim_{n\to\infty} \langle\mathcal{L}_{\scriptscriptstyle \mathcal{D}} h_n, h_n\rangle_{L^2_v} \leqslant \lim_{n\to\infty} \frac{1}{n} = 0.
				\end{align*}
				Therefore, $\langle\mathcal{L}_{\scriptscriptstyle \mathcal{D}} h_0, h_0\rangle_{L^2_v} = 0$, which is equivalent to
				\[
				0 = \{1 - \langle\nu_1 h_0, h_0\rangle_{L^2_v}\} + \{\langle \nu_1 h_0, h_0\rangle_{L^2_v} - \langle\mathcal{K}_{\scriptscriptstyle \mathcal{D}} h_0, h_0\rangle_{L^2_v}\}.
				\]
				Since both terms are non-negative, $1 = \langle \nu_1 h_0, h_0\rangle_{L^2_v}$ and
				\[
				\langle\mathcal{L}_{\scriptscriptstyle \mathcal{D}} h_0, h_0\rangle_{L^2_v} = \langle \nu_1 h_0, h_0\rangle_{L^2_v} - \langle\mathcal{K}_{\scriptscriptstyle \mathcal{D}} h_0, h_0\rangle_{L^2_v} = 0.
				\]
				Therefore, $h_0 =a_{\scriptscriptstyle 1}\sqrt{\mu_{\varepsilon,\eta}}$. On the other hand, letting $n\to\infty$ in \eqref{HN} implies that $h_0$ also satisfies \eqref{HN}. Hence $a_{\scriptscriptstyle 1} = 0$, which contradicts $\langle \nu_1 h_0, h_0\rangle_{L^2_v} = 1$.
			\end{proof}	
			
			\section{Aubin-Lions-Simon Theorem}\label{A-L-S}
			Now we recall the following Aubin-Lions-Simon Theorem, which is presented in Theorem II.5.16 of \cite{MR2986590} (see also Lemma 4.2 in \cite{MR4296180}) and is a fundamental compactness tool in the study of nonlinear evolution problems.
			
			\begin{lemma}[Aubin-Lions-Simon Theorem]
				Let $B_0 \subset B_1 \subset B_2$ be three Banach spaces with $B_0$ compactly embedded in $B_1$ and $B_1$ continuously embedded in $B_2$. Let $p, r \in [1, \infty]$ and $T > 0$. Define
				\[
				E_{p,r} = \left\{ u \in L^p([0,T]; B_0) \ \big|\ \partial_t u \in L^r([0,T]; B_2) \right\}.
				\]
				\begin{enumerate}
					\item[\rm 1)] If $p < \infty$, then $E_{p,r}$ is compactly embedded in $L^p([0,T]; B_1)$.
					\item[\rm 2)] If $p = \infty$ and $r > 1$, then $E_{p,r}$ is compactly embedded in $C([0,T]; B_1)$.
				\end{enumerate}
			\end{lemma}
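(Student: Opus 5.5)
The plan is the classical argument of Simon: combine an interpolation (Ehrling-type) inequality between $B_0$, $B_1$, $B_2$ with an Arzelà--Ascoli / total-boundedness argument in time. Throughout, fix a bounded set $\mathcal F\subset E_{p,r}$, say $\|u\|_{L^p([0,T];B_0)}+\|\partial_t u\|_{L^r([0,T];B_2)}\leqslant M$ for all $u\in\mathcal F$. The goal is to show that $\mathcal F$ is relatively compact in $L^p([0,T];B_1)$ in case 1), resp. in $C([0,T];B_1)$ in case 2).

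\textbf{Step 1 (Ehrling's lemma).} First I will prove: for every $\delta>0$ there is $C_\delta$ with
\[
\|x\|_{B_1}\leqslant \delta\|x\|_{B_0}+C_\delta\|x\|_{B_2}\quad\text{for all } x\in B_0 .
\]
Suppose not. Then there are $x_n$ with $\|x_n\|_{B_0}=1$ and $\|x_n\|_{B_1}>\delta+n\|x_n\|_{B_2}$. By the compact embedding, a subsequence converges in $B_1$ to some $x$, so $\|x\|_{B_1}\geqslant\delta$. On the other hand $\|x_n\|_{B_2}\leqslant \|x_n\|_{B_1}/n\lesssim 1/n\to0$, so $x=0$ by the continuity of $B_1\hookrightarrow B_2$. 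This is a contradiction.

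\textbf{Step 2 (case 2, $p=\infty$, $r>1$).} Writing $u(t)-u(s)=\int_s^t\partial_\tau u\,\mathrm d\tau$ and applying H\"older gives
\[
\|u(t)-u(s)\|_{B_2}\leqslant M|t-s|^{1-1/r},
\]
so $\mathcal F$ is uniformly equicontinuous in $B_2$, with a continuous representative. For the pointwise relative compactness I use local averages $\frac1h\int_t^{t+h}u$: these are bounded in $B_0$ by $M$, hence relatively compact in $B_1$ and hence in $B_2$. They also differ from $u(t)$ by at most $Mh^{1-1/r}$ in $B_2$. So $\{u(t):u\in\mathcal F\}$ is totally bounded in $B_2$ for each $t$, and Arzelà--Ascoli gives relative compactness in $C([0,T];B_2)$.

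To upgrade to $C([0,T];B_1)$ I apply Step 1 to the differences $u_n-u_m$ of the averaged functions. These satisfy a uniform $B_0$ bound: the $B_0$ part is bounded by $2M\delta$, and the $B_2$ part is small by the $C(B_2)$ convergence. The remaining averaging error is controlled the same way. A diagonal choice $h\to0$, $\delta\to0$ then yields a Cauchy subsequence in $C([0,T];B_1)$.

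\textbf{Step 3 (case 1, $p<\infty$).} I regularize in time with the averages $u_h(t)=\frac1h\int_t^{t+h}u$ (extending $u$ past $T$ suitably, or working on $[0,T-h]$ and controlling the endpoint piece by the $L^p$ smallness of short intervals). The family $\{u_h:u\in\mathcal F\}$ with $h$ fixed is bounded in $C([0,T-h];B_0)$, with the sup over $t$ bounded by $h^{-1/p}M$. It is also equicontinuous in $B_0$, hence in $B_1$. Pointwise it is relatively compact in $B_1$, so it is relatively compact in $C(B_1)\subset L^p(B_1)$ by Arzelà--Ascoli.

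Next, $\|u_h-u\|_{L^p(B_2)}\leqslant \|\tau_hu-u\|_{L^p(B_2)}$ up to averaging, and this is at most $h\,\|\partial_tu\|_{L^1(B_2)}$ when $p=1$. For general $p$ it is bounded via $\|\partial_tu\|_{L^1(B_2)}\leqslant T^{1-1/r}M$ together with interpolation against the $L^p(B_2)$ bound, which gives some $o(1)$ rate as $h\to0$ uniformly on $\mathcal F$. Step 1 then gives
\[
\|u_h-u\|_{L^p(B_1)}\leqslant 2\delta M+C_\delta\|u_h-u\|_{L^p(B_2)} .
\]
Choose $\delta$ first and then $h$ small; this makes $\mathcal F$ lie within $\epsilon$ of a relatively compact set. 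Hence $\mathcal F$ is totally bounded in $L^p([0,T];B_1)$.

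\textbf{Main obstacle.} The delicate point is the uniform smallness of time translates $\|u(\cdot+h)-u\|_{L^p(B_2)}$ when only $\partial_tu\in L^1(B_2)$ (the case $r=1$) is available, together with the endpoint effects near $t=T$. I would handle this by proving $\sup_{u\in\mathcal F}\|\tau_hu-u\|_{L^p(0,T-h;B_2)}\to0$ directly from $\|\tau_hu-u\|_{B_2}\leqslant\int_t^{t+h}\|\partial_tu\|_{B_2}$, interpolated with the $L^\infty(B_2)$ bound, and by using the smallness in $L^p(B_1)$ of $u$ on $[T-h,T]$.
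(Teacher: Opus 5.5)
The paper gives no proof of this lemma; it quotes Boyer--Fabrie, Thm.~II.5.16. Your outline (Ehrling's inequality, time averaging, Arzel\`a--Ascoli) is the standard Simon argument behind that citation, and Steps 1 and 2 are sound.

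There is a step in Step 3 that fails for $p=1$, a case the statement includes. You assert that for fixed $h$ the averages $u_h$ are equicontinuous in $B_0$. For $p>1$ this follows from H\"older, $\|u_h(t)-u_h(s)\|_{B_0}\leqslant 2h^{-1}|t-s|^{1-1/p}M$. For $p=1$ it is false, because a bounded subset of $L^1(0,T;B_0)$ can concentrate in time.

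Here is a concrete counterexample. If $\dim B_0=\infty$, compactness of $B_0\hookrightarrow B_1$ forces $\inf\{\|y\|_{B_2}: y\in B_0,\ \|y\|_{B_0}=1\}=0$. Otherwise $B_1$-Cauchy sequences in the unit ball of $B_0$ would be $B_0$-Cauchy, and that ball would be compact. Choose:
\begin{itemize}
\item $\phi_n\geqslant0$, a smooth bump of unit mass supported in an interval of length $1/n$;
\item $y_n$ with $\|y_n\|_{B_0}=1$ and $\|y_n\|_{B_2}\leqslant\|\phi_n'\|_{L^r}^{-1}$;
\item $u_n=\phi_n(t)\,y_n$.
\end{itemize}
This family is bounded in $E_{1,r}$. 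Yet $u_{n,h}(t)=h^{-1}\big(\int_t^{t+h}\phi_n\big)\,y_n$ moves from $0$ to $y_n/h$ in $B_0$ within a time interval of length $1/n$, so it is not equicontinuous in $B_0$.

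The repair uses only your Step 1. Since $\partial_t u_h=h^{-1}\big(u(\cdot+h)-u\big)$, $u_h$ is Lipschitz in $B_2$ with constant $h^{-1}\|\partial_t u\|_{L^1(B_2)}\leqslant h^{-1}T^{1-1/r}M$. Also $\sup_t\|u_h(t)\|_{B_0}\leqslant h^{-1/p}M$. Ehrling then gives
\[
\|u_h(t)-u_h(s)\|_{B_1}\leqslant 2\delta h^{-1/p}M+C_\delta T^{1-1/r}M\,h^{-1}|t-s| .
\]
This is a modulus of continuity in $B_1$, uniform over $\mathcal F$: fix $h$, choose $\delta$, then $|t-s|$. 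Averaging twice would also work.

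Three further points need to be stated correctly.

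\begin{itemize}
\item \emph{Interpolation.} $L^1(B_2)$-smallness of $\tau_s u-u$ cannot be interpolated against an $L^p(B_2)$ bound to give $L^p$-smallness. You need the uniform bound $K:=\sup_{u\in\mathcal F}\sup_t\|u(t)\|_{B_2}<\infty$. It follows from $\sup_t\|u(t)\|_{B_2}\leqslant T^{-1}\|u\|_{L^1(B_2)}+\|\partial_t u\|_{L^1(B_2)}$. Then $\|\tau_s u-u\|_{L^p(0,T-s;B_2)}^p\leqslant (2K)^{p-1}s\,T^{1-1/r}M$. Your ``main obstacle'' paragraph has the right version; the body of Step 3 does not.
\item \emph{Endpoint piece.} Uniform smallness of $u$ on $[T-h,T]$ does not follow from $L^p$-boundedness, since bounded sets of $L^p$ may concentrate at $t=T$. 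It comes from Ehrling and $K$: $\|u\|_{L^p(T-h,T;B_1)}\leqslant\delta M+C_\delta K h^{1/p}$.
\item \emph{Averaging error in Step 2.} Make explicit how $u(t)-u_h(t)$ is bounded in $B_1$, since $u(t)$ itself need not lie in $B_0$ for every $t$. Apply Ehrling to $u_h(t)-u_{h'}(t)$: this family is $B_1$-Cauchy, its limit is identified as $u(t)$ in $B_2$, and letting $h'\to0$ gives the bound. This also yields $E_{\infty,r}\subset C([0,T];B_1)$.
\end{itemize}

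With these corrections the argument is complete.
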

			
			\section{Proof of Lemma \ref{Vlasov equation.}}
			Before we give the proof of Lemma \ref{Vlasov equation.}, we first state the following preparatory lemma.
			\begin{lemma}\label{lemma6.2}
				For any $T > 0$, let $Y$ be a test function satisfying \eqref{Y1}--\eqref{Y4}. Then, as $\varepsilon \to 0$,
				\begin{equation*}
					\int_0^T \int_{\mathbb{R}^3 \times \mathbb{R}^3} F_{\varepsilon,\eta} \nabla_v Y \cdot u_{\varepsilon,\eta} \, \mathrm{d}v \mathrm{d}x \mathrm{d}t \longrightarrow \int_0^T \int_{\mathbb{R}^3 \times \mathbb{R}^3} F \nabla_v Y \cdot u \, \mathrm{d}v \mathrm{d}x \mathrm{d}t.
				\end{equation*}
			\end{lemma}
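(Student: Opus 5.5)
The plan is to use the decomposition \eqref{expres-F}, $F_{\varepsilon,\eta}=\rho_{\scriptscriptstyle F_{\varepsilon,\eta}}\mu_{\varepsilon,\eta}+\sqrt{\mu_{\varepsilon,\eta}}(\mathbf{I-P}_{\scriptscriptstyle 1})h_{\varepsilon,\eta}$, together with the structure of $Y$ in \eqref{Y1}. Since $\nabla_v Y=\psi+2\chi_2 v$, the integral splits as
\[
\int_0^T\!\!\int \Big(\int F_{\varepsilon,\eta}\,\mathrm{d}v\Big)\psi\cdot u_{\varepsilon,\eta}\,\mathrm{d}x\mathrm{d}t+2\int_0^T\!\!\int \chi_2\Big(\int F_{\varepsilon,\eta}v\,\mathrm{d}v\Big)\cdot u_{\varepsilon,\eta}\,\mathrm{d}x\mathrm{d}t .
\]
The limit side equals $\int_0^T\!\int\rho_{\scriptscriptstyle F}\psi\cdot u$, because the $v$ term vanishes against $\delta_{v=0}$. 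I will therefore show that the first piece converges to this quantity and that the second piece tends to zero.

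\emph{First piece.} By construction $\int F_{\varepsilon,\eta}\,\mathrm{d}v=\rho_{\scriptscriptstyle F_{\varepsilon,\eta}}$, since the microscopic part is orthogonal to $\sqrt{\mu_{\varepsilon,\eta}}$. From \eqref{Str.rho.F}, $\rho_{\scriptscriptstyle F_{\varepsilon,\eta}}\to\rho_{\scriptscriptstyle F}$ strongly in $C([0,T];H^{N-\sigma}_x)$, hence in $L^\infty_tL^\infty_x$ on compact sets, because $N\ge2$ and $N-\sigma>3/2$ for small $\sigma$. Writing $u_{\varepsilon,\eta}=\mathcal{P}u_{\varepsilon,\eta}+\mathcal{P}^\perp u_{\varepsilon,\eta}$, the factor $\rho_{\scriptscriptstyle F_{\varepsilon,\eta}}\psi$ converges strongly in $L^1_tL^2_x$, while $u_{\varepsilon,\eta}\rightharpoonup u$ weakly-$\ast$ in $L^\infty_tL^2_x$ by \eqref{Limit.Mac}. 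A strong–weak product limit then gives
\[
\int_0^T\!\!\int\rho_{\scriptscriptstyle F_{\varepsilon,\eta}}\psi\cdot u_{\varepsilon,\eta}\longrightarrow\int_0^T\!\!\int\rho_{\scriptscriptstyle F}\psi\cdot u .
\]
I split it as $(\rho_{\scriptscriptstyle F_{\varepsilon,\eta}}-\rho_{\scriptscriptstyle F})\psi\cdot u_{\varepsilon,\eta}$, bounded by $\|\rho_{\scriptscriptstyle F_{\varepsilon,\eta}}-\rho_{\scriptscriptstyle F}\|_{C_tL^2_x}\|\psi\|_{L^1_tL^\infty_x}\sup_t\|u_{\varepsilon,\eta}\|_{L^2_x}\to0$ using \eqref{global,bdg}, plus $\rho_{\scriptscriptstyle F}\psi\cdot u_{\varepsilon,\eta}$, which converges by weak-$\ast$ convergence since $\rho_{\scriptscriptstyle F}\psi\in L^1_tL^2_x$. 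No compactness of $\mathcal{P}^\perp u_{\varepsilon,\eta}$ is needed, because only one factor is weakly convergent.

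\emph{Second piece.} The Maxwellian part gives $\int\mu_{\varepsilon,\eta}v\,\mathrm{d}v=0$. The microscopic part is bounded by Cauchy–Schwarz and \eqref{mu}:
\[
\Big|\int\sqrt{\mu_{\varepsilon,\eta}}(\mathbf{I-P}_{\scriptscriptstyle 1})h_{\varepsilon,\eta}\,v\,\mathrm{d}v\Big|\lesssim\frac{\eta^{1/2}}{\varepsilon}\,|(\mathbf{I-P}_{\scriptscriptstyle 1})h_{\varepsilon,\eta}|_{L^2_v}.
\]
Hence the second term is at most
\[
\frac{\eta^{1/2}}{\varepsilon}\|\chi_2\|_{L^2_tL^\infty_x}\sup_t\|u_{\varepsilon,\eta}\|_{L^2_x}\Big(\int_0^T\|(\mathbf{I-P}_{\scriptscriptstyle 1})h_{\varepsilon,\eta}\|^2_{\nu_1}\mathrm{d}t\Big)^{1/2}\lesssim\frac{\eta}{\varepsilon},
\]
using \eqref{global,dish}. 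This tends to zero because $\eta\le o(1)\varepsilon^2$. Even without the dissipation bound, $\eta^{1/2}/\varepsilon=o(1)$ already suffices.

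\textbf{Main obstacle.} The only delicate point is justifying the $L^\infty_x$ control of $\rho_{\scriptscriptstyle F_{\varepsilon,\eta}}-\rho_{\scriptscriptstyle F}$, or avoiding it by placing $\psi$ in $L^\infty$. The route above uses only the $L^2_x$ convergence from \eqref{Str.rho.F} together with $\psi\in L^\infty$, so this difficulty should not arise. Otherwise the argument is routine bookkeeping.
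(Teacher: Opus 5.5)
Your proof is correct, but it is organized differently from the paper's.

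\textbf{The paper's route.} The paper splits $u_{\varepsilon,\eta}-u=(\mathcal{P}u_{\varepsilon,\eta}-u)+\mathcal{P}^{\perp}u_{\varepsilon,\eta}$ and writes the difference as $\mathcal{N}_1+\mathcal{N}_2+\mathcal{N}_3$.
\begin{itemize}
\item $\mathcal{N}_1$ is handled by the strong convergence \eqref{Conver,pu} of the Leray-projected velocity, which comes from Aubin--Lions--Simon applied to \eqref{u-Equation}.
\item $\mathcal{N}_2$ is handled by decomposing $F_{\varepsilon,\eta}$ via \eqref{expres-F}, then using the weak-$\ast$ vanishing \eqref{Conver,p.perp.u} of $\mathcal{P}^{\perp}u_{\varepsilon,\eta}$ together with \eqref{global,dish}.
\item $\mathcal{N}_3$ is handled by comparing $F_{\varepsilon,\eta}$ with $\rho_{\scriptscriptstyle F}\delta_{v=0}$.
\end{itemize}

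\textbf{Your route.} You integrate in $v$ first, so $u_{\varepsilon,\eta}$ is only ever tested against two moments of $F_{\varepsilon,\eta}$:
\begin{itemize}
\item the zeroth moment $\int F_{\varepsilon,\eta}\,\mathrm{d}v=\rho_{\scriptscriptstyle F_{\varepsilon,\eta}}$, which converges strongly by \eqref{Str.rho.F};
\item the first moment $\int F_{\varepsilon,\eta}v\,\mathrm{d}v=\langle v\sqrt{\mu_{\varepsilon,\eta}},(\mathbf{I-P}_{\scriptscriptstyle 1})h_{\varepsilon,\eta}\rangle_{L^2_v}$, which is $O(\eta^{1/2}/\varepsilon)$ pointwise in time and $O(\eta/\varepsilon)$ after using \eqref{global,dish}, hence negligible.
\end{itemize}
A single strong--weak product argument with \eqref{Limit.Mac} then closes the proof.

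\textbf{Comparison.} Your route needs no compactness of the velocity at all, neither of $\mathcal{P}u_{\varepsilon,\eta}$ nor of $\mathcal{P}^{\perp}u_{\varepsilon,\eta}$. It uses only the strong compactness of the particle density, which is the one quantity that genuinely must be compact for this product. The paper's decomposition uses more than the lemma requires: the strong convergence of $\mathcal{P}u_{\varepsilon,\eta}$ is indispensable elsewhere, e.g.\ for $u\otimes u$ in the momentum equation, but not here. Both arguments rely equally on the structure $\nabla_v Y=\psi+2\chi_2 v$ of the test class and on the vanishing of the odd moment $\int\mu_{\varepsilon,\eta}v\,\mathrm{d}v=0$. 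As you already note, your $L^\infty_x$ remark about $\rho_{\scriptscriptstyle F_{\varepsilon,\eta}}-\rho_{\scriptscriptstyle F}$ is unnecessary, since $\psi\in L^1_tL^\infty_x$ plus the $C_tL^2_x$ convergence suffices.
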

			
			\begin{proof}
				We decompose the difference into three terms:
				\begin{equation}\label{D.}
					\begin{aligned}
						&\int_0^T \int_{\mathbb{R}^3 \times \mathbb{R}^3} \Big( F_{\varepsilon,\eta} \nabla_v Y \cdot u_{\varepsilon,\eta} - F \nabla_v Y \cdot u \Big) \, \mathrm{d}v \mathrm{d}x \mathrm{d}t \\
						&\quad = \int_0^T \int_{\mathbb{R}^3 \times \mathbb{R}^3} F_{\varepsilon,\eta} \nabla_v Y \cdot (\mathcal{P} u_{\varepsilon,\eta} - u) \, \mathrm{d}v \mathrm{d}x \mathrm{d}t \\
						&\quad\quad + \int_0^T \int_{\mathbb{R}^3 \times \mathbb{R}^3} F_{\varepsilon,\eta} \nabla_v Y \cdot \mathcal{P}^{\perp} u_{\varepsilon,\eta} \, \mathrm{d}v \mathrm{d}x \mathrm{d}t \\
						&\quad\quad + \int_0^T \int_{\mathbb{R}^3 \times \mathbb{R}^3} (F_{\varepsilon,\eta} - F) \nabla_v Y \cdot u \, \mathrm{d}v \mathrm{d}x \mathrm{d}t \\
						&\quad =: \mathcal{N}_1 + \mathcal{N}_2 + \mathcal{N}_3.
					\end{aligned}
				\end{equation}
				\textbf{Estimation of $\mathcal{N}_1$.} From the uniform energy bound \eqref{global,bdh} and the convergence \eqref{Conver,pu}, we obtain
				\begin{equation}\label{N1}
					\begin{aligned}
						|\mathcal{N}_1|
						&\lesssim \left| \int_0^T \int_{\mathbb{R}^3 \times \mathbb{R}^3} \sqrt{\mu_{\varepsilon,\eta}} \, h_{\varepsilon,\eta} (\psi + 2\chi_2 v) \cdot (\mathcal{P} u_{\varepsilon,\eta} - u) \, \mathrm{d}v \mathrm{d}x \mathrm{d}t \right| \\
						&\lesssim \|h_{\varepsilon,\eta}\|_{L^\infty([0,T]; L^2_{x,v})} \|(\psi, \chi_2)\|_{L^1([0,T]; L^\infty_x)} \left\| \sqrt{\mu_{\varepsilon,\eta}} (1 + |v|) \right\|_{L^2_v} \|\mathcal{P} u_{\varepsilon,\eta} - u\|_{C([0,T]; L^2_x)} \\
						&\lesssim \|\mathcal{P} u_{\varepsilon,\eta} - u\|_{C([0,T]; L^2_x)} \xrightarrow{\varepsilon \to 0} 0.
					\end{aligned}
				\end{equation}
				\textbf{Estimation of $\mathcal{N}_2$.} Using the orthogonal decomposition of $F_{\varepsilon,\eta}$ in \eqref{expres-F}, we split $\mathcal{N}_2$ into
				\begin{equation}\label{N-2}
					\begin{aligned}
						\mathcal{N}_2 &= \int_0^T \int_{\mathbb{R}^3 \times \mathbb{R}^3} (\rho_{\scriptscriptstyle F_{\varepsilon,\eta}} - \rho_{\scriptscriptstyle F}) \mu_{\varepsilon,\eta} \nabla_v Y \cdot \mathcal{P}^{\perp} u_{\varepsilon,\eta} \, \mathrm{d}v \mathrm{d}x \mathrm{d}t \\
						&\quad + \int_0^T \int_{\mathbb{R}^3 \times \mathbb{R}^3} \rho_{\scriptscriptstyle F} \mu_{\varepsilon,\eta} \nabla_v Y \cdot \mathcal{P}^{\perp} u_{\varepsilon,\eta} \, \mathrm{d}v \mathrm{d}x \mathrm{d}t \\
						&\quad + \int_0^T \int_{\mathbb{R}^3 \times \mathbb{R}^3} \sqrt{\mu_{\varepsilon,\eta}} \, (\mathbf{I - P}_{\scriptscriptstyle 1}) h_{\varepsilon,\eta} \nabla_v Y \cdot \mathcal{P}^{\perp} u_{\varepsilon,\eta} \, \mathrm{d}v \mathrm{d}x \mathrm{d}t.
					\end{aligned}
				\end{equation}
				For the first term on the right-hand side of \eqref{N-2}, the uniform bound \eqref{Conver,p.perp.u1} and the strong convergence \eqref{Str.rho.F} yield
				\begin{equation*}
					\begin{aligned}
						&\left| \int_0^T \int_{\mathbb{R}^3 \times \mathbb{R}^3} (\rho_{\scriptscriptstyle F_{\varepsilon,\eta}} - \rho_{\scriptscriptstyle F}) \mu_{\varepsilon,\eta} (\psi + 2\chi_2 v) \cdot \mathcal{P}^{\perp} u_{\varepsilon,\eta} \, \mathrm{d}v \mathrm{d}x \mathrm{d}t \right| \\
						&\quad \lesssim \|\rho_{\scriptscriptstyle F_{\varepsilon,\eta}} - \rho_{\scriptscriptstyle F}\|_{C([0,T]; L^2_x)} \|\mu_{\varepsilon,\eta} (1 + |v|)\|_{L^1_v} \|(\psi, \chi_2)\|_{L^1([0,T]; L^\infty_x)} \|\mathcal{P}^{\perp} u_{\varepsilon,\eta}\|_{L^\infty([0,T]; L^2_x)} \\
						&\quad \lesssim \|\rho_{\scriptscriptstyle F_{\varepsilon,\eta}} - \rho_{\scriptscriptstyle F}\|_{C([0,T]; L^2_x)} \xrightarrow{\varepsilon \to 0} 0,
					\end{aligned}
				\end{equation*}
				which implies
				\begin{equation}\label{N21}
					\int_0^T \int_{\mathbb{R}^3 \times \mathbb{R}^3} (\rho_{\scriptscriptstyle F_{\varepsilon,\eta}} - \rho_{\scriptscriptstyle F}) \mu_{\varepsilon,\eta} \nabla_v Y \cdot \mathcal{P}^{\perp} u_{\varepsilon,\eta} \, \mathrm{d}v \mathrm{d}x \mathrm{d}t \xrightarrow{\varepsilon \to 0} 0.
				\end{equation}
				For the second term on the right-hand side of \eqref{N-2}, integrating over $v$ gives
				\begin{equation*}
					\int_0^T \int_{\mathbb{R}^3 \times \mathbb{R}^3} \rho_{\scriptscriptstyle F} \mu_{\varepsilon,\eta} \nabla_v Y \cdot \mathcal{P}^{\perp} u_{\varepsilon,\eta} \, \mathrm{d}v \mathrm{d}x \mathrm{d}t = \int_0^T \int_{\mathbb{R}^3} \rho_{\scriptscriptstyle F} \psi \cdot \mathcal{P}^{\perp} u_{\varepsilon,\eta} \, \mathrm{d}x \mathrm{d}t.
				\end{equation*}
				Since $\psi \in C^1([0,T]; C_c^\infty(\mathbb{R}^3_x))$ and $\rho_{\scriptscriptstyle F} \in L^\infty([0,\infty); H^N_x)$, it holds that $\rho_{\scriptscriptstyle F} \psi \in L^1([0,T]; H^{-N}_x)$. Hence, from convergence \eqref{Conver,p.perp.u}, we deduce
				\begin{equation}\label{N22}
					\int_0^T \int_{\mathbb{R}^3 \times \mathbb{R}^3} \rho_{\scriptscriptstyle F} \mu_{\varepsilon,\eta} \nabla_v Y \cdot \mathcal{P}^{\perp} u_{\varepsilon,\eta} \, \mathrm{d}v \mathrm{d}x \mathrm{d}t \xrightarrow{\varepsilon \to 0} 0.
				\end{equation}
				For the third term on the right-hand side of \eqref{N-2}, applying the energy dissipation bound \eqref{global,dish} yields
				\begin{equation}\label{N23}
					\begin{aligned}
						&\left| \int_0^T \int_{\mathbb{R}^3 \times \mathbb{R}^3} \sqrt{\mu_{\varepsilon,\eta}} \, (\mathbf{I - P}_{\scriptscriptstyle 1}) h_{\varepsilon,\eta} (\psi + 2\chi_2 v) \cdot \mathcal{P}^{\perp} u_{\varepsilon,\eta} \, \mathrm{d}v \mathrm{d}x \mathrm{d}t \right| \\
						&\quad \lesssim \|(\mathbf{I - P}_{\scriptscriptstyle 1}) h_{\varepsilon,\eta}\|_{L^2([0,T]; L^2_{x,v})} \|(\psi, \chi_2)\|_{L^2([0,T]; L^\infty_x)} \|\mathcal{P}^{\perp} u_{\varepsilon,\eta}\|_{L^\infty([0,T]; L^2_x)} \left\| \sqrt{\mu_{\varepsilon,\eta}} (1 + |v|) \right\|_{L^2_v} \\
						&\quad \lesssim \|(\mathbf{I - P}_{\scriptscriptstyle 1}) h_{\varepsilon,\eta}\|_{L^2([0,T]; L^2_{x,v})} \lesssim \eta^{1/2} \xrightarrow{\varepsilon \to 0} 0.
					\end{aligned}
				\end{equation}
				Substituting \eqref{N21}, \eqref{N22}, and \eqref{N23} into \eqref{N-2}, we obtain
				\begin{equation}\label{N2}
					\mathcal{N}_2 \xrightarrow{\varepsilon \to 0} 0.
				\end{equation}
				\textbf{Estimation of $\mathcal{N}_3$.} By \eqref{expres-F} and \eqref{F.def}, we write $\mathcal{N}_3$ as
				\begin{equation}\label{N-3}
					\begin{aligned}
						\mathcal{N}_3 &= \int_0^T \int_{\mathbb{R}^3 \times \mathbb{R}^3} \rho_{\scriptscriptstyle F_{\varepsilon,\eta}} \nabla_v Y \cdot u \, (\mu_{\varepsilon,\eta} - \delta_{v=0}) \, \mathrm{d}v \mathrm{d}x \mathrm{d}t \\
						&\quad + \int_0^T \int_{\mathbb{R}^3 \times \mathbb{R}^3} (\rho_{\scriptscriptstyle F_{\varepsilon,\eta}} - \rho_{\scriptscriptstyle F}) \nabla_v Y \cdot u \, \delta_{v=0} \, \mathrm{d}v \mathrm{d}x \mathrm{d}t \\
						&\quad + \int_0^T \int_{\mathbb{R}^3 \times \mathbb{R}^3} \sqrt{\mu_{\varepsilon,\eta}} \, (\mathbf{I - P}_{\scriptscriptstyle 1}) h_{\varepsilon,\eta} \nabla_v Y \cdot u \, \mathrm{d}v \mathrm{d}x \mathrm{d}t.
					\end{aligned}
				\end{equation}
				Direct integration shows that the first term on the RHS of \eqref{N-3} vanishes identically:
				\begin{equation}\label{N31}
					\begin{aligned}
						&\int_0^T \int_{\mathbb{R}^3 \times \mathbb{R}^3} \rho_{\scriptscriptstyle F_{\varepsilon,\eta}} \nabla_v Y \cdot u \, (\mu_{\varepsilon,\eta} - \delta_{v=0}) \, \mathrm{d}v \mathrm{d}x \mathrm{d}t \\
						= &\int_0^T \int_{\mathbb{R}^3 \times \mathbb{R}^3} \rho_{\scriptscriptstyle F_{\varepsilon,\eta}} \psi \cdot u \, (\mu_{\varepsilon,\eta} - \delta_{v=0}) \, \mathrm{d}v \mathrm{d}x \mathrm{d}t \\
						&+ 2 \int_0^T \int_{\mathbb{R}^3 \times \mathbb{R}^3} \rho_{\scriptscriptstyle F_{\varepsilon,\eta}} \chi_2 v \cdot u \, (\mu_{\varepsilon,\eta} - \delta_{v=0}) \, \mathrm{d}v \mathrm{d}x \mathrm{d}t = 0.
					\end{aligned}
				\end{equation}
				For the second term, the strong convergence \eqref{Str.rho.F} implies
				\begin{equation}\label{N32}
					\int_0^T \int_{\mathbb{R}^3 \times \mathbb{R}^3} (\rho_{\scriptscriptstyle F_{\varepsilon,\eta}} - \rho_{\scriptscriptstyle F}) \nabla_v Y \cdot u \, \delta_{v=0} \, \mathrm{d}v \mathrm{d}x \mathrm{d}t \xrightarrow{\varepsilon \to 0} 0,
				\end{equation}
				and for the third term, the energy dissipation bound \eqref{global,dish} gives
				\begin{equation}\label{N33}
					\int_0^T \int_{\mathbb{R}^3 \times \mathbb{R}^3} \sqrt{\mu_{\varepsilon,\eta}} \, (\mathbf{I - P}_{\scriptscriptstyle 1}) h_{\varepsilon,\eta} \nabla_v Y \cdot u \, \mathrm{d}v \mathrm{d}x \mathrm{d}t \xrightarrow{\varepsilon \to 0} 0.
				\end{equation}
				Consequently, plugging \eqref{N31}--\eqref{N33} into \eqref{N-3} yields
				\begin{equation}\label{N3}
					\mathcal{N}_3 \xrightarrow{\varepsilon \to 0} 0.
				\end{equation}
				Combining \eqref{N1}, \eqref{N2}, and \eqref{N3} in \eqref{D.}, we conclude that
				\begin{equation*}
					\int_0^T \int_{\mathbb{R}^3 \times \mathbb{R}^3} \Big( F_{\varepsilon,\eta} \nabla_v Y \cdot u_{\varepsilon,\eta} - F \nabla_v Y \cdot u \Big) \, \mathrm{d}v \mathrm{d}x \mathrm{d}t \xrightarrow{\varepsilon \to 0} 0,
				\end{equation*}
				which completes the proof of Lemma \ref{lemma6.2}.
			\end{proof}
			We now turn to the proof of Lemma \ref{Vlasov equation.}. We claim that this proof is due to Bernard-Desvillettes-Golse-Ricci in \cite{MR3668954}.
			\begin{proof}
				Using the definition of the collision integral $\mathcal{D}(F,f)$ in \eqref{D,R,defa}, one has
				\begin{align*}
					&\int_{\mathbb{R}^3} \mathcal{D}(F_{\varepsilon,\eta},f_{\varepsilon,\eta})Y \mathrm{d}v \\
					=&\int_{(\mathbb{R}^3)^2\times\mathbb{S}^2} \{F_{\varepsilon,\eta}(v'')f_{\varepsilon,\eta}(w'') - F_{\varepsilon,\eta}(v)f_{\varepsilon,\eta}(w)\} |(\varepsilon v-w)\cdot \omega| Y(v) \mathrm{d}\omega \mathrm{d}w \mathrm{d}v \\
					=&\int_{(\mathbb{R}^3)^2\times \mathbb{S}^2} F_{\varepsilon,\eta}(v)f_{\varepsilon,\eta}(w) \left\{ Y(v'') - Y(v) \right\} |(\varepsilon v-w)\cdot \omega| \mathrm{d}\omega \mathrm{d}w \mathrm{d}v,
				\end{align*}
				where we have used the change of variables $(w'', v'') \rightarrow (w, v)$, $|(\varepsilon v''-w'')\cdot \omega| = |(\varepsilon v-w)\cdot \omega|$, and $\mathrm{d}w'' \mathrm{d}v'' = \mathrm{d}w \mathrm{d}v$.
				Then, by using the Taylor expansion at order 2 for the $C^2$ function $Y(v'')$, we further have
				\begin{equation}\label{limi-oprat-D}
					\begin{aligned}
						&\frac{1}{\eta}\int_0^T\int_{\mathbb{R}^3\times\mathbb{R}^3} \mathcal{D}(F_{\varepsilon,\eta},f_{\varepsilon,\eta})Y \mathrm{d}v \mathrm{d}x \mathrm{d}t \\
						=&\frac{1}{\eta}\int_0^T\int_{(\mathbb{R}^3)^3\times \mathbb{S}^2} F_{\varepsilon,\eta}(v)f_{\varepsilon,\eta}(w) \nabla_v Y(v)\cdot (v''-v) |(\varepsilon v-w)\cdot \omega| \mathrm{d}\omega \mathrm{d}w \mathrm{d}v \mathrm{d}x \mathrm{d}t \\
					&+ \frac{1}{\eta}\int_0^T\int_{(\mathbb{R}^3)^3\times \mathbb{S}^2} F_{\varepsilon,\eta}(v)f_{\varepsilon,\eta}(w) H(v'',v) : (v''-v)^{\otimes^2} |(\varepsilon v-w)\cdot \omega| \mathrm{d}\omega \mathrm{d}w \mathrm{d}v \mathrm{d}x \mathrm{d}t \\
						=:& I_{\varepsilon,\eta} + H_{\varepsilon,\eta},
					\end{aligned}
				\end{equation}
				where
				\[
				H(v'',v) := \int_{0}^1 (1-t) \nabla^2_v Y\left((1-t)v + tv''\right) \mathrm{d}t.
				\]
				We first treat the term $I_{\varepsilon,\eta}$. From \eqref{velocitya}, which implies that
			\begin{equation}\label{V,rela}
					v'' - v = -\frac{2\eta}{1+\eta} \omega\left( \omega\cdot \left(v - \frac{1}{\varepsilon}w\right)\right)  ,
				\end{equation}
				it follows that
				\[
				I_{\varepsilon,\eta} = -\frac{1}{1+\eta}\int_0^T\int_{\mathbb{R}^3\times\mathbb{R}^3} F_{\varepsilon,\eta}(v) \nabla_v Y(v)\cdot K_{\varepsilon,\eta}(v) \mathrm{d}v \mathrm{d}x \mathrm{d}t,
				\]
				where
				\begin{align*}
					K_{\varepsilon,\eta}(v)
					&:= \frac{2}{\varepsilon}\int_{\mathbb{R}^3\times\mathbb{S}^2} f_{\varepsilon,\eta}(w) |(\varepsilon v-w)\cdot \omega| \{(\varepsilon v-w)\cdot \omega\}\omega \mathrm{d}\omega \mathrm{d}w \\
					&= \frac{2\pi}{\varepsilon}\int_{\mathbb{R}^3} f_{\varepsilon,\eta}(w)(\varepsilon v-w)|\varepsilon v-w| \mathrm{d}w \\
					&= \frac{2\pi}{\varepsilon}\int_{\mathbb{R}^3} \{\mu+\sqrt{\mu}g_{\varepsilon,\eta}\}(w)(\varepsilon v-w)|\varepsilon v-w| \mathrm{d}w.
				\end{align*}
				Hence,
				\begin{equation}\label{I0}
					I_{\varepsilon,\eta} = I_{\varepsilon,\eta}^1 + I_{\varepsilon,\eta}^2 + I_{\varepsilon,\eta}^3 + I_{\varepsilon,\eta}^4 + I_{\varepsilon,\eta}^5,
				\end{equation}
				with
				\begin{align*}
					I_{\varepsilon,\eta}^1 &= -\varepsilon\cdot\frac{2\pi}{1+\eta}\int_0^T\int_{\mathbb{R}^3\times\mathbb{R}^3} F_{\varepsilon,\eta}\nabla_v Y \cdot \int_{\mathbb{R}^3} v\sqrt{\mu}g_{\varepsilon,\eta}|\varepsilon v-w| \mathrm{d}w \mathrm{d}v \mathrm{d}x \mathrm{d}t, \\
					I_{\varepsilon,\eta}^2 &= \frac{2\pi}{1+\eta}\int_0^T\int_{\mathbb{R}^3\times\mathbb{R}^3} F_{\varepsilon,\eta}\nabla_v Y \cdot \int_{\mathbb{R}^3} w\sqrt{\mu}g_{\varepsilon,\eta}\left(|\varepsilon v-w|-|w|\right) \mathrm{d}w \mathrm{d}v \mathrm{d}x \mathrm{d}t, \\
					I_{\varepsilon,\eta}^3 &= \frac{2\pi}{1+\eta}\int_0^T\int_{\mathbb{R}^3\times\mathbb{R}^3} F_{\varepsilon,\eta}\nabla_v Y \cdot \int_{\mathbb{R}^3} w\sqrt{\mu}g_{\varepsilon,\eta}|w| \mathrm{d}w \mathrm{d}v \mathrm{d}x \mathrm{d}t, \\
					I_{\varepsilon,\eta}^4 &= -\frac{2\pi}{1+\eta}\int_0^T\int_{\mathbb{R}^3\times\mathbb{R}^3} F_{\varepsilon,\eta}\nabla_v Y \cdot \int_{\mathbb{R}^3} v \mu|\varepsilon v-w| \mathrm{d}w \mathrm{d}v \mathrm{d}x \mathrm{d}t, \\
					I_{\varepsilon,\eta}^5 &= \frac{1}{\varepsilon}\cdot\frac{2\pi}{1+\eta}\int_0^T\int_{\mathbb{R}^3\times\mathbb{R}^3} F_{\varepsilon,\eta}\nabla_v Y \cdot \int_{\mathbb{R}^3} w\mu\left(|\varepsilon v-w|-|w|\right) \mathrm{d}w \mathrm{d}v \mathrm{d}x \mathrm{d}t.
				\end{align*}	
				
				Using Hölder's inequality with the equivalence relation \eqref{mu}, and the global energy estimates \eqref{global,bdh}--\eqref{global,bdg}, we have	
				\begin{align*}
					|I_{\varepsilon,\eta}^1| &\lesssim \varepsilon \left| \int_0^T\int_{(\mathbb{R}^3)^3} \sqrt{\mu_{\varepsilon,\eta}}h_{\varepsilon,\eta} (\psi+2\chi_2 v)\cdot v \sqrt{\mu} g_{\varepsilon,\eta}|\varepsilon v-w| \mathrm{d}w \mathrm{d}v \mathrm{d}x \mathrm{d}t \right| \\
					&\lesssim \varepsilon \|\sqrt{\mu_{\varepsilon,\eta}}\cdot v\cdot(1+|v|)\cdot(1+|\varepsilon v|)\|_{L^2_v} \|h_{\varepsilon,\eta}\|_{L^\infty ([0,T];L^2_{x,v})} \\
					&\quad \times \|\sqrt{\mu} (1+|w|)\|_{L^2_w} \|g_{\varepsilon,\eta}\|_{L^\infty ([0,T];L^2_{x,w})} \|(\psi,\chi_2)\|_{L^1([0,T]; L^\infty_x)} \\
					&\lesssim \varepsilon\cdot\frac{\eta^{1/2}}{\varepsilon} \lesssim \eta^{1/2} \xrightarrow{\varepsilon \to 0} 0,
				\end{align*}
				which implies that
				\begin{equation}\label{I1}
					I_{\varepsilon,\eta}^1 \xrightarrow{\varepsilon \to 0} 0.
				\end{equation}
				Applying the triangle inequality, we obtain
				\begin{equation}\label{triangle ineq}
					\big||\varepsilon v-w|-|w|\big| = \big||w-\varepsilon v|-|w|\big| \leqslant \varepsilon|v|.
				\end{equation}
				Thus, similar arguments as above deduce that
				\begin{equation}\label{I2}
					I_{\varepsilon,\eta}^2 \xrightarrow{\varepsilon \to 0} 0.
				\end{equation}
				Putting the decomposition of $g_{\varepsilon,\eta}$ in \eqref{expres-g} into $I_{\varepsilon,\eta}^3$, we obtain
				\begin{equation}\label{I31}
					\begin{aligned}
						I_{\varepsilon,\eta}^3
						&= \frac{\kappa}{1+\eta}\int_0^T\int_{\mathbb{R}^3\times\mathbb{R}^3} F_{\varepsilon,\eta}\nabla_v Y \cdot u_{\varepsilon,\eta} \mathrm{d}v \mathrm{d}x \mathrm{d}t \\
						&\quad + \frac{2\pi}{1+\eta}\int_0^T\int_{\mathbb{R}^3\times\mathbb{R}^3} F_{\varepsilon,\eta}\nabla_v Y \cdot \int_{\mathbb{R}^3} w\sqrt{\mu}\mathbf{\{I-P\}}g_{\varepsilon,\eta}|w| \mathrm{d}w \mathrm{d}v \mathrm{d}x \mathrm{d}t.
					\end{aligned}
				\end{equation}
				The two terms on the RHS of \eqref{I31} can be estimated as follows:
				For the first term, from Lemma \ref{lemma6.2}, we have
				\begin{equation}\label{I32}
					\frac{\kappa}{1+\eta}\int_0^T\int_{\mathbb{R}^3\times\mathbb{R}^3} F_{\varepsilon,\eta}\nabla_v Y \cdot u_{\varepsilon,\eta} \mathrm{d}v \mathrm{d}x \mathrm{d}t
					\xrightarrow{\varepsilon \to 0} \kappa \int_0^T\int_{\mathbb{R}^3\times\mathbb{R}^3} F\nabla_v Y \cdot u \mathrm{d}v \mathrm{d}x \mathrm{d}t.
				\end{equation}
				For the second term, from the energy dissipation bound \eqref{global,disg}, we obtain
				\begin{equation}\label{I33}
					\frac{2\pi}{1+\eta}\int_0^T\int_{\mathbb{R}^3\times\mathbb{R}^3} F_{\varepsilon,\eta}\nabla_v Y \cdot \int_{\mathbb{R}^3} w\sqrt{\mu}\mathbf{\{I-P\}}g_{\varepsilon,\eta}|w| \mathrm{d}w \mathrm{d}v \mathrm{d}x \mathrm{d}t \xrightarrow{\varepsilon \to 0} 0.
				\end{equation}
				Thus, plugging the convergences \eqref{I32} and \eqref{I33} into \eqref{I31}, we have
				\begin{equation}\label{I3}
					I_{\varepsilon,\eta}^3 \xrightarrow{\varepsilon \to 0} \kappa\int_0^T\int_{\mathbb{R}^3\times\mathbb{R}^3} F\nabla_v Y \cdot u \mathrm{d}v \mathrm{d}x \mathrm{d}t.
				\end{equation}
				
				Similar to the estimates for $I_{\varepsilon,\eta}^3$, we can establish
				\begin{equation}\label{I4}
					I_{\varepsilon,\eta}^4 \xrightarrow{\varepsilon \to 0} -2\pi\int_0^T\int_{\mathbb{R}^3\times\mathbb{R}^3} F\nabla_v Y \cdot \int_{\mathbb{R}^3} v \mu(w)|w| \mathrm{d}w \mathrm{d}v \mathrm{d}x \mathrm{d}t.
				\end{equation}
				
				Note that
				\begin{equation}\label{I51}
					\begin{aligned}
						&I_{\varepsilon,\eta}^{5} + 2\pi\int_0^T\int_{\mathbb{R}^3\times\mathbb{R}^3} F\nabla_v Y \cdot \int_{\mathbb{R}^3} \mu(w)\frac{w}{|w|}w\cdot v \mathrm{d}w \mathrm{d}v \mathrm{d}x \mathrm{d}t \\
						&\quad=\frac{2\pi}{1+\eta}\int_0^T\int_{\mathbb{R}^3\times\mathbb{R}^3} F_{\varepsilon,\eta}\nabla_v Y \cdot \int_{\mathbb{R}^3} w\mu(w)\left(\frac{|\varepsilon v-w|-|w|}{\varepsilon} + \frac{w}{|w|}\cdot v \right) \mathrm{d}w \mathrm{d}v \mathrm{d}x \mathrm{d}t \\
						&\quad\quad+ \frac{2\pi}{1+\eta}\int_0^T\int_{\mathbb{R}^3\times\mathbb{R}^3} (F-F_{\varepsilon,\eta})\nabla_v Y \cdot \int_{\mathbb{R}^3} \mu(w)\frac{w}{|w|}w\cdot v \mathrm{d}w \mathrm{d}v \mathrm{d}x \mathrm{d}t \\
						&\quad=: I_{\varepsilon,\eta}^{5,1} + I_{\varepsilon,\eta}^{5,2}.
					\end{aligned}	
				\end{equation}
				For $k > 2$, $I_{\varepsilon,\eta}^{5,1}$ is bounded by 
				\begin{align*}
					&2\pi \|F_{\varepsilon,\eta} |v|^{3/2}(1+|v|)^k\|_{L^\infty ([0,T];L^2_{x,v})} \\
				&\quad\quad \times\left\| \frac{\nabla_v Y}{|v|^{3/2}(1+|v|)^k}\cdot \int_{\mathbb{R}^3} w\mu(w)\left(\frac{|\varepsilon v-w|-|w|}{\varepsilon} + \frac{w}{|w|}\cdot v \right) \mathrm{d}w \right\|_{L^1 ([0,T];L^2_{x,v})} \\
					&\quad\lesssim\left\| \frac{\nabla_v Y}{|v|^{3/2}(1+|v|)^k}\cdot \int_{\mathbb{R}^3} w\mu(w)\left(\frac{|\varepsilon v-w|-|w|}{\varepsilon} + \frac{w}{|w|}\cdot v \right) \mathrm{d}w \right\|_{L^1 ([0,T];L^2_{x,v})}.
				\end{align*}
				On the one hand, by using the triangle inequality \eqref{triangle ineq}, we obtain
				\begin{align*}
					&\left| \frac{\nabla_v Y}{|v|^{3/2}(1+|v|)^k}\cdot w\mu(w)\left(\frac{|\varepsilon v-w|-|w|}{\varepsilon} + \frac{w}{|w|}\cdot v \right) \right| \\
					&\quad\lesssim \frac{|\psi+2\chi_2 v|}{|v|^{3/2}(1+|v|)^k} |w|\mu(w) \left( \frac{\big||\varepsilon v-w|-|w|\big|}{\varepsilon} + |v| \right) \\
					&\quad\lesssim\frac{|\psi|+|\chi_2|}{|v|^{1/2}(1+|v|)^{k-1}} |w|\mu(w) \in L^1 ([0,T];L^2_{x,v}(L^1_w)).
				\end{align*}
				On the other hand, for all $t \in [0,T]$, $x \in \mathbb{R}^3$, $v \in \mathbb{R}^3$, $w \in \mathbb{R}^3$, it holds that
				\begin{align*}
					&\left| \frac{\nabla_v Y}{|v|^{3/2}(1+|v|)^k}\cdot w\mu(w)\left(\frac{|\varepsilon v-w|-|w|}{\varepsilon} + \frac{w}{|w|}\cdot v \right) \right| \\
					&\quad\lesssim \frac{|\psi+2\chi_2 v|}{|v|^{3/2}(1+|v|)^k} |w|\mu(w) \left| \int_0^1 \frac{w}{|w|}\cdot v - \frac{w-\varepsilon\theta v}{|w-\varepsilon\theta v|}\cdot v \mathrm{d}\theta \right| \xrightarrow{\varepsilon \to 0} 0.
				\end{align*}
				Therefore, by the dominated convergence theorem, we have
				\begin{equation}\label{I52}
					I_{\varepsilon,\eta}^{5,1} \xrightarrow{\varepsilon \to 0} 0.
				\end{equation}
				Using the same way to treat $I_{\varepsilon,\eta}^{5,2}$ as to treat $I_{\varepsilon,\eta}^{3}$, we can establish
				\begin{equation}\label{I53}
					I_{\varepsilon,\eta}^{5,2} \xrightarrow{\varepsilon \to 0} 0.
				\end{equation}
				Hence, plugging the convergences \eqref{I52} and \eqref{I53} into \eqref{I51}, we obtain
				\begin{equation}\label{I5}
					I_{\varepsilon,\eta}^{5} \xrightarrow{\varepsilon \to 0} -2\pi\int_0^T\int_{\mathbb{R}^3\times\mathbb{R}^3} F\nabla_v Y \cdot \int_{\mathbb{R}^3} \mu(w)\frac{w}{|w|}w\cdot v \mathrm{d}w \mathrm{d}v \mathrm{d}x \mathrm{d}t.
				\end{equation}
				By isotropy, one has
				\[
				\int_{\mathbb{R}^3} \mu(w)\frac{w}{|w|}w\cdot v \mathrm{d}w = \frac{1}{3}v\int_{\mathbb{R}^3} \mu(w)|w| \mathrm{d}w,
				\]
				so that combining \eqref{I4} and \eqref{I5} yields
				\[
				I_{\varepsilon,\eta}^4 + I_{\varepsilon,\eta}^5 \longrightarrow -2\pi\int_0^T\int_{\mathbb{R}^3\times\mathbb{R}^3} F\nabla_v Y \cdot v \int_{\mathbb{R}^3} \mu(w)\left(|w| + \frac{1}{3}|w|\right) \mathrm{d}w \mathrm{d}v \mathrm{d}x \mathrm{d}t.
				\]
				On the other hand, we observe that
				\[
				w\cdot\nabla\mu(w) = -|w|^2\mu(w),
				\]
				so that
				\begin{equation*}
					\begin{aligned}
						\int_{\mathbb{R}^3} \mu(w)(3|w|+|w|) \mathrm{d}w 
						&= \int_{\mathbb{R}^3} \mu(w)\nabla\cdot(w|w|) \mathrm{d}w \\
						&= -\int_{\mathbb{R}^3} w\cdot\nabla\mu(w)|w| \mathrm{d}w = \int_{\mathbb{R}^3} \mu(w)|w|^3 \mathrm{d}w.
					\end{aligned}
				\end{equation*}
				Hence,
				\begin{equation}\label{I45}
					\begin{aligned}
						I_{\varepsilon,\eta}^4 + I_{\varepsilon,\eta}^5 &\longrightarrow \int_0^T\int_{\mathbb{R}^3\times\mathbb{R}^3} F\nabla_v Y \cdot v \left( \int_{\mathbb{R}^3} \frac{2\pi}{3}\mu(w)|w|^3 \mathrm{d}w \right) \mathrm{d}v \mathrm{d}x \mathrm{d}t \\
						&= -\kappa\int_0^T\int_{\mathbb{R}^3\times\mathbb{R}^3} F\nabla_v Y \cdot v \mathrm{d}v \mathrm{d}x \mathrm{d}t.
					\end{aligned}
				\end{equation}
				Therefore, plugging the convergences \eqref{I1}, \eqref{I2}, \eqref{I3}, and \eqref{I45} into \eqref{I0}, we have
				\begin{equation}\label{I}
					I_{\varepsilon,\eta} \longrightarrow -\kappa\int_0^T\int_{\mathbb{R}^3\times\mathbb{R}^3} F\nabla_v Y \cdot (v-u) \mathrm{d}v \mathrm{d}x \mathrm{d}t.
				\end{equation}
				Next, we treat the term $H_{\varepsilon,\eta}$.
				From \eqref{V,rela}, one has
				\begin{align*}
					|H_{\varepsilon,\eta}| &\lesssim \left| \frac{1}{\eta}\int_0^T\int_{(\mathbb{R}^3)^3\times \mathbb{S}^2} F_{\varepsilon,\eta}(v)f_{\varepsilon,\eta}(w) H(v'',v) : \left\{ -\frac{2\eta}{1+\eta}\omega\left( \omega\cdot \left(v - \frac{1}{\varepsilon}w\right)\right) \right\}^{\otimes^2} \right. \\
					&\quad \times \left. |(\varepsilon v-w)\cdot \omega| \mathrm{d}\omega \mathrm{d}w \mathrm{d}v \mathrm{d}x \mathrm{d}t \right| \\
					&\lesssim \frac{\eta}{\varepsilon^2} \left| \int_0^T\int_{(\mathbb{R}^3)^3} \sqrt{\mu_{\varepsilon,\eta}(v)} h_{\varepsilon,\eta}(v) \{\mu+\varepsilon\sqrt{\mu} g_{\varepsilon,\eta}\}(w) \cdot |\chi_2| \cdot |\varepsilon v-w|^3 \mathrm{d}w \mathrm{d}v \mathrm{d}x \mathrm{d}t \right|.
				\end{align*}
				Then, by using the uniform energy bounds \eqref{global,bdg} and \eqref{global,bdh}, we further have
				\[
				|H_{\varepsilon,\eta}| \lesssim \frac{\eta}{\varepsilon^2} = o(1) \xrightarrow{\varepsilon \to 0} 0,
				\]
				which implies that
				\begin{equation}\label{H}
					H_{\varepsilon,\eta} \xrightarrow{\varepsilon \to 0} 0.
				\end{equation}
				Ultimately, plugging the convergences \eqref{I} and \eqref{H} into \eqref{limi-oprat-D}, we can establish 
				\[
				\frac{1}{\eta}\int_0^T\int_{\mathbb{R}^3\times\mathbb{R}^3} \mathcal{D}(F_{\varepsilon,\eta},f_{\varepsilon,\eta})Y \mathrm{d}v \mathrm{d}x \mathrm{d}t
				\xrightarrow{\varepsilon \to 0} -\kappa\int_0^T\int_{\mathbb{R}^3\times\mathbb{R}^3} F\nabla_v Y \cdot (v-u) \mathrm{d}v \mathrm{d}x \mathrm{d}t,
				\]
				which completes the proof.
			\end{proof}
		\end{appendices}


		\noindent{\bf Acknowledgements.} \   The research of {\sc Linjie Xiong}
		was partially supported by the National Natural Science Foundation of China under Grants 12231006. 

		\bigskip
		
		\noindent {\bf Conflict of interest} \  The authors declare that they have no conflict of interest.

		

{\footnotesize
			\begin{thebibliography}{99}\setlength{\itemsep}{0.35mm}
				
				\bibitem{MR1079189} Allaire, G.: \newblock{\href{https://doi.org/10.1007/BF00375065}{Homogenization of the Navier-Stokes equations in open sets perforated with tiny holes I. Abstract framework, a volume distribution of holes}}. {\it Arch. Rational Mech. Anal.}, {\bf 113} (3), 209--259, 1991.	
				
				\bibitem{Ano-Bou} Anoshchenko, O. and Boutet de Monvel-Berthier, A.: \newblock{\href{https://doi.org/10.1002/(SICI)1099-1476(199704)20:6<495::AID-MMA858>3.0.CO;2-O}{The existence of the global generalized solution of the system of equations describing suspension motion}}. {\it Math. Meth. Appl. Sci.}, {\bf 20} (6), 495--519, 1997.
				
				\bibitem{MR2904273} Bae, H.-O., Choi, Y.-P., Ha, S.-Y. and Kang, M.-J.: \newblock{\href{https://doi.org/10.1088/0951-7715/25/4/1155}{Time-asymptotic interaction of flocking particles and an incompressible viscous fluid}}. {\it Nonlinearity}, {\bf 25} (4), 1155, 2012.	
				
				\bibitem{MR1115587} Bardos, C., Golse, F. and Levermore, D.: \newblock{\href{ https://dx.doi.org/10.1007/BF01026608}{Fluid dynamic limits of kinetic equations. I. Formal derivations}}. {\it J. Statist. Phys.}, {\bf 63} (1--2), 323--344, 1991.	
				
				\bibitem{MR1213991} Bardos, C., Golse, F. and Levermore, D.: \newblock{\href{https://doi.org/10.1002/cpa.3160460503}{Fluid dynamic limits of kinetic equations. II. Convergence proofs for the Boltzmann equation}}. {\it Comm. Pure Appl. Math.}, {\bf 46} (5), 667--753, 1993.
				
				\bibitem{MR1115292} Bardos, C. and Ukai, S.: \newblock{\href{https://doi.org/10.1142/S0218202591000137}{The classical incompressible Navier-Stokes limit of the Boltzmann equation}}. {\it Math. Models Methods Appl. Sci.}, {\bf 1} (2), 235--257, 1991.	
				
				\bibitem{MR3668954} Bernard, E., Desvillettes, L., Golse, F. and Ricci, V.: \newblock{\href{https://doi.org/10.4310/CMS.2017.v15.n6.a11}{A derivation of the Vlasov-Navier-Stokes model for aerosol flows from kinetic theory}}. {\it Commun. Math. Sci.}, {\bf 15} (6), 1703--1741, 2017.
				
				\bibitem{MR2555647} Boudin, L., Desvillettes, L., Grandmont, C. and Moussa, A.: \newblock{\href{ISSN: 0893-4983}{Global existence of solutions for the coupled Vlasov and Navier-Stokes equations}}. {\it Differential Integral Equations}, {\bf 22} (11--12), 1247--1271, 2009.
				
				\bibitem{MR3582196} Boudin, L., Grandmont, C., Lorz, A. and Moussa, A.: \newblock{\href{https://doi.org/10.1016/j.jde.2016.10.012}{Global existence of solutions to the incompressible Navier-Stokes-Vlasov equations in a time-dependent domain}}. {\it J. Differential Equations}, {\bf 262} (3), 1317--1340, 2017.	
				
				\bibitem{MR2986590} Boyer, F. and Fabrie, P.: \newblock{\href{ https://doi.org/10.1007/978-1-4614-5975-0}{Mathematical tools for the study of the incompressible Navier-Stokes equations and related models}}. {\it Appl. Math. Sci.}, {\bf 183}, 2013.
				
				\bibitem{MR3894734} Briant, M., Merino-Aceituno, S. and Mouhot, C.: \newblock{\href{https://doi.org/10.1142/S021953051850015X}{From Boltzmann to incompressible Navier–Stokes in Sobolev spaces with polynomial weight}}. {\it Anal. Appl. (Singap.)}, {\bf 17} (1), 85--116, 2019.
				
				\bibitem{MR0709743} Caflisch, R. and Papanicolaou, G.C.: \newblock{\href{https://doi.org/10.1137/0143057}{Dynamic theory of suspensions with Brownian effects}}. {\it SIAM J. Appl. Math.}, {\bf 43} (4), 885--960, 1983.
				
				\bibitem{MR3465376} Carrillo, J.A., Choi, Y.-P. and Karper, T.-K.: \newblock{\href{https://doi.org/10.1016/j.anihpc.2014.10.002}{On the analysis of a coupled kinetic-fluid model with local alignment forces}}. {\it Ann. Inst. H. Poincar\'e{} C Anal. Non Lin\'eaire}, {\bf 33} (2), 273--307, 2016.	
				
				\bibitem{Cercignani-88} Cercignani, C.: \newblock{\href{https://doi.org/10.1007/978-1-4612-1039-9}{The Boltzmann Equation and Its Applications}}. {\it Appl. Math. Sci.}, {\bf 67}, 1988.
				
				\bibitem{MR1307620} Cercignani, C., Illner, R. and Pulvirenti, M.: \newblock{\href{https://doi.org/10.1007/978-1-4419-8524-8}{The mathematical theory of dilute gases}}. {\it Appl. Math. Sci.}, {\bf 106}, 1994.
				
				\bibitem{MR2825335} Chae, M., Kang, K. and Lee, J.: \newblock{\href{https://doi.org/10.1016/j.jde.2011.07.016}{Global existence of weak and classical solutions for the Navier-Stokes-Vlasov-Fokker-Planck equations}}. {\it J. Differential Equations}, {\bf 251} (9), 2431--2465, 2011.
				
				\bibitem{MR2564289} Charles, F. and Desvillettes, L.: \newblock{\href{https://doi.org/10.1007/s10955-009-9858-2}{Small mass ratio limit of Boltzmann equations in the context of the study of evolution of dust particles in a rarefied atmosphere}}. {\it J. Stat. Phys.}, {\bf 137} (3), 539--567, 2009.
				
				\bibitem{MR3723164} Choi, Y.-P.: \newblock{\href{https://doi.org/10.1016/j.matpur.2017.05.019}{Finite-time blow-up phenomena of Vlasov/Navier-Stokes equations and related systems}}. {\it J. Math. Pures Appl. (9)}, {\bf 108} (6), 991--1021, 2017.
				
				\bibitem{MR4344262} Choi, Y.-P. and Jung, J.: \newblock{\href{https://doi.org/10.1142/S0218202521500482}{Asymptotic analysis for a Vlasov–Fokker–Planck/Navier–Stokes system in a bounded domain}}. {\it Math. Models Methods Appl. Sci.}, {\bf 31} (11), 2213--2295, 2021.
				
				\bibitem{MR3403400} Choi, Y.-P. and Kwon, B.: \newblock{\href{https://doi.org/10.1088/0951-7715/28/9/3309}{Global well-posedness and large-time behavior for the inhomogeneous Vlasov-Navier-Stokes equations}}. {\it Nonlinearity}, {\bf 28} (9), 3309--3336, 2015.
				
				\bibitem{MR5041103} Danchin, R.: \newblock{\href{https://doi.org/10.1007/s00205-026-02171-x}{Fujita-Kato solutions and optimal time decay for the Vlasov-Navier–Stokes system in the whole space}}. {\it Arch. Ration. Mech. Anal.}, {\bf 250} (2), Paper No. 13, 2026.
				
				\bibitem{MR1029125} De Masi, A., Esposito, R. and Lebowitz, J. L.:  \newblock{\href{https://doi.org/10.1002/cpa.3160420810}{Incompressible Navier-Stokes and Euler limits of the Boltzmann equation}}. {\it Comm. Pure Appl. Math.}, {\bf 42} (8), 1189--1214, 1989.
				
				\bibitem{Springerlink-ews} Desjardins, B. and Esteban, M.J.: \newblock{\href{https://doi.org/10.1007/s002050050136}{Existence of weak solutions for the motion of rigid bodies in a viscous fluid}}. {\it Arch. Ration. Mech. Anal.}, {\bf 146}, 59--71, 1999.
				
				\bibitem{MR2398959} Desvillettes, L., Golse, F. and Ricci, V.: \newblock{\href{https://doi.org/10.1007/s10955-008-9521-3}{The mean-field limit for solid particles in a Navier-Stokes flow}}. {\it J. Stat. Phys.}, {\bf 131} (5), 941--967, 2008.	
				
				\bibitem{MR1014927} DiPerna, R. J. and Lions, P.-L.: \newblock{\href{https://doi.org/10.2307/1971423}{On the Cauchy problem for Boltzmann equations: global existence and weak stability}}. {\it Ann. of Math. (2)}, {\bf 130} (2), 321--366, 1989.
				
				\bibitem{MR4591940} El Ghani, N. and Mejri, H.: \newblock{\href{https://doi.org/10.1142/S0129167X23500313}{The hydrodynamic limit for the inhomogeneous Vlasov-Navier-Stokes system}}. {\it Internat. J. Math.}, {\bf 34} (6), 2023.
				
				\bibitem{MR4698662} Flynn, P. and Guo, Y.: \newblock{\href{https://doi.org/10.1007/s00220-023-04901-8}{The massless electron limit of the Vlasov-Poisson-Landau system}}. {\it Comm. Math. Phys.}, {\bf 405} (2), 2024.
				
				\bibitem{MR1379589} Glassey, R.T.: \newblock{\href{ISBN: 0-89871-367-6 }{The Cauchy problem in kinetic theory}}. {\it Society for Industrial and Applied Mathematics (SIAM)}, 1996.
				
				\bibitem{MR3842054} Glass, O., Han-Kwan, D. and Moussa, A.: \newblock{\href{https://doi.org/10.1007/s00205-018-1253-1}{The Vlasov-Navier-Stokes system in a 2D pipe: existence and stability of regular equilibria}}. {\it Arch. Ration. Mech. Anal.}, {\bf 230} (2), 593--639, 2018.
				
				\bibitem{MR4172441} Golse, F.: \newblock{\href{https://doi.org/10.1088/1873-7005/abb5ef}{From the kinetic theory of gases to models for aerosol flows}}. {\it Fluid Dyn. Res.}, {\bf 52} (5), 051401, 2020.	
				
				\bibitem{MR2517786} Golse, F. and Saint-Raymond, L.: \newblock{\href{https://doi.org/10.1016/j.matpur.2009.01.013}{The incompressible Navier-Stokes limit of the Boltzmann equation for hard cutoff potentials}}. {\it J. Math. Pures Appl. (9)}, {\bf 91} (5), 508--552, 2009.	
				
				\bibitem{MR2025302} Golse, F. and Saint-Raymond, L.: \newblock{\href{https://doi.org/10.1007/s00222-003-0316-5}{The Navier-Stokes limit of the Boltzmann equation for bounded collision kernels}}. {\it Invent. Math.}, {\bf 155} (1), 81--161, 2004.
				
				\bibitem{MR1869641} Goudon, T.: \newblock{\href{https://doi.org/10.1017/S030821050000144X}{Asymptotic problems for a kinetic model of two-phase flow}}. {\it Proc. Roy. Soc. Edinburgh Sect. A}, {\bf 131} (6), 1371--1384, 2001.
				
				\bibitem{MR2106333} Goudon, T., Jabin, P.-E. and Vasseur, A.: \newblock{\href{https://doi.org/10.1512/iumj.2004.53.2508}{Hydrodynamic limit for the Vlasov-Navier-Stokes equations. I. Light particles regime}}. {\it Indiana Univ. Math. J.}, {\bf 53} (6), 1495--1515, 2004.
				
				\bibitem{MR2106334} Goudon, T., Jabin, P.-E. and Vasseur, A.: \newblock{\href{https://doi.org/10.1512/iumj.2004.53.2509}{Hydrodynamic limit for the Vlasov-Navier-Stokes equations. II. Fine particles regime}}. {\it Indiana Univ. Math. J.}, {\bf 53} (6), 1517--1536, 2004.
				
				\bibitem{MR2095473} Guo, Y.: \newblock{\href{https://doi.org/10.1512/iumj.2004.53.2574}{The Boltzmann equation in the whole space}}. {\it Indiana Univ. Math. J.}, {\bf 53} (4), 1081--1094, 2004.	
				
				\bibitem{MR1908664} Guo, Y.: \newblock{\href{ https://doi.org/10.1002/cpa.10040}{The Vlasov-Poisson-Boltzmann system near Maxwellians}}. {\it Comm. Pure Appl. Math.}, {\bf 55} (9), 1104--1135, 2002.
				
				\bibitem{MR1610309} Hamdache, K.: \newblock{\href{https://doi.org/10.1007/BF03167396}{Global existence and large time behaviour of solutions for the Vlasov-Stokes equations}}. {\it Japan J. Indust. Appl. Math.}, {\bf 15} (1), 51--74, 1998.
				
				\bibitem{MR4420295} Han-Kwan, D.: \newblock{\href{https://doi.org/10.2140/pmp.2022.3.35}{Large-time behavior of small-data solutions to the Vlasov-Navier-Stokes system on the whole space}}. {\it Probab. Math. Phys.}, {\bf 3} (1), 35--67, 2022. 
				
				\bibitem{Han-Kwan-Michel-23} Han-Kwan, D. and Michel, D.: 
				\newblock{\href{https://doi.org/10.1090/memo/1516}{On hydrodynamic limits of the Vlasov-Navier-Stokes system}}. {\it Mem. Amer. Math. Soc.}, {\bf 302} (1516), 115pp, 2024. 
				
				\bibitem{MR4061981} Han-Kwan, D., Miot, É., Moussa, A. and Moyano, I.: \newblock{\href{https://doi.org/10.4171/rmi/1120}{Uniqueness of the solution to the 2D Vlasov-Navier-Stokes system}}. {\it Rev. Mat. Iberoam.}, {\bf 36} (1), 37--60, 2020.	
				
				\bibitem{MR4076066} Han-Kwan, D., Moussa, A. and Moyano, I.: \newblock{\href{https://doi.org/10.1007/s00205-020-01491-w}{Large time behavior of the Vlasov-Navier-Stokes system on the torus}}. {\it Arch. Ration. Mech. Anal.}, {\bf 236} (3), 1273--1323, 2020.
				
				\bibitem{MR3479195} Herda, M.: \newblock{\href{https://doi.org/10.1016/j.jde.2016.02.005}{On massless electron limit for a multispecies kinetic system with external magnetic field}}. {\it J. Differential Equations}, {\bf 260} (11), 7861--7891, 2016. 
				
				\bibitem{MR2094523} Jabin, P.-E. and Otto, F.: \newblock{\href{https://doi.org/10.1007/s00220-004-1126-3}{Identification of the Dilute Regime in Particle Sedimentation}}. {\it Comm. Math. Phys.}, {\bf 250} (2), 415--432, 2004.
				
				\bibitem{MR3875244} Jiang, N., Xu, C.J. and Zhao, H.J.: \newblock{\href{https://doi.org/10.1512/iumj.2018.67.5940}{Incompressible Navier-Stokes-Fourier limit from the Boltzmann equation: classical solutions}}. {\it Indiana Univ. Math. J.}, {\bf 67} (5), 1817--1855, 2018.
				
				\bibitem{MR0553964} Kawashima, S., Matsumura, A. and Nishida, T.: \newblock{\href{http://projecteuclid.org/euclid.cmp/1103907292}{On the fluid-dynamical approximation to the Boltzmann equation at the level of the Navier-Stokes equation}}. {\it Comm. Math. Phys.}, {\bf 70} (2), 97--124, 1979.
				
				\bibitem{MR2043729} Liu, T.-P., Yang, T. and Yu, S.-H.: \newblock{\href{https://doi.org/10.1016/j.physd.2003.07.011}{Energy method for Boltzmann equation}}. {\it Phys. D}, {\bf 188} (3--4), 178--192, 2004.
				
				\bibitem{MR0565234} L'vov, V.A. and Hruslov, E.Ja.:
				Perturbation of a viscous incompressible fluid by small particles. {\it ``Naukova Dumka'', Kiev}, pp. 173--177 and 267, 1978. 
				
				\bibitem{MR2415460} Mellet, A. and Vasseur, A.: \newblock{\href{https://doi.org/10.1007/s00220-008-0523-4}{Asymptotic analysis for a Vlasov-Fokker-Planck/compressible Navier-Stokes system of equations}}. {\it Comm. Math. Phys.}, {\bf 281} (3), 573--596, 2008.	
				
				\bibitem{MR0503305} Nishida, T.: \newblock{\href{http://projecteuclid.org/euclid.cmp/1103904211}{Fluid dynamical limit of the nonlinear Boltzmann equation to the level of the compressible Euler equation}}. {\it Comm. Math. Phys.}, {\bf 61} (2), 119--148, 1978.	
				
				\bibitem{MR4296180} Rachid, M.: \newblock{\href{ https://doi.org/10.3934/krm.2021017}{Incompressible Navier-Stokes-Fourier limit from the Landau equation}}. {\it Kinet. Relat. Models}, {\bf 14} (4), 599--638, 2021.		
				
				\bibitem{MR4493149} Su, Y., Wu, G., Yao, L. and Zhang, Y.: \newblock{\href{https://doi.org/10.1016/j.jde.2022.09.029}{Hydrodynamic limit for the inhomogeneous incompressible Navier-Stokes-Vlasov equations}}. {\it J. Differential Equations}, {\bf 342}, 193--238, 2023.
				
				\bibitem{MR3369269} Wang, D. and Yu, C.: \newblock{\href{https://doi.org/10.1016/j.jde.2015.05.016}{Global weak solution to the inhomogeneous Navier-Stokes-Vlasov equation}}. {\it J. Differential Equations}, {\bf 259} (8), 3976--4008, 2015.	
				
				\bibitem{Williams-85} Williams, F.A.: \newblock{\href{https://doi.org/10.1201/9780429494055}{Combustion theory, second edition}}, 1985.		
				
				\bibitem{MR3073216} Yu, C.: \newblock{\href{https://doi.org/10.1016/j.matpur.2013.01.001}{Global weak solutions to the incompressible Navier–Stokes–Vlasov equations}}. {\it J. Math. Pures Appl. (9)}, {\bf 100} (2), 275--293, 2013.
				
		\end{thebibliography} }
		
				
	\end{document}